\title{Overconvergent Hilbert modular forms via perfectoid modular varieties}
\tikzset{
	labl/.style={anchor=south, rotate=90, inner sep=.5mm}
}
\author{Christopher Birkbeck}	
\address{Department of Mathematics, University College London, Gower street, London WC1E 6BT, UK}
\email{c.birkbeck@ucl.ac.uk} 
\author{Ben Heuer}
\address{Mathematical Institute, University of Bonn, Endenicher Allee 60, 53012 Bonn, Germany}
\email{heuer@math.uni-bonn.de}
\author{Chris Williams}
\address{Mathematics Institute, University of Warwick, Coventry CV4 7AL, UK}
\email{christopher.d.williams@warwick.ac.uk}
	\definecolor{mygreen}{RGB}{253,106,2}
	\newcommand{\chris}[1]{{\color{black} \sf
			$\spadesuit$ Chris B: [#1]}}   
	\newcommand{\benwithoutthepicture}[1]{{\color{blue} \sf $\diamondsuit$ Ben: [#1]}} 
	\newcommand{\ben}[1]{\benwithoutthepicture{#1}}
	\newcommand{\CWnote}[1]{{\color{mygreen} \sf
			$\clubsuit$ Chris W: [#1]}}
	\newcommand{\chris}[1]{}
	\newcommand{\ben}[1]{}
	\newcommand{\benwithoutthepicture}[1]{} 
	\newcommand{\CWnote}[1]{}
\theoremstyle{plain}
\newtheorem{thm}{Theorem}[section]
\newtheorem*{thm*}{Theorem}
\newtheorem{lem}[thm]{Lemma}
\newtheorem{cor}[thm]{Corollary}
\newtheorem{corollary}[thm]{Corollary}
\newtheorem*{con*}{Conjecture}
\newtheorem{prop}[thm]{Proposition}
\newtheorem*{prop*}{Proposition}
\newtheorem{proposition}[thm]{Proposition}
\newtheorem{theorem}[thm]{Theorem}
\newtheorem{lemma}[thm]{Lemma}
\newtheorem*{claim}{Claim}
\theoremstyle{definition}
\newtheorem{defn}[thm]{Definition}
\newtheorem{definition}[thm]{Definition}
\newtheorem{rmrk}[thm]{Remark}
\newtheorem*{rmrk*}{Remark}
\newtheorem{remark}[thm]{Remark}
\newtheorem{warn}[thm]{Warning}
\newtheorem*{exmp*}{Example}
\newtheorem*{obs*}{Observation}
\newtheorem{nota}[thm]{Notation}
\numberwithin{equation}{section}
\newcommand{\Q}{\mathbb{Q}}
\renewcommand{\O}{\mathcal{O}}
\newcommand{\Tr}{\operatorname{Tr}}
\renewcommand{\c}{\mathfrak c}
\newcommand{\Z}{\mathbb{Z}}
\newcommand{\ZZ}{\mathbb{Z}}
\newcommand{\Zp}{\mathbb{Z}_p}
\newcommand{\Zpthick}{\ZZ_p^\times}
\newcommand{\RR}{\mathbb{R}}
\newcommand{\A}{\mathbb{A}}
\newcommand{\C}{\mathbb{C}}
\newcommand{\N}{\mathbb{N}}
\newcommand{\F}{\mathbb{F}}
\newcommand{\TT}{\mathbb{T}}
\def\AA{\mathbb{A}}
\def\CC{\mathbb{C}}
\def\GG{\mathbb{G}}
\def\NN{\mathbb{N}}
\renewcommand{\P}{\mathbb{P}}
\renewcommand{\A}{\mathbb{A}}
\newcommand{\G}{\mathbb{G}}
\def\PP{\mathbb{P}}
\def\QQ{\mathbb{Q}}
\def\TT{\mathbb{T}}
\newcommand{\T}{\mathcal{T}}
\newcommand{\roi}{\mathcal{O}}
\newcommand{\cA}{\mathcal{A}}
\newcommand{\cU}{\mathcal{U}}
\newcommand{\cX}{\mathcal{X}}
\newcommand{\cO}{\mathcal{O}}
\newcommand{\cG}{\mathcal{G}}
\newcommand{\cY}{\mathcal{Y}}
\newcommand{\cZ}{\mathcal{Z}}
\newcommand{\W}{\mathcal{W}}
\def \Y {\mathcal{Y}}
\def \XX {\mathcal{X}}
\def \XXt {\mathcal{X}^{\ast}}
\def \ed {\epsilon_{\k}^{\mathrm{def}}}
\newcommand{\n}{N}
\newcommand{\fp}{\mathfrak{p}}
\def\gothc{\mathfrak{c}}
\def \nn {N}
\def\gothp{\mathfrak{p}}
\def\ps{\mathfrak{p}}
\def \proet {\mathrm{pro\acute{e}t}}
\newcommand{\pol}{\mathrm{pol}}
\def \AL {\mathrm{AL}}
\def \a{\alpha}
\def \e {\epsilon}
\def \g {\gamma}
\def \GA {\Gamma}
\def \w {\omega}
\def \W {\mathcal{W}}
\def \YY {\Y}
\def \GX {\mathfrak{X}}
\def \v {v}
\def \lam {\lambda}
\def \d {\mathfrak{d}}
\def \OO {\mathcal{O}}
\def \OF {\mathcal{O}_F}
\def \OFU {\mathcal{O}_F^{\times,+}}
\def \Z {\ZZ}
\def \PGI {\mathrm{P}\Gamma_{\!\scaleto{0}{3pt}}(p^n)}
\def \GI {\Gamma_{\!\scaleto{0}{3pt}}(p^n)}
\def \GIst {\Gamma_{\!\scaleto{0}{3pt}}^*(p^n)}
\def \wh{\widehat}
\def \HHH {\mathcal{H}}
\def \U {\mathcal{U}}
\def \eU {\e_\k}
\def \kU {\k}
\def \wU {w_{\scaleto{\k}{4 pt}}}
\def \rU {t_{\scaleto{\k}{4 pt}}}
\def \k {\kappa}
\def \Norm {N_{F/\QQ}}
\def \Ha {\mathrm{Ha}}
\def \tHa {\widetilde{\Ha}}
\DeclareMathOperator{\coker}{coker}
\DeclareMathOperator{\Sch}{Sch}
\DeclareMathOperator{\Set}{Set}
\DeclareMathOperator{\hH}{H}
\DeclareMathOperator{\tor}{tor}
\DeclareMathOperator{\End}{End}
\DeclareMathOperator{\Lie}{Lie}
\DeclareMathOperator{\Spa}{Spa}
\DeclareMathOperator{\Spf}{Spf}
\DeclareMathOperator{\Spec}{Spec}
\newcommand{\an}{\mathrm{an}}
\newcommand{\ad}{\mathrm{ad}}
\newcommand{\Ig}{\mathrm{Ig}}
\newcommand{\id}{\operatorname{id}}
\DeclareMathOperator{\ttr}{Tr}
\DeclareMathOperator{\Tor}{Tor}
\newcommand{\HT}{\mathrm{HT}}
\newcommand{\AIP}{\mathrm{AIP}}
\DeclareMathOperator{\Hdg}{Hdg}
\newcommand{\cyc}{{\text{cyc}}}
\newcommand{\cts}{{\text{cts}}}
\DeclareMathOperator{\SL}{SL}
\DeclareMathOperator{\Sh}{Sh}
\DeclareMathOperator{\GL}{GL}
\DeclareMathOperator{\GSp}{GSp}
\DeclareMathOperator{\Hom}{Hom}
\DeclareMathOperator{\Res}{Res}
\DeclareMathOperator{\Cl}{Cl}
\DeclareMathOperator{\Aut}{Aut}
\DeclareMathOperator{\Mapc}{\operatorname{Map}_{\cts}}
\def \lra{\longrightarrow}
\newcommand\isorightarrow{\xrightarrow{
		\,\smash{\raisebox{-0.65ex}{\ensuremath{\scriptstyle\sim}}}\,}}
\newcommand{\smallmatrd}[4]{\left(\begin{smallmatrix}#1&#2\\#3&#4\end{smallmatrix}\right)}
\newcommand{\smallvector}[2]{\left(\begin{smallmatrix}#1\\#2\end{smallmatrix}\right)}
\newcommand{\PG}{\overline{\mathrm{P}\Gamma}}
\newcommand{\Gammabar}{\overline{\Gamma}}
\begin{document}
	\maketitle
	
	%%==============================================================================
	%
	%              ABSTRACT
	%
	%%==============================================================================
	\begin{abstract}
		We give a new construction of $p$-adic overconvergent Hilbert modular forms by using Scholze's perfectoid Shimura varieties at infinite level and the Hodge--Tate period map. The definition is analytic, closely resembling that of complex Hilbert modular forms as holomorphic functions satisfying a transformation property under congruence subgroups.  As a special case, we first revisit the case of elliptic modular forms, extending recent work of Chojecki, Hansen and Johansson. We then construct sheaves of geometric Hilbert modular forms, as well as subsheaves of integral modular forms, and vary our definitions in $p$-adic families. We show that the resulting spaces are isomorphic as Hecke modules to earlier constructions of Andreatta, Iovita and Pilloni. Finally, we give a new direct construction of sheaves of arithmetic Hilbert modular forms, and compare this to the construction via descent from the geometric case.

	\end{abstract}
	\setcounter{tocdepth}{2}
	%\tableofcontents

	%%==============================================================================
	%
	%              INTRODUCTION
	%
	%%==============================================================================
	
	\section{Introduction}
	
	  In a first introduction, modular forms are usually defined as certain holomorphic functions $f:\HHH\to \CC$ on the complex upper half-plane $\HHH$ satisfying a transformation property
	of the form
	\[\gamma^\ast f = (cz+ d)^kf\quad \forall \gamma = \big(\begin{smallmatrix}a & b\\ c & d\end{smallmatrix}\big)\in \Gamma,\]
	where $\Gamma \subset \SL_2(\Z)$ is a congruence subgroup.
	More algebraically, one can consider modular forms as sections of an automorphic line bundle $\omega$ on the complex modular curve $\Gamma\backslash\HHH$. 
	This algebraic definition admits a $p$-adic interpretation, giving rise to the theory of overconvergent modular forms varying in $p$-adic families, which has proved extremely important with wide-ranging applications in algebraic number theory and arithmetic geometry. 
	
	An analytic definition of $p$-adic overconvergent modular forms has, however, proved elusive, until such an approach was recently introduced in the case of rational quaternionic modular forms by Chojecki, Hansen and Johansson \cite{CHJ}.
	
	 In this article, we give an analytic definitions of both arithmetic and geometric $p$-adic Hilbert modular forms over any totally real field $F$, for any prime $p$, and show that they agree with earlier algebraic definitions of Andreatta--Iovita--Pilloni in \cite{AIP3}.\\

	Following \cite{CHJ}, the key idea of the construction is to use Scholze's perfectoid Shimura varieties at infinite level over a perfectoid base field $L$, and the associated Hodge--Tate period map $\pi_{\HT}$, all introduced in \cite{torsion}. 
These spaces can be viewed as $p$-adic analogues of $\HHH$. In the complex situation, the pullback of the automorphic bundle $\omega$ along the covering map 
\[\HHH\to \Gamma\backslash\HHH\]
can be canonically trivialised, and the descent to $\Gamma\backslash\HHH$  via the action of $\Gamma$ gives rise to the usual definition of complex modular forms, at least after dealing with compactifications.

 Similarly, in the $p$-adic situation, there is an adic analytic moduli space $\mathcal X$, which in our case is a Hilbert modular variety, carrying an automorphic bundle $\omega$. It has a cover
 \[\mathcal X_{\Gamma(p^\infty)}\to \mathcal X \]
  by a perfectoid Hilbert moduli space. Using $\pi_{\HT}$, the pullback of $\omega$ along this projection can be canonically trivialised over open subspaces. Via the action of the associated covering group -- a $p$-adic level subgroup -- one obtains a definition of overconvergent Hilbert modular forms.

More precisely, there are two different kinds of Hilbert modular forms: there are those associated to the group $G := \mathrm{Res}_{F/\Q}\GL_2$, which are called \emph{arithmetic}; and those associated to $G^* :=G \times_{\Res_{F/\QQ} \mathbb{G}_m} \mathbb{G}_m$, which are called \emph{geometric}. 
Shimura varieties for $G^*$ have a moduli interpretation in terms of abelian varieties with PEL structure. For any (narrow) ideal class $\mathfrak c\in \Cl^+(F)$, we consider the $\mathfrak c$-polarised finite level Shimura variety $\mathcal X_{\mathfrak c}$. Since the Shimura variety for $G^{\ast}$ is in particular of Hodge type, one gets an associated infinite level  Hilbert moduli space $\mathcal{X}_{\mathfrak{c},\Gamma^*(p^\infty)}$ in the limit over level structures at $p$.
Inside of this we have for any small enough $\epsilon \geq 0$ an open subspace $\mathcal{X}_{\mathfrak{c},\Gamma^*(p^\infty)}(\epsilon)_a$, the $\epsilon$-overconvergent anticanonical locus. 

\begin{definition}\label{def:geometric}
	 For any weight character $\kappa:\Z_p^\times\to L^{\times}$, a \emph{geometric overconvergent Hilbert modular form of weight $\kappa$} is a function
	$f \in \mathcal{O}(\mathcal{X}_{\mathfrak{c},\Gamma^*(p^\infty)}(\epsilon)_a)$ satisfying
	\begin{equation}\label{eq:intro equation}
	%M_{\kappa,w}(N)_{\Gamma(p^\infty)} := 
	\gamma^*f = \kappa^{-1}(c\mathfrak{z}+d)f\quad \forall \gamma = \big(\begin{smallmatrix}a & b\\ c & d\end{smallmatrix}\big)\in \Gamma_0^*(p),
	\end{equation}
	where $\Gamma_0^*(p)$ is a $p$-adic level subgroup, and $\kappa(c\mathfrak{z}+ d)$ is a factor of automorphy to be defined. 
\end{definition}
	
	From modular forms for $G^{\ast}$, one can obtain an indirect definition of modular forms for $G$ by descent.	
	As we shall show, one of the advantages of the analytic approach is that instead, one can also work with perfectoid Shimura varieties attached to $G$, and give a completely intrinsic definition. Let $\XX_{G,\mathfrak c}$ be the $\mathfrak c$-polarised Shimura variety for $G$. This is now not a fine moduli space of abelian varieties, but one can still construct a perfectoid cover
	\[ \mathcal{X}_{G,\mathfrak{c},\Gamma(p^\infty)}\to \mathcal X_{G,\mathfrak c}.\]
	
	\begin{definition}\label{def:arithmetic}
	An \emph{arithmetic overconvergent Hilbert modular form of weight $\kappa$} is a function $f\in \O(\mathcal{X}_{G,\mathfrak{c},\Gamma(p^\infty)}(\epsilon)_a)$ satisfying 
	\[\gamma^*f = \kU^{-1}(c\mathfrak z+d)\wU(\det \gamma)f\quad \forall \gamma = \big(\begin{smallmatrix}a & b\\ c & d\end{smallmatrix}\big)\in	\mathrm P\Gamma_0(p)\]
	where $\mathrm P\Gamma_0(p)$ is a $p$-adic level subgroup, and $\wU$ is a character to be defined. 
	\end{definition}
	
	More generally, one can similarly define line bundles $\omega^\kappa$ whose global sections are the modular forms of Definitions \ref{def:geometric} and \ref{def:arithmetic}. These bundles, and hence the modular forms, vary naturally over $p$-adic families $\cU$ in the respective weight spaces, by considering analytic functions on the sousperfectoid space $\mathcal{X}_{\mathfrak{c},\cU,\Gamma^*(p^\infty)}(\epsilon)_a := \mathcal{X}_{\mathfrak{c},\Gamma^*(p^\infty)}(\epsilon)_a \times_L \cU$ (and analogously for $G$).

	\subsection{What is new}

	Several constructions of both geometric and arithmetic $p$-adic overconvergent Hilbert modular forms have already appeared in the literature  (e.g.\ amongst others \cite{Kisin-Lai}, \cite{AIP3} and most generally \cite{AIP}),  so let us explain how our constructions differ and what, in our opinion, are some of their advantages.

	\begin{itemize}
		\item Our main goal is to give a new intrinsic definition of the sheaf of arithmetic Hilbert modular forms for $G$, which is arguably cleaner and easier to work with than the one via descent from $G^{\ast}$.  
		\item We also show how to define subspaces of integral geometric and arithmetic Hilbert modular forms in the analytic setting, which match up with the ones constructed in \cite{AIP}. An advantage of the perfectoid construction is that this does not require formal models. Rather, the subspace of integral forms is given by simply replacing $\OO$ with the integral subsheaf $\OO^+$ in the construction.
		\item As in \cite{CHJ}, the resulting framework is well-adapted to constructing overconvergent Eichler--Shimura maps from overconvergent cohomology, namely maps of the form
		\[H^g_{c}(\mathcal X_{\mathfrak c},D_\kappa)\to H^0(\mathcal X^{\tor}_{\mathfrak c}(\epsilon),\omega^{\kappa}\otimes \Omega_{\mathcal X^{\tor}_{\mathfrak c}(\epsilon)}^g(-\partial))(-g)\]
		where $D_\kappa$ is an \'etale sheaf of distribution modules, $g=[F:\QQ]$, and $\partial\subseteq \mathcal X^{\tor}_{\mathfrak c}(\epsilon)$ is the boundary of a chosen toroidal compactification. A proof of this will be included in upcoming work.
	\end{itemize}
	
	As a secondary goal, we modify the strategy of \cite{CHJ} in several ways: 
	\begin{itemize}
		\item We work with the anticanonical locus rather than the canonical one, which makes it easier to deal with the boundary of the Shimura varieties, an issue which is is not present in \textit{op.cit.} as there the construction is carried out for quaternionic modular forms. 
		
		As a minor but pleasant side effect, this results in the automorphic factor $\kappa(c\mathfrak{z}+d)$ appearing in \eqref{eq:intro equation}, like in the complex case,  rather than the $\kappa(b\mathfrak{z}+d)$ from \emph{op.cit}.
		\item \textcolor{black}{	We give a conceptually new proof that the resulting sheaves are line bundles.}
		\item Throughout we  work with sousperfectoid spaces, a language that was not available at the time that \cite{CHJ} was written. This allows one to define automorphic sheaves uniformly for arbitrary bounded weights $\mathcal U$ in a geometric way, by working over the fibre product of the infinite level modular variety with $\cU$. In particular, one does not have to impose restrictions on the shape of weights as in the construction using formal models.
		\item We also explain how the ``perfect" modular forms of \cite{AIP} appear in this anticanonical setting. In the elliptic case, this point of view has been used in \cite{heuer-thesis} for a perfectoid approach to Coleman's Spectral Halo, and one should be able to use our constructions to obtain similar results for Hilbert modular forms.
	\end{itemize}	
	
	We shall now explain our constructions and the organisation of the paper in some more detail.
	\subsection{Elliptic modular forms via the anticanonical locus}
	While the main focus of this paper is to construct families of Hilbert modular forms, we start in \S\ref{sec:HT for MC}, \ref{sec:mod forms via ac locus} and \ref{sec:AIP elliptic}, by treating the elliptic case.  One reason to consider this seperately is that while the boundary in the higher dimensional case can be dealt with via Koecher's principle, in the elliptic case it requires an explicit analysis. Our second reason to treat the elliptic case separately is to illustrate the ways in which we deviate from the construction in \cite{CHJ}.

	To explain this, we first summarise their construction. Let $L$ be any perfectoid field over $\QQ_p^\cyc$, let $\XX^{\ast}$ be the (compact) modular curve over $L$ of some tame level considered as an adic space, and let $q:\mathcal{X}^*_{\Gamma(p^\infty)}\to \XX^{\ast}$ be Scholze's infinite level perfectoid modular curve (denoted $\cX_{\infty}$ \emph{op.\ cit}.). It admits a Hodge--Tate period map $\pi_{\HT} : \XX^{\ast}_{\Gamma(p^\infty)} \rightarrow \PP^1$ with the key property \[\pi_{\HT}^{\ast}\O(1)=q^{\ast}\omega.\]
	To study this sheaf, they consider a family of open subspaces of $\PP^1$, parametrised by $w \in \QQ_{>0}$, on which $\O(1)$ admits a non-vanishing section. Pulling back under $\pi_{\HT}$ gives a family of neighbourhoods $\XX^{\ast}_{\Gamma(p^\infty),w} \subset \XX^{\ast}_{\Gamma(p^\infty)}$ of the (canonical) ordinary locus. There are then subspaces $\XX^{\ast}_w\subseteq \XX^{\ast}$ such that $\XX^{\ast}_{\Gamma(p^\infty),w} \to \XX^{\ast}_{w}$ is a pro-\'etale $\Gamma_{\!0}(p)$-torsor, at least away from the cusps. Here $\Gamma_{\!0}(p) \subset \GL_2(\Zp)$ is the subgroup of matrices that are upper-triangular modulo $p$. 
	
	Pulling back the natural parameter at $\infty\in \PP^1$, they obtain a parameter $\mathfrak{z}\in\O(\XX^{\ast}_{\Gamma(p^\infty),w})$.
	For a certain class of $p$-adic weights $\kappa$, and $w\geq 0$ sufficiently small, they then define the space of ``$w$-overconvergent'' modular forms of weight $\kappa$ to be the set of $f \in \mathcal{O}(\XX^{\ast}_{\Gamma(p^\infty),w})$ satisfying
	\[
	\gamma^*f = \kappa(b\mathfrak{z}+ d)^{-1}f\quad \forall \gamma = \smallmatrd{a}{b}{c}{d} \in \Gamma_{\!0}(p).
	\]

	\subsubsection{The case of elliptic modular forms}
	The results of \cite{CHJ} are only explicitly proved in the quaternionic case where the Shimura curve is compact, though they do mention that their methods can be extended to the elliptic case, where one can use ``soft'' techniques to deal with ramification at the boundary. This is also noted in \cite{howe-thesis}. We expand on these remarks and explain how to extend to the cusps by using perfectoid Tate curve parameter spaces. 
	
	%Secondly, we discuss some aspects of the perfectoid setup that have not appeared in the previous literature, like integral subspaces and families via sousperfectoid spaces.
	
	Instead of considering the canonical locus, we choose to work with the anticanonical locus $\mathcal{X}^*_{\Gamma(p^\infty)}(\epsilon)_a$ everywhere.
	This definition is equivalent, since the two loci can be interchanged via the action of the Atkin--Lehner matrix $\smallmatrd{0}{1}{p}{0}$.%, but we feel that it simplifies the treatment of the boundary, and the comparison to previous constructions.	
	We define a sheaf  
	\[
	\omega^{\kappa}_1 := \big\{f \in q_{\ast}\mathcal{O}_{\mathcal{X}^*_{\Gamma(p^\infty)}(\epsilon)_a} \big| \gamma^*f = \kappa^{-1}(c\mathfrak{z}+ d)f \quad\forall \gamma = \smallmatrd{a}{b}{c}{d} \in \Gamma_{\!0}(p)\big\}
	\]
	on $\cX^*_{\Gamma_{\!0}(p)}(\e)_a$, where now $\mathfrak{z}$ is the parameter on $\mathcal{X}^*_{\Gamma(p^\infty)}(\epsilon)_a$ defined by pulling back the canonical parameter on $\AA^{1,\mathrm{an}}\subseteq \PP^1$ at $0$, %away from $\infty=(1:0)$, 
	and $q : \cX_{\Gamma(p^\infty)}^* \to \cX_{\Gamma_{\!\scaleto{0}{3pt}}(p)}^*$ is the projection. The space of $\epsilon$-overconvergent modular forms is then the space of global sections of this sheaf. We note that this is very similar to the complex definition. We then use the Atkin--Lehner isomorphism to obtain a sheaf $\omega^{\k} = \omega^{\k}_0 := \AL^*\omega_1^\kappa$ on the tame level space $\cX^*(p^{-1}\e)$.
	One could now prove, as in \cite{CHJ}, that the sheaf $\omega^{\kappa}$ is a line bundle, but we instead deduce this from our later comparison results.
	
	\subsubsection{Variation in families}
	Reinterpreting \cite{CHJ} in the context of sousperfectoid spaces, we show how to extend the definition to also work for $p$-adic families. The weights considered above can be considered as points $\Spa(L,\O_L) \xrightarrow{\kappa} \W$ in the rigid analytic \emph{weight space} 
	\[\W:= \Spf(\Z_p[[\Z_p^\times]])^{\ad}_\eta \times L,\]
	where throughout we consider all rigid spaces as adic spaces, and where $L$ is our perfectoid field.
	 We can then consider more general (families of) weights $\kappa : \U\to \W$, where $\U$ is a smooth rigid space and $\kappa$ has bounded image.
	This gives rise to a sheaf $\omega^{\kappa}$ on the fibre product $\mathcal{X}^\ast(p^{-1}\epsilon)\times_L \U$ whose fibre over any point $\kappa_0\in\U$ is the sheaf $\omega^{\kappa_0}$ defined above.
	
	By comparing the anticanonical locus with the Pilloni-torsor as described in \cite{AIP2}, in Thm.\ \ref{thm:comparison 1} we construct an isomorphism between $\omega^{\kappa}$ and the bundle $\w^{\kappa}_{\mathrm{AIP}}$ of forms \emph{op.\ cit}.:
	\begin{thm}
		\leavevmode Let $\kappa: \U \to \W$ be a bounded smooth weight (Defn.~\ref{def:smooth weight}; e.g.\ a point or an affinoid open in $\W$). Then there is a natural Hecke-equivariant isomorphism $\omega^{\kappa}\cong \omega^{\kappa}_{\mathrm{AIP}}$.
	\end{thm}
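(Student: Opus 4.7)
The plan is to realise both $\omega^\kappa$ and $\omega^\kappa_{\mathrm{AIP}}$ as functions on a common auxiliary object, namely the pullback of the Pilloni torsor $\mathcal F^\times$ to the infinite level perfectoid anticanonical locus $\mathcal X^*_{\Gamma(p^\infty)}(\epsilon)_a$. Recall that, away from the cusps, Andreatta--Iovita--Pilloni construct $\mathcal F^\times$ as a torsor over a strict neighbourhood of the ordinary locus for a certain subgroup of $\ZZ_p^\times$ (the one determined by the analyticity radius of $\kappa$), parametrising generators of $\omega$ that coincide, up to a controlled Hasse-type error, with the canonical generator arising from the canonical subgroup. Their sheaf $\omega^\kappa_{\mathrm{AIP}}$ is then defined as the subsheaf of functions on $\mathcal F^\times$ transforming by $\kappa^{-1}$ under the torsor action. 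Working on the anticanonical locus, instead of the canonical one as in \cite{AIP2}, is harmless thanks to the Atkin--Lehner isomorphism already used to define $\omega^\kappa$.

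The key geometric input is that over $\mathcal X^*_{\Gamma(p^\infty)}(\epsilon)_a$, the Hodge--Tate period map lands in the affine chart $\AA^{1,\mathrm{an}} \subset \PP^1$, and the pullback $\pi_{\HT}^*\O(1)=q^*\omega$ acquires a canonical non-vanishing section trivialised by $\mathfrak z$. First I would verify that this section, viewed in $q^*\mathcal F^\times$, actually lies in the Pilloni torsor at the correct analyticity radius: concretely, one computes the discrepancy between $\mathfrak z$ and the canonical generator along the anticanonical tower and bounds it in terms of $\epsilon$, using the explicit description of $\pi_{\HT}$ provided by the universal trivialisation of the Tate module. This gives a canonical section of $q^*\mathcal F^\times$ that trivialises the torsor pulled back to infinite level.

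Once this canonical section is in hand, it induces an identification of $q^*\omega^\kappa_{\mathrm{AIP}}$ with the subsheaf of $q_*\O$ consisting of functions transforming appropriately under $\Gamma_{\!0}(p)$. To match the automorphy factor, I would compute the action of $\gamma = \smallmatrd{a}{b}{c}{d} \in \Gamma_{\!0}(p)$ on the canonical section: $\gamma$ acts on $\mathfrak z$ by the usual M\"obius transformation, and therefore rescales the canonical generator of $\omega$ by $(c\mathfrak z + d)$ modulo the Hasse-controlled error absorbed in $\mathcal F^\times$. Descending to $\mathcal X^*_{\Gamma_{\!0}(p)}(\epsilon)_a$ then yields precisely the transformation law $\gamma^* f = \kappa^{-1}(c\mathfrak z+d) f$ that defines $\omega^\kappa$, producing the desired isomorphism of sheaves. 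Hecke-equivariance is automatic because both $\mathcal F^\times$ and $\pi_{\HT}$ are natural under prime-to-$p$ Hecke correspondences, and extension to smooth bounded weight families $\cU \to \W$ is obtained by base-changing everything to the sousperfectoid space $\mathcal X^*_{\Gamma(p^\infty)}(\epsilon)_a \times_L \cU$, where both constructions vary uniformly in $\kappa$ by Defn.~\ref{def:smooth weight}.

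The main obstacle is the bound in the second step: identifying $\mathfrak z$ with a section of the Pilloni torsor at the correct radius requires careful tracking of the normalisations of $\pi_{\HT}$, of the Hasse invariant, and of the $p$-adic distance between the Hodge--Tate filtration and the anticanonical subgroup along the tower. A further subtlety is the extension of the canonical trivialisation across the cusps, where the perfectoid Tate-curve parameter spaces of \S\ref{sec:HT for MC} are needed to ensure the comparison holds on all of $\mathcal X^*(p^{-1}\epsilon)$ rather than only on its interior.
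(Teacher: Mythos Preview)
Your proposal is correct and follows essentially the same route as the paper: the canonical section you describe is precisely the paper's $\mathfrak s=\pi_{\HT}^{\ast}s$, the verification that it lands in the Pilloni torsor is Lemma~\ref{l: HT from Gamma to F_n}, and the automorphy-factor computation is \eqref{equation: 	gamma* mf s = (c mf z +d) mf s}. Two small points to tighten: the isomorphism (not just the map) relies on $\omega_{\mathrm{AIP}}^{\kappa}$ already being known to be invertible together with the descent statement $(q_{\ast}\O)^{\Gamma_{\!0}(p)}=\O$ of Prop.~\ref{prop: G-invs}, and Hecke-equivariance must also be checked for $U_p$, not only for prime-to-$p$ operators (the paper handles this separately in \S\ref{sec:hecke-operators}).
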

	
	One reason we prefer to work with the anticanonical locus over the canonical one is that it simplifies the proof of the above comparison. A second reason is that it makes it easier to study the boundary: the technical complication for defining elliptic modular forms rather than quaternionic ones is that the cover $ \mathcal{X}^*_{\Gamma(p^\infty)}(\epsilon)_a\to  \mathcal{X}^*_{\Gamma_{\!0}(p^n)}(\epsilon)_a$ is pro-\'etale over $\mathcal{X}_{\Gamma_{\!\scaleto{0}{3pt}}(p^n)}(\epsilon)_a$, but is ramified at the cusps. However, the situation at the cusps is easy to deal with in the anticanonical tower, because here the cusps are totally ramified and give rise to perfectoid versions of Tate curve parameter discs. This allows one to extend the arguments from \cite{CHJ} to the boundary.

	\subsection{Generalisation to the Hilbert case}\label{sec:intro hilbert}
	The main result of the present paper is a generalisation of this approach to the setting of Hilbert modular forms, that is, modular forms for $\GL_2$ over any totally real field $F$ of degree $g$. 
	
	Having treated the elliptic case separately, we will assume $g \geq 2$, and, by the Koecher principle, largely ignore the boundary in this case.
	Whilst conceptually the constructions follow the same lines as in the elliptic case, there are additional subtleties in the Hilbert case that do not arise when the base field is $\QQ$. 
	The most immediate is in the choice of \emph{classical} definition. The Shimura varieties arising from $G^*$ are of PEL (hence Hodge) type. They are fine moduli spaces parametrising Hilbert--Blumenthal abelian varieties (HBAVs), namely abelian varieties equppied with an $\OO_F$-action and a polarisation, plus some fixed tame level structure.
	The Shimura varieties for $G$, in contrast, are only of abelian type, and are only coarse moduli spaces, parametrising instead only equivalence classes of polarisations.
	These distinctions make it technically easier to work with $G^{\ast}$, although ultimately the case in which we are most interested is the arithmetic case of $G$, which has a better theory of Hecke operators.
	
	In both cases these Shimura varieties are called Hilbert modular varieties, for $G^{\ast}$ and $G$ respectively.
	In practice, we will  work with the $\mathfrak{c}$-polarised part of the Hilbert modular variety. We sometimes emphasize this with a subscript $\mathfrak{c}$, but usually drop this from the notation.

	\subsubsection{ Hilbert modular varieties for $G^*$ at infinite level}\label{sec:intro G*}
	
	As in the elliptic case, the key object in the construction of overconvergent forms is an infinite level Hilbert modular variety for $G^*$, which is a $p$-adic analogue of the classical complex Hilbert modular variety. It it is a special case of Scholze's perfectoid Shimura varieties of Hodge type \cite[\S III and IV]{torsion}.
	As we shall recall in \S\ref{sec:hbv at infinite level}, it arises from the tower of  ($\mathfrak{c}$-polarised) Hilbert modular varieties $\XX_{\gothc,\Gamma^*(p^n)}$ as the wild level $\Gamma^*(p^n) \subset G^*(\ZZ_p)$ varies.
	Once again, one can restrict to the anticanonical locus of an $\epsilon$-neighbourhood of the ordinary locus and  obtain a  Hilbert modular variety at infinite level
	\[
	\mathcal{X}_{\gothc,\Gamma^*(p^\infty)}(\epsilon)_a \sim \varprojlim_n \mathcal{X}_{\gothc,\Gamma^*(p^n)}(\epsilon)_a,
	\]
	which is a pro-\'etale $\Gamma_{\!0}^*(p)$-torsor over $\cX_{\gothc,\Gamma_{\!\scaleto{0}{3pt}}^*(p)}(\e)_a$, where $\Gamma_{\!0}^*(p)\subset G^*(\Zp)$ is the subgroup of matrices that are upper-triangular modulo $p$.
	
	We also need a version of the Hodge--Tate period map, as defined in \cite[\S IV]{torsion} and refined in \cite[\S2]{CarScho}. If $C$ is a perfectoid field extension of $L$, then a $(C,C^+)$-point of $	\mathcal{X}_{\gothc,\Gamma^*(p^\infty)}$ corresponds to a HBAV $A$ equipped with a trivialisation $\alpha:\O_p^2\isorightarrow T_pA^\vee$ and extra data, where  $\OO_p := \OO_F\otimes_\ZZ \Zp$. Here we note that the appearance of $A^{\vee}$ differs somewhat from \cite{torsion}, but in the presence of a polarisation $\lambda$, the two are always isomorphic after a choice of $p$-adic generator of $\mathfrak c$.	The reason we wish to parametrise $T_pA^\vee$ rather than $T_pA$ is that together with the Hodge--Tate morphism
	\[\O_p^2\xrightarrow{\alpha}T_pA^\vee \xrightarrow{\HT_A} \omega_{A},\]
	the trivialisation $\alpha$ gives rise to canonical differentials $\alpha(1,0)$ and $\alpha(0,1)$ of $\omega_{A}$ (instead of $\omega_{A^\vee}$). 
	
	This construction can be made more conceptual by way of the Hodge--Tate period morphism
	\[
	\pi_{\HT} :\XX_{\gothc,\Gamma^*(p^\infty)}(\e)_a \longrightarrow \Res_{\OF/\ZZ} \PP^1.
	\] 
	If $F$ splits in $L$ (which we do not assume in the main text), this decomposes canonically into maps $\pi_{\HT}=\prod_{v\in \Sigma} \pi_{\HT,v} :\XX_{\gothc,\Gamma^*(p^\infty)}(\e)_a \to (\PP^1)^{\Sigma}$, where $\Sigma$ is the set of embeddings $v: F \hookrightarrow L$. 
	On $C$-points, a point corresponding to an isomorphism  $\alpha$ is then sent to the point in $(\PP^1(C))^\Sigma$ defined by the Hodge filtration
	\[
	0 \rightarrow \mathrm{Lie} (A^\vee)(1) \longrightarrow T_pA^\vee \otimes_{\ZZ_p} C \xrightarrow{\HT_A} \omega_{A}\rightarrow 0.
	\]
	Crucial here is that $\pi_{\HT}$ allows one to extend this pointwise consideration to the universal situation: If $\omega_{\mathcal A}$ denotes the conormal sheaf to the universal abelian variety $\mathcal A\to\mathcal{X}_{\gothc,\Gamma^*_0(p^n)}(\epsilon)_a$ and $q:\mathcal{X}_{\gothc,\Gamma^*(p^\infty)}(\epsilon)_a \to \mathcal{X}_{\gothc,\Gamma^*_0(p^n)}(\epsilon)_a$ is the forgetful map, then there is a canonical isomorphism
	\[ q^*\omega_A = \pi_{\HT}^{\ast}\Res_{\OF/\ZZ} \O(1). \]
	If $F$ splits in $L$, then $\Res_{\OF/\ZZ}\O(1)$ is identified with the direct sum $\oplus_{\Sigma}\O(1)$ on $(\P^{1})^{\Sigma}$, and using canonical sections of $\O(1)$ near $(0:1)\in \PP^1$, we obtain a canonical section $\mathfrak s$ of $q^*\omega_{\mathcal A}$ which is a geometric incarnation of the section $\alpha(1,0)$ considered above.
	In general we work with a canonical section of $\Res_{\OF/\ZZ}\O(1)$, an instance of Scholze's ``fake Hasse invariants'' from \cite{torsion}.
	
	In the elliptic case, we had a canonical parameter $\mathfrak z$. In the Hilbert case, $\mathfrak{z}$ is now simply the restriction of $\pi_{\HT}$ to a function $\mathfrak z:\mathcal{X}_{\gothc,\Gamma^*(p^\infty)}(\epsilon)_a\to \Res_{\OF/\ZZ}\hat{\G}_a$ where $\hat{\G}_a\subseteq \P^1$ is the closed unit ball around $(0:1)\in \P^1$.
	When $F$ splits in $L$, via the canonical decomposition $\Res_{\OF/\ZZ}\hat{\G}_a=\G_a^{\Sigma}$ this can be interpreted as a collection of functions  $\mathfrak{z} = (\mathfrak{z}_v)_{v\in\Sigma}$ in  $\O^+(\mathcal{X}_{\gothc,\Gamma^*(p^\infty)}(\epsilon)_a)$.
	
	In order to define $p$-adic families of Hilbert modular forms, let $\W^*$ denote the weight space for $G^*$ (cf.  Defn.~\ref{def:Hilbert weight space}) and let $\kappa: \U \to \W^*$ be a bounded smooth weight. In \S\ref{sec:G* forms}, we use the sousperfectoid adic space 
	\begin{align*}
	% &\mathcal{X}_{\mathfrak{c},\mathcal{U},\GA^*(p^\infty)}(\epsilon)_a := \mathcal{X}_{\mathfrak{c},\Gamma^*(p^\infty)}(\epsilon)_a \times_L \mathcal{U}\\
	&\mathcal{X}_{\gothc,\U,\GA^*(p^\infty)}(\epsilon)_a := \mathcal{X}_{\gothc,\Gamma^*(p^\infty)}(\epsilon)_a \times_L \mathcal{U}
	\end{align*}  
	to define the sheaf of $\mathfrak{c}$-polarised geometric Hilbert modular forms of weight $\kappa$ on $\XX_{\gothc,\cU,\Gamma^{\ast}_0(p)}(\epsilon)_a$ as
	\[
	\omega^{\kappa}_{1,\gothc} := \big\{ f \in \roi_{\mathcal{X}_{\mathfrak{c},\U,\Gamma^*(p^\infty)}(\epsilon)_a} \big| \gamma^*f = \kappa^{-1}(c\mathfrak{z}+d)f\quad \forall \gamma = \smallmatrd{a}{b}{c}{d} \in \Gamma_{\!0}^*(p)\big\}.
	\]
	
	Here $\epsilon>0$ is such that for any element of $\g= \smallmatrd{a}{b}{c}{d} \in \Gamma_{\!0}^*(p)$, we can make sense of $\kappa(c\mathfrak{z}+ d)$ as an invertible function on $\mathcal{X}_{\mathfrak{c},\U,\Gamma^*(p^\infty)}(\epsilon)_a$ (Defn.~\ref{d:cocycle-kappa(cz+d)}).	We describe the variation of this in families, and a local version giving an overconvergent automorphic bundle $\omega_{G^*}^\kappa$on $\cX(\e)$. 
	We also have integral versions of these spaces given by simply replacing $\roi_{\mathcal{X}}$ with $\roi_{\cX}^{+}$ in the above definition. In Thm.~\ref{thm:comparison hilbert} we use the canonical sections of $ \omega_{\mathcal A}$ at infinite level to define a comparison isomorphism to the sheaf of Hilbert modular forms defined in \cite{AIP3}.
	
	\subsubsection{Hilbert modular varieties for $G$ at infinite level}
	For arithmetic applications, it is desirable to have a version of this theory for arithmetic Hilbert modular forms, that is for the group $G$. 
	For example, these are objects that arise in modularity of elliptic curves over totally real fields. 
	
	Towards this goal, we pass from $G^{\ast}$ to $G$ and  discuss in \S\ref{sec:G vars} the perfectoid Hilbert modular variety at infinite level for $G$ and the corresponding Hodge--Tate period map. In the case of $L=\C_p$, this is a special case of the construction of Shen of perfectoid Shimura varieties of abelian type~\cite{Xu}. For Hilbert modular varieties, this is easy to extend to general perfectoid fields following the methods of \cite{torsion}, as we shall describe.
	
	In contrast to the definition in \cite{AIP3} by descent from $G^{\ast}$, this allows us to define the sheaf of $p$-adic overconvergent arithmetic Hilbert modular forms without reference to $G^{\ast}$.
	
	While this definition is ultimately quite simple, in order to explain why this is the correct definition, it is important for us to work out the geometric relation between the perfectoid Hilbert modular varieties for $G$ and $G^{\ast}$ rather explicitly, in particular keeping close track of all of the relevant Galois actions. At tame level, this is easy: let $\XX$ and $\XX_G$ be the adic analytifications of the tame $\mathfrak c$-polarised Hilbert modular varieties for $G^{\ast}$ and $G$ respectively. The natural map $\XX\to \XX_G$
	can then be described as the quotient under the action of $\OO_F^{\times,+}$ on the polarisation, which factors through the action of a finite group $\Delta(N)$.
	
	For level at $p$, however, the condition defining level structures for $G^*$ is \emph{not} preserved by the action of $\OO_F^{\times,+}$, so there is no longer a full polarisation action on the spaces $\cX_{\Gamma^*(p^n)}$. We therefore work with an auxiliary `mixed' moduli problem, and work with the spaces $\cX_{\Gamma(p^n)}$ relatively parametrising $G$-level structures over the space $\cX$ arising from $G^*$.  In the limit, combined with the Weil pairing these give rise to a perfectoid space $\cX_{\Gamma(p^\infty)}$ with a canonical $\Zp^\times$-torsor 
	\begin{equation}\label{eqn:intro torsor}
	\cX_{\Gamma^*(p^\infty)} \times {\cO_p^\times} \longrightarrow \cX_{\Gamma(p^\infty)}.
	\end{equation}

	Now, on $\cX_{\Gamma(p^\infty)}$, we \emph{do} get an $\OO_F^{\times,+}$ action on polarisations. At infinite level, this extends to an action of a profinite group $\Delta(p^{\infty}N)$, which makes the morphism $\cX_{\Gamma(p^\infty)}\to \cX_{G,\Gamma(p^\infty)}$ into a pro-\'etale $\Delta(p^\infty N)$-torsor. 
	We thus obtain a morphism of pro-\'etale torsors (cf. \eqref{eq:diag comparing G*,Gamma*(p^infty) to G,Gamma(p^infty) over tame})
	\begin{equation}\label{eq:intro diagram}
	\begin{tikzcd}[column sep = {4.5cm,between origins},{row sep = 0.55cm}]
	\mathcal X_{\Gamma(p^\infty)} \arrow[d,"\GI"'] \arrow[r, "\Delta(p^\infty \nn)"] & \XX_{G,\Gamma(p^\infty)} \arrow[d, "\PGI"]   \\
	\mathcal{X}_{\Gamma_{\!\scaleto{0}{3pt}}(p^n)} \arrow[r,"\Delta(N)"] & \mathcal{X}_{G,\Gamma_{\!\scaleto{0}{3pt}}(p^n)}.
	\end{tikzcd}
	\end{equation}
	The diagonal map is also a pro-\'etale torsor for some group $E(p^n)$, as we shall discuss in \S8.

	\subsubsection{Hilbert modular forms for $G$}
	
	In \S\ref{sec:G forms}, we use the above to define arithmetic Hilbert modular forms. Let $\W$ be the weight space for $G$ and  let $\k : \cU \to \W$ be a bounded smooth weight.
	\begin{definition}\label{def:intro forms}
		The sheaf of \emph{arithmetic Hilbert modular forms of weight $\kappa$} on $\mathcal{X}_{G,\gothc,\mathcal{U}}(\epsilon)$  is
		\[
		\w_G^{\k}=\left\{ f \in \O(\mathcal{X}_{G,\mathfrak{c},\U,\Gamma(p^\infty)}(\epsilon)_a) \middle| \gamma^*f = \kU^{-1}(c\mathfrak z+d)\wU(\det \gamma)f\quad \forall \gamma = \smallmatrd{a}{b}{c}{d} \in \mathrm{P}\Gamma_{\!0}( p)\right\}.
		\]
		
		Its global sections $ M_{\k}^G(\Gamma_{\!0}(p), \epsilon,\gothc)$ form the space of $\mathfrak{c}$-polarised $\epsilon$-overconvergent arithmetic Hilbert modular forms.
		We also have an integral subsheaf $\w_{G}^{\kU,+}$ by instead using the $\O^+$-sheaf. 
	\end{definition}
	Our approach yields various natural alternative definitions; for example, we could instead use the torsor $\mathcal{X}_{\Gamma(p^\infty)}(\epsilon)_a\to \XX_{G,\Gamma_{\!\scaleto{0}{3pt}}(p)}(\epsilon)_a$ to define forms for $G$. In \S\ref{sec:G forms}, we show that these alternatives (see Defn.~\ref{defn: 4 sheaves for the price of 2}) are all equivalent to the one given above.
	
	\subsubsection{Comparison to other definitions}
	Using the canonical section $\mathfrak s$,
	we obtain a comparison isomorphism to the Hilbert modular forms of Andreatta--Iovita--Pilloni in Thms.~\ref{thm:comparison hilbert} and~\ref{thm:comparison-hilbert-for-G}.
	
	\begin{theorem}
		%Let $\k:\U\to \W^* \to \W$ be a smooth bounded weight. Then 
		There is a natural Hecke-equivariant isomorphism between $\w_{G^*}^{\kU,+}$ (resp.\ $\omega_{G}^{\k,+}$) and the sheaf of integral Hilbert modular forms for $G^*$ (resp.\ $G$) defined in \cite{AIP3,AIP}.
	\end{theorem}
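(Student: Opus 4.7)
The plan is to establish the $G^{\ast}$ comparison first and then descend to obtain the $G$ version via the diagram \eqref{eq:intro diagram} together with the $\cO_p^\times$-torsor \eqref{eqn:intro torsor}. The guiding analogue throughout is the elliptic comparison Thm.~\ref{thm:comparison 1}; the principal new content in the Hilbert case is that the rank-one bundle $q^\ast\omega_{\mathcal{A}}$ carries an $\OF$-action, with the Hodge--Tate period map now taking values in $\Res_{\OF/\ZZ}\PP^1$ and the canonical trivialisation decomposing accordingly.

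For the $G^{\ast}$ case, the central object is the canonical section $\mathfrak{s}$ of $q^\ast\omega_{\mathcal{A}}$ on $\cX_{\gothc,\Gamma^\ast(p^\infty)}(\epsilon)_a$, obtained by pulling back Scholze's fake Hasse invariant along $\pi_{\HT}$. First I would verify that on the anticanonical locus, $\mathfrak{s}$ gives an $\OF$-linear trivialisation of $q^\ast\omega_{\mathcal{A}}$ which is integral up to a power of $p$ controlled by $\epsilon$; this is essentially the content of the anticanonical Hodge--Tate theory, which guarantees that the image of $\alpha(1,0)$ under $\HT_A:T_pA^\vee\to\omega_A$ generates $\omega_A$ modulo a small ideal. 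Consequently $\mathfrak{s}$ identifies with a canonical point of (the pullback of) the Pilloni torsor used in \cite{AIP3} to define $\omega_{\AIP}^{\kappa,+}$ as a sheaf of $\kappa$-equivariant functions on that torsor. The comparison is then given by evaluation at $\mathfrak{s}$: a section $f$ of $\omega_{\AIP}^{\kappa,+}$ maps to the function $f(\mathfrak{s})$ on $\cX_{\gothc,\Gamma^\ast(p^\infty)}(\epsilon)_a$. Under $\gamma=\smallmatrd{a}{b}{c}{d}\in\Gamma_0^\ast(p)$, the first column of the framing becomes $\smallvector{a}{c}$, and the identification of $\pi_{\HT}$ with the Hodge filtration shows that this rescales $\mathfrak{s}$ by exactly $c\mathfrak{z}+d$. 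Hence $f(\mathfrak{s})$ transforms by $\kappa^{-1}(c\mathfrak{z}+d)$ and lands in $\w_{G^\ast}^{\kappa,+}$. The inverse comes from pro-étale descent along $q$: a $\Gamma_0^\ast(p)$-equivariant function on the infinite level anticanonical locus with the correct transformation law is precisely a $\kappa$-equivariant function on the pullback of the Pilloni torsor, and hence descends to a section of $\omega_{\AIP}^{\kappa,+}$.

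For the $G$ case, the AIP sheaf is constructed in \cite{AIP3,AIP} by descending $\omega_{G^\ast,\AIP}^{\kappa|_{\cO_p^\times},+}$ through precisely the chain of torsors appearing in \eqref{eq:intro diagram} and \eqref{eqn:intro torsor}, with the character $\wU\circ\det$ recording the nebentypus twist from the polarisation action. Since our $\omega_G^{\kappa,+}$ is by Defn.~\ref{def:intro forms} the analogous descent of $\w_{G^\ast}^{\kappa|_{\cO_p^\times},+}$, the $G^\ast$ comparison will descend provided it is equivariant for the actions of $\OFU$ and $\cO_p^\times$. This reduces to an explicit identification of how rescaling the polarisation and the framing transforms $\mathfrak{s}$, matching the analogous scaling rule on the AIP side. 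Hecke-equivariance of the isomorphism in both cases is automatic, as both sides arise from the same tower of moduli spaces, the Hecke operators are realised by the same correspondences, and $\mathfrak{s}$ is functorial under these correspondences.

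The main obstacle is integrality: verifying with sharp bounds that $\mathfrak{s}$ lies in the integral Pilloni torsor at the precise level of overconvergence encoded by $\epsilon$, so that the map respects $\OO^+$ on the nose rather than only becoming an isomorphism after inverting $p$. This requires a careful integral comparison of Scholze's fake Hasse invariants on $\Res_{\OF/\ZZ}\PP^1$ with the Hodge--Tate sections used in the formal-model construction of \cite{AIP3,AIP}. A secondary subtlety is the bookkeeping of the profinite descent group $\Delta(p^\infty N)$ in the arithmetic case, where one must check that $\wU$ coincides on the nose with the character implicitly appearing in the AIP descent, and not merely up to an innocuous twist.
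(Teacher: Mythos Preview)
Your approach for $G^{\ast}$ is essentially the paper's: the comparison is indeed $\tilde{\mathfrak{s}}^{\ast}$, pullback along the map $\mathfrak{s}:\mathcal{X}_{\Gamma^{\ast}(p^\infty)}(\epsilon)_a\to \mathcal{F}_m(\epsilon)$ into the Andreatta--Iovita--Pilloni torsor, and the transformation $\gamma^{\ast}\mathfrak{s}=(c\mathfrak{z}+d)\mathfrak{s}$ is what converts the AIP equivariance into ours. Your integrality worry is misplaced, however. The paper shows (Cor.~\ref{l:comparison-of-AIP-torsor-to-Pilloni-torsor} and Prop.~\ref{l:s-from-X-to-P}) that $\mathfrak{s}(x)=\HT_A(\alpha(1,0))$ is, by functoriality of $\HT$, a lift of $\psi(1)\in\omega_{H_m}^+$, and any such lift lies in $\mathfrak{F}_m$ by the very definition of the torsor via $\omega^{\mathrm{int}}$. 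No delicate estimate is required; the integral comparison is immediate once one unwinds the moduli interpretation.

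For $G$ there is a genuine gap. You assert that $\omega_G^{\kappa,+}$ is ``by Defn.~\ref{def:intro forms} the analogous descent of $\omega_{G^{\ast}}^{\kappa,+}$'', but this misreads the definition: Defn.~\ref{def:intro forms} is an \emph{intrinsic} construction on $\mathcal{X}_{G,\Gamma(p^\infty)}(\epsilon)_a$ with a $\mathrm{P}\Gamma_0(p)$-transformation law, making no reference to $G^{\ast}$. The whole point of the theorem for $G$ is that this intrinsic sheaf agrees with the one obtained by descent, and that is not formal. The paper establishes it via a chain of four sheaves (Defn.~\ref{defn: 4 sheaves for the price of 2}), and the step you are missing is Lem.~\ref{prop G sheaves}: passing from the $E(p^n)$-equivariant description on $\mathcal{X}_{\Gamma(p^\infty)}$ to the $\mathrm{P}\Gamma_0(p^n)$-equivariant description on $\mathcal{X}_{G,\Gamma(p^\infty)}$ requires twisting by the Weil-pairing function $w_{\kappa}(\mathbf{e}_{\beta})$. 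By Lem.~\ref{l:equivariance of Weil pairing morphism from X_G,Gamma(p^infty)}, it is precisely this twist that exchanges the polarisation factor $w_{\kappa}(x)$ in the $E(p^n)$-cocycle for the determinant factor $w_{\kappa}(\det\gamma)$ in the $\mathrm{P}\Gamma_0(p^n)$-cocycle. Without it, your proposed descent does not land in the sheaf you want.

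A minor point: Hecke equivariance is not quite automatic. One must verify that $\mathfrak{s}$ intertwines the Hecke correspondences, which for $T_{\mathfrak{a}}$ is the functoriality diagram~\eqref{dg:functoriality-of-HT} and for $U_{\mathfrak{p}}$ uses $u_{\mathfrak{p}}^{\vee}(1,0)=(1,0)$; see Prop.~\ref{prop: hecke equiv}.
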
 
	We establish this for $G^*$, and prove that our modular forms for $G$ are the descent of those for $G^*$ under the action of $\Delta(N)$ (see Lem.~\ref{lem:sheaf descent}); from this we we obtain the analogous result for $G$, as the forms of \cite{AIP3} are \emph{defined} via this descent.

	\subsection{Acknowledgements}
	We would like to thank Christian Johansson for answering all our questions and for many helpful conversations. We also thank David Hansen, Vincent Pilloni and Peter Scholze for useful discussions. We would also like to thank the referee for very useful comments.
	
	The work was supported by the following grants: EPSRC Doctoral Prize Fellowship at University College London, [EP/N509577/1] (Birkbeck);  The EPSRC Centre for Doctoral Training in Geometry and Number Theory (The London School of Geometry and Number Theory), University College London, EPSRC [EP/L015234/1] (Heuer); and a Heilbronn Research Fellowship (Williams).
	
	%%%%%%%%%%%%%%%%%%%%%%%%%%%%%%%%%%%%%%%%%%%%%%%%%%%%%%%%%%%%%%%%%%
	%
	%.          HODGE TATE FOR GL2/Q									
	%
	%%%%%%%%%%%%%%%%%%%%%%%%%%%%%%%%%%%%%%%%%%%%%%%%%%%%%%%%%%%%%%%%%%
	
	\subsection{Notation}
	We fix a rational prime $p$ and a perfectoid field extension $L$ of $\QQ_p^\cyc$. For instance, we could take $L=\QQ_p^\cyc$, or any complete algebraically closed extension of $\Q_p$.
	
\textcolor{black}{	We use adic spaces in the sense of Huber \cite{huber2013etale}, and in particular the notion of  smooth adic spaces from \cite[Definition~1.6.5]{huber2013etale}. By a rigid space over $L$, we mean an adic space of topologically finite type over $\Spa(L,\O_L)$. We use the pro-\'etale site of a smooth adic space in the sense of \cite{Scholzepaidchodge}.}

	The letter $X$ typically refers to modular curves and Hilbert modular varieties. We typically use latin letters $X$ to refer to schemes, gothic letters $\mathfrak{X}$ to refer to formal schemes, and calligraphic letters $\mathcal{X}$ to refer to analytic adic spaces (typically over $L$), like rigid and perfectoid spaces. If $X$ is a modular variety of some tame level, we specify the level at $p$ of a modular variety by a subscript on the appropriate $X$. We will use a superscript $^*$ to denote the minimal compactification $X^{\ast}$.

	Lastly, if $\Gamma$ is any profinite set we also write $\Gamma$ for the associated profinite perfectoid space $\Spa(\mathrm{Maps}_{\cts}(\Gamma,L),\mathrm{Maps}_{\cts}(\Gamma,\O_L))$ when this is clear from the context. If $\Gamma$ is a profinite group, this will be a group object in perfectoid spaces over $L$.
	
	\section{Perfectoid modular curves and the Hodge--Tate period map}\label{sec:HT for MC}
	In this section we review the modular curve at infinite level and the Hodge--Tate period map, and discuss the open subspaces which we are going to use to define $p$-adic elliptic modular forms.

	%%%%%%%%%%%%%%%%%%%%%%%%%%%%%%%%%%%%%%%%%%%%%%%%%%%%%%%%%%%%%%%%%%
	%.        Canonical and anticanonial subsets
	%%%%%%%%%%%%%%%%%%%%%%%%%%%%%%%%%%%%%%%%%%%%%%%%%%%%%%%%%%%%%%%%%%
	
	\subsection{Modular curves and their canonical and anticanonical loci}\label{modular curves}
	Let $N$ be an integer coprime to $p$.
	Let $X$ be the modular curve over $L$ of some tame level $\Gamma^p$ at $N$ such that the corresponding moduli problem is representable by a scheme, e.g.\ $\Gamma(N)$ or $\Gamma_1(N)$ for $N\geq 3$.  \textcolor{black}{Similarly, we let $\Gamma_0(p) \subset \GL_2(\ZZ_p)$ denote the usual upper triangular Iwahori subgroup, corresponding to the choice of an order $p$ sub-group-scheme of our elliptic curves.}
	
	We denote by $\XX$ the rigid analytification, considered as an adic space.\footnote{We note that this is the only way in which our notation deviates from that in \cite[\S III]{torsion},  where $\mathcal X$ denotes the good reduction locus.} The space  $\XX$ represents the moduli functor that sends any adic space $S\to\Spa(L,\OO_L)$ to isomorphism classes of pairs $(E,\mu)$ where $E$ is an elliptic curve over $\OO_S(S)$ with $\Gamma^p$-structure $\mu$ \textcolor{black}{(see \cite[Lemma 2.3]{heuer-cusps}).}  
	
	Let $X^{\ast}$ be the compactification of $X$, with analytification $\XX^{\ast}$. The divisor of cusps $\XX^{\ast}\backslash \XX$ becomes a finite set of closed points after adding a primitive $N$-th root to $L$.

	For any $0\leq \epsilon< 1$ with $|p|^{\epsilon}\in |L|$, we denote by $\XX^{\ast}(\epsilon)\subseteq \XX^{\ast}$ the open subspace of the modular curve where $|\widetilde{\Ha}|\geq |p|^{\epsilon}$, where $\widetilde{\Ha}$ is any local lift of the Hasse invariant. For any analytic adic space $\YY\to \XX^{\ast}$, we write $\YY(\epsilon)\subseteq \YY$ for the preimage of $\XX^{\ast}(\epsilon)\subseteq \XX^{\ast}$. We call the elliptic curves parametrised by this open subspace \emph{$\epsilon$-nearly ordinary}.
	
	Let $\mathcal X^{\ast}_{\Gamma(p^\infty)}\sim \varprojlim \mathcal X^{\ast}_{\Gamma(p^n)}$ be the perfectoid modular ``curve" at infinite level as defined in \cite{torsion}. We in particular have the open subspace $\mathcal X^{\ast}_{\Gamma(p^\infty)}(\epsilon)\sim \varprojlim \mathcal X^{\ast}_{\Gamma(p^n)}(\epsilon)$.
	
	Recall that for any $n\in\Z_{\geq 1}$, the modular curve $X_{\Gamma_{\!\scaleto{0}{3pt}}(p^n)}\to X$ of level $\Gamma_{\!0}(p^n)$ relatively represents the choice of a cyclic rank $p^n$ subgroup scheme $D_n\subseteq E[p^n]$. If $0\leq\epsilon<1/(p+1)p^{n-2}$ then by Lubin's theory of the canonical subgroup, any elliptic curve corresponding to a morphism $S\to \XX(\epsilon)$ admits a canonical cyclic subgroup scheme $H_n\subseteq E[p^n]$ of rank $p^n$, which in the case of good reduction reduces to the kernel of the $n$-th iterate of Frobenius on $E$ modulo $p^{1-\epsilon}$. This defines a canonical section $\mathcal X_{\Gamma_{\!\scaleto{0}{3pt}}(p^n)}(\epsilon)\leftarrow \mathcal X(\epsilon)$ which in fact extends to the cusps.
	As a consequence, for $n=1$, the space $\mathcal X^{\ast}_{\Gamma_{\!\scaleto{0}{3pt}}(p)}(\epsilon)$ decomposes into two open and closed components
	\begin{center}
		\begin{tikzcd}[column sep ={between origins,1.2cm}]
		\mathcal X^{\ast}_{\Gamma_{\!\scaleto{0}{3pt}}(p)}(\epsilon)  &=& \mathcal X^{\ast}_{\Gamma_{\!\scaleto{0}{3pt}}(p)}(\epsilon)_c & \dot{\sqcup} & \mathcal X^{\ast}_{\Gamma_{\!\scaleto{0}{3pt}}(p)}(\epsilon)_a,
		\end{tikzcd}
	\end{center}
	the first of which (away from the cusps) parametrises triples $(E,\alpha,H_1)$ with $\alpha$ a tame level and $H_1$ the canonical subgroup, while the second parametrises $(E,\alpha,D_n)$ with $D_n\subseteq E[p]$ a cyclic rank $p$ subgroup such that $D_n\cap H_1=0$. The two components are called the canonical and the anticanonical locus, respectively. At infinite level, these two components, via pullback, give rise to canonical and anticanonical loci of $\mathcal X^{\ast}_{\Gamma(p^\infty)}(\epsilon)$ respectively: 
	\begin{center}
		\begin{tikzcd}[column sep ={between origins,1.2cm},row sep = 0.55cm]
		\mathcal X^{\ast}_{\Gamma(p^{\infty})}(\epsilon) \arrow[d] &=& \mathcal X^{\ast}_{\Gamma(p^{\infty})}(\epsilon)_c \arrow[d] & \dot{\sqcup} & \mathcal X^{\ast}_{\Gamma(p^{\infty})}(\epsilon)_a \arrow[d] \\
		\mathcal X^{\ast}_{\Gamma_{\!\scaleto{0}{3pt}}(p)}(\epsilon) &=& \mathcal X^{\ast}_{\Gamma_{\!\scaleto{0}{3pt}}(p)}(\epsilon)_c & \dot{\sqcup} & \mathcal X^{\ast}_{\Gamma_{\!\scaleto{0}{3pt}}(p)}(\epsilon)_a.
		\end{tikzcd}
	\end{center}
	
	For any perfectoid $(\QQ_p^{\cyc},\ZZ_p^{\cyc})$-algebra $(R,R^{+})$, the $(R,R^{+})$-points of $\mathcal X_{\Gamma(p^\infty)}$ are in functorial 1-1 correspondence with isomorphism classes of triples $(E,\mu,\alpha)$, where $E$ is an elliptic curve over $R$, $\mu$ is a $\Gamma^p$-structure, and $\alpha:\ZZ_p^2\isorightarrow T_pE$ is a trivialisation of the Tate module
	(see \cite[Cor.\ 3.2]{heuer-cusps}).
	We have an action of $\GL_2(\Zp)$ on $\cX_{\Gamma(p^\infty)}$ given by 
	\begin{equation}\label{eqn:action of GL2}
	\gamma \cdot (E,\mu,\alpha) = (E,\mu,\alpha \circ \gamma^\vee), \hspace{12pt} \gamma^\vee = \mathrm{det}(\gamma)\gamma^{-1} = \smallmatrd{d}{-b}{-c}{a} \text{  for   }\gamma = \smallmatrd{a}{b}{c}{d} \in \GL_2(\Zp).
	\end{equation}
	For notational convenience we also fix the following.

	\begin{defn}
		Let $E$ be an ordinary elliptic curve. Then $E$ has canonical cyclic subgroups $H_n$ of rank $p^n$ for all $n$. The \emph{canonical $p$-divisible subgroup of $E$} is $H=(H_n)_{n\in\NN}\subseteq E[p^\infty]$.
	\end{defn}

	%%%%%%%%%%%%%%%%%%%%%%%%%%%%%%%%%%%%%%%%%%%%%%%%%%%%%%%%%%%%%%%%%%
	%.        Hodge Tate at infinity
	%%%%%%%%%%%%%%%%%%%%%%%%%%%%%%%%%%%%%%%%%%%%%%%%%%%%%%%%%%%%%%%%%%
	
	\subsection{The Hodge--Tate period map around $0\in\PP^1$}

	We recall how the canonical and anticanonical loci behave under the Hodge--Tate period map
	\[
	\pi_{\HT}:\mathcal X^{\ast}_{\Gamma(p^\infty)}\rightarrow \mathbb P^{1}.
	\]
	By \cite[Lem.~III.3.6]{torsion}, the preimage of\/ $\PP^1(\ZZ_p)$ under $\pi_{\HT}$ is given by the closure of the ordinary locus $\mathcal X^{\ast}_{\Gamma(p^\infty)}(0)$. After removing the cusps, this parametrises isomorphism classes of triples $(E,\mu,\alpha)$ as above where $E$ has potentially semistable or good ordinary reduction in every fibre.
	%\begin{lem}
	%	Let $(R,R^{+})$ be perfectoid. Then the $(R, R^{+})$-points of $\pi_{\HT}^{-1}((1:0))\subseteq \mathcal X^{\ast}_{\Gamma(p^\infty)}$ away from the cusps correspond to $(E,t,\alpha:\ZZ_p^2\isorightarrow T_pE)$ where $E$ is ordinary and $\a(e_1)$ generates the canonical $p$-divisible subgroup.
	%\end{lem}
	%\begin{proof}
	%	It suffices to check this on fibres, and thus on $(C,C^{+})$-points where $C$ is complete algebraically closed. Then $E$ is necessarily ordinary. The Hodge--Tate filtration is therefore rational, and by definition $\pi_{\HT}(x)\in \PP^1(\ZZ_p)$ is given by the line 
	%	$T_pH\subseteq T_pE\cong\ZZ_p^2$.
	%	Since $(1:0)\in \PP^1(\ZZ_p)$ corresponds to the line $\ZZ_p\oplus 0\subseteq \ZZ_p^2$, the condition $\pi_{\HT}(x)=(1:0)$ means precisely that $\alpha(e_1)$ generates $T_pH$.
	%\end{proof}
	Write $e_1, e_2$ for the standard basis of $\mathbb{Z}_p^2$.  Away from cusps, the preimage $\pi_{\HT}^{-1}(\infty)$ of $\infty = (1:0)$ parametrises triples where moreover $\alpha(e_1)$ generates the canonical $p$-divisible subgroup. %Pulling back open neighbourhoods of $\infty$ corresponds to trivialisations that are ``near to canonical''. 
	
	Instead of using the canonical locus, we shall work with the anticanonical locus $\mathcal X^{\ast}_{\Gamma(p^\infty)}(\epsilon)_a$, which by contrast is sent by $\pi_{\HT}$ to neighbourhoods of points of the form $(b:1)$ for $b\in \Z_p$.
	
	In order to define overconvergent modular forms on $\mathcal X^{\ast}_{\Gamma(p^\infty)}(\epsilon)_a$ of weight $\kappa$ for $\epsilon>0$, we need to make sense of the expression $\kappa(c\pi_{\HT}(x)+d)$ for $x\in \mathcal X^{\ast}_{\Gamma(p^\infty)}(\epsilon)_a$.  To account for the overconvergence, we therefore need to consider open neighboorhods of these points:
	\begin{defn}\label{definition:Zp(w)}
		Let $B_0(\Z_p:1)\subseteq \PP^1(\ZZ_p) \subseteq \PP^1$ be the subspace of points of the form $(a:1)$ for $a\in \Z_p$ considered as a profinite adic space.
		\textcolor{black}{For any $0<r\leq 1$ and any compact open subspace $U\subseteq \Z_p$, let $B_r(U:1)\subseteq \PP^1$ be the subspace defined as the union of all closed balls of radius $r$ around points $(a:1)\in \PP^1(\ZZ_p)$ with $a\in U$}	 
	\end{defn}
	
	\begin{defn}\label{df:parameter-z}		Let $z$ be the parameter on $\PP^1$ at $0$ arising from the canonical isomorphism of schemes $\AA^1 \isorightarrow \PP^1\backslash\{\infty\}$, $z\mapsto (z:1)$ and let $\mathfrak z:=\pi_
		{\HT}^*z$. It is easy to see that $B_r(\ZZ_p:1)\subseteq \AA^1$ is closed both under the additive group structure as well as the multiplicative monoid structure.
	\end{defn}

	\begin{lem}\label{l: action of Gamma on B_1(0)}
		The action of\/ $\Gamma_{\!0}(p)$ fixes $B_r(\ZZ_p:1)\subseteq \PP^1$. In terms of the parameter $z$, we have
		\begin{equation}\smallmatrd{a}{b}{c}{d}\cdot z = \frac{az+b}{cz+d}.
		\end{equation}
	\end{lem}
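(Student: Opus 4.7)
The plan is to derive the M\"obius formula first, and then use it directly to verify the invariance of $B_r(\ZZ_p:1)$ under $\Gamma_{\!0}(p)$.

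For the formula, I will invoke the $\GL_2(\Z_p)$-equivariance of $\pi_{\HT}$ from \cite[Thm.\ IV.1.1]{torsion}. Under the action $\alpha \mapsto \alpha \circ \gamma^\vee$ on trivialisations $\alpha : \Z_p^2 \isorightarrow T_pE$ recorded in \eqref{eqn:action of GL2}, the Hodge--Tate filtration defining $\pi_{\HT}$ is transported by $(\gamma^\vee)^{-1} = \det(\gamma)^{-1}\gamma$; projectively this is just left multiplication by $\gamma$ on $\PP^1$. Hence in the affine chart of Defn.~\ref{df:parameter-z}, the point $(z:1)$ is sent to $(az+b : cz+d)$, giving $z \mapsto (az+b)/(cz+d)$ as asserted.

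For the invariance, I will argue directly from this formula. For any $\gamma = \smallmatrd{a}{b}{c}{d} \in \Gamma_{\!0}(p)$, the condition $c \in p\Z_p$ together with $\det \gamma \in \Z_p^\times$ forces $ad \in \Z_p^\times$ and hence $a, d \in \Z_p^\times$. For $z$ in a ball of radius $r$ around some $z_0 \in \Z_p$ (with $r$ in the range for which $B_r(\Z_p:1) \subseteq \AA^1$, as already used in Defn.~\ref{df:parameter-z}), we have $|z| \leq 1$, hence $|cz| \leq |p| < 1$, hence $|cz + d| = |d| = 1$. In particular $\gamma \cdot z$ is well defined and $\gamma \cdot z_0 \in \Z_p$. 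The standard identity
\[
\gamma \cdot z - \gamma \cdot z_0 \;=\; \frac{(ad-bc)(z - z_0)}{(cz+d)(cz_0+d)},
\]
combined with $|ad-bc| = |cz+d| = |cz_0+d| = 1$, gives $|\gamma \cdot z - \gamma \cdot z_0| = |z-z_0| \leq r$, so $\gamma \cdot z \in B_r(\gamma \cdot z_0:1) \subseteq B_r(\Z_p:1)$.

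The only point that requires real care is the compatibility of the twisted convention $\alpha \mapsto \alpha \circ \gamma^\vee$ with the induced action on $\PP^1$ via $\pi_{\HT}$; once this is pinned down, both assertions of the lemma reduce to a short valuation computation, and I do not anticipate further obstacles.
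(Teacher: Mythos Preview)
Your proposal is correct, but it differs from the paper's argument in two respects worth noting.

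For the formula, you invoke the $\GL_2(\Z_p)$-equivariance of $\pi_{\HT}$ together with the convention \eqref{eqn:action of GL2} to deduce that the induced action on $\PP^1$ is the standard one. The paper bypasses this entirely: the lemma is a statement purely about $\PP^1$, and the action there is the usual matrix action on homogeneous coordinates, so the paper just writes $\smallmatrd{a}{b}{c}{d}(z:1) = (az+b:cz+d)$. Your route is logically fine but circuitous; the equivariance of $\pi_{\HT}$ is established \emph{with respect to} this pre-existing action, not the other way around.

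For the invariance, your approach is genuinely different and arguably cleaner. The paper uses the fact recorded in Defn.~\ref{df:parameter-z} that $B_r(\Z_p:1)$ is closed under addition and multiplication: it writes $(cz+d)^{-1} = d^{-1}\sum_{n\geq 0}(cd^{-1}z)^n$ to see that this inverse lies in $B_r(\Z_p:1)$, and then concludes that $(az+b)(cz+d)^{-1}$ does too. You instead exhibit the M\"obius map as an isometry via the identity $\gamma\cdot z - \gamma\cdot z_0 = (ad-bc)(z-z_0)/[(cz+d)(cz_0+d)]$, which immediately gives that balls of radius $r$ around $\Z_p$-points are sent to balls of the same radius around $\Z_p$-points. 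Your argument gives more (the map preserves distances, not just the subset), and avoids appealing to the monoid structure of $B_r(\Z_p:1)$; the paper's argument has the mild advantage of reusing a structural fact already stated.
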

	\begin{proof}
		Let $\gamma=\smallmatrd{a}{b}{c}{d}\in \Gamma_{\!0}(p)$. Then inside $\PP^1$ we have 
		$\smallmatrd{a}{b}{c}{d} (z:1)=(az+b:cz+d)=(\gamma z:1)$. Moreover, $|cz+d|=1$ on $B_r(\ZZ_p:1)$ since $|z|\leq 1$ (as $r \leq 1$) , $c\in p\ZZ_p$, $d\in \ZZ_p^\times$.
		Consequently, since $(cz+d)^{-1}=d^{-1}\sum_{n\geq 0} (cd^{-1}z)^n\in B_r(\ZZ_p:1)$, the fact that $B_r(\ZZ_p:1)$ is closed under multiplication and addition implies that also $\gamma z = (az+b)(cz+d)^{-1}\in B_r(\ZZ_p:1)$ as desired.
	\end{proof}
	\begin{rmrk}\label{rem:canonical}
		In the definition of modular forms in \cite{CHJ}, the automorphic factor features the term $(b\mathfrak z+d)$, since in their notation -- where $z$ is a parameter for a neighbourhood of $\infty\in\PP^1$ -- the action of $\Gamma_{\!0}(p)$ is given by $z \mapsto (b+dz)/(a+cz)$. In switching from the canonical to the anticanonical locus,  we instead get $(c\mathfrak z+d)$ as in the complex case (see also Rem.~\ref{r:sign-difference-autom-factor-padic-vs-cpx}).
	\end{rmrk}
	
	\
	The following proposition implies that for any weight $\kappa$, there is an $\epsilon >0$ such that for any $x\in \mathcal X^{\ast}_{\Gamma(p^\infty)}(\epsilon)_a$ and any $\smallmatrd{a}{b}{c}{d}\in \Gamma_{\!0}(p)$, the factor of automorphy $\kappa(c\mathfrak z(x)+d)$ converges.
	\begin{prop}\label{proposition: X_w(0) and anticanonical locus}
		
		Let $0\leq r<1$. Then for $0\leq \epsilon\leq r/2$ if $p\geq 5$ or $\epsilon\leq r/3$ if $p=3$, or $\epsilon\leq r/4$ if $p=2$, we have $\pi_{\HT}(\mathcal X^{\ast}_{\Gamma(p^\infty)}(\epsilon)_a)\subseteq B_r(\ZZ_p:1)$.
	\end{prop}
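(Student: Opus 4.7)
The plan is a two-step argument: first establish the statement on the ordinary locus (the case $\epsilon = 0$), then extend to the overconvergent case $\epsilon > 0$ via a quantitative analysis of $\pi_{\HT}$ controlled by the Hasse invariant.

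First I would treat the ordinary case $\epsilon = 0$. By \cite[Lem.~III.3.6]{torsion}, $\pi_{\HT}$ maps the ordinary locus $\mathcal{X}^\ast_{\Gamma(p^\infty)}(0)$ into $\PP^1(\ZZ_p)$. Moreover, as recalled above, on the non-cuspidal part the preimage $\pi_{\HT}^{-1}(\infty)$ is exactly the locus where $\alpha(e_1)$ generates the canonical $p$-divisible subgroup. The defining property of the anticanonical locus rules this out, yielding
\[
\pi_{\HT}\bigl(\mathcal{X}^\ast_{\Gamma(p^\infty)}(0)_a\bigr)\subseteq \PP^1(\ZZ_p)\setminus\{\infty\}= B_0(\ZZ_p:1)\subseteq B_r(\ZZ_p:1),
\]
at least away from the cusps. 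The cusps are handled by a direct computation on the perfectoid Tate-curve neighbourhoods of the anticanonical cusps, where $\pi_{\HT}$ has an explicit description.

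For the overconvergent case $\epsilon > 0$, the plan is to invoke Lubin's canonical subgroup theorem, which for $\epsilon<1/(p+1)$ yields a canonical subgroup $H_1\subseteq E[p]$ on $\mathcal{X}^\ast(\epsilon)$ together with quantitative bounds on the discrepancy between the Hodge filtration and the filtration induced by $H_1$, in terms of $\val(\widetilde{\Ha})\leq \epsilon$. Chasing this estimate through the Hodge--Tate exact sequence
\[
0\longrightarrow \Lie(E^\vee)(1)\longrightarrow T_pE^\vee\otimes_{\ZZ_p}C\xrightarrow{\HT_E} \omega_E\longrightarrow 0,
\]
and using that on the anticanonical locus $\alpha(e_1)$ is a non-canonical generator modulo $p$, gives that $\pi_{\HT}(x)=(\mathfrak z(x):1)$ with $\mathfrak z(x)$ at distance at most $|p|^r$ from some element of $\ZZ_p$ in $C$, provided $c_p\,\epsilon\leq r$ with $c_p\in\{2,3,4\}$ according to whether $p\geq 5$, $p=3$, or $p=2$. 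This is exactly the bound stated.

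The main obstacle is pinning down the exact prime-dependent constants $c_p$. This requires a careful accounting of the Lubin bounds, whose efficiency degrades for small primes and is the reason for the refined exponents $r/2$, $r/3$, $r/4$ in the statement. An alternative route is to leverage the Atkin--Lehner involution $\smallmatrd{0}{1}{p}{0}$, which swaps canonical and anticanonical loci and sends a neighbourhood of $\infty$ to one of $0$, thereby transferring a corresponding statement for the canonical locus (analogous to results in \cite{CHJ}) to the present setting; one must then track how the Atkin--Lehner action rescales $p$-adic radii.
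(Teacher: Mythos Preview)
Your plan is essentially the same as the paper's approach. The paper's proof of this proposition is in fact just a forward reference: away from the cusps it is declared a special case of the Hilbert version (Prop.~\ref{prop: epsilon w for HMFS}), whose proof proceeds exactly along the lines you sketch --- one shows that the kernel $V$ of the integral Hodge--Tate map agrees with the canonical subgroup $H_n$ modulo $p^x$ for $x=n-\tfrac{p^n}{p-1}\epsilon$, via a degree computation using the quantitative canonical-subgroup results of \cite{AIP2,AIP4} (this is where the constants $c_p\in\{2,3,4\}$ come from, and is precisely the ``obstacle'' you flag). For the cusps the paper is even simpler than your proposed Tate-curve computation: since the cusps lie in the ordinary locus, they are sent to $\PP^1(\Q_p)$, and on the anticanonical locus this already lands in $B_0(\ZZ_p:1)$. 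Your alternative Atkin--Lehner route is not used in the paper.
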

	\begin{proof}
		Away from the cusps, this is a special case of Prop.~\ref{prop: epsilon w for HMFS} below.
		For the cusps, the statement is clear since these are contained in the ordinary locus and are thus sent to $\PP^1(\Q_p)$.
	\end{proof}

	%%%%%%%%%%%%%%%%%%%%%%%%%%%%%%%%%%%%%%%%%%%%%%%%%%%%%%%%%%%%%%%%%%
	%
	%.         MOD FORMS VIA AC LOCUS
	%
	%%%%%%%%%%%%%%%%%%%%%%%%%%%%%%%%%%%%%%%%%%%%%%%%%%%%%%%%%%%%%%%%%%
	
	\section{Overconvergent elliptic modular forms}
	\label{sec:mod forms via ac locus}

	%When \cite{CHJ} define modular forms at infinite level, they essentially use the canonical locus $\mathcal X^{\ast}_{\Gamma(p^\infty)}(\epsilon)_c$ of the infinite level modular curve. In some sense, this is opposite of the direction in which \cite{AIP2} go when they use the anticanonical tower to define their ``perfect modular forms". We now show that the approach of \cite{CHJ} also works on the anticanonical locus. This makes it easier to compare to the work of \cite{AIP2}, and gives a new way to tell whether a perfect modular form is a ``usual" $p$-adic overconvergent form.
	
	%We now give the anticanonical reformulation of \cite{CHJ}, and show it gives a simpler comparison with the constructions of \cite{AIP2}. We also give a new way tell whether a ``perfect'' modular form, as defined in \cite{AIP2}, is a ``usual'' $p$-adic overconvergent form.

	%%%%%%%%%%%%%%%%%%%%%%%%%%%%%%%%%%%%%%%%%%%%%%%%%%%%%%%%%%%%%%%%%%
	%.        Definition of bundle of modular forms
	%%%%%%%%%%%%%%%%%%%%%%%%%%%%%%%%%%%%%%%%%%%%%%%%%%%%%%%%%%%%%%%%%%
	
	%\subsection{Definition of the bundle of modular forms}

	In this section we define line bundles of $p$-adic modular forms of weight $\kappa$, where $\kappa$ is a smooth bounded weight. Following \cite{CHJ} with our slightly modified setup, these bundles are defined using the structure sheaf of $\XX^{\ast}_{\Gamma(p^\infty)}(\epsilon)_a$ by taking invariants under a group action with a factor of automorphy to descend to finite level,  mirroring the definition  of complex modular forms.  %As a first step, we recall that any such character admits an analytic continuation. \CWnote{We repeat this later}
	
	We first explain what we mean by a smooth bounded weight:
	\begin{definition}\label{def:smooth weight}
		The \emph{weight space} for $\GL_2$ is $\W:= \Spf(\Z_p[[\Z_p^\times]])^{\ad}_\eta \times_{\Q_p} L$. A \emph{smooth weight} over $L$ is a smooth adic space $\U$ over a perfectoid extension of $L$ together with a map $\U\to \W$. A smooth weight is \emph{bounded} if its image in $\W$  is contained in some affinoid subspace of $\W$.
	\end{definition}

	%%%%%%%%%%%%%%%%%%%%%%%%%%%%%%%%%%%%%%%%%%%%%%%%%%%%%%%%%%%%%%%%%%
	%.        Sousperfectoid spaces
	%%%%%%%%%%%%%%%%%%%%%%%%%%%%%%%%%%%%%%%%%%%%%%%%%%%%%%%%%%%%%%%%%%
	
	\subsection{Sousperfectoid spaces}\label{sec:sousperfectoid}
	We would like to define sheaves of families of modular forms of weight $\U$ to be functions on $ \XX_{\Gamma(p^\infty)} (\e)_a \times \U$. In order to obtain a sheaf, we need to make sure that the latter fibre product exists as an adic space. For this we use the language of sousperfectoid spaces, which we briefly recall \textcolor{black}{from \cite[Seection 7]{kedhan} and} \cite[Section 6.3]{berkeley17}. Their technical importance stems from Prop.~\ref{prop:sheafy}.
	
	\begin{definition}
		\begin{enumerate}
			\item A complete Tate $\ZZ_p$-algebra $R$ is called \emph{sousperfectoid} if there is a perfectoid $R$-algebra $\widetilde{R}$ such that $R\hookrightarrow \widetilde{R}$ splits in the category of topological $R$-modules.
			\item  A Huber pair $(R,R^{+})$ is called \emph{sousperfectoid} if $R$ is sousperfectoid.
			\item  An adic space is called \emph{sousperfectoid} if it can be covered by affinoid open subspaces of the form $\Spa(R,R^{+})$ where $R$ is sousperfectoid.
		\end{enumerate}
	\end{definition}	
	
	\begin{prop}\cite[Prop.~6.3.4]{berkeley17}\label{prop:sheafy}
		Any sousperfectoid Huber pair $(R,R^{+})$ is stably uniform. In particular, $\Spa(R,R^+)$ is a sheafy adic space.
	\end{prop}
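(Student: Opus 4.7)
The plan is to establish stable uniformity of $(R,R^+)$ and then invoke the standard criterion of Buzzard--Verberkmoes--Mihara: a stably uniform complete Tate--Huber pair is sheafy. So the entire task reduces to showing that for every rational subset $U \subseteq \Spa(R,R^+)$, the complete Tate ring $\mathcal O(U)$ is uniform, i.e.\ its power-bounded subring $\mathcal O(U)^\circ$ is bounded.

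First I would handle the case $U = \Spa(R,R^+)$ itself. Fix a continuous $R$-linear retraction $\sigma \colon \widetilde R \to R$ of the structural embedding $\iota \colon R \hookrightarrow \widetilde R$, which exists by the definition of sousperfectoid. Perfectoid Tate rings are uniform, so $\widetilde R^\circ$ is bounded in $\widetilde R$. Since $\iota$ is continuous, $\iota(R^\circ) \subseteq \widetilde R^\circ$ is bounded in $\widetilde R$, and because the continuous map $\sigma$ sends bounded sets to bounded sets, the equality $R^\circ = \sigma(\iota(R^\circ))$ shows that $R^\circ$ is bounded in $R$. Hence $(R,R^+)$ is uniform.

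To promote this to stable uniformity, I would show that the class of sousperfectoid rings is preserved by rational localization. Given a rational open $U$ cut out by parameters $(f_1,\ldots,f_n;s)$, one has the rational localization $\mathcal O(U) = R\langle T_1/s,\ldots,T_n/s\rangle$. The completed tensor product $\widetilde R \,\widehat\otimes_R\, \mathcal O(U)$ is naturally identified with the corresponding rational localization $\widetilde R\langle T_1/s,\ldots,T_n/s\rangle$ of $\widetilde R$, which is perfectoid by Scholze's theorem that rational localizations of perfectoid Tate rings are perfectoid. The continuous $R$-linear retraction $\sigma$ then base-changes and completes to a continuous $\mathcal O(U)$-linear retraction
\[
\sigma_U \colon \widetilde R \,\widehat\otimes_R\, \mathcal O(U) \longrightarrow \mathcal O(U)
\]
of the natural embedding $\mathcal O(U) \hookrightarrow \widetilde R \,\widehat\otimes_R\, \mathcal O(U)$. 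Thus $\mathcal O(U)$ is sousperfectoid, and the argument of the previous paragraph yields uniformity of $\mathcal O(U)$. Applying this at every rational $U$ gives stable uniformity, whence sheafiness.

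The main technical obstacle is the identification $\widetilde R \,\widehat\otimes_R\, \mathcal O(U) \cong \widetilde R\langle T_1/s,\ldots,T_n/s\rangle$ together with the verification that the topological splitting survives the completed base change. This is where the hypothesis that $\sigma$ is a retraction of \emph{topological} $R$-modules (rather than just abstract ones) is essential: one uses the explicit presentation of $R\langle T/s\rangle$ as the completion of a quotient of $R\langle T_1,\ldots,T_n\rangle$ by relations $sT_i - f_i$, checks that $\sigma$ extends termwise to a continuous map on the level of Tate algebras, and then verifies that the extension factors through the completed quotient on both sides. Once this compatibility is in place, the rest of the argument is formal.
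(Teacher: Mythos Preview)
The paper does not give its own proof of this proposition; it is quoted directly from \cite[Prop.~6.3.4]{berkeley17} and used as a black box. Your argument is essentially the proof given in that reference: uniformity of $R$ via the retraction and uniformity of perfectoid rings, then stability of the sousperfectoid property under rational localization (using that rational localization is a completed base change, that rational localizations of perfectoid rings are perfectoid, and that the topological splitting survives $-\,\widehat\otimes_R\,\mathcal O(U)$), and finally the Buzzard--Verberkmoes criterion. Your identification of the technical crux---that the splitting persists after completed base change---is accurate, and your sketch of how to handle it is correct.
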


	\begin{cor}\label{cor: fiber prod sous} 
		Let $\cX$ be a perfectoid space over $L$ and let $\cY$ be a  rigid space  smooth over a perfectoid extension of $L'/L$. Then the fibre product $\cX\times_L \cY$ exists as a sousperfectoid adic space.
	\end{cor}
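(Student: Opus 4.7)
The plan is to reduce to the affinoid case and then combine two local facts: that completed base changes by polydisc algebras preserve the sousperfectoid property, and that étale morphisms do as well. First, I would base change $\cX$ from $L$ to $L'$, noting that $\cX_{L'}:= \cX\times_L \Spa(L',L'^+)$ is still perfectoid because $L'/L$ is a perfectoid field extension. This reduces to constructing $\cX_{L'}\times_{L'}\cY$ with both factors over the same perfectoid field, so from now on I would write $L$ for $L'$.

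Next, I would work locally: cover $\cX$ by affinoid perfectoid opens $\Spa(R,R^+)$ and cover $\cY$ by affinoid opens $\Spa(S,S^+)$ that admit an étale morphism to some polydisc $\Spa(L\langle T_1,\dots,T_n\rangle, L^{\circ}\langle T_1,\dots,T_n\rangle)$. This is possible because a smooth rigid space over $L$ is, locally, étale over an affinoid polydisc. On such opens I would define the candidate fibre product as $\Spa(R\hat\otimes_L S, (R\hat\otimes_L S)^+)$, with $(R\hat\otimes_L S)^+$ the integral closure of the image of $R^+\hat\otimes_{L^\circ} S^+$. By Prop.~\ref{prop:sheafy}, to see that these glue to a genuine adic space it is enough to verify that each $R\hat\otimes_L S$ is sousperfectoid.

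For the polydisc base case $S = L\langle T_1,\dots,T_n\rangle$, I would exhibit the perfectoid cover by adjoining $p$-power roots of the variables:
\[
R\hat\otimes_L S \;=\; R\langle T_1,\dots,T_n\rangle \;\hookrightarrow\; R\langle T_1^{1/p^\infty},\dots,T_n^{1/p^\infty}\rangle,
\]
which is perfectoid since $R$ is, and which splits as topological $R\hat\otimes_L S$-modules via the projection onto monomials with integer exponents. For the étale case, I would appeal to the general fact that if $A$ is sousperfectoid with split perfectoid cover $A\hookrightarrow \tilde A$, and $B$ is étale over $A$, then $B$ is sousperfectoid: applying $-\otimes_A B$ to the splitting yields $B\hookrightarrow \tilde A\hat\otimes_A B$ split as topological $B$-modules, and the right-hand side is étale over the perfectoid $\tilde A$, hence perfectoid.

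The main technical point is this last step, as it invokes the nontrivial result that étale extensions of perfectoid algebras are perfectoid (a consequence of the almost purity theorem). By contrast, the behaviour of splittings under tensor products is immediate, and the gluing of the candidate affinoid fibre products to an adic space $\cX\times_L \cY$ follows from the standard universal property together with the sheafiness supplied by Prop.~\ref{prop:sheafy}.
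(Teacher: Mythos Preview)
Your proposal is correct and follows essentially the same route as the paper: reduce to $L=L'$ by base-changing the perfectoid factor, cover $\cY$ locally by affinoids \'etale over a polydisc, and then verify that $R\langle T_1,\dots,T_n\rangle$ is sousperfectoid and that sousperfectoidness is stable under \'etale extensions. The only difference is that the paper cites these last two facts as \cite[Prop.~6.3.3]{berkeley17} rather than spelling out the explicit perfectoid cover and splitting as you do.
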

	\begin{proof}
		By \cite[Cor.~1.6.10]{huber2013etale}, the smooth rigid space $\cY$ can be covered by open subspaces which are \'etale over some disc $B=\Spa(L'\langle X_1,\dots,X_n\rangle)$. Since the fibre product of perfectoid spaces is perfectoid,  we may without loss of generality assume that $L=L'$, and that $\cX=\Spa(S,S^{+})$ is affinoid perfectoid. The fibre product $\cX\times_L  B$ then exists and is sousperfectoid because the algebra $S\langle X_1,\dots,X_n\rangle$ is sousperfectoid by \cite[Prop.~6.3.3.(i) and (iii)]{berkeley17}. The fibre product $\cX\times_L \cY=(\cX\times_L B)\times_{ B}\cY$ now exists and is sousperfectoid because algebras \'etale over a sousperfectoid algebra are again sousperfectoid (Prop.~6.3.3.(ii) \emph{op.\ cit}.).
	\end{proof}
	
	\begin{cor}\label{cor: fibre prod sous}
		
		If\/ $\U$ is a smooth adic space over $L$, then $\XX_{\U,\GA(p^\infty)}^*(\e)_a:=\XX_{\Gamma(p^\infty)}^* (\e)_a\times_L \U$ exists as a sousperfectoid adic space. Moreover, if we define $\XX_{\U,\GA(p^n)}^*(\e)_a:=\XX_{\Gamma(p^n)}^* (\e)_a\times_L\U$, then
		\[
		\XX_{\U,\GA(p^\infty)}^*(\e)_a\sim \varprojlim_{n\in\NN} \XX_{\U,\GA(p^n)}^*(\e)_a.
		\]

	\end{cor}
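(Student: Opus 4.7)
The plan is to deduce this essentially directly from the preceding Corollary~\ref{cor: fiber prod sous}, which already handles the existence of fibre products of perfectoid spaces with smooth rigid spaces in the sousperfectoid world. For the first assertion, I would apply Cor.~\ref{cor: fiber prod sous} with $\cX = \XX_{\Gamma(p^\infty)}^*(\e)_a$, which is perfectoid over $L$ (cf.~\S\ref{modular curves} and \cite{torsion}), and $\cY = \U$; this immediately produces $\XX_{\U,\GA(p^\infty)}^*(\e)_a$ as a sousperfectoid adic space over $L$.

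For the tilde-limit assertion, the strategy is to verify the two defining properties of Scholze's relation $\sim$ directly, using the corresponding properties of $\XX_{\Gamma(p^\infty)}^*(\e)_a \sim \varprojlim_n \XX_{\Gamma(p^n)}^*(\e)_a$. The first property is an identification of underlying topological spaces: $|\XX_{\U,\GA(p^\infty)}^*(\e)_a| = \varprojlim_n |\XX_{\U,\GA(p^n)}^*(\e)_a|$. This should follow formally from $|\XX_{\Gamma(p^\infty)}^*(\e)_a| = \varprojlim_n |\XX_{\Gamma(p^n)}^*(\e)_a|$, together with the fact that fibre products of adic spaces compute the inverse limit of underlying topological spaces on the nose, and that filtered inverse limits commute with fibre products of spectral spaces.

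The second property is the density condition. Here I would start from an affinoid cover of $\U$ by $\Spa(B,B^+)$ and a cover of $\XX_{\Gamma(p^\infty)}^*(\e)_a$ by $\Spa(R_\infty,R_\infty^+)$ arising as preimages of $\Spa(R_n,R_n^+) \subseteq \XX_{\Gamma(p^n)}^*(\e)_a$ with $\varinjlim_n R_n$ dense in $R_\infty$ (this being the content of the hypothesis on the perfectoid modular curve). The products of these yield affinoid sousperfectoid opens $\Spa(R_\infty \ctp_L B, (R_\infty \ctp_L B)^+)$ covering $\XX_{\U,\GA(p^\infty)}^*(\e)_a$, which descend to $\Spa(R_n \ctp_L B, (R_n \ctp_L B)^+) \subseteq \XX_{\U,\GA(p^n)}^*(\e)_a$. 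That $\varinjlim_n (R_n \ctp_L B)$ is dense in $R_\infty \ctp_L B$ then reduces to density of $\varinjlim_n R_n$ in $R_\infty$ by a standard approximation argument on elementary tensors, using the natural topology on the completed tensor product.

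The only place one might worry is whether the completed tensor products on the non-perfectoid side produce well-behaved (in particular, sheafy) Huber pairs, which is needed even to make sense of the construction. However, this is exactly what the sousperfectoid framework is designed to handle, and is guaranteed by Prop.~\ref{prop:sheafy}. Thus once the setup of \S\ref{sec:sousperfectoid} is in place the proof should be essentially formal, with no serious obstacle.
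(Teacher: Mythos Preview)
Your proposal is correct and takes essentially the same approach as the paper. The paper's proof is even terser: the first part is ``immediate from the last corollary'', and for the tilde-limit the paper records only the density observation --- that if $\varinjlim A_n \subseteq A_\infty$ has dense image, then so does $\varinjlim (A_n \hat\otimes_{A_1} B) \subseteq A_\infty \hat\otimes_{A_1} B$, ``by pointwise approximation'' --- leaving the topological-space condition implicit.

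One small remark: your sentence ``fibre products of adic spaces compute the inverse limit of underlying topological spaces on the nose'' is not literally true in general (the underlying space of a fibre product of adic spaces is usually larger than the fibre product of the underlying spaces). This does not affect the argument, since one verifies the $\sim$ relation locally on the affinoid covers you describe, where the density statement suffices; but you should phrase that step more carefully.
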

	\begin{proof}
		The first part is immediate from the last corollary. The second part follows from the observation that when $(A_n)_{n\in\NN}$ is a direct system of Tate algebras \textcolor{black}{(by which we mean a Huber pair with a topologically nilpotent unit)}, and $A_\infty$ is a Tate algebra with compatible morphisms $A_n\to A_\infty$ such that $\varinjlim A_n\subseteq A_\infty$ has dense image, and $B$ is a Tate algebra over $A_1$, then $\varinjlim (A_n\hat{\otimes}_{A_1} B)\subseteq A_\infty\hat{\otimes}_{A_1} B$ has dense image by pointwise approximation.
	\end{proof}

	\begin{lem}[{\cite[Thm.~8.2.3]{KedLiu}]}]\label{l:thm-8.2.3-KL-II}
		Let $\cY$ be a seminormal adic space (see \cite[Defn.~3.7.1]{KedLiu}), for example a smooth rigid space. Let $v:\cY_{\proet}\to \cY_{\an}$ be the natural map. Then $v_{\ast}\widehat{\O}^+_{\cY_{\proet}}=\O^+_{\cY}$.
	\end{lem}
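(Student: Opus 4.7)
The statement is essentially the pushforward formulation of \cite[Thm.~8.2.3]{KedLiu}, so the plan is to reduce to the global-sections statement proved there. Since the equality $v_\ast\widehat{\O}^+_{\cY_\proet}=\O^+_\cY$ is an assertion of sheaves on $\cY_\an$, I would first observe that it suffices to check it after restriction to a basis of the analytic topology. In particular, the question is local on $\cY$, and I may assume $\cY=\Spa(R,R^+)$ is an affinoid adic space; moreover seminormality is preserved under rational localisation, so it suffices to compute sections on rational opens.

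For such an affinoid open $U=\Spa(S,S^+)\subseteq\cY$, by definition
\[
(v_\ast\widehat{\O}^+_{\cY_\proet})(U) = \widehat{\O}^+_{\cY_\proet}(U),
\]
where on the right we view $U$ as a (trivial) pro-étale cover of itself. Thus the lemma reduces to the equality $\widehat{\O}^+_{\cY_\proet}(U)=S^+$ for every seminormal affinoid $(S,S^+)$. This last statement is exactly the content of \cite[Thm.~8.2.3]{KedLiu}, which identifies the sections of the completed integral structure sheaf on the pro-étale site of a seminormal affinoid with $S^+$.

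Finally, to see that the hypothesis applies in the examples we care about, I would recall that every smooth rigid space over a non-archimedean field is seminormal (indeed, even normal), as follows from \cite[Defn.~3.7.1 and \S8.2]{KedLiu}. The main potential obstacle is purely bookkeeping: one must check that the pro-étale site used in \cite{KedLiu} matches Scholze's pro-étale site as used here, so that the cited theorem indeed computes the sheaf $\widehat{\O}^+_{\cY_\proet}$ appearing in the statement. Since both formalisms agree on smooth rigid spaces (and more generally on seminormal adic spaces), this identification is routine and the lemma follows.
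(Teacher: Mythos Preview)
Your reduction to affinoids is fine, but the key step is a miscitation. The theorem \cite[Thm.~8.2.3]{KedLiu} does \emph{not} compute the sections of the completed \emph{integral} structure sheaf: it only gives $v_\ast\widehat{\O}_{\cY_\proet}=\O_\cY$, i.e.\ the statement for $\widehat{\O}$, not for $\widehat{\O}^+$. So your sentence ``This last statement is exactly the content of \cite[Thm.~8.2.3]{KedLiu}'' is the gap; as written, you are asserting precisely the thing that remains to be proved.

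The paper's proof fills this in with a short sandwich argument you have not supplied. From the adjunction one has $\O^+_\cY\subseteq v_\ast\widehat{\O}^+_{\cY_\proet}\subseteq v_\ast\widehat{\O}_{\cY_\proet}=\O_\cY$, the last equality being the actual content of the Kedlaya--Liu theorem. To pin $v_\ast\widehat{\O}^+_{\cY_\proet}$ down inside $\O_\cY$, one observes that for any affinoid $V$ every section of $v_\ast\widehat{\O}^+_{\cY_\proet}(V)$ is power-bounded in $v_\ast\widehat{\O}_{\cY_\proet}(V)=\O_\cY(V)$, so lies in $\O_\cY(V)^\circ$; and since $\cY$ is a rigid space, $\O_\cY(V)^\circ=\O^+_\cY(V)$. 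This last step (power-bounded equals $\O^+$) is exactly the missing ingredient, and it genuinely uses the rigid-analytic hypothesis rather than bare seminormality. Your proposal could be salvaged by inserting this argument after the reduction to affinoids.
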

	\begin{proof}
		By \cite[Thm.~8.2.3]{KedLiu}, we have $v_{\ast}\widehat{\O}_{\cY_{\proet}}=\O_{\cY_{\an}}$. Using the adjunction morphism of $v$, we thus have inclusions $\O_\cY^+\subseteq v_{\ast}\widehat{\O}^+_{\cY_{\proet}}\subseteq \O_\cY$. On the other hand, for any affinoid $V\subseteq \cY$, we clearly have 
		$v_*\widehat{\O}_{\cY_{\proet}}^+(V)\subseteq v_*\widehat{\O}_{\cY_{\proet}}(V)^{\circ}=\O_{\cY}(V)^{\circ}$. Since $\cY$ is a rigid space, we have $\O_{\cY}(V)^{\circ}=\O^+_{\cY}(V)$, which shows $v_{\ast}\widehat{\O}^+_{\cY_{\proet}}\subseteq \O^+_\cY$.
	\end{proof}

	\begin{lemma}\label{gen lemma for sous}
		Let $\cY$ be an affinoid adic space over $L$ that is either a smooth rigid space or a perfectoid space. Let $\Gamma$ be a profinite group. Let $\cX \in \cY_{\proet}$ be an affinoid perfectoid pro-\'etale $\Gamma$-torsor. Let $\U$ be a smooth adic space over $L$, set $\cX_\U:=\cX \times_L \U$ and $\cY_\U:=\cY \times_L \U$, and denote the induced map by $h: \cX_\U \to \cY_\U$. Then 
		\[
		(h_{*}\OO^+_{\cX_\U})^\Gamma=\OO^+_{\cY_\U}
		\quad \text{and} \quad (h_{*}\OO_{\cX_\U})^\Gamma=\OO_{\cY_\U}.\]
	\end{lemma}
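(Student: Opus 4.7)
The plan is to reduce to a torsor-invariance statement on the pro-\'etale site and then descend to the analytic site via Lemma~\ref{l:thm-8.2.3-KL-II}. First, I will observe that pro-\'etale morphisms and the torsor property are stable under base change along $\U\to\Spa(L)$, so $h: \cX_\U \to \cY_\U$ inherits from $\cX\to\cY$ the structure of a pro-\'etale $\Gamma$-torsor. Moreover, by Corollary~\ref{cor: fiber prod sous}, both $\cX_\U$ and $\cY_\U$ are either smooth rigid (when $\cY$ is) or sousperfectoid (when $\cY$ is perfectoid), hence in either case seminormal, so that Lemma~\ref{l:thm-8.2.3-KL-II} applies on both sides.

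Next, to establish the sheaf identity, I will check it on an arbitrary affinoid open $V \subseteq \cY_\U$. Its preimage $U := h^{-1}(V) \subseteq \cX_\U$ is again a pro-\'etale $\Gamma$-torsor over $V$, with $U\times_V U \cong U\times\Gamma$ as pro-\'etale $V$-objects. The standard equaliser description of sheaves on a $\Gamma$-torsor cover, applied to the completed integral structure sheaf $\widehat{\O}^+$ on $V_{\proet}$, yields
\[
\widehat{\O}^+_{V_{\proet}}(V) \;=\; \widehat{\O}^+_{U_{\proet}}(U)^{\Gamma}.
\]
Applying Lemma~\ref{l:thm-8.2.3-KL-II} to both sides converts this into $\O^+_{\cY_\U}(V) = \O^+_{\cX_\U}(U)^{\Gamma} = (h_{*}\O^+_{\cX_\U})(V)^{\Gamma}$, which is the asserted identity for $\O^+$. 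The statement for $\O$ then follows by inverting a pseudo-uniformiser $\varpi$ of $L$: since $\varpi$ is $\Gamma$-fixed, $\Gamma$-invariants commute with $\varpi$-localisation.

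The main obstacle, as I see it, will be verifying seminormality of $\cX_\U$ and $\cY_\U$ in the case where $\cY$ is perfectoid and $\U$ is an arbitrary smooth adic space, so that Lemma~\ref{l:thm-8.2.3-KL-II} can be invoked on both sides. This rests on the sousperfectoid framework recalled in \S\ref{sec:sousperfectoid} together with the fact that sousperfectoid spaces are seminormal. The remaining ingredients are formal: base-change stability of pro-\'etale torsors under $-\times_L\U$, and the Cech/equaliser identification for the structure sheaf of a $\Gamma$-torsor cover.
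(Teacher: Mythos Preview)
Your approach diverges from the paper's and has a gap in the $\O^+$ statement. The paper does not work directly on $\cX_\U$ and $\cY_\U$; instead it invokes \cite[Lem.~2.23(2)]{CHJ} (using that $\cY$ is stably uniform) to strip off the factor $\U$ and reduce to showing $(\O^+_\cX)^\Gamma=\O^+_\cY$ for the original map $\cX\to\cY$. In that reduced situation $\cX$ is genuinely affinoid perfectoid, so one has Scholze's identification $\O^+_\cX(\cX)=\widehat{\O}^+_{\cY_{\proet}}(\cX)$; the $\Gamma$-torsor property then yields $\widehat{\O}^+_{\cY_{\proet}}(\cY)$, and Lemma~\ref{l:thm-8.2.3-KL-II} (applied to the rigid or perfectoid $\cY$, not to any fibre product) finishes.

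Your route requires more than Lemma~\ref{l:thm-8.2.3-KL-II} actually delivers. Look at its proof: the passage from $v_\ast\widehat{\O}^+$ to $\O^+$ uses the equality $\O_{\cY}(V)^{\circ}=\O^+_{\cY}(V)$, justified there by ``since $\cY$ is a rigid space''. Seminormality alone does not force $\O^{\circ}=\O^+$; for a general sousperfectoid affinoid $\Spa(R,R^+)$ one may have $R^+\subsetneq R^{\circ}$. So your appeal to Lemma~\ref{l:thm-8.2.3-KL-II} for the sousperfectoid space $\cX_\U$ (and for $\cY_\U$ when $\cY$ is perfectoid) is not covered by the proof given. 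There is a related issue one level up: your equaliser step gives $\widehat{\O}^+_{V_{\proet}}(U)$, and identifying this with $\O^+_{\cX_\U}(U)$ needs either Lemma~\ref{l:thm-8.2.3-KL-II} for the sousperfectoid $\cX_\U$, or the standard computation of $\widehat{\O}^+$ on affinoid \emph{perfectoid} objects of $V_{\proet}$ --- but $U=h^{-1}(V)$ is only sousperfectoid, not perfectoid. The paper's reduction via \cite[Lem.~2.23(2)]{CHJ} is precisely what sidesteps both problems, by keeping the object on which one evaluates $\widehat{\O}^+$ honestly affinoid perfectoid.
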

	
	\begin{proof}
		As the statement is local on $\cY_\U$, it suffices to check that for an affinoid open $V\subseteq \cY$ with affinoid perfectoid preimage $W=h^{-1}(V)$ we have $\OO_{\cX_\U}^+(V\times \U)^\Gamma=\OO_{\cY_\U}^+(W\times \U)$.  Since by our assumptions $\cY$ is stably uniform, \cite[Lem.~2.23 (2)]{CHJ} reduces this to checking that $(h_{*}\OO_{\cX}^+)^\Gamma=\OO_{\cY}^+$. 
		To see this, we first treat the case that $\cY$ is a smooth rigid space. Then in the pro-\'etale site $\cY_{\proet}$ in the sense of \cite{Scholzepaidchodge},  we have the structure sheaf \small$\OO^+_{\cY_{\proet}}$\normalsize as well as the completed structure sheaf \small$\wh{\OO}^+_{\cY_{\proet}}$\normalsize. For the affinoid perfectoid space $\cX$, we have \small$\OO^+_\cX(\cX)=\wh\OO^+_{\cY_{\proet}}(\cX)$\normalsize. 
		The Cartesian diagram expressing $\cX \to \cY$ as a pro-\'etale $\Gamma$-torsor then shows that we have
		\[
		\OO^+_\cX(\cX)^\Gamma=\wh{\OO}^+_{\cY_{\proet}}(\cX)^\Gamma=\wh{\OO}^+_{\cY_{\proet}}(\cY).
		\]
		The first part of the Lemma now follows from Lem.~\ref{l:thm-8.2.3-KL-II}. The second follows by inverting $p$.
		
		If $\cY$ is a perfectoid space, the same argument works in the pro-\'etale site $\cY_{\proet}$ of \cite{diamonds}.
	\end{proof}

	\begin{prop}\label{prop: G-invs}
		Let $\U$ be a smooth adic space over a perfectoid field extension $L'$ of $L$. For any $n\geq 1$ denote by $h:\XX_{\U,\GA(p^\infty)}^*(\e)_a \to \XX_{\U,\Gamma_{\!\scaleto{0}{3pt}}(p^n)}^*(\e)_a$ the natural map. Then \[ (h_{\ast}\OO^+_{\XX_{\U,\GA(p^\infty)}^*(\e)_a})^{\Gamma_{\!\scaleto{0}{3pt}}(p^n)}=\OO^+_{\XX_{\U,\Gamma_{\!\scaleto{0}{3pt}}(p^n)}^*(\e)_a}\quad \text{ and } \quad (h_{\ast}\OO_{\XX_{\U,\GA(p^\infty)}^*(\e)_a})^{\Gamma_{\!\scaleto{0}{3pt}}(p^n)}=\OO_{\XX_{\U,\Gamma_{\!\scaleto{0}{3pt}}(p^n)}^*(\e)_a}.\]
	\end{prop}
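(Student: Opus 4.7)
The strategy is to reduce to Lemma~\ref{gen lemma for sous} by a local argument on $\XX^{*}_{\Gamma_{\!\scaleto{0}{3pt}}(p^n)}(\epsilon)_a$.

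First I would cover $\XX^{*}_{\Gamma_{\!\scaleto{0}{3pt}}(p^n)}(\epsilon)_a$ by affinoid opens $V$ whose preimages $h^{-1}(V) \subseteq \XX^{*}_{\Gamma(p^\infty)}(\epsilon)_a$ are affinoid perfectoid. Such a cover exists because $\XX^{*}_{\Gamma(p^\infty)}(\epsilon)_a$ is perfectoid and the map to finite level is qcqs. Base-changing by $\U$, the fibre products $V \times_L \U$ exist as sousperfectoid adic spaces by Cor.~\ref{cor: fiber prod sous} and cover $\XX^{*}_{\U, \Gamma(p^\infty)}(\epsilon)_a$. Since $h_{\ast}$ and the formation of $\Gamma_{\!\scaleto{0}{3pt}}(p^n)$-invariants commute with restriction to opens, it suffices to prove the analogous equality on each $V \times_L \U$.

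The key step is to verify that $h^{-1}(V) \to V$ is an affinoid perfectoid pro-\'etale $\Gamma_{\!\scaleto{0}{3pt}}(p^n)$-torsor, so that Lem.~\ref{gen lemma for sous} applies directly. Away from the cusps this is standard: each finite-level map $\XX_{\Gamma(p^{n'})}(\epsilon)_a \to \XX_{\Gamma_{\!\scaleto{0}{3pt}}(p^n)}(\epsilon)_a$ (for $n' \geq n$) is finite \'etale with Galois group $\Gamma_{\!\scaleto{0}{3pt}}(p^n)/\Gamma(p^{n'})$, and the perfectoid inverse limit produces a pro-\'etale $\Gamma_{\!\scaleto{0}{3pt}}(p^n)$-torsor in the sense of \cite{Scholzepaidchodge}. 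The main obstacle is at the cusps, where each finite-level cover is totally ramified along the boundary and therefore not \'etale. However, in the anticanonical tower the cusps admit an explicit description: over a cuspidal Tate-curve parameter disc with coordinate $q$, the infinite level space is the perfectoid disc obtained by adjoining all $p$-power roots $q^{1/p^{\infty}}$, and $\Gamma_{\!\scaleto{0}{3pt}}(p^n)$ acts through explicit formulae on these coordinates and on the $\mu_{p^\infty}$-part given by the canonical subgroup. Since perfectoid covers of a smooth rigid space are by definition pro-\'etale covers in the site of \cite{Scholzepaidchodge}, one checks that the resulting action is simply transitive on geometric fibres and hence gives a pro-\'etale $\Gamma_{\!\scaleto{0}{3pt}}(p^n)$-torsor on each cuspidal chart.

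With the torsor property established on each $V$, Lem.~\ref{gen lemma for sous} yields both equalities on $V \times_L \U$, and gluing over the cover completes the proof. The verification at the cusps is exactly where the choice of anticanonical locus pays off, since the boundary cover becomes the simple $q^{1/p^{\infty}}$-adjunction (cf.~the discussion in \S\ref{sec:intro hilbert}); the corresponding argument in the canonical tower would be considerably more delicate.
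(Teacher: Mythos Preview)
Your overall strategy—localise, apply Lemma~\ref{gen lemma for sous} on the open part, and treat the cusps separately—matches the paper's. The gap is in your handling of the cusps.

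You correctly note that each finite-level cover $\XX^*_{\Gamma(p^{n'})}(\epsilon)_a \to \XX^*_{\Gamma_{\!0}(p^n)}(\epsilon)_a$ is totally ramified along the boundary, hence not \'etale there, but then assert that the infinite-level cover is nonetheless a pro-\'etale $\Gamma_{\!0}(p^n)$-torsor on cuspidal charts. This is false. An object of $\cY_{\proet}$ in the sense of \cite{Scholzepaidchodge} is by definition a cofiltered limit of \'etale maps; since every finite stage is ramified at the cusps, the tower does not lie in $\cY_{\proet}$, and Lemma~\ref{gen lemma for sous} does not apply there. Your criterion ``simply transitive on geometric fibres'' fails as well: the map $D_\infty \to D_n$ obtained by adjoining $q^{1/p^\infty}$ has a single point over $q=0$, so the geometric fibre of $\XX^*_{\Gamma(p^\infty)}(\epsilon)_a$ over a cusp is only a copy of $\Gamma_{\!0}(p^\infty)$, on which the strictly larger group $\Gamma_{\!0}(p^n)$ cannot act freely.

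The paper does not try to force the torsor framework at the cusps. Instead, using the explicit description from \cite{lilbow} of the cuspidal neighbourhood at infinite level as $\Gamma_{\!0}(p^\infty)\times D_\infty\times\mu_d$ over $D_n\times\mu_d$, it computes the $\Gamma_{\!0}(p^n)$-invariants by hand: invariance under the subgroup $\Gamma_{\!0}(p^\infty)$ forces $f$ to come from $D_\infty\times\mu_d$, i.e.\ $f\in\O_L[\zeta_d][[q^{1/p^\infty}]]$; then the residual $\Gamma_{\!0}(p^n)/\Gamma_{\!0}(p^\infty)\cong p^n\Z_p$ acts by $q^{1/p^m}\mapsto\zeta_{p^m}^{h}q^{1/p^m}$, whose invariants are exactly $\O_L[\zeta_d][[q^{1/p^n}]]=\O^+(D_n\times\mu_d)$. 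The case of general $\U$ follows by the same computation with coefficients in $\O^+(\U)$.
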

	For the proof, we explain how to deal with the boundary, which was not treated in \cite{CHJ}.
	\begin{proof}
		After base-change to $L'$, we may without loss of generality assume that $L=L'$.
		
		Over the open subspace away from the cusps, the map $h:\XX_{\U,\GA(p^\infty)}(\e)_a \to \XX_{\U,\Gamma_{\!\scaleto{0}{3pt}}(p^n)}(\e)_a$ is a pro-\'etale $\Gamma_{\!0}(p^n)$-torsor for the action defined in \eqref{eqn:action of GL2}. By Lem.~\ref{gen lemma for sous}, we thus have
		\[ (h_{\ast}\OO^+_{\XX_{\U,\GA(p^\infty)}(\e)_a})^{\Gamma_{\!\scaleto{0}{3pt}}(p^n)}=\OO^+_{\XX_{\U,\Gamma_{\!\scaleto{0}{3pt}}(p^n)}(\e)_a}.\]
		We are left to extend this to the cusps. Let us first look at the case that $\U$ is a single point. For this we can use Tate curve parameter discs as discussed in \cite{heuer-cusps}:
		For any geometric point $c$ in the boundary of $\XX^{\ast}$, there is an integer $d|N$ (depending on the tame level structure and the presence of unit roots in $L$) such that there is an open immersion $D\times \mu_d\hookrightarrow \XX^{\ast}$ where $D\subseteq L\langle q\rangle$ is the open disc defined by $|q|<1$, such that the image of the origin contains $c$. For $\XX^\ast_{\Gamma_{\!0}(p^n)}(\epsilon)_a$ there is then also a Tate curve parameter disc $D\times \mu_d\hookrightarrow\XX_{\Gamma_{\!\scaleto{0}{3pt}}(p^n)}^{\ast}(\epsilon)_a$. The induced map over $\XX^\ast_{\Gamma_{\!0}(p^n)}(\epsilon)_a\to \XX^\ast$ is $D\to D, q\mapsto q^{p^n}$  by Prop.~2.10 \textit{op.\ cit}. Equivalently, we may rewrite this as the open disc $D_n\subseteq \Spa(L\langle q^{1/p^{n}}\rangle)$. By taking tilde-limits, we obtain a perfectoid disc $D_\infty \sim \varprojlim D_n \subseteq \Spa(L\langle q^{1/p^\infty}\rangle).$ By \cite[Thm.~3.8]{heuer-cusps}, there is then a Cartesian diagram
		\begin{center}
			\begin{tikzcd}[row sep = 0.55cm]
			{\Gamma}_0(p^\infty)\times D_\infty\times \mu_d \arrow[d,hook] \arrow[r]& D_n\times \mu_d\arrow[d,hook]\\
			\mathcal X^{\ast}_{\Gamma(p^\infty)}(\epsilon)_a \arrow[r] & \mathcal X^{\ast}_{\Gamma_{\!0}(p^n)}(\epsilon)
			\end{tikzcd}
		\end{center}
		where ${\Gamma}_0(p^\infty)$ is the profinite perfectoid group of upper triangular matrices in $\GL_2(\Z_p)$. By Thm.~3.21 \textit{op.\ cit.}, the $\Gamma_{\!0}(p^n)$-invariance of a function $f$ on ${\Gamma}_0(p^\infty)\times D_\infty\times \mu_d$ now means precisely the following: first, the $\Gamma_{\!0}(p^\infty)$-invariance means that $f$ comes from a function on $D_\infty\times\mu_d$ via pullback along the projection ${\Gamma}_0(p^\infty)\times D_\infty\times\mu_d\to D_\infty\times \mu_d$. It is thus of the form $f\in \OO^+(D_\infty\times \mu_d)= \OO_L[\zeta_d][[q^{1/p^\infty}]]$. 
		Second, the remaining $\Gamma_{\!0}(p^n)/\Gamma_{\!0}(p^\infty)=p^n\Z_p$-action is the one which sends $q^{1/p^m}\to \zeta^h_{p^m}q^{1/p^m}$ for all $m\in\NN$ and $h\in p^n\ZZ_p$. 
		For $f$ to be invariant under this action means that $f\in\OO^+(D_n\times \mu_d)= \OO_L[\zeta_d][[q^{1/p^n}]]$, as desired. 
		
		For general weight $\U$, we may without loss of generality assume that $\U$ is affinoid. The same argument then still works, adding a fibre product with $\U$ everywhere in the above and working with $q$-expansions in $\OO^+(\U)[\zeta_d][[q^{1/p^n}]]$ instead.
	\end{proof}

	\begin{remark}
		\textcolor{black}{   As pointed out to us by the referee, one can also use the rigid analytic Riemann Hebbarkeitssatz for normal rigid spaces due to Bartenwerfer \cite{barten} and L\"utkebohmert \cite[Satz~1.6]{lutke} to extend over the boundary. One advantage of the perspective taken above is that it explicitly describes the $q$-expansions associated to $p$-adic modular forms.}
	\end{remark}

	\subsection{Overconvergent modular forms}
	\begin{definition}
		\textcolor{black}{For any rational $0 < r \leq 1$,} let $B_r(\ZZ_p^\times:1) \subset B_r(\ZZ_p:1) \subset \mathbb{P}^1$ be the union of all balls $B_r(a)$ of radius $r$ around points $(a:1)$ with $a \in \ZZ_p^\times$. For $r=0$, we instead let $ B_0(\ZZ^\times_p:1):={\ZZ}_p^\times$, where we recall that we mean by this the perfectoid space associated to the profinite group ${\ZZ}_p^\times$.
	\end{definition}
	The adic space $B_r(\ZZ_p^\times:1)$ inherits the structure of an adic group from $\mathbb G_m=\PP^1\backslash\{0,\infty\}\subseteq \PP^1$. Note that for $r < 1$, on $\ZZ_p$-points we have $B_r(\ZZ_p^\times:1)(\ZZ_p)=\ZZ_p^\times$ compatible with the group structure. We therefore regard $B_r(\ZZ_p^\times:1)$ as an adic analytic thickening of $\ZZ_p^\times$.
	
	\begin{definition}\label{d:G_a,G_m,and hats}
		\begin{enumerate}
			\item  In the following, we will also denote by $\G_a$ the adic analytification of the corresponding scheme over $L$. Its underlying adic space is $\A^{1,\an}$, and it represents the functor sending an adic space $\cZ$ over $(L,\O_L)$ to $\O(\cZ)$.
			\item We analogously define the adic group $\G_m$ as the adic analytification of the corresponding scheme over $L$. It represents the functor sending an adic space $\cZ$ over $(L,\O_L)$ to $\O(\cZ)^{\times}$.
			\item Denote by $\hat{\G}_a$ the adic generic fibre of the formal completion of the $\mathcal O_L$-scheme $\G_{a,\O_L}$; then $\hat{\G}_a\subseteq \G_a$ is an open subgroup, given by the closed ball of radius $1$ around the origin $0\in \G_a$. It represents the functor sending an adic space $\cZ$ over $(L,\O_L)$ to $\O^+(\cZ)$.
			\item Similarly, let $\hat{\G}_m$ be the adic generic fibre of the formal completion of $\G_{a,\O_L}$; then $\hat{\G}_m\subseteq \G_m$ is an open subgroup, given by the closed ball of radius $1$ around the origin $1\in \G_m$. It represents the functor sending an adic space $\cZ$ over $(L,\O_L)$ to $\O^+(\cZ)^\times$.
		\end{enumerate}	
	\end{definition}

	{\color{black} Any continuous character $\kappa : \Zp^\times \to L^\times$ has} a geometric incarnation as a morphism of adic spaces $\kappa:\ZZ_p^\times\to \hat{\G}_m.$
	Indeed, by the universal property of $\GG_m$, any such morphism corresponds to an element of $\mathcal \OO(\Zpthick)^\times = \Mapc(\ZZ_p^\times,L)^{\times}=\Mapc(\ZZ_p^\times,L^{\times})$. 
	Any such $\kappa$ has an analytic continuation to $B_r(\ZZ_p^\times:1)$ for small enough $r$. In fact, this holds more generally: let $\U$ be any bounded smooth weight (see Defn.~\ref{def:smooth weight}). This corresponds to a morphism
	$\kU: \Zpthick\times \U\to \hat{\G}_m$
	or, equivalently, a continuous morphism $\ZZ_p^{\times}\to \mathcal O_{\U}(\U)^\times$, {\color{black}called the \emph{character of} $\U$}.
	\begin{prop}\label{prop: an cont fam} \label{l: kappa extends to adic group homomorphism on P^1_w(0)}
		If $\k:\U \to \W$ is a bounded smooth weight, then there exists $r_\k$ such that for $r_\k \geq r >0$ there is a unique morphism
		\[\kU^{\an}: B_r(\ZZ_p^\times:1)\times \U\to \hat{\G}_m\]
		such that the restriction of $\kU^{\an}$ to $\ZZ_p^\times \times \U$ via $\ZZ_p^\times\hookrightarrow B_r(\ZZ_p^\times:1)$, $a\mapsto (a:1)$, is equal to $\kU$.
	\end{prop}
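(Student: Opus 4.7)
The plan is to construct $\kU^{\an}$ in two stages: first reducing, via multiplicativity and the disjoint-union structure of $B_r(\ZZ_p^\times:1)$, to extending $\kU$ on a small neighborhood of $1 \in \GG_m$; and then constructing this local extension using the classical analytic description of the universal character of weight space.

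For \emph{uniqueness}, observe that $\ZZ_p^\times$ is topologically dense in $B_r(\ZZ_p^\times:1)$ and that $\hat{\G}_m$ is separated. Hence any two morphisms $B_r(\ZZ_p^\times:1)\times \U \to \hat{\G}_m$ restricting to $\kU$ on $\ZZ_p^\times\times \U$ must coincide.

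For \emph{existence}, choose $N$ large enough (with $N\geq 2$ when $p=2$) that $1+p^N\ZZ_p$ is a torsion-free pro-$p$ subgroup of $\ZZ_p^\times$, and fix a set $R$ of coset representatives for $\ZZ_p^\times/(1+p^N\ZZ_p)$. For $r\leq p^{-N}$, the balls $B_r(a:1)$ for $a\in R$ are pairwise disjoint and cover $B_r(\ZZ_p^\times:1)$. Since each $a\in\ZZ_p^\times\subseteq \OO_L^\times$, translation by $a$ induces an isomorphism $B_r(1:1)\isorightarrow B_r(a:1)$, and we define
\[
\kU^{\an}\big|_{B_r(a:1)\times\U}(x,u) := \kU(a,u)\cdot \kU^{\an}(x/a,u).
\]
This reduces the problem to extending the restriction $\kU|_{1+p^N\ZZ_p}$ to a morphism $B_r(1:1)\times \U \to \hat{\G}_m$.

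Finally, one invokes the standard structure theory of $\W$: after enlarging $L$ if necessary, $\W$ is a disjoint union of open unit discs indexed by characters of the torsion subgroup of $\ZZ_p^\times$, with a coordinate $T=\kappa_{\W}(\gamma)-1$ (where $\gamma$ is a topological generator of $1+p^N\ZZ_p$) giving an identification with $\D = \{|T|<1\}$. Thus $\kappa_\W|_{1+p^N\ZZ_p}$ is given by the analytic formula $\kappa_\W(1+p^N y) = \exp\bigl(\log(1+p^N y)\cdot s/\log\gamma\bigr)$ where $s=\log(1+T)$. Pulling back along $\k:\U\to\W$ and using that $\kappa(\U)$ is contained in an affinoid $V\subseteq \W$, the element $s_\U\in\O_\U(\U)$ obtained this way is bounded by some explicit constant $C$ depending only on $V$. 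For $r$ small enough that $\exp(C\cdot \log(1+x)/\log\gamma)$ converges on $|x|\leq r$, the same series defines $\kU^{\an}$ on $B_r(1:1)\times\U$, which is manifestly a group homomorphism in the first variable and agrees with $\kU$ on $\ZZ_p^\times\times\U$.

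The main obstacle is the last step: verifying that boundedness of $\k:\U\to\W$ translates into the required uniform bound on $s_\U$, which in turn guarantees a uniform radius of convergence $r_\k>0$. This amounts to identifying bounded affinoids of $\W$ with closed subdiscs of the unit disc $\D$ under the coordinate $T$, after which the desired estimate is immediate from the Mahler-type expansion of characters of $1+p^N\ZZ_p$.
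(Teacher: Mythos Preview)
Your proof is correct and takes a more hands-on route than the paper. The paper's argument is essentially a two-line citation: it invokes \cite[Prop.~8.3]{buzeig} (Buzzard's eigenvarieties paper) for the case where $\U$ is affinoid, and then notes that a bounded weight factors through some affinoid open of $\W$, reducing immediately to that case; for a sharp value of $r_\kappa$ it points forward to the Hilbert analogue (Prop.~\ref{an cnt hmfs}). You instead unpack the content of Buzzard's result directly, via the coset decomposition modulo $1+p^N\ZZ_p$ and the explicit analytic formula for the character on a neighbourhood of $1$.

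One technical caveat: your formula $\exp\bigl(\log(1+p^Ny)\cdot s/\log\gamma\bigr)$ with $s=\log(1+T)$ is only literally valid in the regime $|T|<p^{-1/(p-1)}$, where $\exp$ and $\log$ are mutual inverses; for general $|T|<1$ one must work with the binomial (Mahler) series $(1+T)^c=\sum_n\binom{c}{n}T^n$ directly, as you do acknowledge at the end. What your approach buys is an explicit and self-contained derivation of $r_\kappa$ in terms of the affinoid containing $\kappa(\U)$, rather than a black-box citation; what the paper's approach buys is brevity, since the cited result already packages exactly this computation.
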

	\begin{proof}
		If $\U$ is affinoid this is a special case of \cite[Prop.~8.3]{buzeig}. In general, the assumption that $\U$ is bounded ensures that $\U$ is contained in some affinoid open subspace of $\mathcal W$. For a precise value of $r_\k$ see Prop.~\ref{an cnt hmfs}.
	\end{proof}
	
	\begin{defn}\label{contiuation notation}
		For any smooth bounded weight $\k:\mathcal U \to \W$, let $r_{\scaleto{\k}{4pt}}$ be the supremum of all $r$ such that the proposition holds. Similarly, let $\ed$ be the maximum $\e$ satisfying the conditions of Prop.~\ref{proposition: X_w(0) and anticanonical locus} with respect to $r_{\scaleto{\k}{4pt}}$, then $\pi_{\HT}(\mathcal X^{\ast}_{\Gamma(p^\infty)}(\epsilon)_a)\subseteq B_r(\ZZ_p:1)$.
	\end{defn}

	Recall $\mathfrak z:=\pi_{\HT}^{\ast}z$ is the function on $\mathcal X^{\ast}_{\Gamma(p^\infty)}(\e)_a$ defined by pullback of the function $z$ on $B_r(\ZZ_p:1)$ from Defn.~\ref{df:parameter-z}. 	Since $\pi_{\HT}$ is $\GL_2(\ZZ_p)$-equivariant, Lem.~\ref{l: action of Gamma on B_1(0)} implies that for any $\gamma\in\Gamma_{\!0}(p)$, we have
	\begin{equation}\label{equation: the action on mathfrak z}
	\gamma^{\ast}\mathfrak z = \frac{a\mathfrak z+b}{c\mathfrak z+d}.
	\end{equation}
	Using this and Prop.~\ref{proposition: X_w(0) and anticanonical locus}, we can then make the following definition:
	\begin{defn}
		Let $\k:\U \to \W$ be a bounded smooth weight and $\ed > \epsilon\geq 0$. For any $c\in p\ZZ_p$, $d\in \ZZ_p^{\times}$, we then let $\kU(c\mathfrak z+d)$ be the invertible function on $\XX_{\U,\Gamma(p^\infty)}(\e)_a$ defined by
		\[\kU(c\mathfrak z+d):\XX_{\U,\Gamma(p^\infty)}(\e)_a \xrightarrow{\pi_{\HT}\times \id} B_r(\ZZ_p:1) \times \U\xrightarrow{(z\mapsto cz+d)\times \id} B_r(\ZZ_p^\times:1)\times \U \xrightarrow{\k^{\an}} \hat{\G}_m. \]
		
	\end{defn}
	
	We can now give the definition of sheaves of overconvergent modular forms of weight $\kappa$.
	\begin{defn}\label{Definition:overconvergent modular forms via perfectoid varieties}
		
		For $\k: \U \to \W$ a bounded smooth weight, $n \in \ZZ_{\geq 1} \cup \{\infty\}$ and $0\leq \epsilon\leq \ed$, we define a sheaf ${\omega}^{\kU}_{n}$ on $\mathcal X^{\ast}_{\U,\GA_{\!\scaleto{0}{3pt}}(p^n)}(\e)_a$ by
		\[{\omega}^{\kU\phantom{,+}}_{n}:=\left\{f \in 	q_{\ast}\mathcal O^{\phantom{,+}}_{\mathcal X^{\ast}_{\U,\Gamma(p^\infty)}(\e)_a}\middle| \gamma^{\ast}f = \kU^{-1}(c\mathfrak z+d)f  \text{ for all }\gamma=\smallmatrd{a}{b}{c}{d}\in \Gamma_0(p^n) \right\}, \]
		where we recall that $q:\mathcal X^{\ast}_{\U,\Gamma(p^\infty)}(\e)_a\rightarrow \mathcal X^{\ast}_{\U,\Gamma_{\!\scaleto{0}{3pt}}(p^n)}(\e)_a$ denotes the projection. We also have an integral subsheaf ${\omega}^{\kU,+}_{n}$ on $\mathcal X^{\ast}_{\U,\GA_{\!\scaleto{0}{3pt}}(p^n)}(\e)_a$ defined by using instead the $\O^+$-sheaf:
		\[{\omega}^{\kU,+}_{n}:=\left\{f \in 	q_{\ast}\mathcal O^+_{\mathcal X^{\ast}_{\U,\Gamma(p^\infty)}(\e)_a}\middle| \gamma^{\ast}f = \kU^{-1}(c\mathfrak z+d)f  \text{ for all }\gamma=\smallmatrd{a}{b}{c}{d}\in \Gamma_0(p^n) \right\}. \]
		
		For $n=0$ and $\eU/p>\epsilon\geq 0$, we use the Atkin--Lehner isomorphism $\AL:\mathcal X^{\ast}_{\Gamma_{\!\scaleto{0}{3pt}}(p)}(p\e)_a\isorightarrow\mathcal X^{\ast}(\e)  $ to define {\color{black}the sheaf of overconvergent  $p$-adic modular forms 
		$\omega^{\k}$} to be the sheaf $\AL_{\ast}\omega_{1}^{\k}$ on $\mathcal X_{\U}^{\ast}(\e)$.
		We also have the $\O^+$-submodule of integral $p$-adic modular forms $\omega^{\k,+}$ given by $\AL_{\ast}\omega_{1}^{\k,+}$.
	\end{defn}
	
	We will later see that these are all invertible sheaves.  Note that for $n= \infty$, this defines sheaves of modular forms on $\mathcal X^{\ast}_{\U,\Gamma_{\!\scaleto{0}{3pt}}(p^\infty)}(\e)_a$ which, by analogy to \cite{AIP3}, we call \emph{perfect modular forms}.

	\begin{defn}
		Let $\k: \U \to \W$ be a bounded smooth weight and let $n \in \ZZ_{\geq 0}\cup\{\infty\}$.
		We define the space of \emph{overconvergent modular forms of weight $\U$, wild level $\Gamma_{\!0}(p^n)$, tame level $\Gamma^p$ and radius of overconvergence $0 \leq \e< \ed$} to be the $L$-vector space
		\begin{align*}
		M_{\k}(\Gamma_{\!0}(p^n),\epsilon):=&\hH^0(\XX^*_{\U,\Gamma_{\!\scaleto{0}{3pt}}(p^n)}(\e)_a, \w_n^{\k}  )\\ =&\left\{ f\in \mathcal O(\XX^*_{\U,\Gamma_{\!\scaleto{0}{3pt}}(p^\infty)}(\e)_a)\middle| \gamma^{\ast}f = \kU^{-1}(c \mathfrak z+d)f \text{ for all }\gamma \in \Gamma_{\!0}(p^n) \right\},
		\end{align*}
		and the analogous space of \emph{integral} forms to be
		\[
		M^+_{\k}(\Gamma_{\!0}(p^n),\epsilon):=\hH^0(\XX^*_{\U,\Gamma_{\!\scaleto{0}{3pt}}(p^n)}(\e)_a, \w_n^{\k,+}).
		\]
		Finally, we set $M_{\k}(\epsilon):=	M_{\k}(\Gamma_{\!0}(p),p\epsilon)$, and analogously for the integral subspaces.
		We note that for any $n<\infty$, there is then a natural Atkin--Lehner isomorphism 
		\[
		M^+_{\k}(\Gamma_{\!0}(p^n),\epsilon) \cong M^+_{\k}(p^{-n}\epsilon).
		\] 
	\end{defn}
	One can similarly define cusp forms by working instead with $\w_n^{\k}(-\partial)$ where $\partial$ denotes the boundary divisor in $\XX^*_{\U,\Gamma_{\!\scaleto{0}{3pt}}(p^n)}(\e)_a$. As usual, one now defines an action of Hecke operators $T_\ell$ for $\ell \nmid Np$ and $U_p$ via correspondences. We shall discuss this in detail in the Hilbert case in \S\ref{sec:hecke-operators}.

	\subsection{Comparison to overconvergent modular forms of classical weights}
	Recall that on $\XX^{\ast}$ we have the conormal sheaf $\omega_{\mathcal E}:=\pi_{\ast}\Omega_{\mathcal E|\mathcal X}^{1}$ of the universal semi-abelian scheme $\pi:\mathcal E\to \mathcal X^{\ast}$. For a $p$-level structure $\Gamma_p$ of the form $\Gamma_0(p^n)$ or $\Gamma(p^n)$ for $n\in \ZZ_{\geq 0}\cup\{\infty\}$, {\color{black}we write $\omega_{\Gamma_p}$ for} the pullback of $\omega_{\mathcal E}$ to $\XX^{\ast}_{\Gamma_p}(\epsilon)_a$.
	In this section, we show that for characters $\kappa$ of the form $x\mapsto x^k$, for $k \in \ZZ_{\geq 1}$, the sheaf $\omega_n^{\kappa}$ can be identified with $\omega_{\Gamma_0(p^n)}^{\otimes k}$. This shows that for classical weights, our definition agrees with the usual spaces of overconvergent modular forms, and contains the spaces of classical modular forms.
	The key to the comparison is the isomorphism of line bundles
	\begin{equation}\label{equation: differential bundle is pullback of O(1) via HT period map}
	\pi_{\HT}^{\ast}\mathcal O(1)=\omega_{\Gamma(p^\infty)}
	\end{equation}
	from \cite[ Thm.~III.3.18]{torsion}. We recall that on $(C,C+)$-points, this has the following moduli interpretation:
	The $C$-points of the total space 
	$\mathcal T(1)\rightarrow \PP^1$ of the line bundle $\mathcal O(1)$ parametrise pairs $(L,y)$ of a line $L\subseteq C^2$ together with a point $y\in C^2/L$ on the quotient. 
	Equivalently, this is the data $(\varphi,y)$ of a linear projection $\varphi:C^2\twoheadrightarrow Q$ to a 1-dimensional $C$-vector space $Q$ and a point $y\in Q$. We sometimes just write this as the point $y$ if $Q$ is clear from context. Using this description of $\mathcal O(1)$, one can now illustrate equation~(\ref{equation: differential bundle is pullback of O(1) via HT period map}) as follows.
	
	\begin{lem}\label{l: moduli interpretation of the isomorphism q*w=pi_HT^*O(1)}
		Let $x$ be a $(C,C^+)$-point of\/ $\mathcal X_{\Gamma(p^\infty)}$, corresponding to data $(E,\mu,\alpha:\ZZ_p^2\isorightarrow T_pE)$. Let $\HT_{E}:T_pE\rightarrow \omega_{E}$ be the Hodge--Tate map of $E$. %Then in terms of the total spaces $\mathcal T(\omega_{\Gamma(p^\infty)})\rightarrow \mathcal X^{\ast}_{\Gamma(p^\infty)}$ and $\mathcal T(1)\rightarrow \PP^1$, 
		In terms of the total spaces, the isomorphism $\omega_{\Gamma(p^\infty)}=\pi_{\HT}^{\ast}\mathcal O(1)$ is given in the fibre of $x$ by the morphism 
		\begin{alignat*}{2}
		(E,\ \alpha,\ \eta\in \omega_E)\mapsto (\HT:T_pE\otimes_{\ZZ_p} C\rightarrow \omega_E,\ \eta\in \omega_E ).
		\end{alignat*}
	\end{lem}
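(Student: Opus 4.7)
The plan is to unpack the construction of the isomorphism $\omega_{\Gamma(p^\infty)} \cong \pi_{\HT}^{\ast}\mathcal{O}(1)$ from \cite[Thm.~III.3.18]{torsion}, which is defined universally via the Hodge--Tate map. Once this is done, the lemma becomes essentially tautological on fibres, and the remaining work is bookkeeping of conventions.

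First, I would recall how $\pi_{\HT}$ is constructed on $(C,C^+)$-points: for $x = (E,\mu,\alpha)$, the Hodge--Tate exact sequence
\[
0 \longrightarrow \Lie(E^\vee)(1) \longrightarrow T_pE \otimes_{\Zp} C \xrightarrow{\HT_E} \omega_E \longrightarrow 0,
\]
combined with $\alpha \otimes C : C^2 \isorightarrow T_pE \otimes_{\Zp} C$, produces a line $L_x := (\alpha \otimes C)^{-1}(\ker \HT_E) \subseteq C^2$, and by definition $\pi_{\HT}(x) = [L_x] \in \PP^1(C)$. Next, I would use the moduli description of $\mathcal{O}(1)$ recalled in the excerpt: a point of the total space $\mathcal{T}(1)$ above $[L] \in \PP(C^2)$ is a pair $(\varphi, y)$, with $\varphi : C^2 \twoheadrightarrow C^2/L$ the canonical projection and $y \in C^2/L$. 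At $[L_x]$, the quotient $C^2/L_x$ is canonically identified with $\omega_E$ via $\HT_E \circ (\alpha \otimes C)$, since $L_x$ is by definition the kernel of this surjection.

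The key point is then that the isomorphism of \cite[Thm.~III.3.18]{torsion} is constructed by making precisely this identification universally: the universal Hodge--Tate map for the universal elliptic curve exhibits $q^{\ast}\omega_{\mathcal{E}}$ as the quotient of the trivial rank-two bundle $\widehat{\O}^2 \cong T_p\mathcal{E} \otimes_{\Zp} \widehat{\O}$ (using the universal trivialisation $\alpha$), and this presentation is precisely the pullback under $\pi_{\HT}$ of the tautological quotient $\O_{\PP^1}^2 \twoheadrightarrow \O_{\PP^1}(1)$ defining $\mathcal{O}(1)$. Restricting to the fibre above $x$ reads off the claimed formula: an element $\eta \in \omega_E$ in the fibre of $\omega_{\Gamma(p^\infty)}$ is sent to the pair $(\HT_E : T_pE \otimes_{\Zp} C \twoheadrightarrow \omega_E,\ \eta)$ in the total space $\mathcal{T}(1)$ over $\pi_{\HT}(x)$.

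I do not expect a genuine obstacle here; the only subtlety is matching the duality conventions between our setup (where $\alpha$ parametrises $T_pE$) and those of \cite{torsion}, and verifying that the chosen identification of the fibre of $\mathcal{O}(1)$ with $C^2/L$ is consistent with the one used in the construction of the isomorphism there. Once these conventions are fixed, the lemma follows immediately from the construction.
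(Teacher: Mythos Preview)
Your proposal is correct. The paper does not give a proof of this lemma at all: it is introduced with ``We recall that on $(C,C^+)$-points, this has the following moduli interpretation'' and stated without a proof environment, treating it as a definitional unpacking of \cite[Thm.~III.3.18]{torsion}. Your argument is precisely the natural verification of that unpacking, and there is nothing to compare.
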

	
	Following \cite{CHJ}, we now compare our sheaf of modular forms $\omega^\kappa$ for $\kappa=\id$ to the bundle of differentials $\omega_{\mathcal E}$ by using an explicit trivialisation of $\mathcal O(1)$ over $B_r(\Z_p:1)$:

	\begin{definition}\label{l: moduli interpretation of section s on PP^1}
		Let $s$ be the global section $\PP^1\rightarrow \mathcal T(1)$ given by
		\[ (x:y)\mapsto \big(C^2\to C^2\big/\big\langle \smallvector{x}{y}\big\rangle ,\ \ \smallvector{1}{0}+\big\langle \smallvector{x}{y}\big\rangle\big), \]
		where the second component lies in $C^2/\langle \smallvector{x}{y}\rangle.$
		This section is non-vanishing away from the point $(1:0)=\infty\in \PP^1$, and in particular it is invertible over $B_r(\ZZ_p:1)\subseteq \PP^1$.
	\end{definition}	
	
	We need to compute the action of  $\Gamma_{\!0}(p)$ on $s$ over $B_r(\ZZ_p:1)\subseteq \G_a\subseteq \P^1$ in terms of the parameter $(z:1)$. For later reference in the Hilbert case, we record this in diagrammatic fashion:
	\begin{lem}\label{l: action of Gamma_0(p) on the section}
		Let $\gamma=\smallmatrd{a}{b}{c}{d}\in \Gamma_{\!0}(p)$,
		then $\gamma^{\ast}s = (cz+d)\gamma$,  i.e.\ the following diagram commutes:
		\begin{center}
			\begin{tikzcd}[row sep = 0.55cm]
			\hat{\G}_m\times 	\T(1) \arrow[r, "\mathrm m"] & 	\T(1) & 	\T(1) \arrow[l, "\gamma"'] \\
			& 	\hat{\G}_a \arrow[r, "\gamma"] \arrow[lu, "{(cz+d)\times s}"] \arrow[u, "{\gamma^{\ast}s}"'] & \hat{\G}_a \arrow[u, " {s}"'].
			\end{tikzcd}
		\end{center}
	\end{lem}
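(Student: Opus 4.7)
The lemma is a direct computation in coordinates. My plan is the following.

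Every $v \in V = C^2$ gives a global section $s_v : \ell \mapsto v + \ell \in V/\ell$ of $\mathcal{O}(1)$, and $v \mapsto s_v$ identifies $V \cong H^0(\P^1, \mathcal{O}(1))$. In this language, the section $s$ of the preceding definition is $s_{e_1}$, where $e_1 = \smallvector{1}{0}$. Over $\hat{\G}_a\subseteq \P^1$, where $\ell_z := \langle \smallvector{z}{1} \rangle$, we have $e_1 \notin \ell_z$, so $s_{e_1}$ trivialises $\mathcal{O}(1)$. Reducing $\smallvector{z}{1} = z\, e_1 + e_2$ modulo $\ell_z$ yields the key identity
\[
s_{e_2}|_{\hat{\G}_a} = -z \cdot s_{e_1}|_{\hat{\G}_a}.
\]

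To compute $\gamma^* s$, I would use that under the $\GL_2$-equivariance inherited from the convention $\alpha \mapsto \alpha \circ \gamma^\vee$ in \eqref{eqn:action of GL2} (which is the action under which $\pi_{\HT}$ is equivariant and which gives the formula in Lem.~\ref{l: action of Gamma on B_1(0)}), the pullback of a linear section is $\gamma^* s_v = s_{\gamma^\vee v}$ for $\gamma^\vee = \det(\gamma)\gamma^{-1}$. Applied to $v = e_1$, this gives $\gamma^\vee e_1 = \smallvector{d}{-c}$, so
\[
\gamma^* s \;=\; s_{\gamma^\vee e_1} \;=\; d \cdot s_{e_1} \;-\; c \cdot s_{e_2},
\]
and substituting the identity from the previous paragraph yields $\gamma^* s = (cz+d) \cdot s$ on $\hat{\G}_a$. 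That $cz+d$ lies in $\hat{\G}_m$ (not merely $\hat{\G}_a$) is because $c \in p\ZZ_p$, $d \in \ZZ_p^\times$ and $|z| \leq 1$ force $|cz+d| = 1$, as in the proof of Lem.~\ref{l: action of Gamma on B_1(0)}. The commutative diagram in the statement is then a diagrammatic repackaging of this identity, combined with the tautological description of $\gamma^* s$ as pullback along the $\gamma$-action on $\hat{\G}_a$ and $\T(1)$.

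The only delicate point is tracing through the $\GL_2$-conventions in the second step: the action of $\gamma$ on $\T(1)$ induced via $\pi_{\HT}$ from \eqref{eqn:action of GL2} goes through $\gamma^\vee$, not $\gamma$ itself, and this is precisely what makes the factor of automorphy come out to $(cz+d)$ without an additional $\det(\gamma)^{-1}$ --- matching the classical factor of automorphy for complex modular forms, as already alluded to in Rem.~\ref{rem:canonical}.
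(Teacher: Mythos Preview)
Your proof is correct and follows essentially the same approach as the paper. The paper's argument is the one-line computation $\gamma^* s(z)=\gamma^{\vee}\cdot \smallvector{1}{0}=\smallvector{d}{-c}\equiv \smallvector{d}{-c}+c\smallvector{z}{1} =(cz+d)\smallvector{1}{0}\bmod \langle \smallvector{z}{1}\rangle$, which is exactly your computation with the intermediate identity $s_{e_2}=-z\,s_{e_1}$ absorbed into the congruence; your remarks on the $\gamma^\vee$-convention and on $|cz+d|=1$ also match the paper's handling (the latter already appeared in Lem.~\ref{l: action of Gamma on B_1(0)}).
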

	\begin{proof}
		We first note that the equivariant action of $\Gamma_{\!0}(p)$ on $\OO(1)$ which is compatible with \eqref{eqn:action of GL2} is by letting $\gamma$ act via $\det(\gamma)^{-1}\gamma$. In particular, $\gamma^{-1}$ acts as $\gamma^{\vee}$.  The natural fibre action of $\gamma$ on $s$ is by $\gamma^* s = \gamma^{-1}\circ s \circ \gamma$, i.e.\ the right square commutes by definition. We therefore have
		\begin{equation}\label{eq:proof-of-gamma*s = (cz+d)gamma}
		\gamma^* s(z)=\gamma^{\vee}\cdot \smallvector{1}{0}=\smallvector{d}{-c}\equiv \smallvector{d}{-c}+c\smallvector{z}{1} =\smallvector{cz+d}{0} = (cz+d)\smallvector{1}{0}\bmod  \langle \smallvector{z}{1}\rangle,
		\end{equation}
		which shows that 
		$\gamma^* s = (cz+d)s$ as desired.
	\end{proof}
	
	\begin{definition}\label{d:section-mathfrak-s-elliptic}
		Let $\mathfrak s:=\pi_{\HT}^{\ast}(s)$ be the pullback of $s$ to a section of $\pi_{\HT}^*\mathcal{O}(1) = q^*\omega$.
	\end{definition}
	Since the isomorphism $\pi_{\HT}^*\mathcal{O}(1)$ is equivariant for the $\Gamma_{\!0}(p)$-action, the action of $\Gamma_{\!0}(p)$ on $\mathfrak s$ is
	\begin{equation}\label{equation: 	gamma* mf s = (c mf z +d) mf s}
	\gamma^* \mathfrak s = (c\mathfrak z+d)\mathfrak s,
	\end{equation}
	where we recall $\mathfrak{z}$ is the pullback of the parameter $z$ to a function on $\XX^{\ast}_{\Gamma(p^\infty)}(\epsilon)_a$. We have the following  consequence of Lem.~\ref{l: moduli interpretation of the isomorphism q*w=pi_HT^*O(1)}.

	\begin{prop}\label{proposition: moduli interpretation of s in terms of HT}
		Let $x$ be a $(C,C^+)$-point of $\mathcal X^{}_{\Gamma(p^\infty)}(\epsilon)_a$ corresponding to a triple $(E,\mu,\alpha:\ZZ_p^2\isorightarrow T_pE)$. Then via the isomorphism $\pi_{\HT}^{\ast}\mathcal O(1)=\omega_{\Gamma(p^\infty)}$, we have
		\[ \mathfrak s(x)=\HT(\alpha(e_1))\in \omega_E.\] 
	\end{prop}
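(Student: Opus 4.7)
The plan is to prove this by unwinding the moduli interpretations already established for the two ingredients: the section $s$ in Defn.~\ref{l: moduli interpretation of section s on PP^1}, and the isomorphism $\pi_{\HT}^{\ast}\mathcal O(1)=\omega_{\Gamma(p^\infty)}$ in Lem.~\ref{l: moduli interpretation of the isomorphism q*w=pi_HT^*O(1)}. Since $\mathfrak s$ is defined as $\pi_{\HT}^{\ast} s$, the value $\mathfrak s(x)$ is by definition $s(\pi_{\HT}(x))$ transported via the identification of fibres.

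First, I would identify $\pi_{\HT}(x) \in \mathbb{P}^1(C)$. Recall that the Hodge--Tate period map sends $x$ to the point in $\mathbb{P}^1(C)$ determined by the Hodge filtration; concretely, under the trivialisation $\alpha:\ZZ_p^2 \isorightarrow T_pE$, the point $\pi_{\HT}(x)$ is the line $L := \alpha^{-1}(\ker \HT_E) \subset C^2$, where $\HT_E : T_pE \otimes_{\ZZ_p} C \to \omega_E$ is the Hodge--Tate map. Plugging into Defn.~\ref{l: moduli interpretation of section s on PP^1}, the value of $s$ at this point is the pair
\[
\big( C^2 \twoheadrightarrow C^2/L,\ e_1 + L \in C^2/L \big),
\]
viewed as a point of the total space $\mathcal T(1)$ over $\pi_{\HT}(x)$.

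Next I would apply Lem.~\ref{l: moduli interpretation of the isomorphism q*w=pi_HT^*O(1)} to transport this pair to the fibre of $\omega_{\Gamma(p^\infty)}$ at $x$. That lemma says that the isomorphism $\pi_{\HT}^{\ast}\mathcal O(1) = \omega_{\Gamma(p^\infty)}$ identifies a point $(\varphi: C^2 \twoheadrightarrow Q,\ y \in Q)$ of $\mathcal T(1)_{\pi_{\HT}(x)}$ with a point of $\omega_E$ by using the composite $\HT_E \circ \alpha : C^2 \to \omega_E$ to identify the quotient $Q = C^2/L$ with $\omega_E$. Under this identification, the element $e_1 + L \in C^2/L$ corresponds to $\HT_E(\alpha(e_1)) \in \omega_E$. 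Concatenating the two steps yields $\mathfrak s(x) = \HT(\alpha(e_1))$.

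There is no real obstacle here; the statement is essentially a formal verification once Lem.~\ref{l: moduli interpretation of the isomorphism q*w=pi_HT^*O(1)} and Defn.~\ref{l: moduli interpretation of section s on PP^1} are in place. The only point requiring mild care is the compatibility of the quotient $C^2/L$ with $\omega_E$ via $\HT_E \circ \alpha$, which follows from the Hodge--Tate exact sequence $0 \to \mathrm{Lie}(E^\vee)(1) \to T_pE \otimes_{\ZZ_p} C \xrightarrow{\HT_E} \omega_E \to 0$ together with the fact that $\alpha^{-1}(\ker \HT_E) = L$ by the definition of $\pi_{\HT}$.
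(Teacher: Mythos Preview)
Your proof is correct and follows exactly the approach the paper has in mind: the statement is announced there as an immediate consequence of Lem.~\ref{l: moduli interpretation of the isomorphism q*w=pi_HT^*O(1)}, and the explicit proof given later in the Hilbert analogue (Lem.~\ref{l:moduli interpretation of s in terms of HT in Hilbert}) unwinds the definitions in precisely the way you do, combining the moduli description of $s$ with that of the isomorphism $\pi_{\HT}^{\ast}\mathcal O(1)=\omega_{\Gamma(p^\infty)}$.
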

	
	\begin{prop}\label{prop: comparison to classical modular forms}
		Let $\kappa : x\mapsto x^k$. Then there is a natural isomorphism
		$\omega^\kappa\cong \omega^{\otimes k}.$
	\end{prop}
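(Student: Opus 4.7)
The plan is to construct the isomorphism by multiplication by $\mathfrak{s}^{\otimes k}$. Recall from Defn.~\ref{d:section-mathfrak-s-elliptic} that $\mathfrak{s}:=\pi_{\HT}^{\ast}s$ is a section of $q^{\ast}\omega_{\Gamma_{\!0}(p)}=\omega_{\Gamma(p^\infty)}$ on $\mathcal{X}^{\ast}_{\Gamma(p^\infty)}(p\epsilon)_a$; by Prop.~\ref{proposition: X_w(0) and anticanonical locus} the map $\pi_{\HT}$ sends this space into $B_r(\mathbb{Z}_p:1)$, on which $s$ is invertible, so $\mathfrak{s}$ is nowhere vanishing. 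Its $\Gamma_{\!0}(p)$-transformation is $\gamma^{\ast}\mathfrak{s}=(c\mathfrak{z}+d)\mathfrak{s}$ (see \eqref{equation: 	gamma* mf s = (c mf z +d) mf s}), so for $\kappa(x)=x^k$ we have $\gamma^{\ast}\mathfrak{s}^{\otimes k}=\kappa(c\mathfrak{z}+d)\mathfrak{s}^{\otimes k}$.

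The first step is to show that the rule $f\mapsto f\cdot\mathfrak{s}^{\otimes k}$ defines an isomorphism $\omega_1^{\kappa}\cong\omega_{\Gamma_{\!0}(p)}^{\otimes k}$ of sheaves on $\mathcal{X}^{\ast}_{\Gamma_{\!0}(p)}(p\epsilon)_a$. Given $f$ with $\gamma^{\ast}f=\kappa^{-1}(c\mathfrak{z}+d)f$, the product $f\cdot\mathfrak{s}^{\otimes k}$ is a $\Gamma_{\!0}(p)$-invariant section of $q_{\ast}q^{\ast}\omega_{\Gamma_{\!0}(p)}^{\otimes k}$. Trivialising the line bundle $\omega_{\Gamma_{\!0}(p)}^{\otimes k}$ locally on the base reduces the identification of invariants to Prop.~\ref{prop: G-invs}, which gives $(q_{\ast}q^{\ast}\omega_{\Gamma_{\!0}(p)}^{\otimes k})^{\Gamma_{\!0}(p)}=\omega_{\Gamma_{\!0}(p)}^{\otimes k}$; the inverse map is division by the nowhere-vanishing $\mathfrak{s}^{\otimes k}$.

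The second step is to identify $\AL_{\ast}\omega_{\Gamma_{\!0}(p)}^{\otimes k}$ with $\omega^{\otimes k}$ on $\mathcal{X}^{\ast}(\epsilon)$. On the anticanonical locus, the Atkin--Lehner map corresponds on moduli to $(E,D)\mapsto E/D$, so $\AL^{\ast}\omega=\omega_{E/D}$ while $\omega_{\Gamma_{\!0}(p)}=\omega_E$. Over our rigid analytic base (which is of characteristic zero) the quotient isogeny $E\to E/D$ is étale, hence induces a canonical isomorphism $\omega_{E/D}\xrightarrow{\sim}\omega_E$, i.e.\ $\AL^{\ast}\omega\cong\omega_{\Gamma_{\!0}(p)}$. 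Pushing forward along the isomorphism $\AL$ then yields $\omega^{\kappa}=\AL_{\ast}\omega_1^{\kappa}\cong\AL_{\ast}\AL^{\ast}\omega^{\otimes k}\cong\omega^{\otimes k}$, as desired.

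The main thing to be careful about is behaviour at the cusps, where the universal elliptic curve degenerates and the preceding isogeny argument needs to be reinterpreted for the Tate curve. This can be dealt with via the perfectoid Tate-parameter discs used already in the proof of Prop.~\ref{prop: G-invs}: on the formal neighbourhood of a cusp, $\mathfrak{s}$ identifies with the canonical invariant differential of the Tate curve, and the quotient $E\to E/D$ becomes a map of the form $\mathbb{G}_m/q^{\mathbb{Z}}\to\mathbb{G}_m/q^{p\mathbb{Z}}$, which is still étale and induces an isomorphism on invariant differentials, so the construction extends across $\mathcal{X}^{\ast}\setminus\mathcal{X}$.
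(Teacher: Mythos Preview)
Your argument is correct and follows the same route as the paper: trivialise $q^{\ast}\omega^{\otimes k}$ by $\mathfrak{s}^{\otimes k}$, use the transformation law $\gamma^{\ast}\mathfrak{s}=(c\mathfrak{z}+d)\mathfrak{s}$, and identify $\Gamma_{\!0}(p)$-invariants via Prop.~\ref{prop: G-invs}. The paper's proof stops after establishing $\omega_1^{\kappa}\cong\omega_{\Gamma_{\!0}(p)}^{\otimes k}$ on $\mathcal{X}^{\ast}_{\Gamma_{\!0}(p)}(\epsilon)_a$ and leaves the Atkin--Lehner descent to $\mathcal{X}^{\ast}(\epsilon)$ implicit; you make this step explicit, which is a reasonable addition.

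One small slip: in your boundary discussion, the direction of the Tate-curve quotient is reversed. With the anticanonical parametrisation of the paper (Prop.~\ref{prop: G-invs}), the Tate curve on $\mathcal{X}^{\ast}_{\Gamma_{\!0}(p)}(\epsilon)_a$ is $\mathbb{G}_m/q^{p\mathbb{Z}}$, the anticanonical subgroup is $\langle q\rangle\cong\mathbb{Z}/p$, and the quotient map is $\mathbb{G}_m/q^{p\mathbb{Z}}\to\mathbb{G}_m/q^{\mathbb{Z}}$ induced by the identity on $\mathbb{G}_m$, not the map you wrote. This corrected map is visibly \'etale with kernel $\mathbb{Z}/p$ and induces an isomorphism on invariant differentials, so your conclusion stands. (Your general characteristic-zero argument already covers this: a finite flat $p$-group over a base on which $p$ is invertible is \'etale fibrewise, hence \'etale, so the isogeny $E\to E/D$ is \'etale regardless of the explicit model.)
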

	\begin{proof}
		As $\mathfrak s$ is a non-vanishing section of $q^{\ast}\omega$ over $\XX^{\ast}_{\Gamma(p^\infty)}(\epsilon)_a$, the sections of $q^{\ast}\omega^{\otimes k}$ are all of the form $f\cdot \mathfrak s^{\otimes k}$ for $f\in \OO(\XX^{\ast}_{\Gamma(p^\infty)}(\epsilon)_a)$. Of these sections, the ones coming from sections of $\omega^{\otimes k}$ -- that is, those defined over $\XX^{\ast}_{\Gamma_{\!\scaleto{0}{3pt}}(p)}(\epsilon)_a$ -- are exactly the $\Gamma_{\!0}(p)$-equivariant ones. But
		\[\gamma^{\ast}(f\cdot\mathfrak s^{\otimes k})=\gamma^\ast f \cdot \gamma^\ast\mathfrak s^{\otimes k}\stackrel{\eqref{equation: 	gamma* mf s = (c mf z +d) mf s}}{=\joinrel=}\gamma^{\ast}f \cdot (c\mathfrak z+d)^k\mathfrak s^{\otimes k} \quad\text{ for all }\gamma\in\Gamma_{\!0}(p).\]
		The $\Gamma_{\!0}(p)$-equivariance of $\gamma^{\ast}(f\cdot\mathfrak s^{\otimes k})$ is thus equivalent to $\gamma^\ast f= (c\mathfrak z+d)^{-k}f = \kappa^{-1}(c\mathfrak z+d)f$.
	\end{proof}
	\begin{remark}\label{r:sign-difference-autom-factor-padic-vs-cpx}
		While the analogy to the complex situation is very close, one notable difference is that on the complex upper half plane $\mathcal H$ the canonical differential $\eta_{\mathrm {can}}$ satisfies $\gamma^\ast  {\eta_{\mathrm {can}}} = (cz+d)^{-1} \eta_{\mathrm {can}}$, whereas on $\XX^{\ast}_{\Gamma(p^\infty)}(\epsilon)_a$ one has $\gamma^\ast \mathfrak s = (c\mathfrak z+d)\mathfrak s$. The different signs can be explained as follows. Both constructions of modular forms depend on a canonical trivialisation of the automorphic bundle $\omega$ on the covering space, which is $\mathcal H$ in the complex case and $\XX^{\ast}_{\Gamma(p^\infty)}(\epsilon)_a$ in the $p$-adic case. But there is a sign difference in the canonical trivialisation: consider the universal trivialisation $\alpha:\ZZ^2\isorightarrow  H_1(E,\ZZ)$ on $\mathcal H$ and let $\alpha_i$ denote the image of the standard basis vector $e_i$ of $\ZZ^2$. Then the canonical non-vanishing differential $\eta_{\mathrm {can}}$ is defined to be the unique differential such that $\int_{\alpha_1}\eta_{\mathrm {can}}=1$ under the pairing 
		\[\textstyle\int:H_1(E,\ZZ)\rightarrow \omega^{\vee}_E,\quad \alpha\mapsto \left(w\mapsto \int_{\alpha}w\right).\]
		As a consequence, when we denote by $q^{\ast}\omega$ the pullback of $\omega$ via $\mathcal H\to Y=\SL_2(\ZZ)\backslash\mathcal H$, then using the natural period map $\iota:\mathcal H\hookrightarrow \PP^1$, $\alpha\mapsto \left(\int_{\alpha_2}w_{\mathrm {can}}:\int_{\alpha_1}w_{\mathrm {can}}\right)$
		we have a natural isomorphism
		$q^{\ast}\omega \cong \iota^{\ast}\OO(-1).$
		On the other hand, in the $p$-adic case, the trivialisation is given using the image $\alpha_1$ of $e_1$ under the Hodge--Tate map $\alpha:\ZZ_p^2\to T_pE$. The canonical differential is then the image of $\alpha_1$ under
		$\HT:T_pE=H^{\operatorname{\acute{e}t}}_1(E,\ZZ_p)\rightarrow \omega_{E^{\vee}}=\omega_E$
		and as discussed earlier, the period map $\pi_{\HT}:\XX^{\ast}_{\Gamma(p^\infty)}(\epsilon)_a \to \PP^1$ therefore induces an isomorphism
		$q^{\ast}\omega_E\cong\pi_{\HT}^{\ast}\OO(1).$
		
		In summary, in the complex case one trivialises $\omega_E^{\vee}$ whereas in the $p$-adic case it is $\omega_{E^{\vee}}$, therefore one description of $\omega_E$ is by comparing to $\OO(-1)$ whereas the other uses $\OO(1)$. It is this difference that ultimately leads to the different signs in the definition of modular forms.		
	\end{remark}	
	{\color{black}
	\subsection{Comparison to Katz' convergent modular forms}
    For $\epsilon=0$, it has long been known how to construct sheaves of $p$-adic modular forms, going back to \cite[\S4]{kertz}. We briefly present the construction here in the adic language, and sketch how it compares to our setting.
	   
	   Let us for simplicity assume $\mathcal U=\Spa(L)$; the discussion applies without changes for general $\mathcal U$.
	  Let $\XX^\ast_{\Ig(p^n)}(0)\to \XX^\ast(0)$ be the $n$-th Igusa curve, i.e.\ the $(\Z/p^n\Z)^\times$-torsor parametrising trivialisations $\Z/p^n\Z{\isorightarrow} H_n^\vee$ of the canonical subgroup. Since this has a natural finite \'etale formal model, we can form the inverse limit $\XX^\ast_{\Ig(p^\infty)}(0)\to \XX^\ast(0)$ over $n$ as a sousperfectoid space. This is a pro-\'etale $\Z_p^\times$-torsor known as the Igusa tower, relatively parametrising isomorphisms $\Z_p{\isorightarrow} T_pH^\vee$. As we will see in more detail in the next section, there is a commutative diagram
	    \[\begin{tikzcd}
        \mathcal X^{\ast}_{\Gamma(p^\infty)}(0)_a \arrow[r] \arrow[d,"\mathfrak t"] & \mathcal X^\ast_{\Gamma_0(p)}(0)_a \arrow[d] \\
        \mathcal{X}^\ast_{\Ig(p^\infty)}(0) \arrow[r] & \mathcal X^\ast(0)
        \end{tikzcd}\]
        where $\mathfrak t$ is given by using the canonical isomorphism $H_n^\vee=E[p^n]/H_n$ and sending a trivialisation $\alpha:\Z_p^2{\isorightarrow} T_pE$ to
        \[ 
        \Z_p\xrightarrow{(1,0)}\Z_p^2{\color{red}\isorightarrow} T_pE\xrightarrow{\alpha} T_pH^\vee.
        \]
        
        One now observes that in the case of $\epsilon=0$, the function $c\mathfrak z+d$ is of the form
	    \[c\mathfrak z+d:\XX^{\ast}_{\mathcal U,\Gamma(p^\infty)}(0)\to \Z_p^\times,\]
	    where as usual we consider $\Z_p^\times$ as a profinite adic space.
        One now checks that $\mathfrak t$ is equivariant with respect to the map sending $\smallmatrd{a}{b}{c}{d}\mapsto c\mathfrak z+d$, in the sense that the following diagram commutes:
         \[\begin{tikzcd}
        \Gamma_0(p)\times \mathcal X^{\ast}_{\Gamma(p^\infty)}(0)_a \arrow[r] \arrow[d,"(c\mathfrak z+d)\times \mathfrak t"] & \mathcal X^{\ast}_{\Gamma(p^\infty)}(0)_a \arrow[d,"\mathfrak t"] \\
        \Z_p^\times\times\mathcal{X}^\ast_{\Ig(p^\infty)}(0) \arrow[r] & \mathcal{X}^\ast_{\Ig(p^\infty)}(0).
        \end{tikzcd}\]
        It follows formally (e.g.\ \cite[Lemma~2.8.4]{heuer-thesis}) that $\omega^{\kappa}$ is the pullback of the pro-\'etale line bundle on $\mathcal X^\ast(0)$ associated to the cocycle
        \[ \kappa:\Z_p^\times\to \O(\mathcal U)^\times.\]
    
    Returning to general $\epsilon\geq 0$ and $\mathcal U$,
	we now use this to give a new proof that $\omega^{\kappa}$ is analytic:
\begin{proposition}\label{p:w-is-analytic}
        For any $0 \leq \e< \ed$, the sheaf $\omega^{\kappa}$ is an analytic line bundle on $\XX^{\ast}_{\mathcal U}(\epsilon)$.
	\end{proposition}
	\begin{proof}
	    Away from the boundary, it is clear that $\omega^{\kappa}$ is a pro-\'etale line bundle, i.e.\ an invertible module over the completed structure sheaf of $\XX_{\mathcal U}(\epsilon)_{\proet}$, as it is defined via a descent datum for the pro-\'etale torsor $\mathcal X_{\Gamma_0(p^\infty)}(\epsilon)_a\to \mathcal X_{\Gamma_0(p)}(\epsilon)_a$. The crucial point is now that by
	    \cite[Cor.~3.5]{heuer-v_lb_rigid}, such a pro-\'etale line bundle is already  an analytic line bundle if it is analytic on any Zariski-dense open subspace of $\XX_{\mathcal U}(\epsilon)$. We can thus reduce to proving the statement over the ordinary locus, including the boundary; the analyticity will then automatically overconverge.

        We now use that the Igusa tower admits a formal model which is still a pro-\'etale $\Z_p^\times$-torsor. It follows from \cite[Prop.~3.8]{heuer-v_lb_rigid} that $\omega^\kappa$ on $\mathcal X^{\ast}(0)$ is locally trivial in the analytic topology.
	\end{proof}
	\begin{remark}
	    Alternatively, one could use the analyticity criterion \cite[Cor.~3.6]{heuer-v_lb_rigid}, which says that a pro-\'etale line bundle on $\mathcal X^\ast(\epsilon)\times \mathcal U$ is analytic if it is analytic in each fibre of a Zariski-dense subset of points in each factor. It is clear that $\omega^{\kappa}$ becomes trivial over the fibre of any $x\in \mathcal X^\ast(\epsilon)(C)$ because the torsor $\mathcal X^\ast_{\Gamma(p^\infty)}(\epsilon)\to \mathcal X^{\ast}(\epsilon)$ becomes split over $x$. On the other hand, Prop.~\ref{prop: comparison to classical modular forms} says that $\omega^\kappa$ is analytic over the Zariski-dense set of classical points of $\mathcal W$.
	\end{remark}
	}

	%%%%%%%%%%%%%%%%%%%%%%%%%%%%%%%%%%%%%%%%%%%%%%%%%%%%%%%%%%%%%%%%%%
	%.        Comparison to AIP
	%%%%%%%%%%%%%%%%%%%%%%%%%%%%%%%%%%%%%%%%%%%%%%%%%%%%%%%%%%%%%%%%%%
	
	\section{Comparison with Andreatta--Iovita--Pilloni's modular forms}\label{sec:AIP elliptic}
	In this section, we prove that the sheaves of $p$-adic modular forms defined above are canonically isomorphic to those defined  \textcolor{black}{by Pilloni in \cite{pilloni2013formes}} and Andreatta--Iovita--Pilloni in \cite{AIP2}. In order to distinguish {\color{black}their construction from ours}, we shall denote the latter sheaf by $\omega_{\AIP}^{\kappa}$. This extends the comparison for classical weights proved in Prop.~\ref{prop: comparison to classical modular forms} above.
	We first briefly summarise the construction.
	\subsection{The Pilloni-torsor}
	The construction of $\omega^{\k}_{\mathrm{AIP}}$ relies on the Pilloni-torsor $\mathcal F_m(\epsilon)$. In this subsection, we will recall its definition, and show how the section $\mathfrak s$ from Defn~\ref{d:section-mathfrak-s-elliptic} induces a natural map $\XX^{\ast}_{\Gamma(p^\infty)}(\epsilon)_a\to \mathcal F_m(\epsilon)$ into the Pilloni-torsor, allowing a direct comparison of the modular forms in \cite{AIP2} with those defined in Defn.~\ref{Definition:overconvergent modular forms via perfectoid varieties} above. {\color{black}This is very similar to  \cite[\S 2.7]{CHJ}, which also relies on the section $\mathfrak s$, but the present setup makes the comparison slightly easier:  it allows for a global comparison map from the perfectoid modular curve to the Pilloni torsor, which makes it possible to avoid auxiliary choices.}
	
	We shall discuss the Pilloni-torsor in an analytic setting, rather than dealing with normal formal schemes like in \cite{AIP2}. 
	This is mainly to avoid discussing normalisations in our non-Noetherian setting over $\O_L$ --  while this is still possible, it would require more work.
	
	\begin{definition}\label{defn: Ig tower}
		For any $m\in \Z_{\geq 1}$, let $0\leq \epsilon\leq \epsilon^{\mathrm{can}}_{m}:=1/p^{m+1}$. Like in \cite[Definition III.2.12]{torsion}, one can define a canonical formal model $\mathfrak X^{\ast}(\epsilon)$ of $\mathcal X^{\ast}(\epsilon)$ with a semi-abelian formal scheme $\mathfrak E\to \mathfrak X^{\ast}(\epsilon)$.
		By \cite[Cor.~A.2]{AIP2}, this admits a canonical subgroup $\mathfrak H_m\subseteq  \mathfrak E$ of rank $p^m$ characterised by the property that its reduction mod $p^{1-\epsilon}$ equals $\ker F^m$ where $F$ is the relative Frobenius. Let $H_m\subseteq \mathcal E\to  \mathcal X^{\ast}(\epsilon)$ be the adic generic fibre, this is the universal canonical subgroup of the semi-abelian adic space over $\mathcal X^{\ast}(\epsilon)$  (cf the discussion in \S\ref{modular curves}).
		
		The Igusa curve $\XX^{\ast}_{\mathrm{Ig}(p^m)}(\epsilon)\to \mathcal X^{\ast}(\epsilon)$ is now the finite \'etale $(\Z/p^m\Z)^{\times}$-torsor which relatively represents isomorphisms of group schemes $\Z/p^m\Z\to H_m^{\vee}$.
	\end{definition}
	Consider the pullback  $\omega_{\mathrm{Ig}(p^m)}$  of the conormal sheaf $\omega$ to $\XX^{\ast}_{\mathrm{Ig}(p^m)}(\epsilon)$, and denote the total space of  $\omega_{\mathrm{Ig}(p^m)}$ by $\mathcal T_m(\epsilon)\rightarrow \XX^{\ast}_{\mathrm{Ig}(p^m)}(\epsilon)$.
	Following \cite{pilloni2013formes}, the Pilloni-torsor is now a certain open subspace $\mathcal{F}_{m}(\epsilon)\subseteq \mathcal T_m(\epsilon)$. We recall this in an analytic setting.
	\begin{definition}
		For any formal scheme $\mathfrak Y$, the generic fibre $\mathcal Y=\mathfrak Y_{\eta}^{\ad}$ comes equipped with a  morphism of locally ringed spaces $s:(\mathcal Y,\O_{\mathcal Y}^+)\to \mathfrak Y$. For any coherent sheaf $\mathfrak G$  on $\mathfrak Y$ with generic fibre $\mathcal G$, this gives rise to an integral $\O_{\mathcal Y}^+$-module $\mathcal G^+:=s^{\ast}\mathfrak G$ on $\mathcal Y$.
	\end{definition}
	Applying this to the conormal sheaf of $\mathfrak E\to \mathfrak X^{\ast}(\epsilon)$  and pulling back to $\XX^{\ast}_{\Ig(p^m)}(\epsilon)$, we see that the sheaf $\omega_{\mathrm{Ig}(p^m)}$ carries a natural integral structure $\omega_{\mathrm{Ig}(p^m)}^+\subseteq \omega_{\mathrm{Ig}(p^m)}$. The same construction applied to the conormal sheaf of the canonical subgroup $\mathfrak H_m\subseteq \mathfrak E\to \mathfrak X^{\ast}(\epsilon)$ gives a morphism of $\O_{\XX^{\ast}_{\Ig(p^m)}(\epsilon)}^+$-modules $\pi:\omega_{\mathrm{Ig}(p^m)}^+\to \omega_{H_m}^+$. Let $\mathrm{Hdg}$ be the Hodge ideal on $\mathcal X_{\Ig(p^m)}^{\ast}(\epsilon)$ \cite[\S3.1]{AIP2}.
	\begin{lem}\label{l:ses-of-integ-conormal-sheaves}
		The following sequence of $\O_{\mathcal X_{\Ig(p^m)}^{\ast}(\epsilon)}^+$-modules is exact:
		\[ 0\to p^m\mathrm{Hdg}^{-\frac{p^m-1}{p-1}}\omega_{\mathrm{Ig}(p^m)}^+\lra \omega_{\mathrm{Ig}(p^m)}^+\xrightarrow{\ \ \pi\ \ } \omega_{H_m}^+\to 0.\]
	\end{lem}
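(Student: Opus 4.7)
The claim is local on $\mathcal{X}^{\ast}_{\Ig(p^m)}(\epsilon)$, so I would first pass to an affinoid open on which the invertible $\mathcal{O}^+$-module $\omega^+_{\Ig(p^m)}$ is trivialised by a generating section. Surjectivity of $\pi$ is clean: by construction $\pi$ arises from the closed immersion $\mathfrak H_m \hookrightarrow \mathfrak E$ of formal group schemes over $\mathfrak X^{\ast}(\epsilon)$ by restricting translation-invariant differentials along the identity section, and closed immersions always induce surjections on K\"ahler differentials, so the induced map of $\mathcal{O}^+$-modules $\omega^+_{\mathfrak E} \twoheadrightarrow \omega^+_{\mathfrak H_m}$ is surjective, and then so is $\pi$ after pullback to $\mathcal{X}^{\ast}_{\Ig(p^m)}(\epsilon)$.

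Because $\omega^+_{\Ig(p^m)}$ is invertible and $\pi$ is $\mathcal{O}^+$-linear, its kernel is locally of the form $J\cdot\omega^+_{\Ig(p^m)}$ for a unique ideal $J\subseteq\mathcal{O}^+$, and the task becomes the identification $J = p^m\mathrm{Hdg}^{-(p^m-1)/(p-1)}$. Note that this ideal does lie in $\mathcal{O}^+$ on $\mathcal X^{\ast}(\epsilon)$ since $\epsilon\leq 1/p^{m+1}$ forces $v(\mathrm{Hdg}^{(p^m-1)/(p-1)})\leq m$. To identify $J$, one considers the quotient isogeny $\phi:\mathfrak E \to \mathfrak E/\mathfrak H_m$, which on generic fibres produces the short exact sequence
\[ 0\to \phi^{\ast}\omega_{\mathfrak E/\mathfrak H_m}\to \omega_{\mathfrak E}\to \omega_{\mathfrak H_m}\to 0,\]
so that $\phi^{\ast}\omega^+_{\mathfrak E/\mathfrak H_m}$ is precisely $\ker(\pi)$ inside $\omega^+_{\mathfrak E}$.

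The essential input is then the degree formula for the canonical subgroup, due to Fargues and recorded in the elliptic case in \cite[App.~A]{AIP2}, which computes $\deg(\mathfrak H_m) = m - \tfrac{p^m-1}{p-1}\cdot v(\mathrm{Hdg})$. Translated into the language of conormal sheaves (using that $\omega^+_{\mathfrak H_m}$ is generated by one element with known annihilator ideal), this says exactly that $\omega^+_{\mathfrak H_m}$ is locally isomorphic to $\mathcal{O}^+/(p^m\mathrm{Hdg}^{-(p^m-1)/(p-1)})$, and combined with the surjectivity of $\pi$ this gives the kernel formula. The nontrivial step is of course the degree theorem itself, which rests on a detailed analysis of the canonical subgroup; once it is invoked the remainder of the argument is formal.
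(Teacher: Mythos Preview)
Your proposal is correct and takes essentially the same approach as the paper: both reduce to identifying the annihilator ideal of $\omega^+_{\mathfrak H_m}$ and cite \cite[App.~A]{AIP2} for the canonical subgroup input. The only cosmetic difference is that the paper works with the finite flat quotient $\mathfrak E[p^m]/\mathfrak H_m$ (citing the annihilator of its conormal sheaf from \cite[Cor.~A.4.2]{AIP2}) rather than the isogeny $\mathfrak E\to\mathfrak E/\mathfrak H_m$ and the degree of $\mathfrak H_m$, but these are equivalent via additivity of degree along $0\to\mathfrak H_m\to\mathfrak E[p^m]\to\mathfrak E[p^m]/\mathfrak H_m\to 0$.
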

	\begin{proof}
		We have an exact sequence $0 \to \omega_{\mathfrak E[p^m]/\mathfrak H_m} \to \omega_{\mathfrak E[p^m]} \to \omega_{\mathfrak H_m} \to 0$ over $\mathfrak X^{\ast}(\epsilon)$. The middle term is $\omega_{\mathfrak E}/p^m$, whilst the first term has annihilator $\mathrm{Hdg}^{(p^m-1)/(p-1)}$ by \cite[Cor. A.4.2]{AIP2}. We thus have an isomorphism $\omega_{\mathfrak E}/p^m\mathrm{Hdg}^{-(p^m-1)/(p-1)} \cong \omega_{\mathfrak H_m}$. Pulling this back under the morphism of ringed spaces
		$(\mathcal X^{\ast}_{\Ig(p^m)}(\epsilon),\O_{\mathcal X^{\ast}_{\Ig(p^m)}(\epsilon)}^+)\to (\mathcal X^{\ast}(\epsilon),\O_{\mathcal X^{\ast}(\epsilon)}^+)\to \mathfrak X^{\ast}(\epsilon)$ gives the result.
	\end{proof}
	When we now regard $H_m^\vee$ as a sheaf of sections over $\mathcal X_{\Ig(p^m)}^{\ast}(\epsilon)$, we have a morphism of sheaves
	\begin{equation}\label{definition:Hodge--Tate morphism in definition of Pilloni-torsor}
	\begin{tikzcd}
	\psi:\ZZ/p^m\ZZ \arrow[r, "\alpha"] & H_m^\vee  \arrow[r,"\HT"] & \omega^+_{H_m}
	\end{tikzcd}
	\end{equation}
	that defines a canonical section $\psi(1)\in \omega^+_{H_m}$.
	\begin{definition}
		Let $0 \leq \epsilon \leq \epsilon^{\mathrm{can}}_{m}$. The \emph{Pilloni-torsor} $\mathfrak{F}_m(\epsilon)$ is the $\O_{\mathcal X^{\ast}_{\Ig(p^m)}(\epsilon)}^{+}$-module defined by
		\[\mathfrak{F}_m(\epsilon):=\{r\in \omega_{\mathrm{Ig}(p^m)}^+\mid\pi(r)=\psi(1)\}. \]
		Let $\mathcal{F}_m(\e)\hookrightarrow \mathcal T_m(\e) \to\mathcal X^{\ast}_{\Ig(p^m)}(\epsilon)$ be its total space.
		By Lem.~\ref{l:ses-of-integ-conormal-sheaves}, this is an analytic torsor under the group $(1+p^m\Hdg^{-(p^m-1)/(p-1)}\hat{\G}_a)\to \mathcal X^{\ast}_{\Ig(p^m)}(\epsilon)$ and an \'etale torsor over $\XX^{\ast}(\e)$ for the group $B_m:=\Z_p^\times(1+p^m\Hdg^{-(p^m-1)/(p-1)}\hat{\G}_a)$ when combined with $\XX_{\Ig(p^n)}^{\ast}(\e)\to \XX^{\ast}(\e)$.
	\end{definition}
	\begin{defn}
		\begin{enumerate}
			\item For the universal weight $\kappa^{\mathrm{un}}:\W\to \W$, let $T$ be the weight space parameter given by the function $\kappa^{\mathrm{un}}(q)-1$ on $\W$ for a fixed topological generator $q\in\Z_p^\times$. 
			\item For any $k\in \Z_{\geq 1}$, we define the annulus $\mathcal W_{k}:= \mathcal W(|T|^{p^{k}}\leq |p|\leq |T|^{p^{k-1}})$. For $k=0$, we simply take the disc $\W_0:=\mathcal W(|T|\leq |p|)$.
			Then $ \mathcal W=\cup_{k\in \Z_{\geq 0}}\mathcal W_k$.
			\item Let now $\kappa:\mathcal U\to \mathcal W$ be any bounded smooth weight, which we may regard as a function $\kappa:\Z_p^\times\times \mathcal U\to \hat{\G}_m$. We let $|T_{\kappa}|:=\sup_{(x,u)\in\Z_p^\times\times \U} \lvert\kappa(x,u)-1\rvert$ and $|\delta_{\kappa}|:=\max(|p|,|T_{\kappa}|)$. Let $r:=3$ if $p>2$ and $r:=5$ if $p=2$. Then we let $0<\epsilon_{\kappa}$ be implicitly defined by $|p|^{\epsilon_{\kappa}}=|\delta_{\kappa}|^{1/p^{r+1}}$. Finally, for any $k\in \Z_{\geq 0}$, we let $\mathcal U_k:=\kappa^{-1}(\mathcal W_k)$; then $\mathcal U=\cup_{k\in \Z_{\geq 0}}\mathcal U_k$.
		\end{enumerate}
	\end{defn}
	
	\begin{rmrk}
		Once checks easily that $\e_{\k} \leq \ed$. Moreover, we note that by definition, $\ed$ is such that we can define our sheaf of modular forms on $\XX(\e)$ for $0 \leq \e <\ed$, while for $0 \leq \e < \e_\k$ this sheaf will be invertible (although we do not claim this is the optimal bound).
	\end{rmrk}

	The sheaf $\omega_{\mathrm{AIP}}^{\k}$ is then defined in \cite{pilloni2013formes}  and \cite{AIP2} as follows. 
	\begin{definition}Let $\k:\U\to \W$ be a bounded smooth weight. In order to define $\omega_{\AIP}^{\kappa}$, we need to split this up into opens $\U_k$ as in Defn.~\ref{contiuation notation}: Let $0 \leq \e < \e_\kappa$. Fix now any $k \in \ZZ_{\geq 0}$ and set $m=r+k-1$ (this implies $\epsilon_{\kappa}\leq \epsilon_m^{\mathrm{can}}$ and $\e_{\k} \leq \ed$).
		\begin{enumerate}
			\item	Using the projection $\mathrm{pr}:\mathcal F_m(\epsilon)\times \U\to \mathcal X_{\U}^{\ast}(\epsilon)$, we define
			\[\omega^{\k}_{\mathrm{AIP}}|_{\U_k} (V) := \{f\in \mathrm{pr}_\ast\mathcal O_{\mathcal F_m(\epsilon)\times \U}(V)|\gamma^\ast f=\kU^{-1}(\gamma)f \text{ for all }\gamma\in \Z_p^\times(1+p^m\hat{\G}_a)\times \U \}.\] Prop.~\ref{prop: glue AIP sheaf} below shows that these sheaves for different $k\in \Z_{\geq 1}$ can then be glued over $\U$ to get a sheaf $\omega^{\k}_{\mathrm{AIP}}$. We similarly define $\omega^{\k,+}_{\mathrm{AIP}}$ by using $\mathcal O^+_{\mathcal F_m(\epsilon)\times \U}$ instead.
			\item For any $n\in\Z_{\geq 1}$ we set $\omega^{\k,+}_{\mathrm{AIP},n}:=\AL^{n*}\omega^{\k,+}_{\mathrm{AIP}}$ where $\AL^n:\XX^{\ast}_{\Gamma_{\!\scaleto{0}{3pt}}(p^n)}(p^n\epsilon)_a\isorightarrow \XX^{\ast}(\e)$ is the Atkin--Lehner isomorphism (corresponding to the matrix $\smallmatrd{p^n}{0}{0}{1}$). Let $i:\XX^{\ast}_{\Gamma_{\!\scaleto{0}{3pt}}(p^n)}(\epsilon)_a\to \XX^{\ast}_{\Gamma_{\!\scaleto{0}{3pt}}(p^n)}(p^n\epsilon)_a$ be the restriction map. Then by \cite[Th\'eor\`eme~6.2.4]{AIP2}, there is a canonical isomorphism
			\[
			i^{\ast}\omega^{\k,+}_{\mathrm{AIP},n} = q_n^{\ast}\omega^{\k,+}_{\mathrm{AIP}},
			\]
			where $q_n:\XX^{\ast}_{\Gamma_{\!\scaleto{0}{3pt}}(p^n)}(\epsilon)_a\to \XX^{\ast}(\epsilon)$ is the forgetful map. For $n=0$ we let $\omega^{\k,+}_{\mathrm{AIP},0}:=\omega^{\k,+}_{\mathrm{AIP}}$.
			\item We therefore set $\omega^{\k,+}_{\mathrm{AIP},\infty}:=q^{\ast}\omega^{\k,+}_{\mathrm{AIP}}$ where $q:\XX^{\ast}_{\Gamma_{\!\scaleto{0}{3pt}}(p^\infty)}(\epsilon)_a\to \XX^{\ast}(\epsilon)$ is the forgetful map.
		\end{enumerate}
	\end{definition}
	\subsection{The comparison morphism}\label{section: ell comparison}
	The following is the main result of this section.
	\begin{thm}\label{thm:comparison 1}
		\leavevmode
		Let $\k:\U \to \W$ be a bounded smooth weight, let $0\leq \epsilon\leq \epsilon_{\kappa}$, and let $n\in \Z_{\geq 0}\cup\{\infty\}$. Then there is a natural  isomorphism of $\O^+$-modules on $\mathcal X^{\ast}_{\U,\Gamma_{\!\scaleto{0}{3pt}}(p^n)}(\epsilon)_a$
		\[\omega_n^{\k,+} \isorightarrow \omega^{\k,+}_{\mathrm{AIP},n}.\]
		In particular, the $\omega_n^{\k,+}$ are invertible $\O^+$-modules. By inverting $p$, we also obtain isomorphisms of invertible $\O$-modules $\omega_n^{\k} \isorightarrow \omega^{\k}_{\mathrm{AIP},n}$ on $\mathcal X^{\ast}_{\U,\Gamma_{\!\scaleto{0}{3pt}}(p^n)}(\epsilon)_a$. Moreover, this induces a Hecke equivariant isomorphism between the respective spaces of modular forms.
	\end{thm}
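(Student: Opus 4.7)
The plan is to construct an explicit morphism $\Phi_m : \XX^*_{\Gamma(p^\infty)}(\epsilon)_a \to \mathcal F_m(\epsilon)$ of adic spaces over $\XX^*(\epsilon)$, for the integer $m$ attached to $\kappa$ via the definition of $\epsilon_\kappa$, using the canonical section $\mathfrak{s}$ from Defn.~\ref{d:section-mathfrak-s-elliptic}, and then to translate the defining transformation law of $\omega^{\k,+}_{\AIP}$ under pullback into that of $\omega_n^{\k,+}$. On $(C, C^+)$-points, $\Phi_m$ sends a triple $(E, \mu, \alpha)$ to $(E, \mu, \bar\alpha_m, \mathfrak{s})$, where $\bar\alpha_m : \ZZ/p^m \isorightarrow H_m^\vee$ is the Igusa trivialisation induced by $\alpha(e_1) \bmod p^m$ — which is well defined because on the anticanonical locus $\alpha(e_1) \bmod p^m$ has trivial intersection with the canonical subgroup $H_m$. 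By Prop.~\ref{proposition: moduli interpretation of s in terms of HT}, $\mathfrak{s}$ is $\HT(\alpha(e_1))$ at the moduli level, and the defining recipe for $\psi$ then forces $\pi(\mathfrak{s}) = \psi(1)$, so $\mathfrak{s}$ indeed lies in $\mathcal F_m(\epsilon)$.

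Next I would verify equivariance. The identity $\gamma^*\mathfrak{s} = (c\mathfrak{z}+d)\mathfrak{s}$ for $\gamma = \smallmatrd{a}{b}{c}{d} \in \Gamma_{\!0}(p)$ shows that $\Phi_m$ intertwines the $\Gamma_{\!0}(p)$-action on the source with the $B_m$-translation action on $\mathcal F_m(\epsilon)$ via the cocycle $\gamma \mapsto (c\mathfrak{z}+d)$. The main technical obstacle — and the reason behind the bound $\epsilon \leq \epsilon_\kappa$ — is to verify that $(c\mathfrak{z}+d)$ actually lies in $B_m = \ZZ_p^\times \cdot (1 + p^m \Hdg^{-(p^m-1)/(p-1)}\hat{\G}_a)$; this requires a careful estimate of $\mathfrak{z}$ relative to $p^m\Hdg^{-(p^m-1)/(p-1)}$ on the anticanonical locus, which is ultimately what the radius $\epsilon_\kappa$ is designed to ensure. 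Granted this, pullback along $\Phi_m \times \mathrm{id}_\U$ sends the defining transformation law of $\omega^{\k,+}_{\AIP}$ (namely $\gamma^*f = \kappa^{-1}(\gamma) f$ for $\gamma \in B_m$) to the defining law of $\omega_\infty^{\k,+}$ (namely $\gamma^*f = \kappa^{-1}(c\mathfrak{z}+d) f$ for $\gamma \in \Gamma_{\!0}(p)$), producing a natural morphism of $\OO^+$-modules $\omega^{\k,+}_{\AIP,\infty} \to \omega_\infty^{\k,+}$.

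To show this morphism is an isomorphism, I would argue locally on the perfectoid cover: $\Phi_m$ canonically trivialises the pullback of the Pilloni-torsor, so the pullback of any local trivialising section of $\omega^{\k,+}_{\AIP}$ to $\XX^*_{\Gamma(p^\infty)}(\epsilon)_a$ is a non-vanishing function transforming under $\Gamma_{\!0}(p)$ by the cocycle $\kappa^{-1}(c\mathfrak{z}+d)$, and as such locally generates $\omega_\infty^{\k,+}$. The two sheaves thus agree locally, hence globally by equivariant descent. The comparison for arbitrary $n$ then follows: for $n = \infty$ one invokes Prop.~\ref{prop: G-invs} for perfectoid descent; for finite $n$ the Atkin--Lehner identifications built into both sides reduce the statement to the case $n = 1$. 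Inverting $p$ yields the $\OO$-module statement, and Hecke-equivariance is automatic, since $\mathfrak{s}$ and $\Phi_m$ are defined functorially from the universal trivialisation $\alpha$ and hence commute with the natural Hecke correspondences, following \cite[\S 2.7]{CHJ}.
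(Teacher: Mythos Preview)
Your approach is essentially the paper's: build the comparison through the section $\mathfrak s$ into the Pilloni torsor $\mathcal F_m(\epsilon)$ (this is the paper's Lem.~\ref{l: HT from Gamma to F_n}), translate the $B_m$-transformation law into the $\kappa^{-1}(c\mathfrak z + d)$-law via $\gamma^*\mathfrak s = (c\mathfrak z + d)\mathfrak s$, and conclude locally from invertibility of $\omega^{\kappa,+}_{\AIP}$ together with Prop.~\ref{prop: G-invs}. Two points deserve correction.

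First, there is an index slip: you write that the law ``$\gamma^*f = \kappa^{-1}(c\mathfrak z + d)f$ for $\gamma \in \Gamma_0(p)$'' defines $\omega_\infty^{\kappa,+}$, but that law defines $\omega_1^{\kappa,+}$; the sheaf $\omega_\infty^{\kappa,+}$ uses $\Gamma_0(p^\infty)$, where $c = 0$. With $\Phi_m = \mathfrak s$ over the forgetful map, your argument most naturally yields $q_n^*\omega^{\kappa,+}_{\AIP} \cong \omega_n^{\kappa,+}$ for each fixed $n$ by running the equivariance over $\Gamma_0(p^n)$; no reduction to $n=1$ is needed, and ``Atkin--Lehner built into both sides'' is not available as stated (it is only built into the AIP side). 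The paper instead sets $\tilde{\mathfrak s} := \mathfrak s \circ u_n$, which lies over $\AL^n$ and directly matches the definition $\omega^{\kappa,+}_{\AIP,n} := \AL^{n*}\omega^{\kappa,+}_{\AIP}$ on the larger space $\XX^*_{\Gamma_0(p^n)}(p^n\epsilon)_a$.

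Second, you misidentify the ``main technical obstacle''. That $(c\mathfrak z + d)$ lies in $B_m$ is not proved by estimating $\mathfrak z$ against $p^m\Hdg^{-(p^m-1)/(p-1)}$: it is automatic, since both $\mathfrak s$ and $\mathfrak s \circ \gamma$ land in the $B_m$-torsor $\mathcal F_m(\epsilon)$ over the same point of $\XX^*(\epsilon)$, so their ratio in $\G_m$ is forced into $B_m$. The bound $\epsilon \leq \epsilon_\kappa$ serves a different purpose: it ensures $\epsilon \leq \epsilon_m^{\mathrm{can}}$ so that $\mathcal F_m(\epsilon)$ exists, and that $\kappa$ extends analytically over $B_m$ so $\kappa^{-1}(c\mathfrak z + d)$ is defined. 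Finally, Hecke equivariance is not quite automatic; the paper verifies it separately (Prop.~\ref{prop: hecke equiv}) by checking that $\mathfrak s$ induces a morphism of Hecke correspondences.
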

	
	%%%%%%%%%%%%%%%%%%%%%%%%%%%%%%%%%%%%%%%%%%%%%%%%%%%%%%%%%%%%%%%%%%
	%.        Comparison to Pilloni-torsor
	%%%%%%%%%%%%%%%%%%%%%%%%%%%%%%%%%%%%%%%%%%%%%%%%%%%%%%%%%%%%%%%%%%
	For $n=0$, this in particular gives a canonical isomorphism $\omega^{\k,+} \cong \omega^{\k,+}_{\mathrm{AIP}}$ on $\XX_{\U}^{\ast}(\e)$. 
	
	The proof of Thm.~\ref{thm:comparison 1} now relies on the following lemma (cf \cite[p 31]{AIP2}):
	\begin{lem}\label{l: HT from Gamma to F_n}
		Let $\epsilon\leq \epsilon_m^{\mathrm{can}}$. Then $\mathfrak s$ induces a morphism over $\XX^{\ast}(\epsilon)$
		\[\mathfrak s:\XX^{\ast}_{\Gamma(p^\infty)}(\epsilon)_{a} \to\mathcal F_m(\e).\]
		Let $\tilde{\mathfrak s}:= \mathfrak s \circ u_n$. Then for any $\gamma = \smallmatrd{a}{b}{c}{d}\in \Gamma_0(p^n)$, we have $\gamma^{\ast}\tilde{\mathfrak s}=(c\mathfrak z+d)\tilde{\mathfrak s}$, that is the diagram
		\begin{equation}\label{dg:comparison-mod-forms-AIP-forms}
		\begin{tikzcd}[row sep = 0.55cm]
		\mathcal X^{\ast}_{\Gamma(p^\infty)}(p^n\epsilon)_a \arrow[r, "\gamma"] \arrow[d, "(c\mathfrak z+d)\times\tilde{\mathfrak s}"'] & \mathcal X^{\ast}_{\Gamma(p^\infty)}(p^n\epsilon)_a \arrow[d, "\tilde{\mathfrak s}"'] \arrow[r] & \mathcal X^{\ast}_{\Gamma_{\!\scaleto{0}{3pt}}(p^n)}(p^n\epsilon)_a \arrow[d,"\AL^n"] \\
		B_m\times \mathcal F_m(\epsilon) \arrow[r, "\mathrm m"] & \mathcal F_m(\epsilon) \arrow[r] & \mathcal X^{\ast}(\epsilon),
		\end{tikzcd}
		\end{equation}
		commutes, where $\mathrm{m}$ denotes the respective action maps.
	\end{lem}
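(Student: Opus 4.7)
The plan has two main parts: showing that $\mathfrak s$ factors through the inclusion $\mathcal F_m(\epsilon)\subseteq \mathcal T_m(\epsilon)$, and then deriving the equivariance of $\tilde{\mathfrak s}$ from the known equivariance of $\mathfrak s$.

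First, I would produce a canonical Igusa-level-$m$ structure on $\XX^{\ast}_{\Gamma(p^\infty)}(\epsilon)_a$, lifting the forgetful map to $\XX^{\ast}_{\Ig(p^m)}(\epsilon)$, and then verify that $\mathfrak s$ is integral with reduction matching $\psi(1)$. On the anticanonical locus, $\pi_{\HT}$ lands in $B_r(\Z_p:1)\subseteq \PP^1$, so the Hodge line in $T_pE\otimes C$ is spanned by a vector close to $\alpha(e_2)$; hence $\alpha(e_2)\bmod p^m$ generates the canonical subgroup $H_m$, and $\alpha(e_1)\bmod p^m$ induces a canonical generator of the \'etale quotient $E[p^m]/H_m$, which the Weil pairing identifies canonically with $H_m^{\vee}$. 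This defines the Igusa structure. By Prop.~\ref{proposition: moduli interpretation of s in terms of HT}, on points $\mathfrak s$ equals $\HT_E(\alpha(e_1))$, which is integral because $\alpha(e_1)\in T_pE$. Functoriality of the Hodge--Tate map along $H_m\hookrightarrow E[p^m]$ identifies the image of $\mathfrak s$ under $\pi$ of Lem.~\ref{l:ses-of-integ-conormal-sheaves} with $\HT_{H_m}(\overline{\alpha(e_1)})$, which is precisely $\psi(1)$ by the construction of the Igusa structure. Hence $\mathfrak s$ factors through $\mathcal F_m(\epsilon)$.

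For the equivariance, the identity $\gamma^{\ast}\mathfrak s=(c\mathfrak z+d)\mathfrak s$ established earlier for $\gamma\in \Gamma_0(p)$ holds a fortiori for $\gamma\in \Gamma_0(p^n)\subseteq \Gamma_0(p)$. Since $\tilde{\mathfrak s}=u_n^{\ast}\mathfrak s$, the desired identity for $\tilde{\mathfrak s}$ will follow once I check that $u_n$ intertwines the $\Gamma_0(p^n)$-action on its source with an action on its target preserving the cocycle $(c\mathfrak z+d)$. Writing $u_n$ as the action of $\smallmatrd{p^n}{0}{0}{1}$ at infinite level, the $\GL_2(\Q_p)$-equivariance of $\pi_{\HT}$ allows one to compute directly how $u_n^{\ast}\mathfrak z$ and $u_n\circ\gamma$ behave: the key point is to exhibit $\gamma'\in\Gamma_0(p)$ with $u_n\circ\gamma=\gamma'\circ u_n$ and with the same lower row $(c,d)$ as $\gamma$ (modulo an element of $\Gamma(p^\infty)$ which acts trivially). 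The right-hand square of the diagram commutes by construction, since $u_n$ is by definition a lift of $\AL^n$ along the forgetful map to finite level.

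The main obstacle is making the intertwining precise: the naive conjugation $u_n\Gamma_0(p^n)u_n^{-1}=\{\smallmatrd{a}{p^nb}{p^{-n}c}{d}\}$ sits in $\GL_2(\Z_p)$ but does not automatically land in $\Gamma_0(p)$, so one must modify by an element of the kernel of $\GL_2(\Z_p)\to \GL_2(\Z_p/p^n)$ (which acts trivially at infinite level up to the $\Gamma(p^\infty)$-torsor structure) to land in the right subgroup while preserving the $(c,d)$-data governing the cocycle. A secondary technical point is handling the cusps: the Igusa structure and the factorisation through $\mathcal F_m(\epsilon)$ need to be extended across the boundary using the perfectoid Tate curve parameter discs from \S\ref{modular curves}, since the formal/adic Igusa tower was set up integrally.
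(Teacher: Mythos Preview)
Your treatment of the first part is essentially the paper's argument (given in the Hilbert generality as Prop.~\ref{l:s-from-X-to-P}): the Igusa structure is produced from $\alpha(e_1)\bmod p^m$ via the Weil-pairing identification $E[p^m]/H_m\cong H_m^\vee$, and functoriality of Hodge--Tate matches $\pi(\mathfrak s)$ with $\psi(1)$. One small imprecision: on the anticanonical locus the Hodge line is spanned by $\mathfrak z\,\alpha(e_1)+\alpha(e_2)$ with $\mathfrak z$ ranging over a neighbourhood of all of $\Z_p$, not only near $\alpha(e_2)$; but what you actually use is just that $\alpha(e_1)\bmod p\notin H_1$, which is precisely the anticanonical condition.

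The second part has a genuine gap. Your proposed fix --- modifying $\gamma'=u_n\gamma u_n^{-1}$ by an element of $\Gamma(p^n)=\ker(\GL_2(\Z_p)\to\GL_2(\Z/p^n))$ to force it into $\Gamma_0(p)$ --- does not work: such elements act trivially on $\XX^{\ast}_{\Gamma(p^n)}$ but \emph{not} on $\XX^{\ast}_{\Gamma(p^\infty)}$, and $\mathfrak s$ genuinely lives at infinite level, so the modification alters $\gamma'^*\mathfrak s$. The paper avoids this entirely by working on $\PP^1$. Since $\mathfrak s=\pi_{\HT}^*s$ and $\pi_{\HT}$ is $\GL_2(\Q_p)$-equivariant, it suffices to prove $\gamma^*\tilde s=(cz+d)\tilde s$ for $\tilde s:=u_n^*s$. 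On $\PP^1$ one has $u_n^*z=p^nz$, so $u_n$ carries $\hat{\G}_a$ onto $p^n\hat{\G}_a$; the conjugate $\gamma'=\smallmatrd{a}{p^nb}{p^{-n}c}{d}$ lies in $\Gamma^0(p^n)$ (upper-triangular mod $p^n$, not $\Gamma_0$), which preserves $p^n\hat{\G}_a$, and on that domain the computation of Lem.~\ref{l: action of Gamma_0(p) on the section} goes through verbatim to give $\gamma'^*s=(p^{-n}cz+d)s$. Pulling back by $u_n$ then yields $\gamma^*\tilde s=u_n^*\big((p^{-n}cz+d)s\big)=(cz+d)\tilde s$. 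The key point you missed is that the transformation law for $s$ is not special to $\Gamma_0(p)$: it is a general identity on $\A^1\subseteq\PP^1$ valid wherever $c'z+d'$ is invertible, so there is no need to coerce $\gamma'$ into $\Gamma_0(p)$ at all.
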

	\begin{rmrk}
		In fact, the map $\mathfrak s$ factors through the forgetful map $\mathcal X^{\ast}_{\Gamma(p^\infty)}(\epsilon)_{a}\to \mathcal X^{\ast}_{\Gamma_1(p^\infty)}(\epsilon)_a$.
	\end{rmrk}
	\begin{proof}
		Lem.~\ref{l:s-from-X-to-P} and Lem.~\ref{l:comparison-mod-forms-AIP-forms-Hilbert} below will prove a more general version of the first part away from the cusps. The morphism extends since the relative moduli interpretation of $\mathcal F_m(\e)\to \mathcal X_{\Ig(p^m)}(\epsilon)\to \XX(\epsilon)$ also holds over the cusps.
		
		For the second part, recall that $\mathfrak s = \pi_{\HT}^{\ast}s$. By $\GL_2(\Q_p)$-equivariance of $\pi_{\HT}$, we therefore have $\tilde{\mathfrak s} = \pi_{\HT}^{\ast}\tilde{s}$ for the section $\tilde{s}:=u_n^{\ast}s$ on $\hat{\G}_a\subseteq \P^1$. It therefore suffices to prove that $\gamma^{\ast}\tilde{s} = (cz+d)\tilde{s}$.
		For this we first note that the action of $u_n$ sends $(z:1)$ to $(p^nz:1)$, and therefore 
		\begin{equation}\label{eq:proof-of-gamma*tilde s = (cz+d)tilde s}
		u_n^{\ast}z = p^nz
		\end{equation}
		In particular, $u_n$ sends $\hat{\G}_a\subseteq \P^1$ onto $p^n\hat{\G}_a\subseteq \P^1$.
		Note that this is preserved by the action of $\Gamma^0(p^n):=\{\smallmatrd{\ast}{b}{\ast}{\ast}\subseteq \GL_2(\Z_p)| b\in p^n\Z_p\}$.  By the same computation as in \eqref{eq:proof-of-gamma*s = (cz+d)gamma}, we see that on $p^n\hat{\G}_a$, for any $\gamma'=\smallmatrd{a'}{b'}{c'}{d'}\in \Gamma^0(p^n)$, we have $\gamma'^{\ast}s=(c'z+d')s$. For $\gamma':=u_n \gamma u_n^{-1}$, this shows
		\[\gamma^{\ast}\tilde{ s}=\gamma^{\ast}u_n^{\ast} s=u_n^{\ast}(u_n \gamma u_n^{-1})^{\ast} s = u_n^{\ast} \smallmatrd{a}{p^nb}{p^{-n}c}{d}^{\ast} s = u_n^{\ast}((p^{-n}c z+d) s) \stackrel{\eqref{eq:proof-of-gamma*tilde s = (cz+d)tilde s}}{=} (c z+d)\tilde{ s}.\qedhere\]
	\end{proof}
	
	\begin{proof}[Proof of Thm.~\ref{thm:comparison 1}]
		It suffices to prove this locally on $\W$, so we may assume that $\k$ has image in $\W_k$ for some $k\in \Z_{\geq 0}$. Set $m=r+k-1$. We start with the case of $n\in \Z_{\geq 1}$.
		
		Let $f$ be a section of $\omega_{\mathrm{AIP}}^{\k,+}$. For simplicity of notation, let us assume that $f$ is a global section, even though the proof works for any section. We may then regard $f$ as a map $\mathcal F_{m}(\e)\times \U \to  \hat{\mathbb G}_a$. To see that $\tilde{\mathfrak s}^{\ast}f=f\circ \tilde{\mathfrak s}$ is a section of $\omega_n^{\k,+}$, we use that for any $\gamma \in \Gamma_{\!0}(p^n)$, the diagram
		\begin{center}
			\begin{tikzcd}[column sep = 1.8cm,row sep = 0.55cm] \mathcal X^{\ast}_{\Gamma(p^\infty)}(p^n\epsilon)_a\times \U \arrow[r,   "\gamma"] \arrow[d,   "(c\mathfrak z+d)\times\tilde{\mathfrak s}\times \id"'] & \mathcal X^{\ast}_{\Gamma(p^\infty)}(p^n\epsilon)_a\times \U  \arrow[d,   "\tilde{\mathfrak s}\times \mathrm {id}"']\\
			\mathbb{Z}^\times_p(1+p^m\hat{\G}_a)\times \mathcal F_{m}(\e)\times \U \arrow[r,   "\mathrm m\times \id"] \arrow[d,   "\kU^{-1}\times f"'] & \mathcal F_{m}(\e)\times \U \arrow[d,   "f"']\\
			\hat \G_m\times \hat \G_a \arrow[r,   "\mathrm m"] & \hat{\G}_a
			\end{tikzcd}
		\end{center}
		commutes, where $\mathrm m$ denotes the multiplication map.
		Here the top square commutes by Lem.~\ref{l: HT from Gamma to F_n} and the bottom square is commutative by definition of $\omega^{\k,+}_{\mathrm{AIP}}$. The outer square now shows that
		\[\mathfrak \gamma^{\ast}(\tilde{\mathfrak s}^{\ast}f)=\kU^{-1}(c\mathfrak z+d)\tilde{\mathfrak s}^{\ast}f \]
		as desired. This gives a natural morphism of $ \mathcal O^+_{\XX_{\U}^{\ast}(\epsilon)}$-modules
		$\omega_{\mathrm{AIP}}^{\k,+}\to \AL^n_{\ast}\omega_n^{\k,+}$ which in turn induces  $\tilde{\mathfrak s}^{\ast}:\omega_{\mathrm{AIP},n}^{\k,+}=\AL^{n\ast}\omega_{\mathrm{AIP}}^{\k,+}\to \omega^{\k}_n$.
		
		To see this is an isomorphism, recall that $\omega_{\mathrm{AIP}}^{\k,+}$ is invertible, so locally on some open $U\subseteq \XX_\U(\epsilon)$ we can find $f$ that is invertible as an element of $\O^+_{\mathcal F_m(\epsilon)\times \U}(U)$. Thus $\omega_{\mathrm{AIP}|\U}^{\k,+}=f\O^+_{U}$.  
		Let $V:=\AL^{-n}(U)$; then $\tilde{\mathfrak s}^{\ast}f\in q_{\ast} \O^{+}_{\XX^{\ast}_{\U,\Gamma(p^\infty)}(p^n\epsilon)_a}(V)$ is invertible. 
		It now follows from Prop.~\ref{prop: G-invs} that we have $\omega^{\kappa,+}|_V=\tilde{\mathfrak s}^{\ast}f \O^+_{V}$. 
		Thus $\tilde{\mathfrak s}^{\ast}$ is an isomorphism locally over $U$.
		Since $\XX_{\U}(\epsilon)$ can be covered by such $U$, this completes the proof in the case of $n\in \Z_{\geq 1}$.
		
		The case of $n=0$ follows from the case of $n=1$ since we have $\omega^{\kappa,+}=\AL_{ \ast}\omega_{1}^{\kappa,+}=\omega_{\AIP}^{\kappa,+}$. For $n=\infty$, the same argument works for the diagram
		\begin{equation}\label{dg:comparison-mod-forms-AIP-forms-inf}
		\begin{tikzcd}[row sep = 0.55cm]
		\mathcal X^{\ast}_{\Gamma(p^\infty)}(\epsilon)_a \arrow[r, "\gamma"] \arrow[d, "d\times\mathfrak s"'] & \mathcal X^{\ast}_{\Gamma(p^\infty)}(\epsilon)_a \arrow[d, "\mathfrak s"'] \arrow[r] & \mathcal X^{\ast}_{\Gamma_{\!\scaleto{0}{3pt}}(p^\infty)}(\epsilon)_a \arrow[d,"q"] \\
		\Z_p^\times\times \mathcal F_m(\epsilon) \arrow[r, "\mathrm m"] & \mathcal F_m(\epsilon) \arrow[r] & \mathcal X^{\ast}(\epsilon).
		\end{tikzcd}
		\end{equation}
		which induces an isomorphism $\mathfrak s^{\ast}:q^{\ast}\omega^+_{\AIP}\to \omega_\infty^{\kappa,+}$ as desired. 
		Lastly, the statement about Hecke equivariance  is a special case of Prop.~\ref{prop: hecke equiv}.
	\end{proof}
	
	%%%%%%%%%%%%%%%%%%%%%%%%%%%%%%%%%%%%%%%%%%%%%%%%%%%%%%%%%%%%%%%%%%
	%
	%.          HILBERT MODULAR VARIETIES
	%
	%%%%%%%%%%%%%%%%%%%%%%%%%%%%%%%%%%%%%%%%%%%%%%%%%%%%%%%%%%%%%%%%%%
	
	\section{Perfectoid Hilbert modular varieties for $G^*$}
	\label{sec:hilbert modular varieties}
	%%%%%%%%%%%%%%%%%%%%%%%%%%%%%%%%%%%%%%%%%%%%%%%%%%%%%%%%%%%%%%%%%%
	%.        Notation
	%%%%%%%%%%%%%%%%%%%%%%%%%%%%%%%%%%%%%%%%%%%%%%%%%%%%%%%%%%%%%%%%%%
	For the remainder of the paper, we move on to Hilbert modular forms. In this section, we recall the classical Hilbert modular varieties for $G$ and $G^*$, and the perfectoid versions for $G^*$.  
	
	\subsection{Classical Hilbert modular varieties for $G$ and $G^*$}
	\textcolor{black}{The content of this section is mainly classical and can be found in many sources, e.g.\ \cite[\S2]{tian} and \cite[\S4]{hidapadic}.}
	\begin{nota}
		\begin{enumerate}
			\item  Let $F$ be a totally real field of degree $g$ over $\QQ$ with ring of integers $\O_F$ and absolute different $\mathfrak{d}\subseteq \O_F$. Let $\Sigma$ denote the set of infinite places of $F$.
			\item Recall that we fixed a rational prime $p$. We set $\OO_p:=\OF \otimes \ZZ_p = \oplus_{\ps|p} \OO_{F_\ps}$, where $\ps|p$ ranges over the prime ideals of $\OO_F$ over $p$ and where $F_{\ps}$ is the completion of $F$ at $\ps$.
			\item	For any fractional ideal $\mathfrak{r}$, let $\mathfrak{r}^+$ denote the totally positive elements in $\mathfrak{r}$; in general `$+$' will denote `totally positive'. We have a non-degenerate trace pairing $\ttr: \mathfrak{r} \times \mathfrak{r}^{-1}\mathfrak{d}^{-1} \to \ZZ$.
			\item  Let $G:=\Res_{F/\QQ} \GL_2$ and let $G^*:=G \times_{\Res_{F/\QQ} \mathbb{G}_m} \mathbb{G}_m$, where the map $G \to \Res_{F/\QQ} \mathbb{G}_m$ is given by the determinant morphism and {\color{black}$\mathbb{G}_m \to \Res_{F/\QQ} \mathbb{G}_m$} is given by the diagonal map.
			%\item Let $h_0:\mathbb{S}:=\Res_{\CC/\RR} \to G_\CC$ be the map sending $a+ib \in \mathbb{S}(\RR)=\CC^\times$ to $ \left ( \begin{smallmatrix}
			%a & b \\ -b &a
			%\end{smallmatrix} \right )$. Let $\mathcal S$ denote the conjugacy class of $h_0$ under conjugation by $G(\RR)$. Note $\mathcal S$ is isomorphic to $(\HHH^{\pm})^{\Sigma}$ and the connected component of the identity $\mathcal S^{+}$ is isomorphic to $\HHH^{\Sigma}$, with $\HHH$ the usual complex upper half plane.
		\end{enumerate}
	\end{nota}

	%%%%%%%%%%%%%%%%%%%%%%%%%%%%%%%%%%%%%%%%%%%%%%%%%%%%%%%%%%%%%%%%%%
	%.        HMV as Shimura varieties
	%%%%%%%%%%%%%%%%%%%%%%%%%%%%%%%%%%%%%%%%%%%%%%%%%%%%%%%%%%%%%%%%%%

	Let $\mathcal{S} := \mathcal{H}^\Sigma$, where $\mathcal{H} \subset \C$ is the standard upper half-plane. For $K$ an open compact subgroup of $G(\AA_f)$, there exists a Shimura variety $\Sh_K(G,\mathcal{S})$ of level $K$ over $\QQ$. Similarly, if $K^*$ is an open compact subgroup of $G^*(\AA_f)$, there exists a Shimura variety $\Sh_{K^*}(G^*,\mathcal S)$ over $\QQ$. 
	These Shimura varieties are the \emph{Hilbert modular varieties}. The Shimura variety $\Sh_{K^*}(G^*,\mathcal S)$ is of PEL type, therefore of Hodge type, whereas the Shimura variety $\Sh_K(G,\mathcal S)$ is of abelian type.
	
	\begin{defn}
		Let $\mathfrak c\subseteq \O_F$ be any nonzero ideal and let $N$ be coprime to $\mathfrak c$.
		\begin{enumerate}
			\item Let $K_{\mathfrak c}:= G(\AA_f)\cap \smallmatrd{\widehat{\O}_F}{(\mathfrak c\mathfrak d)^{-1}\widehat{\O}_F}{\mathfrak c\mathfrak d\widehat{\O}_F}{\widehat{\O}_F}$. Let $K^{\ast}_{\mathfrak c}:= G^{\ast}(\AA_f)\cap K_{\mathfrak c}$.
			\item Let $K_0(\mathfrak c,N):=\{\gamma\in K_{\mathfrak c}|\gamma\equiv \smallmatrd{\ast}{\ast}{0}{\ast}\bmod N\}$ and $K^{\ast}_0(\mathfrak c,N):=K_0(\mathfrak c,N)\cap K^{\ast}_{\mathfrak c}$.
			\item Let $K_1(\mathfrak c,N):=\{\gamma\in K_{\mathfrak c}|\gamma\equiv \smallmatrd{\ast}{\ast}{0}{1}\bmod N\}$. Let $K^{\ast}_1(\mathfrak c,N):=K_1(\mathfrak c,N)\cap K^{\ast}_{\mathfrak c}$.
			\item Let $K(\mathfrak c,N):=\{\gamma\in K_{\mathfrak c}|\gamma\equiv \smallmatrd{1}{0}{0}{1}\bmod N\}$. Let $K^{\ast}(\mathfrak c,N):=K(\mathfrak c,N)\cap K^{\ast}_{\mathfrak c}$.
		\end{enumerate}
	\end{defn}

	%%%%%%%%%%%%%%%%%%%%%%%%%%%%%%%%%%%%%%%%%%%%%%%%%%%%%%%%%%%%%%%%%%
	%.        HMV as moduli spaces
	%%%%%%%%%%%%%%%%%%%%%%%%%%%%%%%%%%%%%%%%%%%%%%%%%%%%%%%%%%%%%%%%%%

	\subsubsection{Moduli problems for Hilbert--Blumenthal abelian varieties}\label{Section: lvl HMFs}
	%\subsection{Hilbert modular varieties as moduli spaces}

	%\chris{I'm going to write here what I know about the moduli interpretation of all of this in the Hilbert case, and hopefully clarify how $G^*$ and $G$ are related in this context.}
	
	Hilbert modular varieties arise as solutions to moduli problems of abelian varieties with level structures.
	
	\begin{defn}\label{HBAVC} \label{rap cond}
		Let $S$ be any scheme. Let $\gothc\subseteq \OO_F$ be an ideal. 
		\begin{enumerate}
			\item A Hilbert--Blumenthal abelian variety (HBAV) over $S$ is a triple $(A,\iota,\lambda)$ consisting of an abelian variety $A$ over $S$ with real multiplication $\iota:\O_F \hookrightarrow \End(A)$ and a $\mathfrak c$-polarisation $\lam : A\otimes \gothc \isorightarrow A^\vee$, such that $\iota$ is stable under the Rosati-involution.
			\item Given a HBAV $(A,\iota,\lambda)$, we refer to $(A,\iota)$ as the underlying abelian variety with real multiplication (AVRM).
			\item A morphisms of HBAVs is an $\OF$-linear morphisms $f:A \to A'$ of abelian $S$-schemes for which $ \lam=f^{\vee} \circ \lam' \circ f$.
		\end{enumerate}
	\end{defn}
	
	The Shimura varieties for $G^{\ast}$ represent moduli problems given by $\mathfrak c$-polarised HBAVs with additional rigidifying level structure, which we shall discuss next. In contrast, the Shimura varieties for $G$ are only coarse moduli spaces parametrising triples $(A,\iota,[\lambda])$ where $(A,\iota)$ is an AVRM and $[\lambda]=\{\nu \lam | \nu \in \OF^{\times,+} \}$ is a \emph{polarisation class}, plus level structure.
	
	%%%%%%%%%%%%%%%%%%%%%%%%%%%%%%%%%%%%%%%%%%%%%%%%%%%%%%%%%%%%%%%%%%
	%.      Level structures for HBV  
	%%%%%%%%%%%%%%%%%%%%%%%%%%%%%%%%%%%%%%%%%%%%%%%%%%%%%%%%%%%%%%%%%%

	\begin{defn}[Level structures for $G$]\label{lvls for G*}

		Let  $(A,\iota,\lambda)$ be a $\gothc$-polarised HBAV, let $N\in \ZZ_{\geq 4}$ with $(N,p)=1$ and let $n\in \Z_{\geq 0}$.
		
		\begin{enumerate}
			\item  A $\mu_{N}$-level structure is a closed immersion of $\OO_F$-module schemes $\mathfrak d^{-1}\otimes_{\ZZ} \mu_N \hookrightarrow A[N]$.
			
			\item  A $\Gamma_{0}(p^n)$-level structure is an $\OO_F$-submodule scheme $\Phi_n:C\hookrightarrow A[p^n]$ that is \'etale locally isomorphic to $\OO_F/p^n\OO_F$. Via $\lambda$, this is equivalent to giving an $\OO_F$-submodule scheme \[C':=\lambda(C\otimes\gothc) \hookrightarrow A^{\vee}[p^n].\]
			In the context of Shimura varieties for $G^*$, we also call the same data a $\Gamma^{\ast}_{0}(p^n)$-level structure, since this level structure appears for both $G^*$ and $G$.
			\item  A $\Gamma_{1}(p^n)$-level structure is a closed immersion of $\OO_F$-module schemes \[\Phi_n:\OF/p^n\OF\hookrightarrow A^\vee[p^n].\]
			Again, we also call this a $\Gamma^{\ast}_{1}(p^n)$-level structure,
			\item A $\Gamma(p^n)$-level structure is an isomorphism of $\OO_F$-module schemes
			\[
			\alpha_{n} : (\OO_F/p^n\OO_F)^2\isorightarrow A^\vee[p^n].
			\]
		\end{enumerate}
		
	\end{defn}
	\begin{remark}\label{rem:tate dual c}
		Our definition of level structures at $p$ is slightly non-standard: usually one would define a $\Gamma(p^n)$-structure to be an isomorphism $(\OO_F/p^n\OO_F)^2\isorightarrow A[p^n]$. The reason we use the above modified version parametrising $A^\vee[p^n]$ is that the Hodge--Tate morphism is of the form $T_pA^{\vee}\to \omega_{A}$, so it is \emph{this} level structure which gives rise at infinite level to canonical sections of $\omega_{A}$, as required for the definition of modular forms. The isomorphisms $(\OO_F/p^n\OO_F)^2 \isorightarrow A[p^n]$ would instead give sections of $\omega_{A^\vee}$.	
		We note, however, that given a fixed $\gothc$-polarisation $\lambda$ as in the moduli problem for $G^*$, a $\Gamma(p^n)$-level structure is equivalent to an isomorphism \textcolor{black}{\[(\OO_F/p^n\OO_F)^2 \otimes \gothc^{-1} \xrightarrow{\alpha_n\otimes \gothc^{-1}} A^\vee[p^n]\otimes \gothc^{-1} \xrightarrow{\lambda^{-1}} A[p^n].\]} In this case our notion is (non-canonically) isomorphic to the more standard definition.
	\end{remark}
	
	We now define level structures for $G^*$. Recall that we have defined $\Gamma_{\!0}^* = \Gamma_{\!0}$ and $\Gamma_1^* = \Gamma_1$-level structures to be the same, but at full level we need a slightly different definition, analogous to \cite[\S1.21]{RapoportCompactifications}. To motivate this, observe that we can see $G$ as a group preserving a pairing up to similitude, whilst $G^*$ is the subgroup that preserves certain rational structures within this. For the Shimura varieties associated to $G^{\ast}$, we therefore need isomorphisms $\alpha_{n}$ preserving a rational structure. We will now define the $\OO_F$-structure in which the rational structure should live, via an $\OO_F$-linear version of the Weil pairing.

	\begin{definition}\label{def:linearisation}
		
		Let $(A,\iota,\lambda)$ be a HBAV. The Weil pairing 
		$e_{p^n}:A[p^n]\times A^{\vee}[p^n]\to \mu_{p^n}$
		satisfies $e_{p^n}(ax,y)=e_{p^n}(x,ay)$ for $a \in \OF$ (see \cite[Section 20]{mumford}). The Weil pairing can therefore be extended to an $\OO_F$-linear version, by using the trace map to write $\mathfrak d^{-1} \simeq \Hom(\OO_F,\ZZ)$, setting
		\begin{align}\label{df:tilde e_n}
		\widetilde{\mathbf e}_{n}:A[p^n]\times A^{\vee}[p^n]&\longrightarrow \mathfrak d^{-1}\otimes_{\ZZ}\mu_{p^n}, \\
		\quad x,y &\longmapsto (a\mapsto e_{p^n}(ax,y))\notag
		\end{align}
		which is $\OO_F$-bilinear and perfect. We call $\widetilde{\mathbf{e}}_n$ the \emph{$\OO_F$-linearisation} of $e_{p^n}$. Note $e_{p^n} = \mathrm{Tr}\circ\widetilde{\mathbf{e}}_n$.
		
	\end{definition}
	
	We fix a non-degenerate $\O_F$-linear pairing on $(\OO_F/p^n\OF)^2 \otimes \gothc^{-1}\times (\OO_F/p^n\OF)^2$ to compare to the Weil pairing. To this end, fix an isomorphism of free $\OO_p$-modules of rank 1 \[\beta:\mathfrak c^{-1} \OO_p\to \mathfrak d^{-1}\otimes_{\Z}T_p\mu_{p^\infty}=:\d^{-1}(1),\] or equivalently an $\O_p$-module generator $\beta\in \mathfrak c\mathfrak d^{-1}\otimes_{\Z}T_p\mu_{p^\infty}=\gothc \d^{-1}(1)$. Then we get a pairing
	\begin{align*}
	\langle -,-\rangle_{\beta,n}:(\OO_F/p^n\OO_F)^2\otimes \mathfrak c^{-1}\times (\OO_F/p^n\OO_F)^2&\xrightarrow{\beta\times\id} (\mathfrak d^{-1}/p^n)^2\otimes_{\Z}\mu_{p^n} \times(\OO_F/p^n\OO_F)^2\\ &\xrightarrow{\det}  \mathfrak{d}^{-1}\otimes_{\Z}\mu_{p^n},
	\end{align*}
	where $\det : [(a,b),(c,d)] \mapsto ad-bc$. Given a $\Gamma(p^n)$-level structure $\alpha_n$, this fits into a diagram
	\begin{equation}\label{eq:similitude}
	\begin{tikzcd}[column sep = {1.6cm,between origins},row sep = 0.55cm]
	(\mathcal O_F/p^n\mathcal O_F)^2\otimes\mathfrak c^{-1} \arrow[d,"{\lambda^{-1}\circ(\a_{n}\otimes \mathrm{id})}"] &\quad\times& (\mathcal O_F/p^n\mathcal O_F)^2 \arrow[d,"\alpha_{n}"]\arrow[rrr,"{\langle -,-\rangle_{\beta,n}}"] &&& \mathfrak{d}^{-1}\otimes_{\Z}\mu_{p^n}\arrow[rr,"\mathrm{Tr}"] \arrow[d,"b","\sim"labl] && \mu_{p^n} \arrow[d,dotted] \\
	{A[p^n]} &\quad\times & {A^{\vee}[p^n]} \arrow[rrr,"\widetilde{\mathbf{e}}_{n}"] &&& \mathfrak{d}^{-1} \otimes_{\Z}\mu_{p^n} \arrow[rr,"\mathrm{Tr}"]&& \mu_{p^n}.
	\end{tikzcd}
	\end{equation}
	The two pairings into $\mathfrak{d}^{-1}\otimes_{\Z}\mu_{p^n}$ will always be similar, that is there always exists some $b \in \mathrm{Aut}(\mathfrak{d}^{-1} \otimes_{\Z}\mu_{p^n}) = (\OO_F/p^n\OO_F)^\times$ that makes the above diagram commute. 
	
	\begin{definition}\label{d:Gamma^* lvl struct and action by precompose with dual}
		A $\Gamma^\ast(p^n)$-level structure is a $\Gamma(p^n)$-level structure $\alpha_{n}:(\OO_F/p^n)^2\isorightarrow A^\vee[p^n]$ such that the similitude $b$ in \eqref{eq:similitude} lies in the subgroup $(\ZZ/p^n\ZZ)^\times\subseteq (\O_F/p^n\O_F)^\times$. Equivalently, it is an $\alpha_{n}$ such that after composing both pairings with the trace map -- the final horizontal maps of \eqref{eq:similitude} -- the two pairings into $\mu_{p^n}$ remain similar via $b \in \mathrm{Aut}(\mu_{p^n}) = (\ZZ/p^n\ZZ)^\times$.
		
		Via its natural action on $(\OO_F/p^n\OO_F)^2$, the group $G^{\ast}(\Z_p/p^n\Z)\subseteq \GL_2(\OO_F/p^n\OO_F)$ acts on $\Gamma^\ast(p^n)$-level structures by letting $\gamma$ act as pre-composition with $\gamma^{\vee}:=\det(\gamma)\gamma^{-1}$. 
	\end{definition}
	\begin{rmrk}
		
		We note that the dual ensures that we obtain a left-action. One reason for us to use $\gamma^{\vee}$ rather than $\gamma^{-1}$ to define the action is Lem.~\ref{l:action-of-G(Q_p)} below.
	\end{rmrk}

	In the limit $n\to \infty$, the pairings $\langle -,-\rangle_{\beta, n}$ are compatible and define a pairing $\langle -,-\rangle_{\beta}:\O_p^2\otimes \mathfrak c^{-1}\times \O_p^2\to \mathfrak{d}^{-1}(1)$. A $\Gamma^{\ast}(p^{\infty})$-structure is a compatible collection of $\Gamma^{\ast}(p^{n})$-level structures. Equivalently, this is an isomorphism $\O_p^2\to T_pA^\vee$ inducing a (rational) similitude of pairings on $\O_p^2\otimes \c^{-1}\times \O_p^2\to T_pA\times T_pA^{\vee}$ as above.\footnote{The above choices may not seem natural, but we remark that they do not affect our construction for modular forms $G$, and are just an auxiliary choice in our construction of $G^*$-forms. We also note that the Shimura varieties are of course completely canonical, it is the moduli interpretation which requires the choice.}

	\subsubsection{Hilbert modular varieties as moduli spaces}\label{s:Hilbert modular varieties as moduli spaces}
	We now recall the moduli problems that the Shimura varieties $\Sh_{K^*}(G^*,\mathcal S)$ represent. Consider the functor 
	\[
	\Sch/_{\ZZ[1/N]}  \lra \Set,
	\]
	sending a scheme $S$ to the set of isomorphism classes of $(A,\iota,\lam,\mu)$, where $(A,\iota,\lam)$ is a $\mathfrak c$-polarised HBAV and $\mu$ is a $\mu_N$-level structure which we call the tame level. This functor is represented by the Hilbert moduli scheme $X(\gothc,\mu_N)_{\ZZ[1/N]}$ over ${\ZZ[1/N]}$. We denote by $X(\gothc,\mu_N)_{R}$ the base change to any $\ZZ[1/N]$-algebra $R$. Then there is an isomorphism 
	\[X(\gothc,\mu_N)_{\C}\cong \Sh_{K^{\ast}_1(\mathfrak c,N)}(G^*,\mathcal S).\]
	Thus $X(\gothc,\mu_N)_{\ZZ[1/N]}$ is a model for $\Sh_{K^{\ast}_1(\mathfrak c,N)}(G^*,\mathcal S)$ over $\Z[1/N]$. We write $X:=X(\gothc,\mu_N)_{L}$ 
	
	In the case of $G$, the Shimura variety $\Sh_{K_1(\mathfrak c,N)}(G,\mathcal{S})$ also has a canonical model $X_G(\gothc,\mu_N)_{\ZZ[1/N]}$, but this is only a coarse moduli space; for an algebraically closed field $C$, and appropriate $K$, the $C$-points of $X_G(\gothc,\mu_N)_{\C}$ parametrise isomorphism classes of tuples $(A, \iota, [\lambda], \mu_N)$, where  $[\lambda]$ is a polarisation class. We shall write $X_G:=X_G(\gothc,\mu_N)_{L}$.
	
	We obtain the analogous results when we now add level structures at $p$. Let $\Gamma^{\ast}_p$ be one of $\Gamma^{\ast}_0(p^n)$, $\Gamma^{\ast}_1(p^n)$ or $\Gamma^{\ast}(p^n)$, and let $K^{\ast}_p$ be the corresponding subgroup $K^{\ast}_0(\gothc,p^n)$, $K^{\ast}_1(\gothc,p^n)$ or $K^{\ast}(\gothc, p^n)$.  Consider the functor $\Sch/_{\QQ}  \to \Set$
	sending a scheme $S$ to the set of isomorphism classes of $(A,\iota,\lam,\mu,\alpha)$, where $(A,\iota,\lam,\mu)$ is as above, and $\alpha$ is a $\Gamma^{\ast}_p$-level structure.  This functor is represented by a Hilbert moduli scheme $X(\gothc,\mu_N,\Gamma^{\ast}_p)_{\QQ}$ whose base-change to $\C$ is isomorphic to $\Sh_{K^{\ast}_1(\mathfrak c,N)\cap K^{\ast}_p}(G^*,\mathcal S)$. Here for $\Gamma^{\ast}_p=\Gamma^{\ast}(p^n)$ we recall that our definition of this level structure depends on our chosen isomorphism $\beta$, and the isomorphism is given by transforming our notion of a $\Gamma^{\ast}(p^n)$-level structure into the usual notion, by using $\beta$, the complex unit root $\exp(2 \pi i/p^n)$ and the given polarisation to make $(\O_F/p^n\OF)^2\to A^{\vee}[p^n]$ into an isomorphism $\mathfrak d^{-1}\otimes \mu_{p^n}\times \O_F/p^n\OF\to A[p^n]$.
	
	Again we abbreviate $X_{\Gamma^{\ast}_p}:=X(\mathfrak c,\mu_N,\Gamma^{\ast}_p)_L$. By the natural forgetful map between the moduli functors, we can summarise these Hilbert moduli schemes in the tower of moduli schemes
	\[ X_{\Gamma^{\ast}(p^n)}\longrightarrow X_{\Gamma^{\ast}_1(p^n)} \longrightarrow X_{\Gamma^{\ast}_0(p^n)} \longrightarrow X.\]
	Let $X_{\Gamma_p^*}^\ast$ denote the minimal compactification of $X_{\Gamma_p^*}$ over $L$. For tame level, we also have the minimal compactification $X^{\ast}_{\O_L}$ over $\O_L$. These can all be defined via base-change from $\Q$ or $\Z[1/N]$, respectively.
	
	Finally in this subsection, we pass to $p$-adic geometric spaces:
	\begin{defn}
		Let $\Gamma^{\ast}_p$ be a level at $p$ of the form $\Gamma_0^*(p^n), \Gamma_1^*(p^n),\Gamma^*(p^n)$ for some $n\in \Z_{\geq 0}$.
		\begin{enumerate}
			\item Let $\GX^\ast\to \Spf(\O_L)$ be the $p$-adic completion of $X_{\O_L}\to \Spec(\O_L)$.
			\item We denote by $\XX_{\Gamma^{\ast}_p}$ the adic analytification of $X_{\Gamma^{\ast}_p}$. We thus obtain a tower of adic spaces
			\[ \XX_{\Gamma^{\ast}(p^n)}\longrightarrow \XX_{\Gamma^{\ast}_1(p^n)} \longrightarrow \XX_{\Gamma^{\ast}_0(p^n)} \longrightarrow\XX.\]
			\item In the case of tame level, we also have the compactification $\GX^\ast\to\Spf(\OO_L)$ obtained by completion of $X_{\O_L}^{\ast}$. Its adic generic fibre coincides with the analytification  $\XX^\ast$  of  $X^{\ast}$. 
			
		\end{enumerate}
		
	\end{defn}
	
	\begin{rmrk}
		Our notation suppresses the dependence on the polarisation ideal $\gothc$ (and on the tame level). If required, we will make this clear by writing $\GX_{\gothc,K_p^*}^*$, $\XX_{\gothc,K_p^*}^*$, etc.
	\end{rmrk}

	%%%%%%%%%%%%%%%%%%%%%%%%%%%%%%%%%%%%%%%%%%%%%
	%
	%.    DEFINITIONS OF PERF HILB FORMS
	%
	%%%%%%%%%%%%%%%%%%%%%%%%%%%%%%%%%%%%%%%%%%%%%
	
	\subsection{Hilbert modular varieties for $G^*$ at infinite level}
	\label{sec:hbv at infinite level}
	Next, we recall the perfectoid Hilbert modular varieties for the group $G^*$, following \cite{torsion} and \cite{CarScho}.

	Let $\XXt(\e)$ denote the admissible open subset of $\XXt$ defined by $|\tHa| \geq |p|^{\e}$, where $\tHa$ denote any local lift of the (total) Hasse invariant. Now, for $n \in \ZZ_{\geq 0}$ take $\e \in [0, (p-1)/p^n) \cap \log |L|$. Then by \cite[Section 3.2]{AIP3} we have an integral model  $\GX^*(\e)$ of $\XXt(\e)$.  Moreover, abelian schemes parametrised by $\XXt(\e)$ have a canonical subgroup of level $n$ which agrees with the kernel of the $n$-th iterated power of the Frobenius map modulo $p^{1-\e}$. Following \cite{torsion}, we let $\XXt_{\Gamma_{\!0}^*(p^n)}(\e)_a$ denote the anticanonical locus, parametrising subgroups that intersect the canonical subgroup trivially.
	The forgetful maps then extend to the minimal compactification and give the anticanonical tower 
	\[\dots \longrightarrow \XXt_{\Gamma_{\!\scaleto{0}{3pt}}^*(p^2)}(\e)_a \longrightarrow \XXt_{\Gamma_{\!\scaleto{0}{3pt}}^*(p)}(\e)_a \longrightarrow \XXt(\e).\]
	One then has the following analogue of the results in \cite[\S III]{torsion}:
	\begin{thm}\label{p:X_Gamma*_0(p^infty) for G*}
		There are perfectoid spaces that are the tilde-limits
		\begin{enumerate}
			\item $\XX_{\Gamma_{\!\scaleto{0}{3pt}}^*(p^\infty)}(\e)_a \sim \varprojlim_n \XX_{\Gamma_{\!\scaleto{0}{3pt}}^*(p^n)}(\e)_a$.
			\item $\XX_{\Gamma_{\!\scaleto{1}{3pt}}^*(p^\infty)}(\e)_a \sim \varprojlim_n \XX_{\Gamma_{\!\scaleto{1}{3pt}}^*(p^n)}(\e)_a$.
			\item $\XX_{\Gamma^*(p^\infty)}(\e)_a \sim \varprojlim_n \XX_{\Gamma^*(p^n)}(\e)_a$.
			\item $\XX_{\Gamma^*(p^\infty)} \sim \varprojlim_n \XX_{\Gamma^*(p^n)}.$
		\end{enumerate}
		They fit into the following tower of pro-\'etale torsors for the indicated profinite groups
		\begin{center}
			\begin{tikzcd}[row sep = {1cm,between origins}]
			&  &  & \mathcal X_{\Gamma^*(p^\infty)}(\epsilon)_a  \arrow[d] \arrow[dd,dash,dashed,swap,"\Gamma_{\!\scaleto{0}{3pt}}^*(p^\infty)" description, rounded corners, to path={	-- ([xshift=-1.5cm]\tikztostart.west)
				|- (\tikztotarget) [pos=0.25] \tikztonodes
			}   ]  \arrow[ddd,dash,dashed,swap,"\Gamma_{\!\scaleto{0}{3pt}}^*(p^n)" description,rounded corners, to path={	-- ([xshift=-2.5cm]\tikztostart.west)
				|- (\tikztotarget) [pos=0.25] \tikztonodes
			}   ]   & &  \\
			&  & & \mathcal X_{\Gamma_1^*(p^\infty)}(\epsilon)_a  \arrow[d]  \ar[d, dash,dashed,rounded corners,
			"\OO_p^\times" description, swap,
			to path={
				-- ([xshift=-0.5cm]\tikztostart.west)
				|- (\tikztotarget) [pos=0.25] \tikztonodes
			}]& &  \\
			& {} & {} & \mathcal X_{\Gamma_{\!\scaleto{0}{3pt}}^*(p^\infty)}(\epsilon)_a \arrow[d]  & &  \\
			{} &  & & \mathcal X_{\Gamma_{\!\scaleto{0}{3pt}}^*(p^n)}(\epsilon)_a.&  & 
			\end{tikzcd}
		\end{center}
		where $\Gamma_{\!0}^*(p^n)\subseteq G^{\ast}(\Z_p)$ is the subgroup of matrices that are upper triangular mod $p^n$, and $\Gamma_{\!0}^*(p^\infty)\subseteq G^{\ast}(\Z_p)$ is the subgroup of upper-triangular matrices.
	\end{thm}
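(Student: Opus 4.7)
The plan is to adapt Scholze's strategy from \cite[\S III]{torsion} to the Hilbert setting, building up the tower level by level and using the Hasse invariant together with the good behavior of Frobenius in the anticanonical direction. The key geometric input is that in the anticanonical tower, the transition maps $\XX^\ast_{\Gamma_0^\ast(p^{n+1})}(\e)_a \to \XX^\ast_{\Gamma_0^\ast(p^n)}(\e)_a$ correspond on HBAVs to quotienting by a subgroup transverse to the canonical one, which on the integral models $\GX^\ast(\e)$ of \cite[\S3.2]{AIP3} is, modulo a suitable power of $p$, given by the relative Frobenius on the quotient $A/H_n^{\mathrm{can}}$. This is the Hilbert analogue of \cite[Cor.~III.2.19]{torsion} and it implies that on suitable affine formal models the transition maps become isomorphisms modulo $p^{1-\e}$ after raising to the $p^g$-th power, which is exactly what is needed to apply the tilting criterion.

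First I would treat $\XX^\ast_{\Gamma_0^\ast(p^\infty)}(\e)_a$. Using the integral models $\GX^\ast(\e)$, cover $\XX^\ast(\e)$ by affinoid opens $\mathrm{Spa}(R_0,R_0^+)$ coming from affine opens $\Spf(R_0^+)$ of $\GX^\ast(\e)$. The moduli interpretation of $\XX^\ast_{\Gamma_0^\ast(p^n)}(\e)_a$ provides, after base change, formal affine covers $\Spf(R_n^+)$, and Frobenius-lift arguments as in \cite[Lem.~III.2.18]{torsion} (using the anticanonical descent, relying crucially on the fact that we are avoiding the canonical subgroup so Frobenius becomes étale on the quotient) show that $R_\infty^+ := (\varinjlim R_n^+)^\wedge$ is perfectoid. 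This gives (1). The extension to the cusps is handled as in \cite[Lem.~III.2.13, Prop.~III.2.22]{torsion}: the boundary strata of $\GX^\ast$ admit formal neighborhoods built from (products of) perfectoid Tate objects in the anticanonical direction, exactly as in the elliptic Tate-curve argument of Prop.~\ref{prop: G-invs}; Koecher's principle is not needed here.

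Next I would add the remaining level structures. For (2), on the anticanonical locus the cyclic $\OO_F$-module scheme $C'_n \subseteq A^\vee[p^n]$ carried by $\XX^\ast_{\Gamma_0^\ast(p^\infty)}(\e)_a$ becomes a free $\OO_p$-module at infinite level (by transversality with the canonical subgroup and the Hodge--Tate argument of Lem.~\ref{l: moduli interpretation of the isomorphism q*w=pi_HT^*O(1)} and its Hilbert analogue). Thus $\XX^\ast_{\Gamma_1^\ast(p^\infty)}(\e)_a \to \XX^\ast_{\Gamma_0^\ast(p^\infty)}(\e)_a$ is a pro-finite-étale $\OO_p^\times$-torsor and hence the tilde-limit of $\XX^\ast_{\Gamma_1^\ast(p^n)}(\e)_a$ is affinoid perfectoid by \cite[Lem.~II.1.5, Prop.~III.2.21]{torsion}. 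For (3), on the anticanonical locus the $\O_F$-linearised Weil pairing $\widetilde{\mathbf e}_n$ of Defn.~\ref{def:linearisation} pairs the canonical subgroup with the dual anticanonical one nondegenerately, so a $\Gamma^\ast(p^\infty)$-structure is equivalent to a $\Gamma_1^\ast(p^\infty)$-structure plus a compatible trivialisation of the other line in $T_pA^\vee$, and the relative data is again a $\OO_p^\times$-torsor (with the similitude condition cutting out a $\Zp^\times$-subtorsor), giving the perfectoid structure and the torsor statements in the diagram.

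For (4), I would pass from the anticanonical locus to all of $\XX^\ast_{\Gamma^\ast(p^\infty)}$ by Hecke translation. The $G^\ast(\QQ_p)$-action on the limit is by rational isomorphisms of abelian varieties; using the Atkin--Lehner matrix $u = \smallmatrd{0}{1}{p}{0}$ as in the elliptic case one exchanges canonical and anticanonical loci, and iterating with other diamond operators one covers $\XX^\ast_{\Gamma^\ast(p^\infty)}$ by $G^\ast(\QQ_p)$-translates of the anticanonical pieces $\XX^\ast_{\Gamma^\ast(p^\infty)}(\e)_a$ for varying (small) $\e$. Since being perfectoid is local, this gives (4), and the whole tower structure is tautological from the construction. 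The expected main obstacle is the bookkeeping of the similitude condition defining $G^\ast \subset G$ and its interaction with the $\OO_F$-linear Weil pairing in Defn.~\ref{def:linearisation}, i.e.\ ensuring that the groups of the stated torsor structure are exactly $\Gamma_0^\ast(p^n)$ and $\OO_p^\times$ rather than the larger groups appearing for $G$; this forces one to track the auxiliary choice of $\beta$ throughout and to verify that the similitude factor lands in $\Zp^\times$ at every finite level, as in Defn.~\ref{d:Gamma^* lvl struct and action by precompose with dual}.
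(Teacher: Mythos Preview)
Your overall strategy matches the paper's: adapt Scholze's \cite[\S III]{torsion} by first building the perfectoid anticanonical $\Gamma_0^*$-tower via a Frobenius-lift argument on formal models, then climb to $\Gamma_1^*$ and $\Gamma^*$ by finite \'etale covers, and finally globalise via the $G^*(\Q_p)$-action. A few points where the paper's execution differs from yours are worth noting.

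For part (1), the paper formulates the Frobenius lift not directly on the anticanonical tower but on the system $\mathfrak X^*(p^{-n}\e)$ with transition map $\phi$ given by division by the canonical subgroup (Lem.~\ref{l:Frob-lift}); one then identifies this with the anticanonical tower via Atkin--Lehner. The map $\phi$ reduces to the \emph{relative} Frobenius mod $p^{1-\delta}$, which on functions is the $p$-th power, not the $p^g$-th power as you write; the scheme is over $\Z_p$ (or $\O_L$), so relative Frobenius is the ordinary one. For the boundary, the paper does not use Tate-curve parameter discs (which would require analysing toroidal boundary charts in the Hilbert setting) but instead invokes Scholze's Hartog extension principle \cite[Lem.~III.2.10]{torsion}, using that the cusps have codimension $\geq 2$ in $X^*_{\F_p}$; this is considerably lighter.

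For parts (2) and (3), your proposed decomposition via the $\O_F$-linear Weil pairing and explicit identification of the extra level data as $\O_p^\times$-torsors is more than is needed. The paper simply observes that each $\XX_{\Gamma_1^*(p^n)}(\e)_a \to \XX_{\Gamma_0^*(p^n)}(\e)_a$ and $\XX_{\Gamma^*(p^n)}(\e)_a \to \XX_{\Gamma_0^*(p^n)}(\e)_a$ is finite \'etale, pulls these back to the already-perfectoid $\XX_{\Gamma_0^*(p^\infty)}(\e)_a$, and uses that perfectoid tilde-limits of inverse systems of perfectoid spaces with affinoid transition maps exist. The torsor statements then follow formally since tilde-limits commute with fibre products. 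Your approach would also work but involves unnecessary moduli-theoretic bookkeeping; the similitude issue you flag as the ``expected main obstacle'' does not actually arise, because one never needs to decompose the level structure explicitly.

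For part (4), your Hecke-translation argument is exactly what the paper does, reducing the topological statement $G^*(\Q_p)\cdot|\XX_{\Gamma^*(p^\infty)}(\e)_a| = |\XX_{\Gamma^*(p^\infty)}|$ to \cite[Thm.~IV.1.1]{torsion} after shrinking the tame level.
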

	
	Part (4) is proved  in \cite[\S Theorem IV.1.1]{torsion} in the case that $L$ is algebraically closed, and under the additional assumption that the embedding $G^{\ast}\hookrightarrow \GSp_{2g}$ sends $K^p$ into a subgroup of $\GSp_{2g}(\hat{\Z})$. In our case, this means that $\mathfrak c=1$. One can deduce the general case from this by first shrinking the level and then quotienting by a Galois action, \textcolor{black}{which by \cite[Theorem 1.4]{Hansenquots} or \cite[Theorem 5.8]{HJperf} is again a perfectoid space}. Part (3) of the proposition then follows by restriction, and one can modify the argument to also obtain statements (1) and (2). 
	
	Alternatively, to prove Theorem~\ref{p:X_Gamma*_0(p^infty) for G*} one can follow Scholze's construction in the Siegel case \cite[\S III]{torsion}, as we shall now demonstrate: here we note that since we ignore the boundary, we do not need to worry about ramification. One first shows (cf \cite[Theorem III.2.15]{torsion}):
	\begin{lem}\label{l:Frob-lift}
		Division by the canonical subgroup defines a natural map
		\[\phi:\mathfrak X^{\ast}(p^{-1}\e)\to \mathfrak X^{\ast}(\epsilon)\]
		that reduces to the relative Frobenius mod $p^{1-\delta}$ where $\delta=\frac{p+1}{p}\epsilon$.
	\end{lem}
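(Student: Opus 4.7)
The plan is to imitate Scholze's construction in the Siegel case \cite[Thm.~III.2.15]{torsion}, adapted to the Hilbert setting where the canonical subgroup has rank $p^g$ and inherits an $\OO_F$-action, while being careful that the polarisation ideal is preserved.

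First, I would verify that over $\mathfrak X^{\ast}(p^{-1}\epsilon)$ a canonical subgroup $\mathfrak H_1\subseteq \mathfrak A[p]$ of the universal (semi-abelian) HBAV exists. Since $p^{-1}\epsilon$ can be assumed smaller than the bound $1/(p+1)$ appearing in Lubin--Raynaud--Fargues theory (and the sharper Hilbert version in \cite[Cor.~A.2]{AIP2}, already cited in Defn.~\ref{defn: Ig tower}), this canonical subgroup is \'etale-locally free of rank $p^g$, $\OO_F$-stable, and reduces to $\ker F\subseteq \mathfrak A\bmod p^{1-\delta}$ with $\delta = \tfrac{p+1}{p}\epsilon$. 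This last point is exactly the characterising property of the canonical subgroup, and it will feed directly into the final compatibility with Frobenius.

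Next, I would define $\phi$ on objects by
\[(A,\iota,\lambda,\mu)\longmapsto (A/\mathfrak H_1,\iota',\lambda',\mu'),\]
where $\iota'$ is the induced $\OO_F$-action (which is possible because $\mathfrak H_1$ is $\OO_F$-stable) and $\mu'$ is transported via the isogeny $A\to A/\mathfrak H_1$ (which is \'etale away from $p$ and in particular injective on $N$-torsion, since $(N,p)=1$). For $\lambda'$, I would use that, locally in the \'etale topology, the polarisation $\lambda$ identifies $\mathfrak H_1$ with a ``Cartier-dual-type'' subgroup of $A^\vee$, inducing a $\mathfrak{c}$-polarisation on the quotient; in the ordinary fibres this recovers the $\mathfrak c$-polarisation on $A^{(p)}$ obtained via relative Frobenius, which is the whole point. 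One then has to check that this assignment is functorial and extends to the toroidal (and hence minimal) compactification via the usual gluing over the Tate objects at the cusps.

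Third, I would verify $\phi$ lands in $\mathfrak X^{\ast}(\epsilon)$. This is the Hodge-height identity
\[v\bigl(\Hdg(A/\mathfrak H_1)\bigr)=p\cdot v(\Hdg(A)),\]
for $v(\Hdg(A))<1/(p+1)$, which is the Hilbert analogue of the calculation reproduced e.g.\ in \cite[Prop.~III.2.13--14]{torsion}, and follows because away from the canonical subgroup the isogeny $A\to A/\mathfrak H_1$ is ``Verschiebung-like''; if $v(\Hdg(A))\leq p^{-1}\epsilon$ then $v(\Hdg(A/\mathfrak H_1))\leq \epsilon$. Finally, the reduction statement mod $p^{1-\delta}$ is immediate from the Frobenius-reduction property of $\mathfrak H_1$ noted in the first step: modulo $p^{1-\delta}$ we have $\mathfrak H_1 = \ker F$, so $A/\mathfrak H_1$ agrees with the Frobenius twist $A^{(p)}$ compatibly with $\OO_F$-action, polarisation and tame level.

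The main obstacle, in my view, is the verification in step two that the $\mathfrak c$-polarisation descends to $A/\mathfrak H_1$ on the nose (and not only up to a twist by a local unit ideal) in families over the non-ordinary part of $\mathfrak X^{\ast}(p^{-1}\epsilon)$; everything else is either a direct transport of structure or a citation to the existing canonical subgroup machinery of \cite{AIP2}. Here I would reduce to the ordinary locus by density, using that $\mathfrak X^{\ast}(p^{-1}\epsilon)$ is flat over $\Spf\OO_L$ and that the ordinary locus is schematically dense, so that the $\mathfrak c$-polarisation on $A/\mathfrak H_1$ is uniquely determined by its restriction there, where the result is the classical fact for $A^{(p)}$.
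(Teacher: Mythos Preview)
Your overall strategy---define $\phi$ via $A\mapsto A/\mathfrak H_1$, check compatibility with the extra structure, then verify the target and the Frobenius reduction---matches the paper's. But two of your steps diverge from the paper's proof in ways worth noting.

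For landing in $\mathfrak X^{\ast}(\epsilon)$, you invoke the Hodge-height identity $v(\Hdg(A/\mathfrak H_1))=p\cdot v(\Hdg(A))$. That is fine on points of the generic fibre, but $\mathfrak X^{\ast}(\epsilon)$ is a specific \emph{formal model}, locally of the form $\Spf\big(R\langle T\rangle/(T\tilde{\Ha}-p^{\epsilon})\big)$, and you need a map of formal schemes, not just a factorisation on rigid points. The paper handles this by an explicit computation: from $\phi(\tilde{\Ha})\equiv \tilde{\Ha}^{p}\bmod p^{1-p^{-1}\epsilon}$ one finds $T^p\phi(\tilde{\Ha})=p^{\epsilon}(1+p^{1-\delta}u)$, and then \emph{defines} $\phi(T):=T^p(1+p^{1-\delta}u)^{-1}$. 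This is exactly where the constant $\delta=\tfrac{p+1}{p}\epsilon$ enters: on $R$ the map is Frobenius already mod $p^{1-p^{-1}\epsilon}$, but on the blow-up coordinate $T$ one only gets $\phi(T)\equiv T^p\bmod p^{1-\delta}$. Your sketch attributes $\delta$ to the canonical-subgroup congruence itself, which obscures this point; the canonical subgroup in fact satisfies the stronger congruence mod $p^{1-p^{-1}\epsilon}$, and the loss to $p^{1-\delta}$ comes from $T$.

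For the boundary, rather than gluing over Tate objects, the paper reduces to $\epsilon=0$ (since the boundary lies in the ordinary locus) and applies Scholze's Hartogs-type extension \cite[Lem.~III.2.10]{torsion}, using that $X^{\ast}_{\F_p}$ is normal and the cusps have codimension $\geq 2$. This is shorter and avoids any explicit analysis at the cusps.

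Finally, the polarisation issue you flag as the ``main obstacle'' is not treated in the paper's proof at all; it is absorbed into the statement that $A/\mathfrak H_1$ defines a point of $\mathfrak X$, and is not where the work lies.
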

	\begin{proof}
		We argue as in \cite[Thm.~III.2.15]{torsion}. Let $ A\to\mathfrak X(p^{-1}\e)$ be the universal abelian scheme. Then the abelian scheme $A':=A/H_1$ defines a morphism
		$\phi:\mathfrak X(p^{-1}\e)\to \mathfrak X.$
		Locally on any affine open $\Spf(R)\subseteq \mathfrak X$ defined over $\Z_p$, this corresponds to a morphism $\phi:R\to R\langle T\rangle/(T\tHa-p^{p^{-1}\epsilon}).$ Since $H_1\equiv \ker F\bmod p^{1-p^{-1}\epsilon}$, we have $A'\equiv A^{(p)}\bmod p^{1-p^{-1}\epsilon}$. Consequently, $\phi$ reduces mod $p^{1-p^{-1}\epsilon}$ to the relative Frobenius on $R$. Here $R^{(p)}=R$ since $R$ is already defined over $\Z_p$. In particular, since $\Ha(A^{(p)})=\Ha(A)^{p}$, we have $\phi(\tHa)\equiv \tHa^p \bmod p^{1-p^{-1}\epsilon}$. Consequently,
		\[
		T^p\phi(\tHa)\equiv T^p\tHa^p =p^{\epsilon} \bmod p^{1-p^{-1}\epsilon}
		\]
		inside $R\langle T\rangle/(T\tHa-p^{p^{-1}\epsilon})$.
		We can therefore find $u\in R$ such that 
		\[ T^p\phi(\tHa) = p^{\epsilon}+p^{1-p^{-1}\epsilon}u=p^{\epsilon}(1+p^{1-\delta}u).\]
		Sending $T\mapsto \phi(T):=T^p(1+p^{1-\delta}u)^{-1}$ therefore defines a unique extension
		\[\phi:R\langle T\rangle/(T\tHa-p^{\epsilon})\to R\langle T\rangle/(T\tHa-p^{p^{-1}\epsilon})\]
		giving the desired lift to a map 
		\begin{equation}\label{m:proof-of-l:Frob-lift}
		\phi:\mathfrak X(p^{-1}\e)\to \mathfrak X(\e).
		\end{equation}
		Since $\phi \bmod p^{1-\epsilon}$ is given by the Frobenius on $R$, and by sending $T\mapsto T^p(1+p^{1-\delta}u)^{-1}\equiv T^p\bmod p^{1-\delta}$, this map reduces to the relative Frobenius mod $p^{1-\delta}$ as desired.
		
		It remains to extend this to the minimal compactification. Since the boundary is already contained in the ordinary locus, it suffices to consider the case of $\epsilon=0$, and thus $\delta=0$. It moreover suffices to consider the case of $L=\Q_p^\cyc$; the general case follows by base-change.
		
		Since $X^{\ast}_{\F_p}$ is normal  (see \cite[Thm.~4.3]{Chai-ArithCompact} and the discussion in \cite[1.8.1]{Kisin-Lai}), and the cusps are of codimension $\geq 2$, we can now apply Scholze's version of Hartog's extension principle, \cite[Lem.~III.2.10]{torsion} to see that $\phi$ extends uniquely over the boundary to a map
		\[\phi:\mathfrak X^{\ast}(0)\to \mathfrak X^{\ast}(0)\]
		which  still reduces to the relative Frobenius mod $p$.
		Glueing this to~\eqref{m:proof-of-l:Frob-lift} proves the lemma.
	\end{proof}
	\begin{proof}[Proof of Thm.~\ref{p:X_Gamma*_0(p^infty) for G*}]
		We can argue as in \cite[Cor.~III.2.19]{torsion}. For every $n\in \N$ we have by Lem.~\ref{l:Frob-lift} a morphism $\phi:\mathfrak X^{\ast}(p^{-n-1}\e)\to \mathfrak X^{\ast}(p^{-n}\e)$ that reduces to the relative Frobenius mod $p^{1-\frac{p+1}{p^{n+1}}\e}$ and in particular mod $p^{1-\delta}$ where $\delta:=\frac{p+1}{p}\e$. Consequently, $\mathfrak X^{\ast}(p^{-\infty}\e):=\varprojlim_{\phi} \mathfrak X^{\ast}(p^{-n}\e)$ is a flat formal scheme for which the relative Frobenius mod $p^{1-\delta}$ is an isomorphism. It follows that the generic fibre $\mathcal X^{\ast}(p^{-\infty}\e)$ is perfectoid and moreover, by \cite[Prop.~2.4.2]{SW}, on the generic fibres we have
		\[\mathcal X^{\ast}(p^{-\infty}\e)\sim \varprojlim_{\phi} \mathcal X^{\ast}(p^{-n}\e).\]
		By \cite[Prop.~2.4.3]{SW}, we can now restrict to the open modular curve to deduce that there is a perfectoid tilde-limit $\mathcal X(p^{-\infty}\e)\sim \varprojlim_{\phi} \mathcal X(p^{-n}\e)$.
		Since the Atkin--Lehner isomorphisms $\AL^n$ define an isomorphism of inverse systems of the anticanonical tower to the system
		\[
		\dots \longrightarrow \mathcal X(p^{-n-1}\epsilon)  \xlongrightarrow{\phi}  \mathcal X(p^{-n}\epsilon)  \longrightarrow \dots\]
		we equivalently have $\mathcal X_{\Gamma^{\ast}_{\!\scaleto{0}{3pt}}(p^\infty)}(\e)_a\sim \varprojlim_n\mathcal X_{\Gamma_{\!\scaleto{0}{3pt}}^{\ast}(p^n)}(\epsilon)_a$, as desired. This proves part (1).
		
		To deduce parts (2) and (3), we use that the forgetful morphism $\XX_{\Gamma_1^*(p^n)}(\e)_a \to \XX_{\Gamma_{\!\scaleto{0}{3pt}}^*(p^n)}(\e)_a$ is finite \'etale. By pulling these back from varying $n$, we obtain a tower of finite \'etale morphisms
		\begin{center}
			\begin{tikzcd}[row sep = 0.55cm]
			\XX_{\Gamma_{\!\scaleto{0}{3pt}}^*(p^\infty)}(\e)_a  \ar[d]  & \XX_{\Gamma_{\!\scaleto{0}{3pt}}^*(p^\infty) \cap \Gamma_1^*(p^n)}(\e)_a  \arrow[l] \ar[d] & \arrow[l] \dots \\
			\XX_{\Gamma_{\!\scaleto{0}{3pt}}^*(p^n)}(\e)_a & \arrow[l]	\XX_{\Gamma_1^*(p^n)}(\e)_a & 
			\end{tikzcd}
		\end{center}
		Since perfectoid tilde-limits of inverse systems of perfectoid spaces with affinoid transition maps exist, we obtain a perfectoid tilde-limit \[\XX_{\Gamma_1^*(p^\infty)}(\e)_a \sim \varprojlim_n \XX_{\Gamma_{\!\scaleto{0}{3pt}}^*(p^\infty) \cap \Gamma_1^*(p^n)}(\e)_a.\]
		This proves part (2). Part (3) follows similarly using that $\XX_{\Gamma_1^*(p^n)} \to \XX_{\Gamma_{\!\scaleto{0}{3pt}}^*(p^n)}$ is finite \'etale.
		
		Since the morphisms $\XX_{\Gamma_1^*(p^n)}(\e)_a \to \XX_{\Gamma_{\!\scaleto{0}{3pt}}^*(p^n)}(\e)_a$ and  $\XX_{\Gamma^*(p^n)}(\e)_a \to \XX_{\Gamma_{\!\scaleto{0}{3pt}}^*(p)}(\e)_a$ and $\XX_{\Gamma^*(p^n)}(\e)_a \to \XX_{\Gamma_{\!\scaleto{0}{3pt}}^*(p^n)}(\e)_a$ are  finite \'etale torsors for the groups $(\OF/p^n\OF)^\times$, $\{\smallmatrd{\ast}{\ast}{c}{\ast}\in \G^{\ast}(\Z/p^n\Z)|c\in p\OF/p^n\OF\}$ and $\{\smallmatrd{\ast}{\ast}{0}{\ast}\in \G^{\ast}(\Z/p^n\Z)\}$, respectively, the last statement follows from the fact that perfectoid tilde-limits commute with fibre products.
		
		It remains to prove (4), which we deduce from (3) using the $G^{\ast}(\Q_p)$-action at infinite level recalled in  \S\ref{s:the action og G(Q_p)}: like in \cite[\S III.3]{torsion}, it suffices to prove that on the level of topological spaces we have $G^{\ast}(\Q_p)|\XX_{\Gamma(p^\infty)}(\e)_a|=|\XX_{\Gamma(p^\infty)}|:=|\varprojlim_n \XX_{\Gamma(p^n)}|$. But as it suffices to prove this after passing to a smaller $K^p$ and any field extension of $C$, we can reduce to the case considered in \cite[Theorem~IV.1.1]{torsion}. This finishes the proof of the theorem.
	\end{proof}
	
	\begin{rmrk}\label{rem: hilb moduli}
		As in the elliptic case, we have a moduli description of the $(C,C^+)$-points of $\XX_{\Gamma^*(p^\infty)}$ for any perfectoid extension $C$ of $L$: They are in functorial one-to-one correspondence with isomorphism classes of tuples $(A,\iota,\lam,\mu_N,\a)$ where $(A,\iota,\lam)$ is an $\epsilon$-nearly ordinary $\gothc$-polarised HBAV over $C$ with tame level $\mu_N$,  together with a $\Gamma^{\ast}(p^\infty)$-level structure $\a: \OO_p^2 \isorightarrow T_p A^{\vee}$. The subspace $\XX_{\Gamma_0^*(p)}(\e)_a$ represents those tuples for which $\a(1,0)$ generates a subgroup of $A^{\vee}[p]$ that is different to the canonical subgroup.
	\end{rmrk}
	
	\begin{definition}
		The $G^{\ast}(\ZZ/p^n\ZZ)$-actions on $\XX_{\Gamma^*(p^n)}$ in the limit give rise to a $G^{\ast}(\ZZ_p)$-action on  $\XX_{\Gamma^*(p^\infty)}$ which in terms of moduli can be described as follows:  the action of $\gamma\in  G^{\ast}(\ZZ_p)\subseteq \GL_2(\O_p)$ sends any HBAV $(A,\iota,\lambda,\alpha:\O_p^2\to T_pA^\vee)$ to $(A,\iota,\lambda,\alpha\circ \gamma^{\vee})$.
	\end{definition}

	\subsection{The Hodge--Tate period morphism and its image}\label{section: ht morph in hilbert case}

	%%%%%%%%%%%%%%%%%%%%%%%%%%%%%%%%%%%%%%%%%%%%%%%%%%%%%%%%%%%%%%%%%%
	%.        Hodge Tate morphism
	%%%%%%%%%%%%%%%%%%%%%%%%%%%%%%%%%%%%%%%%%%%%%%%%%%%%%%%%%%%%%%%%%%
\textcolor{black}{For any adic space $S$ over $\Spa(L)$, we denote by $\Res_{\O_F|\Z}S$ the functor on affinoid  $(L,\O_L)$-algebras given by $(R,R^+)\mapsto S(R_F,R_F^+),$ 
where $R_F:=R\otimes_{\Q}F$ and $R_F^+$ is the integral closure of $R^+\otimes_{\Z}\O_F$ in $R_F$. If $S=X^{\an}$ is the analytification of a variety $X$ over $L$ for which the usual restriction of scalars $\Res_{\O_F|\Z}X$ is representable by a variety, we have $\Res_{\O_F|\Z}S=(\Res_{\O_F|\Z}X)^{\an}$. For all spaces we need below, this shows that $\Res_{\O_F|\Z}S$ is representable by an adic space.}

 For example, $\Res_{\O_F|\Z}\PP^1$ is the adic analytification of the finite type scheme representing the functor that sends any $L$-algebra $R$ to the set $\P^1(R\otimes_{\Z}\O_F)$. This is the flag variety of $G^{\ast}$. By \cite[Thm.~2.1.3]{CarScho}, there is a Hodge--Tate period map of the form 
	\[
	\pi_{\HT}:\XX_{\Gamma^*(p^\infty)} \to \Res_{\O_F|\Z}\PP^{1}.
	\]
	
	\begin{rmrk}\label{r:mod-interpret-of-pi_HT-Hilbert}
		On points, this map has the following moduli interpretation: let $C/L$ be a complete algebraically closed field and let $A$ a $\mathfrak c$-polarised HBAV over $C$. Then the Hodge--Tate filtration is  a short exact sequence of $C\otimes_{\Z_p}\O_p$-modules
		\[ 
		0 \lra \Lie(A^{\vee})(1) \lra T_pA^{\vee} \otimes_{\ZZ_p} C \xrightarrow{\HT_A} \omega_{A} \lra 0
		\]
		Now, a point $x\in \XX_{\Gamma^*(p^\infty)}(C,C^+)$ gives rise to a trivialisation $\O_{p}^{2}\xrightarrow{\sim}T_p A^{\vee}$ which we can use to consider the above as a filtration of $\O_{p}^{2}\otimes_{\Z_p} C$ of rank $1$. This defines the desired point $\pi_{\HT}(x)\in \Res_{\O_F|\Z}\P^1(C,C^+)=\P^1(\O_{p}\otimes_{\Z_p} C)$. 
	\end{rmrk}
	
	For the definition of Hilbert modular forms, it will be important for us to bound the image of the anticanonical locus under the Hodge--Tate period map.
	More precisely, our goal is to compare this to a family of neighbourhoods of $\P^1(\O_p)\subseteq \Res_{\OO_F|\ZZ}\P^1$ which we shall now define.
	\begin{definition}
		Recall from Defn.~\ref{d:G_a,G_m,and hats} that we had defined adic groups $\G_a$, $\G_m$, $\hat{\G}_a$, $\hat{\G}_m$.
		\begin{enumerate}
			\item By applying the functor $\Res_{\OO_F|\ZZ}-$, we obtain adic spaces $\Res_{\OO_F|\ZZ}\G_m$,  $\Res_{\OO_F|\ZZ}\G_a$, and open subspaces $\Res_{\OO_F|\ZZ}\hat{\G}_m$,  $\Res_{\OO_F|\ZZ}\hat{\G}_a$.
			\item Given a point $x\in \Res_{\OO_F|\ZZ}\G_a$, and an element $z\in L$ with $|z|=r$, we shall call the open subspace $B_r(x):=x+z\Res_{\OO_F|\ZZ}\hat{\G}_a\subseteq {\G}_a$ the ball of radius $r$ around $x$.
		\end{enumerate}
	\end{definition}
	\begin{defn}\label{d:embed-profinite-O_p-into-ResP^1}
		The subspace $\PP^1(\OO_p)=\Res_{\OF|\ZZ}\PP^1(\Z_p)$ is a profinite set, and therefore has a geometric incarnation as a morphism
		$\PP^1(\OO_p) \rightarrow \Res_{\OF|\ZZ}\PP^1$, where as usual we also write $\PP^1(\OO_p)$ for the associated profinite perfectoid space.

		We embedded $\G_a\hookrightarrow \PP^1$ via $z\mapsto (z:1)$. By applying $\Res_{\OF|\ZZ}$, this defines an open subspace $\Res_{\OF|\ZZ}\G_a\hookrightarrow \Res_{\OF|\ZZ}\PP^1$.
		We also have $\OO_p=:B_0(\OO_p:1)\hookrightarrow\PP^1(\OO_p)$ via $a \mapsto (a:1)$ for $a\in \OO_p$. For $r \in (0,1]\cap |L|$, we define the open neighbourhood  $B_r(\OO_p:1)\subseteq \Res_{\OO_F|\ZZ}\hat{\G}_a\subseteq \Res_{\OF|\ZZ}\PP^1$ of $B_0(\O_p:1)$ to be the union of all balls of radius $r$ around points in $\O_p\hookrightarrow \hat{\G}_a\subseteq \Res_{\OF|\ZZ}\PP^1$. We make analogous definitions for open subspaces $B_r(\OO^\times_p:1)$ and $B_r(1:p\OO_p)$ of $\Res_{\OF|\ZZ}\PP^1$.
	\end{defn}
	
	\begin{prop}\label{prop: epsilon w for HMFS}
		Let  $1>r\geq 0$. Then for any $m\in\ZZ_{\geq 1}$ with $1/p^{m}\leq r$ and any $0\leq \epsilon\leq 1/2p^{m}$, or $\epsilon\leq 1/3p^{m}$ if $p=3$, or $\epsilon\leq 1/4p^{m}$ if $p=2$, we have:
		\begin{enumerate}
			\item $ \pi_{\HT}(\XX_{\Gamma^*(p^\infty)}(\epsilon)_c)\subseteq B_r(1:p\OO_p)$,
			\item $ \pi_{\HT}(\XX_{\Gamma^*(p^\infty)}(\epsilon)_a)\subseteq B_r(\O_p:1)$.
		\end{enumerate}
	\end{prop}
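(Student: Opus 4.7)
The strategy is to generalise Prop.~\ref{proposition: X_w(0) and anticanonical locus} to the Hilbert setting; the additional input needed is the $\OO_F$-linear theory of canonical subgroups. Since both $\XX_{\Gamma^*(p^\infty)}(\epsilon)_a$ and $B_r(\OO_p:1)$ are stable under base change along perfectoid extensions, and $\pi_{\HT}$ is functorial, it suffices to check the containment on $(C,C^+)$-valued points with $C$ an algebraically closed perfectoid extension of $L$. By Rem.~\ref{rem: hilb moduli} and Rem.~\ref{r:mod-interpret-of-pi_HT-Hilbert}, such a point corresponds to a tuple $(A,\iota,\lambda,\mu_N,\alpha)$ with $\alpha:\OO_p^2\isorightarrow T_p A^{\vee}$, and $\pi_{\HT}(x)$ is the class in $\P^1(\OO_p\otimes_{\ZZ_p} C)$ of the rank-$1$ $\OO_p\otimes C$-direct summand $L:=\alpha^{-1}(\Lie(A^\vee)(1))\subseteq \OO_p^2\otimes_{\ZZ_p} C$.

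The bounds on $\epsilon$ are calibrated to guarantee the existence of a canonical subgroup $H_m\subseteq A[p^m]$ of order $p^{mg}$, and dually a canonical subgroup $H_m'\subseteq A^\vee[p^m]$ which is free of rank $1$ over $\OO_p/p^m$. For part (2), the anticanonical condition at level $1$ states that the reduction of $\alpha(1,0)$ in $A^\vee[p]$ generates an $\OO_p/p$-line disjoint from $H_1'=H_m'\bmod p$. Since $H_m'$ and the free $\OO_p/p^m$-submodule $\alpha(\OO_p\cdot(1,0))$ of $A^\vee[p^m]$ have disjoint reductions mod $p$, they are themselves disjoint and their sum exhausts $A^\vee[p^m]$. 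Hence there is a unique $\tilde a\in \OO_p/p^m$ with $H_m'=\OO_p\cdot\alpha(\tilde a,1)\bmod p^m$.

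The key quantitative estimate is that under the stated bounds on $\epsilon$, the Hodge--Tate map $\HT:T_pA^\vee\to\omega_A$ satisfies $\HT(\alpha(\tilde a,1))\in p^{\,r}\omega_A$ with $r=1/p^m$. As $\Lie(A^\vee)(1)=\ker\HT$ inside $T_pA^\vee\otimes_{\ZZ_p} C$, this means $L$ contains a vector of the form $(\tilde a+z,\,1+w)$ with $z,w\in p^{\,r}(\OO_p\otimes_{\ZZ_p}C^+)$; since the second coordinate is a unit, $L$ projectivises to a point of the form $(\tilde a+O(p^{\,r}):1)$, which lies in $B_r(\OO_p:1)$. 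Part (1) is analogous: the canonical condition forces $H_m'=\OO_p\cdot\alpha(1,\tilde b)\bmod p^m$ with $\tilde b\in p\OO_p/p^m$, and the Hodge line projectivises into $B_r(1:p\OO_p)$.

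The main obstacle is making the estimate on $\HT$ restricted to the canonical subgroup of $A^\vee$ precise in the $\OO_F$-linear setting. This can be done by reducing, prime-by-prime after a suitable base change, to an $\OO_{F_{\gothp}}$-linear version of the Lubin--Katz analysis of canonical subgroups (as recorded for instance in \cite[App.~A]{AIP2}); the numerical constants $1/2,1/3,1/4$ depending on whether $p\geq 5$, $p=3$, or $p=2$ track exactly the usual loss incurred in propagating the valuation bound from level $1$ to level $m$, matching the elliptic case of Prop.~\ref{proposition: X_w(0) and anticanonical locus}.
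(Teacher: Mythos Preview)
Your overall strategy—checking on $(C,C^+)$-points for $C$ algebraically closed and comparing the position of the Hodge–Tate kernel with that of the canonical subgroup—is the same as the paper's. But the substantive step is precisely the one you leave as an ``estimate'': you assert that $\HT(\alpha(\tilde a,1))\in p^{r}\omega_A$ and then deduce that $L$ contains a vector of the form $(\tilde a+z,1+w)$ with $z,w$ small. This implication is not automatic. Knowing that a vector has small integral Hodge–Tate image only tells you it is small in the quotient $(\O_p^2\otimes\O_C)/V$, where $V$ is the saturated integral kernel; to translate this into a nearby element of $V$ you need to control the cokernel of the integral map $(\O_p^2\otimes\O_C)/V\hookrightarrow\omega_{A_0}^+$, and that cokernel is exactly the kind of Hodge defect one is trying to bound.

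The paper does not argue this way. Instead it works at level $n=m+1$ (not $m$), and proves directly that the saturated kernel $V$ and the canonical subgroup $H_n\otimes\O_C$ agree modulo $p^x$ for $x=n-\tfrac{p^n}{p-1}\epsilon>m$. The mechanism is a degree computation: using Prop.~\ref{p:properties-of-can-subgroup}, one shows that the kernel $N$ of $\HT\bmod p^y$ (where $y=n-\epsilon\tfrac{p^n-1}{p-1}$) has $\O_C$-degree exactly $gy+\epsilon/(p-1)$; since both $V/p^y$ and $H_n\otimes\O_C/p^y$ sit inside $N$ as free $\O_C/p^y$-submodules of rank $g$, the shape of $N$ as $(\O_C/p^y)^g\oplus(\text{$p^{\epsilon/(p-1)}$-torsion})$ forces them to coincide after multiplying by $p^{\epsilon/(p-1)}$. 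This is the real content, and your sketch does not supply any substitute for it.

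Finally, your proposed route of ``reducing prime-by-prime to an $\O_{F_\gothp}$-linear Lubin–Katz analysis'' is not what the paper does and would not straightforwardly work: the local pieces $A[\gothp^\infty]$ are $p$-divisible groups of dimension $[F_\gothp:\Q_p]$, not one-dimensional. The results from \cite[App.~A]{AIP2} that the paper records in Prop.~\ref{p:properties-of-can-subgroup} already handle higher-dimensional $p$-divisible groups directly, and the degree argument uses them as a black box without any further decomposition.
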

	For the proof, we need the following technical input on the Hodge--Tate morphism:
	\begin{prop}\label{p:properties-of-can-subgroup}
		Let $K$ be a completely valued extension of $\Q_p$ with algebraic closure $\overline{K}$. For any $v\in |\RR|$, let $(p^{v}):=\{x\in K\mid |x|\leq v\}$. Let $D$ be a $p$-divisible group over $\O_K$ of dimension $d$ and height $h$. Let $0\leq \epsilon$ be such that the Hodge ideal is $\Hdg(D)=(p^{\epsilon})$. Let $n\in \Z_{\geq 0}$ be such that $\epsilon\leq 1/2p^{n-1}$ if $p\geq 5$, or $\epsilon\leq 1/3p^{n-1}$ if $p=3$, or $\epsilon\leq 1/4p^{n-1}$ if $p=2$. Let $\delta:=\epsilon \frac{p^n-1}{p-1} <1$.
		\begin{enumerate}
			\item $D$ has canonical subgroups $1\subseteq H_1\subseteq \dots \subseteq H_m \dots \subseteq H_n\subseteq D$ of level $m$, finite locally free of rank $p^{md}$, for all $m\leq n$. They reduce to the kernel of Frobenius on $D \bmod (p^{1-\delta})$.
			\item The map $\omega_{D[p^n]}\to \omega_{H_n}$ induces an isomorphism  $\omega_{D[p^n]}/(p^{n-\delta})=\omega_{H_n}/(p^{n-\delta})$.
			\item The Hodge--Tate map $H_n(\overline{K})^{\vee}\otimes_{\ZZ}\O_{\overline{K}}\to \omega_{H_n}\otimes_{\O_K}\O_{\overline{K}}$ has cokernel of degree $\epsilon/(p-1)$.
		\end{enumerate}
	\end{prop}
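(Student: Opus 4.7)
The plan is to deduce all three claims from the standard theory of canonical subgroups of $p$-divisible groups, as developed by Lubin, Katz, Abbes--Mokrane, Andreatta--Gasbarri and Fargues. Throughout, I normalise the valuation so that $v(p)=1$, so that $(p^v)$ denotes the ideal of elements of valuation $\geq v$ in $\O_K$ for any $v\in \RR_{\geq 0}$. The $p$-dependent numerology on $\epsilon$ in the hypothesis is precisely the classical range of Hodge height in which canonical subgroups of level $n$ exist.

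For part (1), the existence of canonical subgroups $H_m\subseteq D[p^m]$ of rank $p^{md}$ for all $m\leq n$, together with the inclusions $H_1\subseteq\cdots\subseteq H_n$, is by now a standard result. One may invoke Fargues's construction via the Harder--Narasimhan filtration of truncated Barsotti--Tate groups, where the chain $H_m$ is produced directly by the filtration, or alternatively the earlier inductive construction of Andreatta--Gasbarri. The congruence $H_m\equiv \ker F^m \pmod{p^{1-\delta}}$ is the standard refinement of the level-$1$ congruence $H_1\equiv \ker F\pmod{p^{1-\epsilon}}$ already used in the proof of Lem.~\ref{l:Frob-lift}; the accumulated error at level $n$ comes out to precisely $\delta=\epsilon(p^n-1)/(p-1)$.

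For part (2), the key input is the exact sequence of conormal sheaves
\begin{equation*}
0\to \omega_{D[p^n]/H_n}\to \omega_{D[p^n]}\to \omega_{H_n}\to 0
\end{equation*}
associated to the flat quotient $D[p^n]\twoheadrightarrow D[p^n]/H_n$. By \cite[Cor.~A.4.2]{AIP2}, already used in the proof of Lem.~\ref{l:ses-of-integ-conormal-sheaves}, the leftmost term is annihilated by $\Hdg^{(p^n-1)/(p-1)}=(p^\delta)$. Since $\omega_{D[p^n]}\cong (\O_K/p^n)^d$ is a free $\O_K/p^n$-module, any submodule killed by $(p^\delta)$ is contained in $(p^{n-\delta})\omega_{D[p^n]}$; hence reducing modulo $(p^{n-\delta})$ turns the surjection $\omega_{D[p^n]}\twoheadrightarrow \omega_{H_n}$ into an isomorphism.

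Part (3) is the main difficulty and the step I expect to be the most work. It is a version of Fargues's computation of the cokernel of the Hodge--Tate map restricted to the canonical subgroup, closely related to the computations in \cite[\S III.2]{torsion}. A natural strategy is to use part (1) to identify $H_n$ with $\ker F^n$ modulo $p^{1-\delta}$, which reduces the Hodge--Tate map to (the Cartier dual of) a power of Frobenius where a direct calculation applies; the denominator $p-1$ in the degree $\epsilon/(p-1)$ then emerges from the same geometric-series factor that produces $\delta$. The main obstacle is to keep careful track of valuations throughout this reduction, so as to obtain an exact degree statement rather than merely an inequality, and to ensure the answer is independent of auxiliary choices such as a uniformiser or a trivialisation of $\omega_D$.
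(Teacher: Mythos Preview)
Your proposal is correct in substance, but differs from the paper in that the paper's proof is purely by citation: part (1) is \cite[Cor.~A.2, parts 1,2]{AIP2}; parts (2) and (3) for $p>2$ are \cite[Prop.~3.2.2]{AIP4}; and for $p=2$ the paper points to \cite[Cor.~A.2.4]{AIP2} and \cite[Prop.~A.3]{AIP2} respectively. Your more hands-on treatment is a reasonable alternative.

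For part (1), your references to Fargues and Andreatta--Gasbarri are interchangeable with the paper's citation; nothing to add. For part (2), your conormal-sequence argument is exactly the content of the cited \cite[Prop.~3.2.2]{AIP4}, so you have effectively reproduced the proof rather than cited it---this is fine and more self-contained.

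For part (3), you correctly identify this as the delicate point and give only a strategy. The paper does not attempt this computation either; it simply invokes the literature. Your Frobenius-reduction idea is indeed what underlies those results, but if you intend to carry it out you will need one further ingredient beyond what you wrote: the statement you want is that $\det(\coker \HT)$ is generated by an element of valuation $\epsilon/(p-1)$, and the clean way to see this (as in \cite[Prop.~A.3]{AIP2}) is via the existence of an ideal $\mathrm{HdgT}\subseteq \O_K$ with $\mathrm{HdgT}^{p-1}=\Hdg(D)=(p^\epsilon)$ such that $\det\coker = \det\omega_{H_n}/\mathrm{HdgT}$. This gives the exact degree directly, whereas your sketched approach via congruence to $\ker F^n$ would a priori only yield an inequality unless you separately bound the degree from below. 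Unless you specifically want a self-contained argument, citing \cite[Prop.~3.2.2]{AIP4} and \cite[Prop.~A.3]{AIP2} is the cleaner route.
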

	\begin{proof}
		(1) is a special case of \cite[Cor.~A.2, parts 1,2]{AIP2}. For (2), in the case of $p>2$, this is  \cite[ Prop.~3.2.2]{AIP4}. For the case of $p=2$, we can in the proof replace \cite[Thm.~3.1.1]{AIP4} by \cite[ Cor. A.2.4]{AIP2}. 
		Finally, for $p>2$, (3) is again \cite[Prop.~3.2.2]{AIP4}. The case of (3) for $p=2$ follows from \cite[Prop.~A.3]{AIP2}, which applies by $2\in \Hdg(D)^{4}$ and which says that $\det \coker=\det\omega_{H_n}/\mathrm {HdgT}$ for an ideal $\mathrm{HdgT}\subseteq \O_K$ satisfying $\mathrm{HdgT}^{p-1}=\Hdg(D)=(p^{\epsilon})$.  
	\end{proof}

	\begin{proof}[Proof of Prop.~\ref{prop: epsilon w for HMFS}]
		
		It suffices to check this on $(C,C^+)$-points for $C$ algebraically closed.
		
		Let $z\in \XX_{\Gamma(p^\infty)}(\epsilon)(C,C^+)$ correspond to a $\mathfrak c$-polarised HBAV $A/C$ with extra data and an isomorphism $\alpha:\O_p^2\to T_pA^{\vee}$. 
		Let $A_0^{\vee}$ be the semi-abelian scheme  over $\O_C$ associated to $A^{\vee}$ and let
		$V$ be the kernel of the integral Hodge--Tate-map; then there is a left exact sequence
		\[0\to V\to T_pA_0^{\vee}\otimes \O_C\xrightarrow{\HT} \omega_{A_0}\]
		and by definition, $V\subseteq T_pA_0^{\vee}\otimes \O_C$ is saturated. Via $\alpha$, it thus gives a point $(a:b) \in  \PP^1(\OO_p \otimes_{\Zp}\OO_C) \cong \PP^1(\OO_C)^\Sigma$ with $a = (a_v)_v, b = (b_v)_v \in \OO_C^\Sigma$, which is the image of $z$ under $\pi_{\HT}$.
		
		Let $n:=m+1$. Upon reduction mod $p^n$, we get an injection  $V/p^n\to A_0^{\vee}[p^n]\otimes_{\Z} \O_C$ which fits into a (not necessarily exact) complex
		\[0\to V/p^n\to A_0^{\vee}[p^n]\otimes \O_C\to \omega_{A_0[p^n]}.\]
		
		The Hodge ideals of the $p$-divisible groups of $A$ and $A^\vee$ are the same (e.g. \cite[Thm.~3.1.1]{AIP4}); thus by Prop.~\ref{p:properties-of-can-subgroup} (1) and our choice of $\e$, there is a canonical subgroup $H_n\subseteq A_0^{\vee}[p^n]$ of rank $p^n$. Modulo a certain power of $p$, the position of $V/p^n$ coincides with that of $H_n$ inside $A_0^\vee[p^n]$:
		
		\begin{claim}
			Let $x=n-\frac{p^{n}}{p-1}\epsilon$. Then inside $A^{\vee}_0[p^n]\otimes \O_C/p^x$, we have 
			$V/p^x=H_n\otimes \O_C/p^x.$
		\end{claim}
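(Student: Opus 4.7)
The plan is to derive the equality by reducing the Hodge--Tate exact sequence modulo $p^n$ and comparing both sides inside $A_0^\vee[p^n]\otimes\O_C$, using the two quantitative statements of Prop.~\ref{p:properties-of-can-subgroup}. Write $\HT_n$ for the reduction of $\HT$ mod $p^n$. Since $V$ is saturated and $\omega_{A_0}$ is locally free over $\O_C$, the inclusion $V/p^n \hookrightarrow A_0^\vee[p^n]\otimes\O_C$ identifies $V/p^n$ with a direct summand free of rank $g$ over $\O_C/p^n$, on which $\HT_n$ vanishes. Under the Hilbert hypothesis the canonical subgroup $H_n\subseteq A_0^\vee[p^n]$ is \'etale-locally $\O_F/p^n$, so $H_n\otimes\O_C$ is likewise an $\O_C/p^n$-submodule of ``rank $g$''.

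For the inclusion $H_n\otimes\O_C\subseteq V/p^n \bmod p^{n-\delta}$, I would let $H_n'\subseteq A_0[p^n]$ denote the canonical subgroup of $A_0$, which under Cartier duality is $(A_0^\vee[p^n]/H_n)^\vee$, so that $A_0[p^n]/H_n' \cong H_n^\vee$. Applying Prop.~\ref{p:properties-of-can-subgroup}(2) to the $p$-divisible group of $A_0$ yields $\omega_{A_0[p^n]}/p^{n-\delta}\cong\omega_{H_n'}/p^{n-\delta}$, so the exact sequence $0\to\omega_{H_n^\vee}\to\omega_{A_0[p^n]}\to\omega_{H_n'}\to 0$ forces $\omega_{H_n^\vee}\subseteq p^{n-\delta}\omega_{A_0[p^n]}$. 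Naturality of the truncated Hodge--Tate map gives $\HT_n(H_n\otimes\O_C)\subseteq \omega_{H_n^\vee}\otimes\O_C$, and combined with the surjectivity of $\HT_n$ this yields $H_n\otimes\O_C\subseteq V/p^n + p^{n-\delta}(A_0^\vee[p^n]\otimes\O_C)$. Since $n-\delta\geq x$, this is already one of the containments modulo $p^x$.

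For the reverse containment I would use Prop.~\ref{p:properties-of-can-subgroup}(3), which bounds the degree of the cokernel of the Hodge--Tate map of $H_n^\vee$ by $\epsilon/(p-1)$. Translating this via Cartier duality and the first inclusion shows that the cokernel of $H_n\otimes\O_C\hookrightarrow V/p^{n-\delta}$ inside $A_0^\vee[p^n]\otimes\O_C/p^{n-\delta}$ has degree at most $\epsilon/(p-1)$; an elementary-divisor comparison of these two rank-$g$ sublattices of the rank-$2g$ module $A_0^\vee[p^n]\otimes\O_C/p^{n-\delta}$ then yields the containment $V/p^n\subseteq H_n\otimes\O_C + p^x(A_0^\vee[p^n]\otimes\O_C)$ with $x = n-\delta-\epsilon/(p-1) = n-\tfrac{p^n}{p-1}\epsilon$. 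Combining the two inclusions gives the claim.

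The main obstacle is the precise bookkeeping of the compounding errors: the ``$n-\delta$'' loss from replacing $\omega_{A_0[p^n]}$ by $\omega_{H_n'}$ via Prop.~\ref{p:properties-of-can-subgroup}(2), together with the further ``$\epsilon/(p-1)$'' loss from Prop.~\ref{p:properties-of-can-subgroup}(3), must combine cleanly to produce exactly the exponent $x$. In particular, converting the ``degree of cokernel'' bound from part (3) into a bound on the annihilator (rather than merely on the determinant) requires some care over the valuation ring $\O_C$, and the Cartier-duality translation used to apply part (3) to the pair $(H_n, V/p^n)$ rather than directly to $(H_n^\vee, \omega_{H_n})$ is the most delicate step of the argument.
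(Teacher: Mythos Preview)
Your asymmetric approach of establishing two separate containments runs into a genuine gap. In the first inclusion you invoke ``surjectivity of $\HT_n$'' to pass from $\HT_n(H_n\otimes\O_C)\subseteq p^{n-\delta}\omega_{A_0[p^n]}$ to $H_n\otimes\O_C\subseteq V/p^n+p^{n-\delta}(A_0^\vee[p^n]\otimes\O_C)$, but the integral Hodge--Tate map is \emph{not} surjective: its cokernel is precisely what is measured by Prop.~\ref{p:properties-of-can-subgroup}(3). What your Cartier-duality argument actually establishes is only that $H_n\otimes\O_C/p^y$ lies in $N:=\ker\HT_y$ (with $y=n-\delta$), and $N$ strictly contains $V/p^y$, the discrepancy having degree $\partial=\epsilon/(p-1)$. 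So neither containment is yet in hand, and your reverse inclusion faces exactly the degree-versus-annihilator difficulty you flag at the end.

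The paper's argument avoids both problems by treating $M_1:=V/p^y$ and $M_2:=H_n\otimes\O_C/p^y$ symmetrically, as two free rank-$g$ submodules of the single module $N=\ker\HT_y$. A degree count on the four-term sequence $0\to N\to A_0^\vee[p^n]\otimes\O_C/p^y\xrightarrow{\HT_y}\omega_{H_n}/p^y\to Q\to 0$ (using parts (2) and (3) of Prop.~\ref{p:properties-of-can-subgroup} directly, with no Cartier duality) gives $\deg N=gy+\partial$. Since $N$ is $p^y$-torsion and contains the free rank-$g$ module $M_1$ as a pure submodule, one has $N\cong(\O_C/p^y)^g\oplus T$ with $\deg T=\partial$; as $T$ is a sum of cyclics whose lengths add to $\partial$, it is $p^\partial$-torsion. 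Hence $p^\partial M_1=p^\partial N=p^\partial M_2$, which is the claim with $x=y-\partial$. The key idea you are missing is that once both submodules sit inside a common $N$ whose excess degree over each is $\partial$, the structure of $N$ over the valuation ring $\O_C$ is what converts the degree bound into the annihilator bound $p^\partial$; attempting the two inclusions separately never isolates such an $N$.
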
 
		%This would prove the proposition; since $H_n\otimes_{\Z} \O_C/p^x\subseteq A^{\vee}_0[p^n]\otimes\O_C/p^x$ has $\ZZ_p/p^n\ZZ_p$-coordinates after forgetting the $\OO_F$-structure, the lemma implies that $(a:b) \newmod{p^x} \in \PP^1(\O_C/p^x)^\Sigma$ is in the image of $\PP^1(\ZZ_p)^\Sigma$ under reduction. Hence $a_v/b_v\in \Z_p+p^x\O_C$ for each $v \in \Sigma$.
		%Since $x>n-1$, we have 
		%\[
		%	|p^x|=1/p^x\leq 1/p^{n-1}\leq w.
		%\]
		%This implies $\pi_{\HT}(t)\in B_r(1:p\OO_p)$ as desired.\\
		
		To see that this proves the proposition, note that $H_n\otimes_{\Z}\O_C\subseteq A^{\vee}_0[p^n]\otimes_{\Z}\O_C$ has $\ZZ_p/p^n\ZZ_p$-coordinates. Moreover, the case~(1) that $z\in \XX_{\Gamma(p^\infty)}(\epsilon)_c(C,C^+)$ is equivalent to the coordinates of $H_n$ being of the form $(1:0) \in \PP^1(\ZZ/p\ZZ_p)^\Sigma$ after reducing modulo $p$. The claim then implies that $b/a \in  p\OO_p + p^x\OO_C^\Sigma$, and hence $\pi_{\HT}(z)=(a:b) \in B_{|p^x|}(1:p\OO_p).$
		Since $x>n-1$, we have $
		|p^x|=1/p^x\leq 1/p^{n-1}\leq r$.
		This implies $\pi_{\HT}(z)\in B_r(1:p\OO_p)$, as desired.
		
		The proof of~(2) follows in the same way, using that  $z\in \XX_{\Gamma(p^\infty)}(\epsilon)_a(C,C^+)$ is equivalent to the coordinates of $H_n$ being of the form $(c:1) \in \PP^1(\ZZ/p\ZZ_p)^\Sigma$ for some $c\in \ZZ/p\ZZ_p$, and therefore
		\begin{equation}\label{eq:sharp-estimate-on-image-of-pi_HT-in-proof-of-image-of-HT-Hilbert-case}
		\pi_{\HT}(z)=(a:b) \in B_{|p^x|}(\OO_p:1).
		\end{equation}
		\emph{Proof of claim}: Let $y:= n - \delta = n-\frac{p^{n}-1}{p-1}\epsilon$. By Prop.~\ref{p:properties-of-can-subgroup} (2),
		modulo $p^y$ the Hodge--Tate map can be described as
		\[ 
		\HT_y:A_0^{\vee}[p^n]\otimes \O_C/p^{y}\to\w_{A_0^{\vee}[p^n]}\otimes \O_C/p^y= \omega_{H_n}\otimes \O_C/p^{y}.
		\]
		Let now $N:=\ker \HT_y$ and $Q:=\coker \HT_y$ and
		consider the exact sequence
		\[0\to N\to A_0^\vee[p^n]\otimes_{\Z_p} \O_C/p^y\xrightarrow{\HT_y} \omega_{H_n}/p^{y}\to Q\to 0  \]
		By Prop.~\ref{p:properties-of-can-subgroup} (3), the $\O_C$-module $Q=\coker \HT_y$ has degree $\partial:=\epsilon/(p-1)$. Using additivity of degrees of $\O_C$-modules in extensions, we calculate that
		\[
		\deg  N=\deg(A_0[p^n]\otimes_{\Z_p} \O_C/p^y)-\deg \omega_{H_n}/p^{y} +\deg Q = 2gy-gy+\partial=gy+\partial.
		\]
		
		Observe now that $M_1:=V/p^y$ and $M_2:=H_n\otimes_{\Z_p} \O_C/p^y$ are both free $\O_C/p^y$-submodules of rank $g$ of $A_0[p^n]\otimes_{\Z_p} \O_C/p^y$ that are contained in $N$. Since  $N$ is $p^y$-torsion and of degree $gy+\partial$, we conclude from this that $N$ is of the form $(\O_C/p^y)^g\oplus T$ as an $\O_C$-module, where $T$ is $p^{\partial}$-torsion. Second, this shows that inside $p^{\partial}N$, the modules $p^{\partial}M_1$ and $p^{\partial}M_2$ coincide. Thus the same is true inside $A_0^{\vee}[p^n]\otimes p^{\partial}\O_C/p^y$. Via multiplication by $p^{\partial} :\O_C/p^{y-\partial}\isorightarrow p^{\partial}\O_C/p^{y}$, this shows that the images of $M_1$ and $M_2$ in $\O_C/p^{y-\partial}$ coincide. Since by definition $x=y-\partial$, this gives the desired statement, proving the claim, and hence the proposition.
	\end{proof}
	
	\begin{defn}
		We write $\mathfrak z$ for the restriction of $\pi_{\HT}$ to the open subspaces 
		\[\mathfrak z:\mathcal X^{}_{\Gamma^*(p^\infty)}(\epsilon)_a\to B_r(\O_p:1)\subseteq\Res_{\OF|\ZZ} \hat{\G}_a\subseteq\Res_{\OF|\ZZ} \P^1.\]
	\end{defn}
	\begin{rmrk}
		If $F$ is split in $L$, we also consider for any $v:F\to L$ the projection
		\[\mathfrak z_v:\mathcal X^{}_{\Gamma^*(p^\infty)}(\epsilon)_a\to  \Res_{\OF|\ZZ} \hat{\G}_a=\hat{\G}_a^{\Sigma}\xrightarrow{\pi_v}\hat{\G}_a.\]
		By the universal property of $\hat{\G}_a$, we can interpret each $\mathfrak z_v$ as a function in $\O^+(\mathcal X^{}_{\Gamma^*(p^\infty)}(\epsilon)_a)$. However, we caution that for general $L$, the morphism $\mathfrak z$ admits no such canonical interpretation.	
	\end{rmrk}
	\subsection{The canonical differential}
	\begin{defn}
		We define a $G^*$-equivariant vector bundle $\Res_{\OF|\ZZ}\O(1)$ of rank $g$  on $\Res_{\OF|\ZZ}\PP^1$ as follows: recall that on $\PP^1$ we have the line bundle $\O(1)$ whose total space $\pi:\T(1)\to \PP^1$ is therefore a $\GG_m$-bundle with fibres $\A^1$.  It moreover has a natural $\GL_2$-equivariant action. By applying the functor $\Res_{\OO_F|\ZZ}$, we see that $\Res_{\OO_F|\ZZ}\pi:\Res_{\OO_F|\ZZ}\T(1)\to \Res_{\OO_F|\ZZ}\PP^1$ is a $\Res_{\OO_F|\ZZ}\G_m$-bundle with fibres $\Res_{\OO_F|\ZZ}\A^1$. As any choice of $\Z$-basis of $\O_F$ induces an isomorphism $\Res_{\OO_F|\ZZ}\A^1\cong\A^g$, we conclude that $\Res_{\OO_F|\ZZ}\pi$ is a vector bundle of rank $g$. It moreover receives a natural equivariant $\Res_{\O_F|\ZZ}\GL_2=G$-action (and hence a $G^*$-action) by functoriality.
	\end{defn}
	\begin{rmrk}
		The vector bundle $\Res_{\OO_F|\ZZ}\T(1)$ has the following moduli interpretation: for any $\Z_p$-algebra $R$, the $R$-points of $\Res_{\OF|\ZZ}\PP^1$ parametrise quotients $R^2\otimes_{\Z}\OF \to Q$ of rank 1 as $R\otimes_{\Z}\OF$-modules. Then $\Res_{\OO_F|\ZZ}\T(1)\to \Res_{\OF|\ZZ}\PP^1$ represents the choice of a point of $Q$.
	\end{rmrk}

	\begin{defn}\label{defn: omega notation}
		Let $\omega_{\mathcal A}$ be the conormal sheaf of the universal abelian variety $\mathcal A\to\mathcal X$, an invertible $\O_{\mathcal X}\otimes_{\Z}\O_F$-module. Its total space $\T(\omega_{\mathcal A})\to \mathcal X$ is a $\Res_{\OO_F|\ZZ}\G_m$-bundle. 
		As before, if $q:\XX_{K_p} \to \XX$ is the forgetful map with $K_p$ any of our wild levels, we let $\omega_{K_p}:=q^*\omega_{\cA}$.
	\end{defn}
	
	As a special case of \cite[Thm.~2.1.3. (2)]{CarScho}, we then have the following result which forms the basis of our definition of Hilbert modular forms.
	\begin{prop}\label{prop: CS sheaf under pi_HT}
		There is a $\Res_{\OO_F|\ZZ}\G_m$-equivariant isomorphism
		\[\omega_{\Gamma^*(p^\infty)}=\pi_{\HT}^{\ast}\Res_{\O_F|\Z}\OO(1).\]
	\end{prop}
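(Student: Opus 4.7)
The plan is to deduce this from the Caraiani--Scholze construction of Hodge--Tate period maps for Shimura varieties of Hodge type, of which the Hilbert case for $G^\ast$ is an instance. I would first unpack the moduli description: at a $(C,C^+)$-point $x$ of $\mathcal{X}_{\Gamma^\ast(p^\infty)}$ corresponding to $(A,\iota,\lambda,\mu_N,\alpha)$ with $\alpha:\OO_p^2\isorightarrow T_pA^\vee$, the Hodge--Tate filtration recalled in Rem.~\ref{r:mod-interpret-of-pi_HT-Hilbert}
\[
0\to \Lie(A^\vee)(1)\to T_pA^\vee\otimes_{\Zp}C\xrightarrow{\HT_A}\omega_A\to 0
\]
is a short exact sequence of $\OO_F\otimes_{\Z}C$-modules (since $\OO_F$ acts on $A$ via $\iota$ and $\HT_A$ is functorial in $\OO_F$-endomorphisms). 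Composing with $\alpha$ yields a surjection $\OO_p^2\otimes_{\Zp}C\twoheadrightarrow \omega_A$ of $\OO_p\otimes_{\Zp}C$-modules whose kernel is a direct summand of rank $1$; by the moduli description of $\Res_{\OF|\Z}\PP^1$, this is exactly the datum of the point $\pi_{\HT}(x)\in \Res_{\OF|\Z}\PP^1(C,C^+)$, and the fibre of $\Res_{\OF|\Z}\OO(1)$ at $\pi_{\HT}(x)$ is canonically identified with $\omega_A$. This already gives the isomorphism on fibres.

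To globalise the fibrewise statement to an isomorphism of vector bundles, I would invoke the fact that the Hodge--Tate map can be promoted to a morphism of pro-\'etale sheaves
\[
T_p\mathcal{A}^\vee\otimes_{\Zp}\widehat{\OO}_{\mathcal{X}_{\Gamma^\ast(p^\infty)}}\longrightarrow \omega_{\mathcal{A}}
\]
on $\mathcal{X}_{\Gamma^\ast(p^\infty)}$, as constructed in \cite[\S2]{CarScho} for Hodge-type Shimura data. The universal trivialisation $\alpha$ then turns this into a surjection $(\OO_F\otimes_{\Z}\widehat{\OO})^2\twoheadrightarrow \omega_{\Gamma^\ast(p^\infty)}$ of $\OO_F\otimes_{\Z}\widehat{\OO}$-modules, which is a rank-one quotient in the $\OO_F$-linear sense. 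By the universal property of the tautological quotient bundle $\Res_{\OF|\Z}\OO(1)$ on $\Res_{\OF|\Z}\PP^1$, this surjection is precisely the pullback along $\pi_{\HT}$ of the universal rank-one quotient, which identifies $\omega_{\Gamma^\ast(p^\infty)}$ with $\pi_{\HT}^\ast\Res_{\OF|\Z}\OO(1)$. The $\Res_{\OF|\Z}\G_m$-equivariance is immediate from the $\OO_F$-linearity of the Hodge--Tate map and the $G^\ast(\Zp)$-equivariance of $\pi_{\HT}$.

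The main obstacle will be verifying that Caraiani--Scholze's $\GSp_{2g}$-type construction descends to (or rather refines to) the $G^\ast$-equivariant statement stated here, keeping track of the $\OO_F$-action throughout. Concretely, one must check that under the embedding $G^\ast\hookrightarrow \GSp_{2g}$ induced by $\Res_{\OF|\Z}$, the $\GSp_{2g}$-flag variety $\mathrm{Gr}(g,2g)$ receives a closed embedding of $\Res_{\OF|\Z}\PP^1$ as the flag variety of $G^\ast$, and that the Hodge--Tate period map factors through this embedding. Away from this checking of compatibilities with the PEL datum, the argument is entirely formal and follows the same pattern as the moduli-theoretic description in Lem.~\ref{l: moduli interpretation of the isomorphism q*w=pi_HT^*O(1)} from the elliptic case.
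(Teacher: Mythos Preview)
Your proposal is correct and aligns with the paper's approach: the paper does not give a proof at all, but simply states the proposition as a special case of \cite[Thm.~2.1.3.(2)]{CarScho}, which is exactly the reference you invoke. Your sketch fills in the details of how one unpacks that citation in the Hilbert setting, and the moduli-theoretic fibrewise description you give matches the paper's subsequent Lem.~\ref{l:moduli interpretation of s in terms of HT in Hilbert}, so the compatibility checks you flag are indeed the only content beyond the direct citation.
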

	Recall that in \S4 we have defined a canonical section $s:\P^1\to\T(1)$ of $\O(1)$, non-vanishing away from $\infty$. We shall now change notation and denote this by $s_{\mathrm{ell}}:\A^1\to \T(1)$. We now set:
	\begin{defn}
		Let $s:=\Res_{\OO_F|\ZZ}s_{\mathrm{ell}}:\Res_{\O_F|\ZZ}\P^1\to \Res_{\OO_F|\ZZ}\T(1)$. This is a section of the vector bundle $\Res_{\OO_F|\ZZ}\O(1)$, non-vanishing over $\Res_{\OO_F|\ZZ}\A^1\subseteq \Res_{\OO_F|\ZZ}\P^1$.
	\end{defn}
	\begin{rmrk}\label{r:moduli-interpreation-of-s-Hilbert-case}
		From the moduli description in the case of $g=1$, we see that in the moduli interpretation, $s$ sends a quotient $R^2\otimes_{\Z}\OF \to Q$ to the image of $(1,0)\otimes 1$.
	\end{rmrk}
	
	\begin{rmrk}\label{r:section-s-split-case}
		If $F$ is split in $L$, we have $\OF \otimes_{\ZZ} L=\prod_{\v \in \Sigma}L$  where we interpret $\Sigma$ as the set $\Hom_{\ZZ}(\OF,L)$ and
		where the morphism into the $v$-component comes from the natural map $\OF\otimes_{\ZZ}L\xrightarrow{v\otimes \mathrm{id}} L$.
		Consequently, we then get a canonical splitting $\Res_{\OF|\ZZ}\P^1 =(\P^1)^{\Sigma}$.
		Similarly, we see on total spaces that the vector bundle $\Res_{\OF|\ZZ}\O(1)$ becomes the direct sum $\Res_{\OF|\ZZ}\O(1)=\bigoplus_{v\in\Sigma} \pi_v^{\ast}\O(1)$
		of the pullbacks of $\O(1)$ on $\PP^1$ along the projections $\pi_{v}:(\PP^1)^{\Sigma}\to \PP^1$. 
		%In particular, we can then identify $\det\Res_{\OF|\ZZ}\O(1)$  with the exterior product $\boxtimes_{v\in \Sigma}\O(1)$ of the $\O(1)$ on $(\PP^1)^{\Sigma}$.
		The section $s$ then decomposes into partial sections $\mathfrak s=\sum_{v:F\hookrightarrow L} s_v$ where $s_v:=\pi^{\ast}_vs_{\mathrm{ell}}$. 
	\end{rmrk}

	\begin{lem}\label{l:transform-of-s-Hilbert}
		For any $\gamma=\smallmatrd{a}{b}{c}{d}\in\Gamma^{\ast}_0(p)$, let $(cz+d)$ be the map $\Res_{\OF|\ZZ}\hat{\G}_a\xrightarrow{\cdot c}p\Res_{\OF|\ZZ}\hat{\G}_a\xrightarrow{+d}\Res_{\OF|\ZZ}\hat{\G}_m$. Then we have $\gamma^{\ast}s = (cz+d)s$, in the sense that the following diagram commutes:
		\begin{center}
			\begin{tikzcd}[row sep = 0.55cm]
			\Res_{\OF|\ZZ}\hat{\G}_m\times 	\Res_{\OF|\ZZ}\T(1) \arrow[r, "\mathrm m"] & 	\Res_{\OF|\ZZ}\T(1) & 	\Res_{\OF|\ZZ}\T(1) \arrow[l, "\gamma"'] \\
			& 	\Res_{\OF|\ZZ}\hat{\G}_a \arrow[r, "\gamma"] \arrow[lu, "(cz+d)\times s"] \arrow[u, "\gamma^{\ast}s"'] & \Res_{\OF|\ZZ}\hat{\G}_a. \arrow[u, "s"']
			\end{tikzcd}
		\end{center}
	\end{lem}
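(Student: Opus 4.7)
The plan is to deduce this as a direct generalisation of the elliptic case treated in Lem.~\ref{l: action of Gamma_0(p) on the section}, exploiting the fact that by construction $s = \Res_{\OF|\ZZ}s_{\mathrm{ell}}$, the total space $\Res_{\OF|\ZZ}\T(1)$ inherits the equivariant $G$-action from the $\GL_2$-action on $\T(1)$ via functoriality of $\Res_{\OF|\ZZ}$, and the factor of automorphy $(cz+d)$ is obtained similarly. The commutative diagram of Lem.~\ref{l: action of Gamma_0(p) on the section} is valid not just for $\Gamma_0(p)\subseteq \GL_2(\Z_p)$ but, as the proof via \eqref{eq:proof-of-gamma*s = (cz+d)gamma} shows, for any matrix $\smallmatrd{a}{b}{c}{d}$ with $c$ topologically nilpotent and $a,d$ units in any base. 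Applying $\Res_{\OF|\ZZ}$ to the elliptic diagram over $\Z_p$ and restricting to $\Gamma_0^*(p)\subseteq G^*(\Z_p)\subseteq \GL_2(\OO_p)$ then produces precisely the diagram of the lemma.

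For an explicit computation (which may be preferable for clarity), I would mimic the elliptic proof directly using the moduli interpretation from Rem.~\ref{r:moduli-interpreation-of-s-Hilbert-case}. The section $s$ sends a quotient $R^2\otimes_\Z \OF \twoheadrightarrow Q$ to the image of $(1,0)\otimes 1 \in Q$, and by Defn.~\ref{d:Gamma^* lvl struct and action by precompose with dual} the $G^*(\Z_p)$-equivariant action on $\Res_{\OF|\ZZ}\T(1)$ acts on fibres through $\gamma^\vee = \det(\gamma)\gamma^{-1}$, so that $\gamma^*s = \gamma^\vee\circ s \circ \gamma$. Over a point of $\Res_{\OF|\ZZ}\hat{\G}_a$ parametrised by $(z:1)$ with $z\in \OO_p\otimes R$, an identical calculation to \eqref{eq:proof-of-gamma*s = (cz+d)gamma} yields
\[
\gamma^*s(z) = \gamma^\vee\cdot\smallvector{1}{0} = \smallvector{d}{-c} \equiv \smallvector{d}{-c} + c\smallvector{z}{1} = (cz+d)\smallvector{1}{0} \pmod{\langle \smallvector{z}{1}\rangle},
\]
which gives the claimed identity $\gamma^*s = (cz+d)s$.

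I do not expect any substantive obstacle. The only subtlety worth flagging is checking that $(cz+d)$ genuinely factors through $\Res_{\OF|\ZZ}\hat{\G}_m$ rather than merely $\Res_{\OF|\ZZ}\hat{\G}_a$, as asserted in the definition of the map. This follows exactly as in Lem.~\ref{l: action of Gamma on B_1(0)}: since $c\in p\OO_p$ and $d\in \OO_p^\times$, on $B_r(\OO_p:1)$ we have $|cz|<1$, so $cz+d = d(1+d^{-1}cz)$ is a unit in $\OO^+$, and hence $(cz+d)^{-1}$ is given by the convergent geometric series $d^{-1}\sum_{n\geq 0}(-d^{-1}cz)^n$.
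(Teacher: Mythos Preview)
Your proposal is correct. Both your functorial argument (applying $\Res_{\OF|\ZZ}$ to the elliptic diagram) and your explicit moduli-theoretic computation via Rem.~\ref{r:moduli-interpreation-of-s-Hilbert-case} are valid routes to the result, and your check that $(cz+d)$ lands in $\Res_{\OF|\ZZ}\hat{\G}_m$ is fine.

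The paper takes a slightly different path: rather than carrying out the computation directly over $\OO_p$-points, it first observes that commutativity of the diagram may be checked after a finite base extension of $L$, and then passes to an extension in which $F$ splits. By Rem.~\ref{r:section-s-split-case}, everything then decomposes canonically as a product over $\Sigma$, and the diagram becomes literally a $\Sigma$-fold product of the elliptic diagram already established in Lem.~\ref{l: action of Gamma_0(p) on the section}. This has the advantage of reducing strictly to the previously proved statement without redoing any computation, at the small cost of invoking the splitting remark and the base-change step. Your direct computation avoids that reduction and is arguably more self-contained; the two are equivalent in substance, since your identity $\gamma^\vee\cdot\smallvector{1}{0}\equiv (cz+d)\smallvector{1}{0}$ over $\OO_p\otimes R$ is precisely what, after splitting, becomes the $\Sigma$-indexed family of elliptic identities.
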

	\begin{proof}
		It suffices to show that this diagram commutes after extending $L$, so we may without loss of generality assume that $F$ is split in $L$. Then by Rem.~\ref{r:section-s-split-case},  $\Res_{\OO_F|\ZZ}\P^1=(\P^1)^{\Sigma}$ is canonically split, as is the bundle $\Res_{\OO_F|\ZZ}\T(1)=\oplus_{\Sigma} \O(1)$, and the diagram becomes a product over $\Sigma$ of the diagram in Lem.~\ref{l: action of Gamma_0(p) on the section}.
	\end{proof}

	\begin{definition}\label{d:mathfrak-s-Hilbert-case}
		Let $\mathfrak s:=\pi_{\HT}^*s$. This is a section of $\pi_{\HT}^{\ast}\Res_{\OF|\ZZ}\O(1)=\omega_{\Gamma^*(p^\infty)}$. Write $\mathcal T(\omega_{\Gamma^*(p^\infty)})\to \XX$ for the total space of $\omega_{\Gamma^*(p^\infty)}$; then
		we may regard $\mathfrak s$ as a morphism
		\[\mathfrak s:\mathcal X_{\Gamma^*(p^\infty)}\to \mathcal T(\omega_{\Gamma^*(p^\infty)}). \]
	\end{definition}
	
	As in the elliptic case, one checks that: $\mathscr{s}$
	\begin{lem}\label{l:transform-for-mathfrak-s-Hilbert-case}
		For any $\gamma=\smallmatrd{a}{b}{c}{d}\in \Gamma^{\ast}_0(p)$, we write $c\mathfrak z+d$ for the composition 
		\[c\mathfrak z+d:\XX_{\Gamma^\ast(p^\infty)}(\epsilon)_a\xrightarrow{\mathfrak z} \Res_{\OO_F|\ZZ}\hat{\G}_a\xrightarrow{z\mapsto cz+d} \Res_{\OO_F|\ZZ}\hat{\G}_m.\]
		Then we have $\gamma^{\ast}\mathfrak s = (c\mathfrak z+d)\mathfrak s$,
		in the sense that the following diagram commutes:
		\begin{center}
			\begin{tikzcd}[row sep = 0.55cm]
			\mathcal X_{\Gamma^{\ast}(p^\infty)}(\epsilon)_a \arrow[r, "\gamma"] \arrow[d, "(c\mathfrak z+d)\times\mathfrak s"'] & \mathcal X_{\Gamma^{\ast}(p^\infty)}(\epsilon)_a \arrow[d, "\mathfrak s"'] \\
			\Res_{\OO_F|\ZZ}\hat{\G}_m\times\T(\omega_{\mathcal A}) \arrow[r, "\mathrm m"] & \T(\omega_{\mathcal A})
			\end{tikzcd}
		\end{center}
	\end{lem}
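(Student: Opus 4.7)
The strategy is to deduce the lemma by pulling back the analogous statement on $\Res_{\OF|\ZZ}\PP^1$, namely Lem.~\ref{l:transform-of-s-Hilbert}, along the Hodge--Tate period map $\pi_{\HT}$. By Defn.~\ref{d:mathfrak-s-Hilbert-case}, we have $\mathfrak s = \pi_{\HT}^{\ast}s$, and by construction the morphism $c\mathfrak z+d$ is precisely the composition of $\pi_{\HT}$ (restricted to $\mathcal X_{\Gamma^*(p^\infty)}(\epsilon)_a$, which lands in $B_r(\OO_p:1) \subseteq \Res_{\OF|\ZZ}\hat{\G}_a$ by Prop.~\ref{prop: epsilon w for HMFS}) with the map $z \mapsto cz+d$ on $\Res_{\OF|\ZZ}\hat{\G}_a$.

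The key input is that $\pi_{\HT}$ is $G^{\ast}(\ZZ_p)$-equivariant for the action of $G^{\ast}(\ZZ_p)$ on $\mathcal X_{\Gamma^*(p^\infty)}$ defined at the end of \S\ref{sec:hbv at infinite level}, and on $\Res_{\OF|\ZZ}\PP^1$ via the natural $G^*$-action; this is part of the construction of $\pi_{\HT}$ in \cite[Thm.~2.1.3]{CarScho}. Furthermore, the isomorphism $\omega_{\Gamma^*(p^\infty)} = \pi_{\HT}^{\ast}\Res_{\OF|\ZZ}\OO(1)$ of Prop.~\ref{prop: CS sheaf under pi_HT} is equivariant for this $G^{\ast}(\ZZ_p)$-action on both sides, so that passing to total spaces gives an equivariant isomorphism $\mathcal T(\omega_{\Gamma^*(p^\infty)}) \cong \mathcal X_{\Gamma^*(p^\infty)} \times_{\Res_{\OF|\ZZ}\PP^1} \Res_{\OF|\ZZ}\mathcal T(1)$.

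With these equivariance properties in hand, the proof is essentially formal. I would verify that the outer rectangle
\begin{center}
\begin{tikzcd}[row sep = 0.55cm]
\mathcal X_{\Gamma^{\ast}(p^\infty)}(\epsilon)_a \arrow[r,"\gamma"] \arrow[d,"\pi_{\HT}"] & \mathcal X_{\Gamma^{\ast}(p^\infty)}(\epsilon)_a \arrow[d,"\pi_{\HT}"] \\
\Res_{\OF|\ZZ}\hat{\G}_a \arrow[r,"\gamma"] & \Res_{\OF|\ZZ}\hat{\G}_a
\end{tikzcd}
\end{center}
commutes by equivariance of $\pi_{\HT}$, and then glue the diagram of Lem.~\ref{l:transform-of-s-Hilbert} on top by applying $\pi_{\HT}^{\ast}$: pulling back the relation $\gamma^{\ast}s = (cz+d)s$ yields $\gamma^{\ast}(\pi_{\HT}^{\ast}s) = \pi_{\HT}^{\ast}\gamma^{\ast}s = \pi_{\HT}^{\ast}((cz+d)s) = (c\mathfrak z+d)\,\pi_{\HT}^{\ast}s$, which is exactly $\gamma^{\ast}\mathfrak s = (c\mathfrak z+d)\mathfrak s$ under the identification from Prop.~\ref{prop: CS sheaf under pi_HT}.

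The only mild subtlety is bookkeeping: one must check that the equivariant structure on $\Res_{\OF|\ZZ}\OO(1)$ used in Prop.~\ref{prop: CS sheaf under pi_HT} matches the one implicit in Lem.~\ref{l:transform-of-s-Hilbert} (i.e.\ the action by $\det(\gamma)^{-1}\gamma$ on $\OO(1)$, via the dual-level action fixed in \eqref{eqn:action of GL2} and Defn.~\ref{d:Gamma^* lvl struct and action by precompose with dual}), so that the formula $\gamma^{\ast}s = (cz+d)s$ is the correct one to pull back; this is however exactly the convention under which Lem.~\ref{l:transform-of-s-Hilbert} was proved. Once this is fixed, no further computation is needed.
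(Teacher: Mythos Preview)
Your proposal is correct and is precisely the approach taken in the paper, which simply records that the lemma follows from Lem.~\ref{l:transform-of-s-Hilbert} by pullback along $\pi_{\HT}$. You have merely spelled out in more detail the equivariance bookkeeping that the paper leaves implicit.
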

	
	\begin{proof}
		This follows  from Lem.~\ref{l:transform-of-s-Hilbert} by pullback along $\pi_{\HT}$.
	\end{proof}
	
	The crucial property of $\mathfrak s$ is given by  the following moduli interpretation.
	\begin{lem}\label{l:moduli interpretation of s in terms of HT in Hilbert}
		Let $x\in \mathcal X_{\Gamma^*(p^\infty)}(C,C^+)$ be a point corresponding to a HBAV $A$ equipped with a $\Gamma^{\ast}(p^\infty)$-level $\alpha:\OO_p^2 \isorightarrow T_pA^{\vee}$ and extra structures. Then via $\pi_{\HT}^{\ast}\Res_{\OO_F|\ZZ}\mathcal O(1)=\omega_{\Gamma^*(p^\infty)}$,
		\[ \mathfrak s(x)=\HT_{A}(\alpha(1,0))\in \omega_{A}.\]
		
	\end{lem}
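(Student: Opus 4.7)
The plan is to unwind the definition of $\mathfrak{s}$ in light of the moduli descriptions of $\pi_{\HT}$ and of the section $s$, so that the identification becomes a direct consequence of the fact that $s$ picks out $(1,0)$ in the relevant rank-$1$ quotient.

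First, since $\mathfrak{s} = \pi_{\HT}^{\ast} s$ by Defn.~\ref{d:mathfrak-s-Hilbert-case}, the value of $\mathfrak{s}$ at the $(C,C^+)$-point $x$ is obtained by evaluating $s$ on the point $\pi_{\HT}(x) \in \Res_{\OF|\ZZ}\PP^1(C,C^+) = \PP^1(\OO_p \otimes_{\Zp} C)$ and then transporting the result along the identification $\pi_{\HT}^{\ast}\Res_{\OF|\ZZ}\OO(1) = \omega_{\Gamma^*(p^\infty)}$ of Prop.~\ref{prop: CS sheaf under pi_HT}. I would thus carry out the computation in two steps: identify $\pi_{\HT}(x)$ as a rank-$1$ quotient of $\OO_p^2 \otimes_{\Zp} C$, then apply the description of $s$ in those terms.

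For the first step, Rem.~\ref{r:mod-interpret-of-pi_HT-Hilbert} says $\pi_{\HT}(x)$ is the point of $\PP^1(\OO_p \otimes_{\Zp} C)$ represented by the Hodge--Tate quotient
\[
\OO_p^2 \otimes_{\Zp} C \xrightarrow{\alpha \otimes 1} T_p A^\vee \otimes_{\Zp} C \xrightarrow{\HT_A} \omega_A,
\]
i.e.\ $Q = \omega_A$ viewed as a rank-$1$ quotient $\OO_p \otimes_{\Zp} C$-module of $\OO_p^2 \otimes_{\Zp} C$ via the above composition. For the second step, I would extend the moduli description of the total space $\Res_{\OF|\ZZ}\mathcal{T}(1) \to \Res_{\OF|\ZZ}\PP^1$ from Rem.~\ref{r:moduli-interpreation-of-s-Hilbert-case}: points of this total space over a quotient $R^2 \otimes_{\ZZ} \OF \twoheadrightarrow Q$ correspond to a choice of element in $Q$, and the canonical section $s$ selects the image of $(1,0) \otimes 1$. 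Applying this with $R = C$, $Q = \omega_A$, and the quotient map as above, we get that $s(\pi_{\HT}(x))$ is the image of $(1,0) \otimes 1 \in \OO_p^2 \otimes_{\Zp} C$ in $\omega_A$, which by construction of the quotient equals $\HT_A(\alpha(1,0))$.

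The only slightly delicate point is to check that the identification $\pi_{\HT}^{\ast}\Res_{\OF|\ZZ}\OO(1) = \omega_{\Gamma^*(p^\infty)}$ of Prop.~\ref{prop: CS sheaf under pi_HT} really is the one induced on total spaces by the Hodge--Tate quotient $\HT_A : T_p A^\vee \otimes_{\Zp} C \twoheadrightarrow \omega_A$ itself (rather than some twist thereof); this is the Hilbert analogue of Lem.~\ref{l: moduli interpretation of the isomorphism q*w=pi_HT^*O(1)}, and it is the main thing to verify. One clean way is to reduce to the split case (which is what Rem.~\ref{r:section-s-split-case} is set up for): after base change to an extension $L'/L$ in which $F$ splits, both the flag variety and the bundle decompose over $\Sigma$, as does $T_pA^\vee \otimes_{\Zp} L'$ and $\omega_A \otimes_L L'$, and the statement becomes a product over $v \in \Sigma$ of the elliptic Lem.~\ref{l: moduli interpretation of the isomorphism q*w=pi_HT^*O(1)}, applied at each embedding. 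Combining the two steps then gives $\mathfrak{s}(x) = \HT_A(\alpha(1,0))$, as claimed.
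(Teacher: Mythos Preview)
Your proposal is correct and follows essentially the same approach as the paper: both arguments unwind $\mathfrak s = \pi_{\HT}^{\ast}s$ using the moduli description of $\pi_{\HT}$ (Rem.~\ref{r:mod-interpret-of-pi_HT-Hilbert}) and the moduli interpretation of $s$ (Rem.~\ref{r:moduli-interpreation-of-s-Hilbert-case}). The only difference is that where you flag the identification of Prop.~\ref{prop: CS sheaf under pi_HT} with the Hodge--Tate quotient as a ``delicate point'' and propose reducing to the split case, the paper simply asserts this description of the isomorphism on fibres directly from Rem.~\ref{r:mod-interpret-of-pi_HT-Hilbert}, treating it as part of the construction rather than something requiring separate verification.
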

	\begin{proof}
		In terms of the total spaces $\mathcal T(\omega_{\Gamma^*(p^\infty)})\rightarrow \mathcal X_{\Gamma^*(p^\infty)}$ and $\Res_{\OO_F|\ZZ}\mathcal T(1)\rightarrow \Res_{\OF|\ZZ}\PP^1$, by Rem.~\ref{r:mod-interpret-of-pi_HT-Hilbert} the isomorphism $\omega_{\Gamma^*(p^\infty)}=\pi_{\HT}^{\ast}\Res_{\OF|\ZZ}\mathcal O(1)$ is defined in the fibre of $x$ by sending
		\begin{alignat*}{2}
		(A,\alpha,\eta\in \omega_{A})\mapsto (\O_p^2\otimes_{\Z_p} C\xrightarrow{\alpha}T_pA^\vee\otimes_{\Z_p} C\xrightarrow{\HT} \omega_{A},\eta\in \omega_A ).
		\end{alignat*}
		Since $s$ by Rem.~\ref{r:moduli-interpreation-of-s-Hilbert-case} sends a quotient $x:\O_p^2\otimes_{\Z_p} C\to Q$ to the image of $(1,0)\otimes 1$ under $x$, it follows that $\mathfrak s$ sends $x$ to the image of $(1,0)$ under $\HT\circ\alpha$.
	\end{proof}

	%Split case

	%%%%%%%%%%%%%%%%%%%%%%%%%%%%%%%%%%%%%%%%%%%%%%%%%%%%%%%%%%%%%%%%%%
	%
	%.        OVERCONVERGENT HILBERT MODULAR FORMS FOR G*
	%
	%%%%%%%%%%%%%%%%%%%%%%%%%%%%%%%%%%%%%%%%%%%%%%%%%%%%%%%%%%%%%%%%%%
	
	\section{Geometric overconvergent Hilbert modular forms}
	\label{sec:G* forms}
	
	\subsection{Weights and analytic continuation}
	%%%%%%%%%%%%%%%%%%%%%%%%%%%%%%%%%%%%%%%%%%%%%%%%%%%%%%%%%%%%%%%%%%
	%.        Convergent HMF
	%%%%%%%%%%%%%%%%%%%%%%%%%%%%%%%%%%%%%%%%%%%%%%%%%%%%%%%%%%%%%%%%%%

	%%%%%%%%%%%%%%%%%%%%%%%%%%%%%%%%%%%%%%%%%%%%%%%%%%%%%%%%%%%%%%%%%%
	%.        Overconvergent Hilbert MF
	%%%%%%%%%%%%%%%%%%%%%%%%%%%%%%%%%%%%%%%%%%%%%%%%%%%%%%%%%%%%%%%%%%
	Next we define the relevant weight spaces for $G^*$ and $G$, and set up some notational conventions as to how they are related.	
	\begin{defn}\label{def:Hilbert weight space}
		
		Let $\TT:=\Res_{\OF|\ZZ}\GG_m$, then define:%\footnote{This notation conflicts slightly with that for $F=\Q$ above; the weight space we consider for $\Q$ would be denoted $\W^*$ here. Since we treat the case $F=\Q$ and $[F:\Q]\geq 2$ separately, we hope this will not cause confusion.}:
		\begin{itemize}
			\item[(i)]$\W := \mathrm{Spf}(\ZZ_p\llbracket \TT(\ZZ_p) \times \ZZ_p^\times \rrbracket)^{\mathrm{an}}_\eta \times L$,  the \emph{weight space for $G$}. %to be the rigid analytic space associated to $\OO_K\llbracket \TT(\ZZ_p) \times \ZZ_p^\times \rrbracket$;
			\item[(ii)]  $\W^* := \mathrm{Spf}(\ZZ_p \llbracket \TT(\ZZ_p) \rrbracket)^{\mathrm{an}}_\eta \times L$, the \emph{weight space for $G^*$}.%, denoted $\W$, to be the rigid analytic space associated to $\OO_K\llbracket \TT(\ZZ_p) \rrbracket$. 
			
		\end{itemize}
		An $L$-point $(w,t) \in \W(L)$ is a pair of maps $w:\TT(\ZZ_p) \to L^\times$ and $t: \ZZ_p^\times \to L^\times$ (and analogously, an $L$-point of $\W^*$ is just a map $\TT(\ZZ_p) \to L^\times$). Following \cite{AIP}, we let $\rho: \W \to \W^*$ be the morphism associated to the map $\TT(\ZZ_p) \to \TT(\ZZ_p) \times \ZZ_p^\times$ defined by $x \mapsto (x^{2}, \Norm(x))$. For $(w,t) \in \W(\CC_p)$ we write $\kappa = w^{2} \cdot (t^{-1} \circ \Norm)$ for its image in  $\W^*(\CC_p)$, noting that $\k(x)\cdot w(x^{-2})$ factors through some power of the norm.

	\end{defn} 
	
	\begin{defn}\label{nota: weight conv}
		In order to be able to treat single weights and families in a uniform way, we define a weight to be a morphism $\k:\U \to \W$ or $\k:\U \to \W^*$  for $G$ and $G^*$ respectively, where $\U$ is a smooth rigid space over some perfectoid field extension of $L$. We say that $\kappa$ is bounded if its image in $\W$ or $\W^{\ast}$ is contained in some affinoid open subspace. This generalises Defn.~\ref{def:smooth weight}.
		
		By unravelling the definitions, a weight $\kappa : \U \to \W^*$ determines a morphism
		\[
		\kappa:\OO_p^\times\times \U \to \hat{\G}_m,
		\]
		which in an abuse of notation we also denote $\kappa$.
		The weight $\kappa$ is then bounded if and only if \[|T_{\kappa}|:=\sup_{(t,x)\in \O_p^\times \times \U} |\kappa(t,x)-1|<1\]
		
		Similarly, for $G$ we have associated to any $\kappa : \U \to \W$ a pair of maps $(\wU,\rU)$ of the form $\wU :	\OO_p^\times\times \U \to \hat{\G}_m$ and $\rU : 	\ZZ_p^\times\times \U \to \hat{\G}_m$. By composing with $\rho$, we get an associated weight $\rho(\wU,\rU) = \wU^{2} \cdot (\rU^{-1}\circ N_{F/\Q})$ for $G^*$, which we use to see any weight for $G$ as a weight for $G^*$.
	\end{defn}
	
	Recall from Definition~\ref{d:embed-profinite-O_p-into-ResP^1} that we embed $\O_p$ as a profinite set into $\Res_{\O_F|\ZZ} \P^1$ by sending $z\mapsto (z:1)$. Given a bounded weight, one can then always find an analytic continuation of $\kappa$ to a neighbourhood of $\OO_p^\times$ in $\Res_{\OF|\ZZ}\P^1$. More precisely:
	\begin{prop}\label{an cnt hmfs}
		Let $\k:\U \to\W^*$ be a bounded smooth weight.  
		Let $r_0=1$ if $p >2$ and $r_0=3$ if $p=2$.
		Let $r_\k:=|p|^{r_0}|T_\kappa|$, then for $r_\k \geq r > 0$, the morphism $\kU$ extends uniquely to a morphism
		\[\kU^{\an}: B_r(\OO_p^{\times}:1)\times \U\to \hat{\G}_m.\]
	\end{prop}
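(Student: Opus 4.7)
Since $\kappa$ has bounded image in $\W^\ast$, by working locally on $\U$ I may assume $\U = \Spa(A, A^+)$ is affinoid, so that $\kappa$ corresponds to a continuous group homomorphism $\OO_p^\times \to A^\times$ satisfying $|\kappa(t) - 1| \leq |T_\kappa|$ for all $t \in \OO_p^\times$. The plan is to generalise the argument of the elliptic case (Prop.~\ref{prop: an cont fam}, based on \cite[Prop.~8.3]{buzeig}) by exploiting the topological group structure of $\OO_p^\times$.

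First I would decompose $\OO_p^\times = \mu \times U^{(r_0)}$, where $\mu$ is the finite torsion subgroup and $U^{(r_0)} := 1 + p^{r_0}\OO_p$. The value $r_0 = 1$ (for $p > 2$) or $r_0 = 3$ (for $p = 2$) is chosen precisely so that $U^{(r_0)}$ is torsion-free and the $p$-adic logarithm induces an isomorphism of topological groups $\log : U^{(r_0)} \isorightarrow p^{r_0}\OO_p$. Fixing a $\ZZ_p$-basis $\gamma_1, \ldots, \gamma_g$ of $U^{(r_0)}$ and setting $\ell_i := \log \gamma_i$ and $u_i := \kappa(\gamma_i) - 1 \in A$ (so $|u_i| \leq |T_\kappa|$), the restriction of $\kappa$ to $U^{(r_0)}$ is given by
\[
\kappa\Bigl(\prod\nolimits_i \gamma_i^{s_i}\Bigr) = \exp\Bigl(\sum\nolimits_i s_i \log(1+u_i)\Bigr) \quad \text{for } s_i \in \ZZ_p,
\]
while $\kappa|_\mu$ is locally constant on the finite set $\mu$.

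For $r$ sufficiently small (in particular $r \leq |p|^{r_0}$), the open neighbourhood $B_r(\OO_p^\times : 1)$ decomposes as a disjoint union of discs of radius $r$ indexed by $\mu$; on the component containing $\eta \in \mu$, a point has the form $\eta(1 + h)$ with $|h| \leq r$, and I would define
\[
\kappa^{\an}(\eta(1 + h)) := \kappa(\eta) \cdot \exp\Bigl(\sum\nolimits_{i=1}^{g} s_i(h) \log(1 + u_i)\Bigr),
\]
where $s_i(h) \in \mathcal{O}(B_r \times \U)$ are the analytic coordinates expressing $\log(1+h) \in p^{r_0}\Res_{\OO_F|\ZZ}\hat{\G}_a$ in the basis $(\ell_1, \ldots, \ell_g)$ of $p^{r_0}\OO_p$. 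Uniqueness of the extension is forced by the Zariski-density of $\OO_p^\times$ in $B_r(\OO_p^\times : 1)$, since any analytic function on a polydisc vanishing on such a dense subset vanishes identically.

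The main technical task is to verify convergence of the exponential series on the disc of radius $r_\kappa$. One must bound $\bigl|s_i(h) \log(1 + u_i)\bigr|$ to ensure the argument of $\exp$ lies in its domain of convergence: tracking $|s_i(h)| \leq |h|/|p|^{r_0}$ and $|\log(1+u_i)| \leq |u_i| \leq |T_\kappa|$ (with a more delicate estimate for $p=2$) yields the stated value $r_\kappa = |p|^{r_0}|T_\kappa|$. The principal obstacle will be the case $p = 2$, where the choice $r_0 = 3$ must simultaneously ensure torsion-freeness of $U^{(r_0)}$ and compensate for the smaller radii of convergence of both $\log$ and $\exp$; this is essentially a careful bookkeeping exercise.
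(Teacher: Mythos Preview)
Your approach differs substantially from the paper's. The paper's proof is brief: for the existence of \emph{some} radius, it reduces to the case where $F$ is split in $L$ (so that $\Res_{\OF|\ZZ}\hat{\G}_m \cong (\hat{\G}_m)^\Sigma$ and one applies the elliptic case Prop.~\ref{prop: an cont fam} componentwise), and for general $L$ it passes to a finite Galois extension $L'|L$ in which $F$ splits and then descends by the Galois action. The precise value of $r_\kappa$ is not reproved but simply cited from \cite[Prop.~2.8]{AIP3}. Your direct log/exp construction is therefore closer to what \cite{AIP3} does than to what the present paper does; the paper deliberately avoids redoing that computation.

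That said, your argument as written has a gap: the claimed decomposition $\OO_p^\times = \mu \times (1 + p^{r_0}\OO_p)$ fails whenever $p$ ramifies in $F$. If $\mathfrak p\mid p$ has ramification index $e>1$ and residue field of order $q$, then for $p>2$ the quotient $\OO_{F_\mathfrak{p}}^\times/(1+p\OO_{F_\mathfrak{p}}) \cong (\OO_{F_\mathfrak{p}}/\mathfrak p^e)^\times$ has order $(q-1)q^{e-1}$, whereas the torsion subgroup of $\OO_{F_\mathfrak{p}}^\times$ typically has order only $q-1$; the factor $q^{e-1}$ cannot be absorbed into $\mu$. Consequently $B_r(\OO_p^\times:1)$ is a disjoint union of discs indexed not by $\mu$ but by the larger finite set $\OO_p^\times/(1+p^{r_0}\OO_p)$. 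The repair is routine --- replace $\mu$ by a full set of coset representatives for this quotient and set $\kappa^{\an}(\eta(1+h)) := \kappa(\eta)\cdot\exp(\ldots)$ on each component --- but as stated your argument does not cover ramified primes.
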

	\begin{proof}
		We first prove that such a bound exists. In case that $F$ is split in $L$, this is completely analogous to Prop.~\ref{prop: an cont fam}. In general, we first pass to a finite Galois extension $L'|L$, with group $H$ and in which $F$ is split, to obtain a morphism $B_r(\OO_p^{\times}:1)\times_{L}L'\times \U\to \hat{\G}_m$. Passing to the quotient by $H$, the result follows. 
		
		The precise value of $r_\kappa$ follows from \cite[Prop.~2.8]{AIP3}. 
	\end{proof}
	
	\begin{definition}\label{d:cocycle-kappa(cz+d)}
		Let $\kappa:\mathcal U\to \mathcal W^{\ast}$ be a smooth bounded weight.
		Let $\ed>0$  be such that $\XX_{\Gamma(p^\infty)}(\ed)_a\subseteq B_{r_\k}(\O_p:1)$, see Prop.~\ref{prop: epsilon w for HMFS} for a precise bound on $\ed$. Then for any $c\in p\O_p$, $d\in \O_p^\times$, we define the invertible function $\kappa(c\mathfrak z+d)\in \O^+(\mathcal X^{}_{\U,\Gamma^*(p^\infty)}(\epsilon)_a)^\times$ to be the composition
		\[\kU(c\mathfrak z+d):\XX_{\U,\GA^*(p^\infty)}(\e)_a \xrightarrow{\pi_{\HT}\times \id} B_r(\OO_p:1) \times \U\xrightarrow{(cz+d)\times \id} B_r(\OO^\times_p:1)\times \U \xrightarrow{\kU^{\an}} \hat{\G}_m,\] where $\XX_{\U,\GA^*(p^\infty)}(\e)_a:= \XX_{\GA^*(p^\infty)}(\e)_a \times_L \U$.
	\end{definition}
	
	%%%%%%%%%%%%%%%%%%%%%%%%%%%%%%%%%%%%%%%%%%%%%%%%%%%%%%%%%%%%%%%%%%
	%.        Definition of OCHMF
	%%%%%%%%%%%%%%%%%%%%%%%%%%%%%%%%%%%%%%%%%%%%%%%%%%%%%%%%%%%%%%%%%%
	
	\subsection{Definition of overconvergent Hilbert modular forms}

	%	We can finally define the sheaf of overconvergent Hilbert modular forms of weight $\U$ as follows:	
	
	\begin{defn}\label{def:bundle for G*}

		For $\k: \U \to \W^*$ a bounded smooth weight, $0 \leq \e \leq \ed$ and $n \in \ZZ_{\geq 1} \cup \{\infty\}$, we define a sheaf $\omega^{\k}_{n}$ on $\mathcal X_{\U,\Gamma_{\!\scaleto{0}{3pt}}^*(p^n)}(\e)_a$ by setting
		\[\omega^{\k}_{n}(U):=\{f \in q_{*} \mathcal O_{\mathcal X_{\U,\Gamma^*(p^\infty)}(\e)_a}(U)| \gamma^{\ast}f = \kU^{-1}(c\mathfrak z+d)f  \text{ for all }\gamma=\smallmatrd{a}{b}{c}{d}\in \Gamma_{\!0}^*(p^n) \}, \] 
		where  $q:\mathcal X_{\U,\Gamma^*(p^\infty)}(\e)_a\rightarrow \mathcal X_{\U,\Gamma_{\!\scaleto{0}{3pt}}^*(p^n)}(\e)_a$ is the projection. We similarly get the integral subsheaf
		\[\omega^{\k,+}_{n}(U):=\{f \in q_{*} \mathcal O^+_{\mathcal X_{\U,\Gamma^*(p^\infty)}(\e)_a}(U)| \gamma^{\ast}f = \kU^{-1}(c\mathfrak z+d)f  \text{ for all }\gamma=\smallmatrd{a}{b}{c}{d}\in \Gamma_{\!0}^*(p^n) \}, \] 
		by using the $\mathcal O^+$-sheaf instead. For $n=0$, as before, via the Atkin--Lehner isomorphism $\AL: \mathcal X_{\U,\Gamma_{\!\scaleto{0}{3pt}}^*(p)}(p\e)_a\isorightarrow \mathcal X_{\U}(\e)$ we define the sheaves $\w^{\k}:=\omega_{0}^{\k} :=\AL_{\ast}\omega_{ 1}^{\k}$ and $\w^{\k,+}:=\omega_{0}^{\k} :=\AL_{\ast}\omega_{ 1}^{\k,+}$ on $\mathcal X_{\U}(\e)$ thus giving a sheaf on the tame level Hilbert modular variety. If needed we will add a subscript $G^*$ to make clear these are sheaves for $G^*$.

	\end{defn}	
	
\textcolor{black}{
Exactly like in Prop.~\ref{p:w-is-analytic}, we see:}
\begin{proposition}
    $\omega_{n}^{\k}$ is an analytic line bundle on $\mathcal X_{\mathcal U}(\epsilon)$.
\end{proposition}
We will also see this in Thm.~\ref{thm:comparison hilbert}, which moreover shows that $\omega_{n}^{\k,+}$  is an invertible $\O^+$-modules.
\begin{proof}
    \textcolor{black}{Exactly as in the elliptic case,  \cite[Cor.~4.1]{heuer-v_lb_rigid} shows that the analyticity overconverges if we can prove it for $\epsilon=0$. By the same argument, we may restrict to the good reduction locus, as this is Zariski-dense in $\mathcal X_{\U}$. Over this, we again have an Igusa tower with a pro-\'etale formal model, and like in the elliptic case, \cite[Prop.~4.8]{heuer-v_lb_rigid} gives the desired statement.   }
\end{proof}
	
	\begin{warn}
		We caution the reader that $\omega_{n}^1$ is not the same as $\omega_{\Gamma_0^*(p^n)}$ from Defn.~$\ref{defn: omega notation}$, as the latter is not an invertible sheaf, when $[F:\QQ] >1$. Instead, we have $\omega_{n}^1= \det \omega_{\Gamma^*(p^n)}$.
	\end{warn}
	
	\begin{defn}
		Let $\k:\U \to \W^*$ be a bounded smooth weight,  $0 \leq \e \leq \ed$ and $n \in \ZZ_{\geq 0} \cup \{\infty\}$.
		We define the space of $\gothc$-polarised overconvergent Hilbert modular forms for $G^*$ of weight $\k$, wild level $\Gamma_{\!0}^*(p^n)$, tame level $\mu_N$ and radius of overconvergence $\e$ to be the $L$-vector space
		\[
		M^{G^*}_{\k}(\Gamma_{\!0}^*(p^n),\mu_N,\epsilon,\gothc):=\hH^0(\XX_{\gothc,\U,\Gamma_{\!\scaleto{0}{3pt}}^*(p^n),\mu_N}(\e)_a,\  \omega_{G^*,n}^{\k}).
		\]
		Similarly, we define the space of integral overconvergent Hilbert modular forms for $G^*$ to be \[M^{G^*,+}_{\k}(\Gamma_{\!0}^*(p^n),\mu_N,\epsilon,\gothc):=\hH^0(\XX_{\gothc,\U,\Gamma_{\!\scaleto{0}{3pt}}^*(p^n),\mu_N}(\e)_a, \  \omega_{G^*,n}^{\k,+}).\]

	\end{defn}
	
	%\{ f\in \mathcal O(\XX_{\gothc,\U,\infty,\mu_{\nn}}(\e)_a)\mid \gamma^{\ast}f = \kappa^{-1}_\U(c \mathfrak z+d)f \text{ for all }\gamma \in \G_1 \}.\] where $\XX_{\gothc,\U,\infty,\mu_{\nn}}(\e)_a=\XX_{\gothc,\GA^*(p^\infty)\mu_{\nn}}(\e)_a \times_L \U$.
	
	\begin{remark}\label{rem:cusp forms}
		\textcolor{black}{By the Koecher principle (see \cite[Prop.~8.4]{AIP3}) or \cite[Theorem 5.5.1]{AIP}, $\w^{\k}_{\gothc}$ extends uniquely to a line bundle on a suitable toroidal compactification $\XX^{\tor}(\gothc)$. Let  $\partial$ denote the boundary divisor, then one can define the subspaces of cusp forms as sections of the subsheaf $\w^{\k}_{\gothc}(-\partial)$. Via Thm.~\ref{thm:comparison hilbert} below, these agrees with the spaces of cusp forms defined in \cite{AIP}. In particular, they will be projective Banach modules with surjective specialisation maps (see \cite[Thm.~3.16]{AIP3}).} %Similarly one can extend the sheaves defined for $G$ to the compactification and define cusp forms in this situation.
	\end{remark}

	%%%%%%%%%%%%%%%%%%%%%%%%%%%%%%%%%%%%%%%%%%%%%%%%%%%%%%%%%%%%%%%%%%
	%.        Comparison
	%%%%%%%%%%%%%%%%%%%%%%%%%%%%%%%%%%%%%%%%%%%%%%%%%%%%%%%%%%%%%%%%%%

	%%============================================================================
	%%
	%% 				Hilbert modular varieties for G
	%%
	%%============================================================================

	\section{Comparison to Andreatta--Iovita--Pilloni's geometric Hilbert modular forms}\label{section: comparison HMFs}
	
	In this section, we will show that our spaces of overconvergent Hilbert modular forms for $G^*$ coincide with those defined in \cite{AIP3}.
	
	\subsection{The Andreatta--Iovita--Pilloni-torsor}
	Like in the elliptic case, Andreatta--Iovita--Pilloni construct integral sheaves of Hilbert modular forms on the Hilbert modular variety as a formal scheme over $\O_F$.
	\textcolor{black}{In order to define such a sheaf on the full Hilbert modular variety over $\O_F$, the definition of the Pilloni-torsor in the Hilbert case is not just the straightforward adaptation of the elliptic case (the issue appears away from the Rapoport locus, i.e.\ on the closed subscheme concentrated in the special fibre where the abelian scheme does not satisfy the Rapoport condition). Instead, Andreatta--Iovita--Pilloni in \cite[\S4.1]{AIP3} explain how this definition needs to be modified by endowing the sheaf $\omega_{\mathcal A}$ with an integral structure $\omega^{\mathrm{int}}$ (denoted by $\mathcal F$ \textit{op.\ cit.}) which, when $p$ is ramified in $F$, is different to the canonical one. We briefly recall the construction, with the minor modification that as before we present it in the analytic setting over $L$ rather than in the excellent Noetherian setting of \cite[\S4.1]{AIP3}.}
	
	\begin{definition}
		\begin{enumerate}
			\item For any $m\in \Z_{\geq 1}$, let $\epsilon^{\mathrm{can}}_{m}:=1/p^{m+1}$ as before. Then  \cite[Cor.~A.2]{AIP2} implies that, for $0\leq \epsilon\leq \epsilon^{\mathrm{can}}_{m}$, the universal semi-abelian variety $\mathcal A$ on $\mathcal X(\e)$  admits a canonical subgroup $H_m\subseteq \mathcal{A}$ of order $p^{m}$, \'etale locally isomorphic to $\O_F/p^m\O_F$.
			\item
			We denote by $\mathcal X_{\Ig(p^m)}(\epsilon)\to \mathcal X(\epsilon)$ the finite \'etale $(\O_F/p^m\O_F)^\times$-torsor which relatively represents isomorphisms $\O_F/p^m\O_F\to H_m^\vee$ of adic spaces with $\O_F$-module structure 
		\end{enumerate} 
	\end{definition}

	Let $\omega_{\Ig(p^m)}$ be the conormal sheaf of the pullback of $\mathcal A$ to $\mathcal X_{\Ig(p^m)}(\epsilon)$. It has an integral subsheaf $\omega^+_{\Ig(p^m)}	$ obtained from its formal model on $\mathfrak X^{\ast}(\epsilon)$. The canonical subgroup $H_m\subseteq \mathcal A$, considered as a finite flat group over $\mathcal X_{\Ig(p^m)}(\epsilon)$ induces a map $\pi:\omega_{\Ig(p^m)}^+\to \omega_{H_m}^{+}$. As in Lem.~\ref{l:ses-of-integ-conormal-sheaves}, we see:
	\begin{lem}[{\cite[Cor.~A.4]{AIP2}}]\label{l:Cor-A.4-AIP2}
		We have a right exact sequence of $\O^+_{\mathcal X_{\Ig(p^m)}(\epsilon)}$-modules
		\[ I_m\cdot \omega_{\Ig(p^m)}^+\to \omega_{\Ig(p^m)}^+\xrightarrow{\pi}\omega_{H_m}^{+}\to 0,\quad \text{where}\quad I_m:=p^m\Hdg^{-\frac{p^m-1}{p-1}}.\]
	\end{lem}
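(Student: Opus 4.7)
The plan is to mimic the proof of Lemma~\ref{l:ses-of-integ-conormal-sheaves} in the Hilbert setting, working over the formal model $\mathfrak X^{\ast}(\epsilon)$ rather than on the analytic generic fibre, and then pulling back along the morphism of locally ringed spaces $(\mathcal X_{\Ig(p^m)}(\epsilon),\O^+_{\mathcal X_{\Ig(p^m)}(\epsilon)}) \to \mathfrak X^{\ast}(\epsilon)$.

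First I would consider the short exact sequence of finite flat group schemes
\[ 0 \to \mathfrak H_m \to \mathfrak A[p^m] \to \mathfrak A[p^m]/\mathfrak H_m \to 0 \]
over $\mathfrak X^{\ast}(\epsilon)$, where $\mathfrak A$ denotes the universal semi-abelian formal scheme and $\mathfrak H_m$ the canonical subgroup of level $m$. Taking conormal sheaves gives a short exact sequence
\[ 0 \to \omega_{\mathfrak A[p^m]/\mathfrak H_m} \to \omega_{\mathfrak A[p^m]} \to \omega_{\mathfrak H_m} \to 0. \]
The middle term is canonically isomorphic to $\omega_{\mathfrak A}/p^m \omega_{\mathfrak A}$ via multiplication by $p^m$ on $\mathfrak A$.

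Next, I would invoke \cite[Cor.~A.2]{AIP2} (cf.\ Prop.~\ref{p:properties-of-can-subgroup}(3)) to control the annihilator of $\omega_{\mathfrak A[p^m]/\mathfrak H_m}$. Since $\mathfrak A[p^m]/\mathfrak H_m$ is the ``anticanonical'' quotient and is, modulo a suitable power of $p$, isogenous via Verschiebung to the $p^m$-torsion of a quotient abelian scheme, one obtains that $\omega_{\mathfrak A[p^m]/\mathfrak H_m}$ is annihilated by $\mathrm{Hdg}^{(p^m-1)/(p-1)}$. Combining these two statements yields an isomorphism
\[ \omega_{\mathfrak A}/\bigl(p^m \mathrm{Hdg}^{-(p^m-1)/(p-1)}\bigr)\omega_{\mathfrak A} \;\stackrel{\sim}{\lra}\; \omega_{\mathfrak H_m} \]
of $\O_{\mathfrak X^{\ast}(\epsilon)}$-modules, or equivalently a right-exact sequence
\[ I_m\cdot \omega_{\mathfrak A} \to \omega_{\mathfrak A} \to \omega_{\mathfrak H_m} \to 0. \]

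Finally, I would pull this back along the morphism of ringed spaces
\[ (\mathcal X_{\Ig(p^m)}(\epsilon),\O^+_{\mathcal X_{\Ig(p^m)}(\epsilon)}) \to (\mathcal X^{\ast}(\epsilon),\O^+_{\mathcal X^{\ast}(\epsilon)}) \to \mathfrak X^{\ast}(\epsilon), \]
noting that the formation of conormal sheaves is compatible with such pullback and that $\mathcal X_{\Ig(p^m)}(\epsilon) \to \mathcal X^{\ast}(\epsilon)$ is finite \'etale so that right-exactness is preserved. This produces the desired sequence on $\mathcal X_{\Ig(p^m)}(\epsilon)$. The main obstacle is the bound on the annihilator of $\omega_{\mathfrak A[p^m]/\mathfrak H_m}$, which is the content of the delicate analysis of canonical subgroups in \cite[Appendix A]{AIP2}; once this input is granted, the rest is formal.
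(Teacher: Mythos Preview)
Your proposal is correct and follows exactly the strategy the paper uses for the analogous elliptic statement (Lemma~\ref{l:ses-of-integ-conormal-sheaves}); in fact the paper does not supply a separate proof of Lemma~\ref{l:Cor-A.4-AIP2} at all, simply citing \cite[Cor.~A.4]{AIP2}. One small correction: the annihilator bound on $\omega_{\mathfrak A[p^m]/\mathfrak H_m}$ is the content of \cite[Cor.~A.4]{AIP2} itself (not Cor.~A.2, which concerns existence of canonical subgroups), and your cross-reference to Prop.~\ref{p:properties-of-can-subgroup}(3) is off, as that item bounds the cokernel of the Hodge--Tate map rather than the annihilator of the anticanonical conormal sheaf.
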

	
	The Hodge--Tate map now defines a morphism of sheaves of $\O_F$-modules over $\mathcal X_{\Ig(p^m)}(\epsilon)$
	\[\psi:\O_F/p^m\O_F\to H_m^\vee\xrightarrow{\HT}\omega^{+}_{H_m}\rightarrow \omega_{\Ig(p^m)}^+/I_m .\]
	\begin{definition}
		Let $\omega^{\mathrm{int}}_{\Ig(p^m)}$ be the $\O_F\otimes_{\Z}\O_{\mathcal X_{\Ig(p^m)}(\epsilon)}^+$-submodule of $\omega^+_{\Ig(p^m)}$ defined as the preimage of the $\O_F$-submodule of $\omega_{\Ig(p^m)}^+/I_m$ generated by $\psi(1)$. 
	\end{definition}
	The sheaf $\omega^{\mathrm{int}}_{\Ig(p^m)}$ gives a second integral structure on $\omega_{\Ig(p^m)}$. If $p$ is ramified in $\O_F$, it is better behaved than $\omega_{\Ig(p^m)}^+$, because it always satisfies the analogue of the Rapoport condition:
	\begin{prop}[{\cite[Prop.~4.1]{AIP3}}]\label{p:prop-of-w^int}\leavevmode
		\begin{enumerate}
			\item The sheaf $\omega^{\mathrm{int}}_{\Ig(p^m)}$ is a locally free  $\O_F\otimes_{\Z}\O_{\mathcal X_{\Ig(p^m)}(\epsilon)}^+$-module on $\mathcal X_{\Ig(p^m)}(\epsilon)$.
			\item The cokernel of $\omega^{\mathrm{int}}_{\Ig(p^m)}\subseteq \omega_{\Ig(p^m)}^+$ is annihilated by $\Hdg^{1/(p-1)}$. We thus have an injection
			\[ \omega_{\Ig(p^m)}^{\mathrm{int}}/I_m\hookrightarrow \omega^{+}_{\Ig(p^m)}/I_m \]
			whose image is precisely the $\O_F\otimes_{\Z}\O_{\mathcal X_{\Ig(p^m)}(\epsilon)}^+$-submodule generated by $\psi(1)$.
			\item Let $I_m':=p^m\Hdg^{-\frac{p^m}{p-1}}\supseteq I_m$. Sending $1\mapsto \psi(1)$ induces an isomorphism
			\[\HT':\O_F\otimes_{\Z}\O^+_{\mathcal X_{\Ig(p^m)}(\epsilon)}/I'_m \isorightarrow  \omega^{\mathrm{int}}_{\Ig(p^m)}/I_m'.\]
		\end{enumerate}
	\end{prop}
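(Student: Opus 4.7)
The plan is to translate the argument of [AIP3, Prop.~4.1] into our analytic setting, the only difference being that we work on the adic generic fibre rather than on the formal model used there. Since the statement is local on $\mathcal X_{\Ig(p^m)}(\epsilon)$, I would first pass to an affinoid open $\Spa(R,R^+)$ that is small enough that $\omega^+_{\Ig(p^m)}$ is free of rank $g=[F:\Q]$ as an $R^+$-module and $\Hdg$ is a principal ideal $(\delta) \subseteq R^+$ with $\delta^{(p^m-1)/(p-1)} \mid p^m$ (which holds on $\mathcal X(\epsilon)$ for $\epsilon \leq 1/p^{m+1}$). Both $\omega^+$ and $\omega^{\mathrm{int}}$ are compatible with such localisation.

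The essential input is Prop.~\ref{p:properties-of-can-subgroup}(3): the Hodge--Tate map $H_m^{\vee} \otimes R^+ \to \omega^+_{H_m}$ has cokernel annihilated by $\Hdg^{1/(p-1)}$. Combining this with the right exact sequence of Lem.~\ref{l:Cor-A.4-AIP2} shows that the composite
\[ \O_F \otimes_\Z R^+ \xrightarrow{\psi} \omega^+_{\Ig(p^m)}/I_m \]
has cokernel annihilated by $\Hdg^{1/(p-1)}$. By definition $\omega^{\mathrm{int}}/I_m$ is the $\O_F\otimes R^+/I_m$-submodule generated by $\psi(1)$, so part (2) follows: the cokernel of $\omega^{\mathrm{int}} \hookrightarrow \omega^+$ is killed by $\Hdg^{1/(p-1)}$. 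For part (3), observe that $I'_m = I_m \cdot \Hdg^{-1/(p-1)}$, so modulo $I'_m$ the obstruction $\Hdg^{1/(p-1)}$ is absorbed and $\psi(1)$ generates $\omega^{\mathrm{int}}/I'_m$; the induced map $\HT' \colon (\O_F \otimes R^+)/I'_m \to \omega^{\mathrm{int}}/I'_m$ is then surjective, and I would verify injectivity by comparing ranks over $\O_F \otimes R/\Hdg^{\bullet}$ or by observing that both sides are locally free $R^+/I'_m$-modules of rank $g$.

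For part (1), I would apply a Nakayama-type lifting argument: since $\HT'$ is an isomorphism modulo $I'_m$ and $I'_m$ contains a power of $p$ (namely $p^m \Hdg^{-p^m/(p-1)}$ still has positive valuation by the bound on $\epsilon$), and $R^+$ is $p$-adically complete, any lift of $\psi(1)$ to $\omega^{\mathrm{int}}$ generates it freely over $\O_F \otimes R^+$. Hence $\omega^{\mathrm{int}}_{\Ig(p^m)}$ is locally free of rank $1$ as an $\O_F \otimes \O^+_{\mathcal X_{\Ig(p^m)}(\epsilon)}$-module.

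The main obstacle is justifying the Nakayama step in the non-Noetherian analytic setting of [AIP3], but this is mild: $R^+$ is $p$-adically complete and $p$-torsion free (indeed flat over $\O_L$), and $I'_m$ is an invertible fractional ideal containing a positive power of $p$, so the classical Nakayama argument applied after quotienting by $p^m$ lifts without change. Alternatively, one can reduce directly to [AIP3, Prop.~4.1] by invoking the formal model $\mathfrak X^{\ast}(\epsilon)$ of $\mathcal X^{\ast}(\epsilon)$ from \cite[\S3.2]{AIP3} and noting that the formation of $\omega^{\mathrm{int}}$ commutes with the passage to adic generic fibres.
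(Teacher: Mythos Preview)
The paper does not give a proof of this proposition at all: it is stated with the citation \cite[Prop.~4.1]{AIP3} and is immediately followed by Definition~\ref{d:AIP-torsor}. In other words, the paper's ``proof'' is simply the reference, relying on the fact that the analytic sheaves $\omega^{\mathrm{int}}_{\Ig(p^m)}$, $\omega^+_{\Ig(p^m)}$, $\omega^+_{H_m}$ are pullbacks of their formal counterparts along the specialisation map $(\mathcal X_{\Ig(p^m)}(\epsilon),\O^+)\to \mathfrak X^{\ast}(\epsilon)$, so that the conclusions of \cite[Prop.~4.1]{AIP3} transfer verbatim. Your closing alternative (``reduce directly to [AIP3, Prop.~4.1] by invoking the formal model\ldots and noting that the formation of $\omega^{\mathrm{int}}$ commutes with the passage to adic generic fibres'') is therefore exactly the paper's approach, and is the cleanest route.

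Your more detailed sketch is a reasonable reconstruction of the argument in \cite{AIP3}, but there is a small circularity to watch: in part~(3) you justify injectivity of $\HT'$ by ``observing that both sides are locally free $R^+/I'_m$-modules of rank $g$'', yet local freeness of $\omega^{\mathrm{int}}/I'_m$ is what you then deduce in part~(1) via Nakayama from the isomorphism in~(3). To break this loop you would need an independent argument for injectivity, e.g.\ by computing the kernel of the linearised Hodge--Tate map $H_m^\vee\otimes R^+\to \omega^+_{H_m}$ (which is what \cite{AIP3} does, using that $\omega^+_{H_m}$ has controlled length as an $R^+$-module). Your rank-comparison alternative over $\O_F\otimes R/\Hdg^{\bullet}$ would also work if spelled out, but as written neither option is complete.
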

	\begin{definition}\label{d:AIP-torsor}
		The Andreatta--Iovita--Pilloni-torsor is the subsheaf of $\omega^{\mathrm{int}}_{\Ig(p^m)}$ defined by 
		\[
		\mathfrak F_m:=\{w \in \omega^{\mathrm{int}}_{\Ig(p^m)}| w\equiv \HT'(1) \bmod I'_m\}.
		\]
		We denote the analytic total space of $\mathfrak F_m\subseteq \omega_{\Ig(p^n)}^{+}$ over $\mathcal X_{\Ig(p^m)}(\epsilon)$ by
		\[\mathcal F_m(\epsilon)\to \mathcal X_{\Ig(p^m)}(\epsilon).\]
		By Prop.~\ref{p:prop-of-w^int}.(3), this is a torsor in the analytic topology for the subgroup $1+I'_m\Res_{\OF|\ZZ}\hat{\G}_a|_{\XX(\epsilon)}$ of $\Res_{\OF|\ZZ}\hat{\G}_m|_{\XX(\epsilon)}$, where $\Res_{\OF|\ZZ}\hat{\G}_a|_{\XX(\epsilon)}$ is the pullback of $\Res_{\OF|\ZZ}\hat{\G}_a\to \Spa(L)$ to $\XX(\epsilon)$, and similarly for $\Res_{\OF|\ZZ}\hat{\G}_m|_{\XX(\epsilon)}$. In particular, the composition $\mathcal F_m(\epsilon)\to \mathcal X(\epsilon)$ is a torsor in the \'etale topology for the subgroup $B_m:=\O_p^\times(1+I'_m\Res_{\OF|\ZZ}\hat{\G}_a|_{\XX(\epsilon)})\subseteq \Res_{\OF|\ZZ}\hat{\G}_m|_{\XX(\epsilon)}$. This also shows that the natural map $\mathcal F_m(\epsilon)\to \mathcal T(\omega)$ into the total space of $\omega$ over $\mathcal X$ is an open immersion. Finally, we note that since $p^{\epsilon}\in \Hdg$, we have for $x:=m-\epsilon p^{m}/(p-1)$ that
		\begin{equation}\label{eq:estimate-for-B_n}
		\O_p^\times(1+p^{x}\Res_{\OF|\ZZ}\hat{\G}_a)|_{\XX(\epsilon)}\subseteq B_m.
		\end{equation}
	\end{definition}
	The following corollary relates $\mathfrak F_m$ to the definition in the elliptic case:
	\begin{corollary}\label{l:comparison-of-AIP-torsor-to-Pilloni-torsor}
		Let $w \in \omega_{\Ig(p^m)}^+$ be any lift of $\psi(1)\in \omega^+_{H_m}$ under $\omega_{\Ig(p^m)}^+\to \omega^+_{H_m}$. Then $w\in \mathfrak F_m$. 
	\end{corollary}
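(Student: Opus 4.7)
The plan is to unravel the definition of $\mathfrak F_m$, which requires verifying two properties of $w$: first, that $w$ actually lies in the integral structure $\omega^{\mathrm{int}}_{\Ig(p^m)}$; and second, that $w\equiv \HT'(1)\bmod I_m'$.

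For the first point, I would use Lem.~\ref{l:Cor-A.4-AIP2}, which says that the right-exact sequence there induces an isomorphism $\omega^+_{\Ig(p^m)}/I_m\cdot\omega^+_{\Ig(p^m)}\isorightarrow\omega^+_{H_m}$ via $\pi$. The hypothesis $\pi(w)=\psi(1)$ in $\omega^+_{H_m}$ then translates into the statement that $w\bmod I_m$ equals the image of $\psi(1)\in\omega^+_{H_m}$ in $\omega^+_{\Ig(p^m)}/I_m$, which by construction coincides with the element denoted $\psi(1)$ in the definition of $\omega^{\mathrm{int}}_{\Ig(p^m)}$. Since $\omega^{\mathrm{int}}_{\Ig(p^m)}$ is defined as the preimage of the $\O_F$-submodule generated by this element, we conclude $w\in \omega^{\mathrm{int}}_{\Ig(p^m)}$.

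For the second point, I would note that by Prop.~\ref{p:prop-of-w^int}(3) the isomorphism $\HT'$ is defined by sending $1\mapsto \psi(1)$, so in $\omega^{\mathrm{int}}_{\Ig(p^m)}/I_m'$ the element $\HT'(1)$ is by construction the (unique) lift of $\psi(1)$. By the first point, $w$ is another such lift in $\omega^{\mathrm{int}}_{\Ig(p^m)}/I_m$, and since the inclusion $I_m\subseteq I_m'$ gives a surjection $\omega^{\mathrm{int}}_{\Ig(p^m)}/I_m\twoheadrightarrow \omega^{\mathrm{int}}_{\Ig(p^m)}/I_m'$ (compatibly with the analogous surjection on the $+$-versions, using that the cokernel of $\omega^{\mathrm{int}}\subseteq \omega^+$ is $\Hdg^{1/(p-1)}$-torsion so does not obstruct the identification of $I_m'$-torsion), the image of $w$ in $\omega^{\mathrm{int}}_{\Ig(p^m)}/I_m'$ also agrees with $\psi(1)=\HT'(1)$. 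Hence $w\equiv \HT'(1)\bmod I_m'$, so $w\in\mathfrak F_m$ as required.

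The only subtle point, and the one I would double-check carefully, is the compatibility between reductions modulo $I_m$ and $I_m'$ across the inclusion $\omega^{\mathrm{int}}_{\Ig(p^m)}\subseteq \omega^+_{\Ig(p^m)}$: concretely, that $I_m'\omega^+_{\Ig(p^m)}\cap \omega^{\mathrm{int}}_{\Ig(p^m)}=I_m'\omega^{\mathrm{int}}_{\Ig(p^m)}$, so that congruence mod $I_m'$ inside $\omega^+$ genuinely transfers to $\omega^{\mathrm{int}}$. This is the only place where the distinction between the two integral structures really matters, and it follows from Prop.~\ref{p:prop-of-w^int}(2) together with the explicit form of $I_m'$.
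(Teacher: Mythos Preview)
Your argument is correct and follows essentially the same path as the paper's proof. The one place where the paper is cleaner is your ``subtle point'': rather than worrying about whether $I_m'\omega^+_{\Ig(p^m)}\cap\omega^{\mathrm{int}}_{\Ig(p^m)}=I_m'\omega^{\mathrm{int}}_{\Ig(p^m)}$, the paper simply invokes the injection $\omega^{\mathrm{int}}_{\Ig(p^m)}/I_m\hookrightarrow\omega^{+}_{\Ig(p^m)}/I_m$ from Prop.~\ref{p:prop-of-w^int}(2) directly. Since you already know $w$ and $\psi(1)$ agree in $\omega^{+}_{\Ig(p^m)}/I_m$ and both lie in $\omega^{\mathrm{int}}_{\Ig(p^m)}$, injectivity gives agreement in $\omega^{\mathrm{int}}_{\Ig(p^m)}/I_m$, and then you pass to the coarser quotient $\omega^{\mathrm{int}}_{\Ig(p^m)}/I_m'$. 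This is the same content as your subtle point (indeed the $I_m$-injectivity is the stronger statement), but it avoids having to reprove anything.
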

	\begin{proof}
		By Lem.~\ref{l:Cor-A.4-AIP2}, for $w$ to be a lift of $\psi(1)$ means that $w$ and the image of $\psi(1)$ in $\omega_{\Ig(p^m)}^+/I_m$ agree. Thus $w\in \w^{\mathrm{int}}_{\Ig(p^m)}$. By the injective morphism from Proposition~\ref{p:prop-of-w^int}.(2), this shows that $\psi(1)$ and $w$ also agree in $\w^{\mathrm{int}}_{\Ig(p^m)}/I_m$ and thus in its quotient $\w^{\mathrm{int}}_{\Ig(p^m)}/I'_m$. This means $w\in \mathfrak F_m$.
	\end{proof}
	
	\begin{defn}
		
		Let $\kappa:\mathcal U\to \mathcal W^{\ast}$ be a bounded smooth weight. Recall that we may regard $\kappa$ as a morphism $\O_p^\times \times U\to \hat{\G}_m$. As before, we let $|\delta_{\kappa}|:=\max(|p|,|T_{\kappa}|)$. Let $r=3$ if $p>2$ and $r=5$ if $p=2$. Let $\epsilon_{\kappa}>0$ be implicitly defined by $|p|^{\epsilon_{\kappa}}=|\delta_{\kappa}|^{1/p^{r+1}}$. We note that $\e_{\k} \leq \ed$.
		
	\end{defn}
	\begin{defn}
		For any $k\in \Z_{\geq 1}$, let $\mathcal W^{\ast}_{k}$ be the open in weight space denoted by $\mathcal W_{F,[p^{k-1},p^{k}]}$ in \cite[\S2]{AIP3}, and let $\mathcal W_0^{\ast}$ be the open denoted by $\mathcal W_{F,[0,1]}$. Explicitly, for any $k\in \Z_{\geq 0}$, we have
		$\mathcal W^{\ast}_{k}:= \mathcal W^{\ast}(|\delta_{\kappa}|^{p^{k}}\leq |p|\leq |\delta_{\kappa}|^{p^{k-1}})$. Then $ \mathcal W^{\ast}=\cup_{k\in \Z_{\geq 0}}\mathcal W_k^*$.
		
		If $\kappa:\mathcal U\to \mathcal W^{\ast}$ is a bounded smooth weight, we let $\mathcal U_k:=\kappa^{-1}(\mathcal W^{\ast}_k)$, then $\mathcal U=\cup_{k\in \Z_{\geq 0}}\mathcal U_k$.
	\end{defn}
	
	\begin{definition}
		Let $\kappa:\mathcal U\to \mathcal W^{\ast}$ be a smooth bounded weight and let $0\leq \epsilon\leq \epsilon_{\kappa}$. For each $k\in \Z_{\geq 0}$, let $m=k+r$ (this is the variable ``$n$'' in \cite{AIP3}), so that $\epsilon_{\kappa}\leq \epsilon_m^{\mathrm{can}}$.
		The sheaf $\omega^{\kappa}_{\AIP}$ on $\XX(\e)\times \mathcal U_k$ of modular forms of weight $\kappa$, as defined in \cite{AIP3}, is given locally as
		\[
		\omega_{\AIP|\U_k}^{\kappa}:=\O_{\mathcal F_m(\epsilon)\times \mathcal U_k}[\kappa^{-1}]=\left\{f\in \O_{\mathcal F_m(\epsilon)\times \mathcal U_k}|\gamma^{\ast}f=\kappa^{-1}(\gamma)f \text{ for all }\gamma \in B_m\right\}. 
		\]
		As usual, we define an integral subspace  $\omega^{\kappa,+}_{\AIP|\U_k}$ by using $\O^+$ instead.
	\end{definition}

	\begin{prop}[{\cite[Prop.~4.3]{AIP3}}]\label{prop: glue AIP sheaf}
		Let $\kappa:\mathcal U\to\mathcal W^{\ast}$ be a smooth bounded weight and let $0\leq \epsilon\leq \epsilon_{\kappa}$. 
		Then the sheaves $\omega_{\AIP|\U_k}^{\kappa,+}$ can be canonically identified on intersections $\mathcal X(\epsilon)\times(\mathcal U_k\cap \mathcal U_{k+1})$, so that they glue to give an $\O^+$-module $\omega_{\AIP}^{\kappa,+}$ on $\XX_{\U}(\epsilon)=\XX(\e)\times\U$. This $\O^+$-module is invertible. Similarly, we can glue the   $\omega_{\AIP|\U_k}^{\kappa}$ to get a line bundle  $\omega_{\AIP}^{\kappa}$ on $\XX_\U(\e)$.
	\end{prop}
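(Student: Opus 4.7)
The plan is to follow the strategy in \cite[Prop.~4.3]{AIP3} essentially verbatim, but carried out in the analytic (rather than formal) setup that we use. The key geometric input is a canonical tower of transition morphisms between the Andreatta--Iovita--Pilloni-torsors of different levels $m$, and the fact that the character $\kappa$ is compatible with the corresponding shrinking chain of structure groups.

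First, I would construct the transition morphism $\mathcal F_{m+1}(\epsilon) \to \mathcal F_m(\epsilon)$. The canonical subgroup satisfies $H_m = H_{m+1}[p^m]$, so there is a natural $\O_F$-linear forgetful map $\mathcal X_{\Ig(p^{m+1})}(\epsilon) \to \mathcal X_{\Ig(p^m)}(\epsilon)$, and the Hodge--Tate maps $\psi_m, \psi_{m+1}$ of Proposition~\ref{p:prop-of-w^int} are compatible modulo $I_m$. Pulling back $\omega^{\mathrm{int}}_{\Ig(p^m)}$ along this map and using compatibility with the defining congruences of Definition~\ref{d:AIP-torsor} produces a canonical morphism $\mathcal F_{m+1}(\epsilon) \to \mathcal F_m(\epsilon)$ which, via the explicit torsor structure, is equivariant for the inclusion of groups $B_{m+1} \hookrightarrow B_m$ obtained from $I'_{m+1} \subseteq I'_m$.

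Next I would define the comparison isomorphism. On the intersection $\mathcal X(\epsilon) \times (\U_k \cap \U_{k+1})$, the character $\kappa$ extends analytically both to $B_m$ (with $m = k+r$) and to $B_{m+1}$, by the choice of $\epsilon_\kappa$ and Proposition~\ref{an cnt hmfs}; moreover, the estimate $|p|^{\epsilon_\kappa} = |\delta_\kappa|^{1/p^{r+1}}$ guarantees that the $B_m$-analytic continuation on $\U_k$ restricts to the $B_{m+1}$-analytic continuation on $\U_k \cap \U_{k+1}$. Pulling back along the equivariant morphism $\mathcal F_{m+1}(\epsilon) \times (\U_k \cap \U_{k+1}) \to \mathcal F_m(\epsilon) \times (\U_k \cap \U_{k+1})$ then sends a function transforming by $\kappa^{-1}$ under $B_m$ to a function transforming by $\kappa^{-1}$ under $B_{m+1}$, giving a morphism $\omega^{\kappa,+}_{\AIP|\U_k}|_{\U_k \cap \U_{k+1}} \to \omega^{\kappa,+}_{\AIP|\U_{k+1}}|_{\U_k \cap \U_{k+1}}$. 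The cocycle condition on triple overlaps $\U_k \cap \U_{k+1} \cap \U_{k+2}$ follows automatically from the tower structure of the morphisms $\mathcal F_{m+2} \to \mathcal F_{m+1} \to \mathcal F_m$. That these comparison maps are isomorphisms is local: working étale-locally where $\mathcal F_m(\epsilon)$ is trivial, both sides are generated by the tautological section coming from $\HT'(1)$, and the map identifies these generators.

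Finally, invertibility: since $\mathcal F_m(\epsilon) \to \mathcal X(\epsilon)$ is an étale $B_m$-torsor by Definition~\ref{d:AIP-torsor}, so is $\mathcal F_m(\epsilon) \times \U_k \to \mathcal X(\epsilon) \times \U_k$, and $\omega^{\kappa,+}_{\AIP|\U_k}$ is by construction the contracted product of this torsor along the character $\kappa^{-1}: B_m \to \hat{\G}_m$. This is an invertible $\O^+$-module. Inverting $p$ gives the statement for $\omega^\kappa_{\AIP}$. The main technical point, and the place I expect real care is needed, is the verification that $\kappa^{\mathrm{an}}$ extends consistently across the overlap $\U_k \cap \U_{k+1}$ with the bounds dictated by the two values of $m$; this reduces to the explicit bound on the radius of analytic continuation given by Proposition~\ref{an cnt hmfs}, together with the interplay between $|T_\kappa|$ and $|\delta_\kappa|^{p^{k-1}}$ that defines the partition $\{\mathcal W^*_k\}$, exactly as in the proof of \cite[Prop.~4.3]{AIP3}.
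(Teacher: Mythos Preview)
Your approach is correct but takes a genuinely different route from the paper's. The paper does not reprove the content of \cite{AIP3} in the analytic setting; instead, it identifies the analytically-defined sheaf $\omega^{\kappa,+}_{\AIP|\U_k}$ as the pullback of the formal sheaf ``$\mathfrak w_{n,r,I}$'' from \cite[\S4]{AIP3} along a morphism of ringed spaces $(\mathcal X(\epsilon)\times \mathcal U_k, \O^+) \to \mathfrak X_{r,k}$, after first matching the paper's radius parameter $\epsilon$ with the parameter $r$ used in \cite{AIP3}. Invertibility and the gluing on overlaps then become direct citations of \cite[Prop.~4.3]{AIP3} and \cite[Prop.~4.7]{AIP3}. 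Your approach instead reconstructs these two propositions in the analytic setting: you build the transition maps $\mathcal F_{m+1}(\epsilon)\to\mathcal F_m(\epsilon)$, verify they induce isomorphisms on the $\kappa^{-1}$-eigensheaves via local triviality, and deduce invertibility from the \'etale torsor/contracted product description. This is more self-contained and avoids passing through formal models, at the cost of redoing work already done in \cite{AIP3}; the paper's route is shorter but requires checking the analytic-to-formal comparison carefully, which is where it spends its effort.
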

	\begin{proof}
		We first need to explain how our variable $\epsilon$ is related to the radius variable $r$ used in \cite[\S3]{AIP3}. The sheaf ``$\mathfrak w_{n,r,I}$'' constructed in \cite[\S4]{AIP3} for $I=[p^{k-1},p^k]$ lives on a formal scheme ``$\mathfrak X_{r,[p^{k-1},p^{k}]}$'' over $\Z_p$. The base-change to $L$ of the generic fibre of this formal scheme is the subspace of $\mathcal X\times \mathcal W^{\ast}_k$ cut out by the condition $|\Ha^{p^{r+1}}|\geq |\delta_{\kappa}|$. Since we have set $|p|^{\epsilon_{\kappa}}= |\delta_{\kappa}|^{1/p^{r+1}}$, this is contained in $\mathcal X(\epsilon)\times \mathcal W^{\ast}_k$. 
		By \cite[Prop.~4.3]{AIP3}, the sheaf  ``$\mathfrak w_{n,r,I}$'' is a line bundle. It now follows from our definition that the sheaf  $\omega_{\AIP|\U_k}^{\kappa,+}$ is the pullback of  ``$\mathfrak w_{n,r,I}$'' along the morphism of ringed spaces $(\mathcal X(\epsilon)\times \mathcal U_k,\O^+)\to \mathfrak X_{r,k}$. In particular, $\omega_{\AIP|\U_k}^{\kappa,+}$ is invertible.
		
		By \cite[Prop.~4.7]{AIP3}, there is a canonical isomorphism between this sheaf and the one defined using $m+1$ instead of $m$. This shows that the $\omega_{\AIP|\U_k}^{\kappa,+}$ glue on intersections $\mathcal U_k\cap \mathcal U_{k+1}$. Since $\omega_{\AIP|\U_k}^{\kappa,+}$ is invertible on each open $\mathcal X(\epsilon)\times \U_k$, it is clearly also invertible on $\mathcal X_\U(\epsilon)$.
	\end{proof}

	\subsection{The comparison morphism}\label{s:comparison-morphism-Hilbert}
	Recall from Defn.~\ref{d:mathfrak-s-Hilbert-case} that over $\XX_{\Gamma(p^\infty)}$, the conormal sheaf $\omega_{\Gamma^*(p^\infty)}$ has a canonical section $\mathfrak s$ that we may regard as a morphism into the total space $\mathfrak s:\XX^*_{\Gamma(p^\infty)}\to\mathcal T(\omega_{\Gamma^*(p^\infty)})\to \mathcal T(\omega)$.
	Let us simply write $\mathcal T(\epsilon)\to \mathcal X(\epsilon)$ for the restriction of $\T(\omega)$ to $\mathcal X(\epsilon)$, then $\mathfrak s$ restricts to
	\[\mathfrak s:\mathcal X_{\Gamma^*(p^\infty)}(\epsilon)_a\to \mathcal T(\epsilon). \]
	
	The comparison of our Hilbert modular forms to the ones defined by \cite{AIP3} relies on the following proposition saying that $\mathfrak s$ compares $\mathcal X_{\Gamma^*(p^\infty)}(\epsilon)_a$ to the Andreatta--Iovita--Pilloni-torsor.
	\begin{prop}\label{l:s-from-X-to-P}
		Let  $0\leq \epsilon\leq \epsilon^{\mathrm{can}}_{m}$. Then $\mathfrak s:\mathcal X_{\Gamma^*(p^\infty)}(\epsilon)_a\to \mathcal T(\epsilon)$ factors through the subspace
		\[\mathfrak s:\mathcal X_{\Gamma^*(p^\infty)}(\epsilon)_a\to \mathcal F_m(\epsilon) \subset \mathcal{T}(\e)\]
		defined by the Andreatta--Iovita--Pilloni torsor (see Definition~\ref{d:AIP-torsor}).
	\end{prop}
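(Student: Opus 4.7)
The plan is to reduce the statement to Corollary \ref{l:comparison-of-AIP-torsor-to-Pilloni-torsor}: it suffices to produce a factorisation
\[ \mathfrak s:\XX_{\Gamma^*(p^\infty)}(\e)_a\to \mathcal T_{\Ig(p^m)}^+(\e)\to \mathcal T(\e), \]
where $\mathcal T_{\Ig(p^m)}^+(\e)$ is the total space of $\w_{\Ig(p^m)}^+$ over $\XX_{\Ig(p^m)}(\e)$, and then show that the section $\mathfrak{s}$ of $\w_{\Ig(p^m)}^+$ is a lift of $\psi(1)\in \w_{H_m}^+$ under $\pi$.

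First I would promote the forgetful morphism $\XX_{\Gamma^*(p^\infty)}(\e)_a\to \XX(\e)$ to a morphism $\XX_{\Gamma^*(p^\infty)}(\e)_a\to \XX_{\Ig(p^m)}(\e)$. By the moduli description in Remark \ref{rem: hilb moduli}, a point of the source corresponds to $(A,\iota,\lambda,\mu_N,\alpha:\OO_p^2\isorightarrow T_pA^\vee)$ with $A$ nearly ordinary and with $\alpha(1,0)\bmod p$ not lying in the canonical subgroup of $A^\vee$. Cartier duality applied to the canonical subgroup $H_m\hookrightarrow A[p^m]$ gives a short exact sequence $0\to H_m^{A^\vee,\mathrm{can}}\to A^\vee[p^m]\to H_m^\vee\to 0$ in which the first term is the canonical subgroup of $A^\vee$. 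The anticanonical condition (extended from level $p$ to level $p^m$ via Prop.~\ref{p:properties-of-can-subgroup}) implies that the class of $\alpha(1,0)$ in the \'etale group $H_m^\vee$ is an $\OO_F/p^m\OO_F$-module generator, hence defines an isomorphism $\psi_\alpha:\OO_F/p^m\OO_F \isorightarrow H_m^\vee$. This is exactly an Igusa structure, yielding the desired map. The fact that this construction extends uniquely to the boundary is automatic because the cusps lie in the ordinary locus, where the Igusa cover is already well-defined and the construction of $\psi_\alpha$ works verbatim for the semi-abelian scheme.

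Next I would check integrality and compatibility. By Lem.~\ref{l:moduli interpretation of s in terms of HT in Hilbert}, $\mathfrak s(x)=\HT_A(\alpha(1,0))\in \w_A$, and because $\HT_A$ is defined on the integral level (i.e.\ on $T_pA^\vee$ with target $\w_{A}^+$ in the formal model of Defn.~\ref{defn: Ig tower}), $\mathfrak s$ lands in $\w_{\Ig(p^m)}^+$. The key compatibility is the commutative square
\[\begin{tikzcd}[row sep = 0.55cm]
T_pA^\vee \arrow[r,"\HT_A"] \arrow[d,two heads] & \w_A^+\arrow[d,two heads]\\
H_m^\vee \arrow[r,"\HT_{H_m}"] & \w_{H_m}^+,
\end{tikzcd}\]
in which the left vertical is reduction mod $p^m$ followed by $A^\vee[p^m]\twoheadrightarrow H_m^\vee$ and the right vertical comes from $H_m\hookrightarrow A$. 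Chasing $\alpha(1,0)$ around this diagram computes the image of $\mathfrak s(x)$ in $\w_{H_m}^+$ as $\HT_{H_m}(\psi_\alpha(1))=\psi(1)$, by definition of the Hodge--Tate map $\psi$ in \eqref{definition:Hodge--Tate morphism in definition of Pilloni-torsor} applied to the Igusa structure $\psi_\alpha$. Corollary \ref{l:comparison-of-AIP-torsor-to-Pilloni-torsor} then gives $\mathfrak s(x)\in \mathfrak F_m$, which is precisely the required factorisation.

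The main obstacle will be verifying the anticanonical compatibility at level $p^m$ rather than just $p$, i.e.\ ensuring that $\alpha(1,0)$ genuinely defines an isomorphism $\OO_F/p^m\OO_F\isorightarrow H_m^\vee$ and not just a generator at level one. This reduces to the interaction between the connected--\'etale filtration of $A^\vee[p^m]$ and the level structure $\alpha$, which follows from the canonical-subgroup properties in Prop.~\ref{p:properties-of-can-subgroup} together with the fact that $\alpha$ is a $G^*$-level structure (so that its image at all finite levels is controlled by its image modulo $p$ through the action of $\Gamma_{\!0}^*(p^\infty)$). All other ingredients are routine once the Igusa refinement in Step~1 is in place.
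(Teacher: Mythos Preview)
Your proposal is correct and essentially matches the paper's proof: construct the map to the Igusa tower via $\alpha(1,0)\bmod p^m$, use the functoriality square for $\HT$ to show $\mathfrak s$ lifts $\psi(1)\in\omega_{H_m}^+$, and apply Corollary~\ref{l:comparison-of-AIP-torsor-to-Pilloni-torsor}. The paper streamlines slightly by first noting that, since $\mathcal F_m(\epsilon)\subseteq\mathcal T(\epsilon)$ is open, it suffices to check the factorisation on $(C,C^+)$-points for $C$ algebraically closed---this dissolves your ``main obstacle'', since on points $H_m^\vee\cong\OO_F/p^m\OO_F$ and the anticanonical condition at level $p$ together with Nakayama already force the image of $\alpha(1,0)$ to generate $H_m^\vee$.
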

	\begin{proof}
		It suffices to check that for $C$ any complete algebraically closed extension of $L$, the $(C,C^+)$-points that the image of $\mathfrak s$ are contained in the open subspace $\mathcal F_m(\epsilon)\subseteq \mathcal T(\epsilon)$.
		
		There is a natural map 
		$\varphi:\XX_{\Gamma^*(p^\infty)}(\epsilon)_a\to \XX_{\mathrm{Ig}(p^m)}(\epsilon)$  defined by sending a point valued in some stably uniform adic ring $(R,R^+)$ corresponding to an abelian variety $A$ and a trivialisation $\alpha:\OO_p^2 \isorightarrow T_pA^{\vee}$ to the trivialisation of $H_m^\vee(A)$ given by the composition
		\[\O_p/p^m\OO_p\xrightarrow{(1,0)}(\O_p/p^m\OO_p)^2\xrightarrow{\alpha\bmod p^m}A^{\vee}[p^m]\to H_m^\vee\]
		with the dual of the inclusion $H_n\to A[p^m]$, where as usual we identify $A[p^m]^\vee=A^{\vee}[p^m]$ via the Weil pairing.
		By functoriality of the Hodge--Tate map we then have a commutative diagram
		\begin{center}
			\begin{tikzcd}[row sep = 0.55cm]
			\O_p^2\arrow[r,"\alpha"]\arrow[d,"{(1,0)\bmod p^n}"']&T_pA^{\vee} \arrow[d] \arrow[r, "\HT"] & \omega^+_A \arrow[d] \\
			\psi:\O_p/p^m\O_p\arrow[r]&	H_m^\vee \arrow[r, "\HT"] & \omega^+_{H_m}.
			\end{tikzcd}
		\end{center}
		For $(R,R^+)=(C,C^+)$, we then have $\mathfrak s(x)=\HT\circ\alpha(1,0)$ by Lem.~\ref{l:moduli interpretation of s in terms of HT in Hilbert}. This shows that $\mathfrak s(x)$ is a lift of $\psi(1)\in \omega^+_{H_m}$. By Cor.~\ref{l:comparison-of-AIP-torsor-to-Pilloni-torsor}, this implies that $\mathfrak s(x)\in\mathcal F_m(\epsilon)(C,C^+)$ as desired.
	\end{proof}
	\begin{lem}\label{l:comparison-mod-forms-AIP-forms-Hilbert}
		The following diagram commutes
		\begin{center}
			\begin{tikzcd}[row sep = 0.55cm]
			\Gamma_{\!0}^*(p)\times\mathcal X_{\Gamma^{\ast}(p^\infty)}(\epsilon)_a \arrow[r, "\mathrm{m}"] \arrow[d, "(c\mathfrak z+d)\times\mathfrak s"'] & \mathcal X_{\Gamma^{\ast}(p^\infty)}(\epsilon)_a \arrow[d, "\mathfrak s"'] \arrow[r] &  \mathcal X_{\Gamma^{\ast}_{\!\scaleto{0}{3pt}}(p^\infty)}(\epsilon)_a \arrow[d,"q"] \\
			B_m\times \mathcal F_m(\epsilon) \arrow[r, "\mathrm{m}"] & \mathcal F_m(\epsilon) \arrow[r] & \mathcal X(\epsilon).
			\end{tikzcd}
		\end{center}
	\end{lem}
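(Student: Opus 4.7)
The plan is to deduce commutativity of both squares from results already established: Lem.~\ref{l:transform-for-mathfrak-s-Hilbert-case} for the left square, and the construction of $\mathfrak s$ via the factorisation in Prop.~\ref{l:s-from-X-to-P} for the right.

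First I will treat the right square. By Prop.~\ref{l:s-from-X-to-P}, the map $\mathfrak s$ factors through the open immersion $\mathcal F_m(\epsilon) \hookrightarrow \mathcal T(\epsilon)$, and the projection $\mathcal F_m(\epsilon) \to \mathcal X(\epsilon)$ is the structure map of the $B_m$-torsor. Moreover, the proof of Prop.~\ref{l:s-from-X-to-P} exhibits $\mathfrak s$ as a section of $q^\ast\omega_{\mathcal A}$ over the morphism $\varphi: \mathcal X_{\Gamma^\ast(p^\infty)}(\epsilon)_a \to \mathcal X_{\mathrm{Ig}(p^m)}(\epsilon)$ that refines the forgetful map $\mathcal X_{\Gamma^\ast(p^\infty)}(\epsilon)_a \to \mathcal X(\epsilon)$. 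Since the latter manifestly factors through $\mathcal X_{\Gamma^\ast_0(p^\infty)}(\epsilon)_a$ (as both $\mathfrak s$ and $\omega$ depend only on tame level data), the right square commutes.

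For the left square, Lem.~\ref{l:transform-for-mathfrak-s-Hilbert-case} already establishes the identity $\gamma^\ast\mathfrak s = (c\mathfrak z + d)\mathfrak s$ in the ambient space $\mathcal T(\omega_{\mathcal A})$. Applying Prop.~\ref{l:s-from-X-to-P} to both $\mathfrak s$ and $\gamma^\ast\mathfrak s$ -- and using the openness of $\mathcal F_m(\epsilon) \hookrightarrow \mathcal T(\epsilon)$ -- this equality lifts to an equality of maps valued in $\mathcal F_m(\epsilon)$. What remains is to verify that the composite $(c\mathfrak z+d) : \mathcal X_{\Gamma^\ast(p^\infty)}(\epsilon)_a \to \Res_{\OF|\ZZ}\hat{\G}_m$ actually lands in the subgroup $B_m \subseteq \Res_{\OF|\ZZ}\hat{\G}_m|_{\XX(\epsilon)}$, so that multiplication by it makes sense on the torsor $\mathcal F_m(\epsilon)$. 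Writing $(c\mathfrak z + d) = d\cdot (1 + d^{-1}c\mathfrak z)$, we have $d \in \OO_p^\times$ and $d^{-1}c \in p\OO_p$; since by Prop.~\ref{prop: epsilon w for HMFS} the function $\mathfrak z$ takes values in $B_{r_\kappa}(\OO_p:1)\subseteq \Res_{\OF|\ZZ}\hat{\G}_a$, we have $d^{-1}c\mathfrak z \in p\Res_{\OF|\ZZ}\hat{\G}_a$, and by \eqref{eq:estimate-for-B_n} this lies in $I'_m\Res_{\OF|\ZZ}\hat{\G}_a|_{\XX(\epsilon)}$ provided $\epsilon \leq \epsilon^{\mathrm{can}}_m$, which holds by hypothesis.

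The only mildly technical point, and thus the closest thing to an obstacle, is the last verification that $(c\mathfrak z + d)$ lands in $B_m$ rather than the ambient $\Res_{\OF|\ZZ}\hat{\G}_m$; this is just a size estimate comparing $|p|$ to $|I'_m|=|p^m\Hdg^{-p^m/(p-1)}|$ on $\mathcal X(\epsilon)$, so it reduces to the explicit bound on $\epsilon$ stated in the lemma. Having checked this, both squares commute and the proof is complete.
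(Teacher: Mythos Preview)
Your overall plan matches the paper's: the right square is essentially tautological, the left square comes from Lem.~\ref{l:transform-for-mathfrak-s-Hilbert-case}, and the only real content is checking that $(c\mathfrak z+d)$ lands in $B_m$. However, your size estimate for this last step is too coarse and does not go through.

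You argue that $d^{-1}c\mathfrak z \in p\Res_{\OF|\ZZ}\hat{\G}_a$ and then invoke \eqref{eq:estimate-for-B_n} to conclude this lies in $I'_m\Res_{\OF|\ZZ}\hat{\G}_a$. But \eqref{eq:estimate-for-B_n} only gives the inclusion $p^{x}\hat{\G}_a\subseteq I'_m\hat{\G}_a$ for $x=m-\epsilon p^m/(p-1)$, not $p\hat{\G}_a\subseteq I'_m\hat{\G}_a$. Since $\epsilon\leq \epsilon^{\mathrm{can}}_m=1/p^{m+1}$ forces $x>m-1$, for $m\geq 2$ one has $x>1$ and the inclusion $p\hat{\G}_a\subseteq I'_m\hat{\G}_a$ genuinely fails: on the ordinary locus $\Hdg=(1)$ so $I'_m=(p^m)$, and $p\hat{\G}_a\not\subseteq p^m\hat{\G}_a$. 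Thus knowing only that $\mathfrak z\in\Res_{\OF|\ZZ}\hat{\G}_a$ (equivalently, citing Prop.~\ref{prop: epsilon w for HMFS} in its stated form) is not enough.

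The paper fixes this by using the sharper bound \eqref{eq:sharp-estimate-on-image-of-pi_HT-in-proof-of-image-of-HT-Hilbert-case} extracted from the \emph{proof} of Prop.~\ref{prop: epsilon w for HMFS}, namely $\mathfrak z\in B_{|p^x|}(\O_p:1)$. Writing $\mathfrak z=a_0+u$ with $a_0\in\O_p$ and $|u|\leq |p^x|$, one gets $c\mathfrak z+d=(ca_0+d)+cu$ with $ca_0+d\in\O_p^\times$ and $|cu|\leq |p^{x+1}|$, hence $c\mathfrak z+d\in \O_p^\times(1+p^x\hat{\G}_a)\subseteq B_m$ by \eqref{eq:estimate-for-B_n}. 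Once you replace your estimate with this one, the rest of your argument is fine. (A minor point: in your right-square discussion, the parenthetical ``$\mathfrak s$ depends only on tame level data'' is not accurate---$\mathfrak s$ depends on the full level structure $\alpha$---but this is irrelevant, since commutativity there only needs that the composite $\mathcal F_m(\epsilon)\to\mathcal X(\epsilon)$ after $\mathfrak s$ is the forgetful map, which is clear.)
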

	\begin{proof}
		We first note that the morphism $\mathcal X_{\Gamma^{\ast}(p^\infty)}(\epsilon)_a\to B_m$ is well-defined: for this we use that by \eqref{eq:estimate-for-B_n}, we have $\O_p^\times(1+p^x\Res_{\OO_F|\ZZ}\hat{\G}_a)\mid_{\mathcal X(\epsilon)} \subseteq B_m$. Moreover, by \eqref{eq:sharp-estimate-on-image-of-pi_HT-in-proof-of-image-of-HT-Hilbert-case}, the map $\mathfrak z$ already restricts to $\mathcal X_{\Gamma^{\ast}(p^\infty)}(\epsilon)_a\to B_{|p^x|}(\O_p:1)=\O_p^\times(1+p^x\Res_{\OO_F|\ZZ}\hat{\G}_a)$.
		The left square now commutes by Lem.~\ref{l:transform-for-mathfrak-s-Hilbert-case}. Commutativity of the right square is clear.
	\end{proof}
	
	Combining this with the morphism $u_n:\mathcal X_{\Gamma^{\ast}(p^\infty)}(p^n\epsilon)_a\to \mathcal X_{\Gamma^{\ast}(p^\infty)}(\epsilon)_a$ defined by the action of the matrix $\smallmatrd{p^n}{0}{0}{1}$, we obtain from the lemma a commutative diagram
	\begin{equation}\label{dg:comparison-mod-forms-AIP-forms_up}
	\begin{tikzcd}[row sep = 0.55cm]
	\Gamma_{\!0}^*(p)\times\mathcal X_{\Gamma^{\ast}(p^\infty)}(p^n\epsilon)_a \arrow[r, "\mathrm{m}"] \arrow[d, "(c\mathfrak z+d)\times \tilde{\mathfrak s}"'] & \mathcal X_{\Gamma^{\ast}(p^\infty)}(p^n\epsilon)_a \arrow[d, "  \tilde{\mathfrak s}"'] \arrow[r] & \mathcal X_{\Gamma_{\!\scaleto{0}{3pt}}^{\ast}(p^n)}(p^n\epsilon)_a \arrow[d,"\AL^n","\sim"labl] \\
	B_m\times \mathcal F_m(\epsilon) \arrow[r, "\mathrm{m}"] & \mathcal F_m(\epsilon) \arrow[r] & \mathcal X(\epsilon)
	\end{tikzcd}
	\end{equation}
	where $\tilde{\mathfrak s}=\mathfrak s\circ u_n$. From this, we finally deduce the following Hilbert analogue of Thm.~\ref{thm:comparison 1}.
	
	\begin{definition}
		\begin{enumerate}
			%\item	Using the projection $\pi:\mathcal F_n(\epsilon)\times \U\to \mathcal X^{\ast}(\epsilon)\times \U$, we define
			%\[\omega^{\k}_{\mathrm{AIP}}(V) := \{f\in \pi_\ast\mathcal O_{\mathcal F_n(\epsilon)\times \U}(V)|\gamma^\ast f=\kU^{-1}(\gamma)f \text{ for all }\gamma\in \Z_p^\times(1+p^r\GG_a)\times \U \}.\]
			\item For any $n\in\Z_{\geq 0}$ we set $\omega^{\k,+}_{\mathrm{AIP},n}:=\AL^{n\ast}\omega^{\k,+}_{\mathrm{AIP}}$ where 
			\[\AL^n:\XX_{\Gamma_{\!\scaleto{0}{3pt}}^*(p^n)}(p^n\epsilon)_a\isorightarrow \XX(\e)\]
			is the Atkin--Lehner isomorphism. By \cite[Thm.~6.7.3]{AIP3}, the restriction of $\omega^{\k,+}_{\mathrm{AIP},n}$ to $\XX_{\Gamma_{\!\scaleto{0}{3pt}}(p^n)}(\epsilon)_a$ equals $q_n^{\ast}\omega^{\k,+}_{\mathrm{AIP}}$, where $q_n:\XX_{\Gamma_{\!\scaleto{0}{3pt}}^*(p^n)}(\epsilon)_a\to \XX(\epsilon)$ is the forgetful map.
			\item We then set $\omega^{\k,+}_{\mathrm{AIP},\infty}:=q^{\ast}\omega^{\k,+}_{\mathrm{AIP}}$ where $q:\XX_{\Gamma_{\!\scaleto{0}{3pt}}^*(p^\infty)}(\epsilon)_a\to \XX(\epsilon)$ is the forgetful map.
		\end{enumerate}
	\end{definition}
	\begin{theorem}\label{thm:comparison hilbert}
		Let $\k:\U \to \W^*$ be a bounded smooth weight. Let $0\leq \epsilon\leq \epsilon_{\kappa}$.
		Then for any $n \in\ZZ_{\geq 0} \cup \{\infty\}$, the map $\tilde{\mathfrak s}^{\ast}$ induces a
		Hecke-equivariant isomorphism of $\OO^+_{\XX_{\U,\Gamma_{\!\scaleto{0}{3pt}}(p^n)}(\e)_a}$-modules 
		\[\tilde{\mathfrak s}^{\ast}:\w^{\k,+}_{n}\isorightarrow\w^{\k,+}_{\mathrm{AIP},n}.\]
		In particular, $\w^{\k,+}_{n}$ is an invertible $\OO^+_{\XX_{\U,\Gamma_{\!\scaleto{0}{3pt}}(p^n)}(\e)_a}$-module, and  $\w^{\k}\cong \w^{\k}_{\AIP}$.
	\end{theorem}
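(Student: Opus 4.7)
The plan is to mimic the proof of Theorem~\ref{thm:comparison 1} in the elliptic case, using the Hilbert-specific ingredients assembled in Section~\ref{s:comparison-morphism-Hilbert}. Since the comparison can be checked locally on $\W^*$, first reduce to the case where $\kappa$ factors through some $\W^*_k$, and accordingly set $m = k+r$, which ensures $\epsilon_\kappa \leq \epsilon^{\mathrm{can}}_m$ so that the Andreatta--Iovita--Pilloni torsor $\mathcal F_m(\epsilon)$ is defined. One then treats the three cases $n \in \ZZ_{\geq 1}$, $n=0$, and $n=\infty$ in turn.

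For $n \in \ZZ_{\geq 1}$, any local section $f$ of $\omega_{\mathrm{AIP}}^{\kappa,+}$ can be viewed as a $B_m$-equivariant map $\mathcal F_m(\epsilon)\times \U \to \hat{\GG}_a$. Pulling back along $\tilde{\mathfrak s}\times \id$ yields a function $\tilde{\mathfrak s}^\ast f$ on the anticanonical infinite-level cover. Stacking the commutative diagram \eqref{dg:comparison-mod-forms-AIP-forms_up} from Lemma~\ref{l:comparison-mod-forms-AIP-forms-Hilbert} on top of the $B_m$-equivariance square defining $\omega_{\mathrm{AIP}}^{\kappa,+}$, a direct diagram chase (identical to the one in the elliptic case) shows that $\gamma^\ast(\tilde{\mathfrak s}^\ast f) = \kappa^{-1}(c\mathfrak z+d)\tilde{\mathfrak s}^\ast f$ for all $\gamma = \smallmatrd{a}{b}{c}{d}\in \Gamma_{\!0}^\ast(p^n)$, so $\tilde{\mathfrak s}^\ast f \in \omega_n^{\kappa,+}$. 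This defines the desired morphism $\tilde{\mathfrak s}^\ast : \omega_{\mathrm{AIP},n}^{\kappa,+} \to \omega_n^{\kappa,+}$. To see it is an isomorphism, use that $\omega_{\mathrm{AIP}}^{\kappa,+}$ is invertible (Prop.~\ref{prop: glue AIP sheaf}): on a small enough open $U\subseteq \XX_\U(\epsilon)$ one finds an invertible $f$, so $\tilde{\mathfrak s}^\ast f$ is invertible in $q_\ast \O^+_{\XX_{\U,\Gamma^\ast(p^\infty)}(p^n\epsilon)_a}(V)$ for $V:=\mathrm{AL}^{-n}(U)$; combined with the Hilbert analogue of Prop.~\ref{prop: G-invs}, this gives $\omega_n^{\kappa,+}|_V = \tilde{\mathfrak s}^\ast f \cdot \O^+_V$.

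The cases $n=0$ and $n=\infty$ are deduced as in the elliptic case: the former from $n=1$ by definition of both sheaves as Atkin--Lehner pushforwards; the latter by running the same argument with $\mathfrak s$ in place of $\tilde{\mathfrak s}$ and $u_n$ removed from \eqref{dg:comparison-mod-forms-AIP-forms_up}. Invertibility of $\omega_n^{\kappa,+}$ then falls out of the isomorphism, and Hecke-equivariance follows from a general compatibility proved later in the paper (analogous to the elliptic case). The main obstacle is the Hilbert version of Prop.~\ref{prop: G-invs}, asserting that $(h_\ast \O^+_{\XX_{\U,\Gamma^\ast(p^\infty)}(\epsilon)_a})^{\Gamma_{\!0}^\ast(p^n)} = \O^+_{\XX_{\U,\Gamma_{\!\scaleto{0}{3pt}}^\ast(p^n)}(\epsilon)_a}$. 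Away from the cusps this is immediate from Lem.~\ref{gen lemma for sous} applied to the pro-\'etale torsor structure from Thm.~\ref{p:X_Gamma*_0(p^infty) for G*}; unlike the elliptic case, no explicit analysis at the boundary is needed, since Koecher's principle (valid for $g=[F:\QQ]\geq 2$) extends the comparison across the cusps automatically.
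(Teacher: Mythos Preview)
Your proposal is correct and follows essentially the same approach as the paper's proof: reduce locally on weight space to $\W^*_k$ with $m=k+r$, use diagram~\eqref{dg:comparison-mod-forms-AIP-forms_up} stacked with the $B_m$-equivariance of $f$ to show $\tilde{\mathfrak s}^\ast$ lands in $\omega_n^{\kappa,+}$, then conclude isomorphism from invertibility of $\omega_{\AIP}^{\kappa,+}$ together with Lem.~\ref{gen lemma for sous}, and handle $n=0,\infty$ exactly as you describe. One minor remark: in the Hilbert setting the paper defines $\omega_n^{\kappa,+}$ on the \emph{open} variety $\mathcal X_{\U,\Gamma_{\!0}^*(p^n)}(\epsilon)_a$ (no asterisk), so Lem.~\ref{gen lemma for sous} applies directly and Koecher's principle is not actually invoked for the sheaf comparison itself---it only enters later (Rem.~\ref{rem:cusp forms}) when extending the already-constructed line bundle to the minimal compactification.
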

	\begin{proof}
		With the preparations from this and the last section, we can argue as in the elliptic case.
		It suffices to prove this locally on $\W^{*}$. We may therefore without loss of generality assume that $\kappa$ has image in $\W^{\ast}_k$ for some $k\in \Z_{\geq 1}$ and set $m:=k+r$. 
		
		We first check that $\mathfrak s$ induces a map $\w^{\k,+}_{\mathrm{AIP},n}\to \w^{\k,+}_{n}$. This is because for any section $f$ of $\w^{\k,+}_{\mathrm{AIP},n}$ and for any $\g \in \Gamma_{\!0}^*(p^n)$, diagram~\eqref{dg:comparison-mod-forms-AIP-forms_up} implies that the following diagram commutes:
		
		\begin{center}
			\begin{tikzcd}[column sep = 1.8cm,row sep = 0.55cm]
			\mathcal X_{\U,\Gamma^*(p^\infty)}(p^n\epsilon)_a \arrow[d,  "\gamma"'] \arrow[r,  "(c\mathfrak z+d)\times \tilde{\mathfrak s}"] & B_m\times \mathcal F_m(\epsilon) \arrow[d,  "\mathrm m"'] \arrow[r,  "\kappa^{-1}\times f"] & \mathbb G_m\times \mathbb A^{\an} \arrow[d,  "\mathrm m"']\\
			\mathcal X^{}_{\U,\Gamma^*(p^\infty)}(p^n\epsilon)_a \arrow[r,  "\tilde{\mathfrak s}"] & \mathcal F_m(\epsilon) \arrow[r,  "f"] & \mathbb A^{\an},
			\end{tikzcd}
		\end{center}
		As before, this together with $\w^{\k,+}_{\mathrm{AIP},n}$ being invertible proves the theorem for $n\in \Z_{\geq 1}$ since 
		\[(\OO^+_{\mathcal X_{\U,\Gamma^*(p^\infty)}(\epsilon)_a})^{\Gamma_{\!\scaleto{0}{3pt}}^*(p^n)}=\OO^+_{\mathcal X_{\U,\Gamma_{\!\scaleto{0}{3pt}}^*(p^n)}(\epsilon)_a}\]
		by Lem.~\ref{gen lemma for sous}.
		The case of $n=\infty$ follows by the same argument from the diagram in Lem.~\ref{l:comparison-mod-forms-AIP-forms-Hilbert}. The case of $n=0$ follows from $\w^{\k,+} = \AL^{\ast}\w^{\k,+}_{1}=\w^{\k,+}_{\mathrm{AIP},1}$. Finally, the isomorphism $\w^{\k}=\w^{\k}_{\mathrm{AIP}}$ is induced from the integral one by inverting $p$.
		
		We postpone the proof of Hecke equivariance to \S\ref{sec:hecke-operators} where we discuss the Hecke action.
	\end{proof}

	\section{Perfectoid Hilbert modular varieties for $G$}\label{sec:G vars}
	We now pass from modular forms for $G^*$ to those for $G$,  the so-called arithmetic Hilbert modular forms. This  requires a closer study of the perfectoid modular varieties attached to $G$, and their relation to the perfectoid modular varieties of $G^*$, which is the subject of this section.
	
	\begin{nota}
		Recall that in \S\ref{s:Hilbert modular varieties as moduli spaces} we have defined Hilbert modular varieties $X,X_{G}$ over $L$ which are base-changes of models for the Shimura varieties attached to $G^*$ and $G$ respectively. In doing so, we had fixed a choice of tame level $\mu_N$ as well as polarisation ideal $\gothc$ and omit these notation.  We denote by $\XX,\XX_{G}$ the adic analytifications of $X,X_G$.
	\end{nota}

	We begin by recalling that the action of $\OO_F$ is a source of isomorphisms of HBAV:
	\begin{lem}\label{l:isom-induced-by-real-mult}
		Let $S$ be any ring and let $(A,\iota,\lambda,\mu,\alpha)$ be a HBAV over $S$ where $\mu$ is a $\mu_N$-structure and $\alpha$ is either a $\Gamma_0(p^n)$, $\Gamma_1(p^n)$ or $\Gamma(p^n)$-level structure.
		Then for any $\eta \in \OO_F^\times$, the map $\iota(\eta):A\to A$ induces an isomorphism of HBAV 
		\[\eta:(A,\iota,\eta^2 \lambda,\eta^{-1} \mu_N,\eta \alpha)\isorightarrow (A,\iota,\lambda,\mu_N,\alpha).\]
		Here we write $\eta\alpha$ as a shorthand notation for the composition of the $\O_F$-linear map $\alpha$ with multiplication by $\eta$ on either side of $\alpha$. Similarly for $\eta^2 \lambda$ and $\eta^{-1} \mu_N$.
	\end{lem}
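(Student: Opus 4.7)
The plan is a direct verification of the four compatibility conditions in the definition of isomorphism of $\mathfrak c$-polarised HBAVs with tame and wild level structure. Since $\eta\in \OO_F^\times$, the endomorphism $\iota(\eta)$ has two-sided inverse $\iota(\eta^{-1})$, so $\iota(\eta)\colon A\to A$ is an isomorphism of the underlying abelian $S$-schemes; and it is $\OO_F$-linear because $\iota(\eta)\circ\iota(x)=\iota(\eta x)=\iota(x\eta)=\iota(x)\circ\iota(\eta)$ for all $x\in\OO_F$.

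For the polarisation, recalling from Defn.~\ref{HBAVC} that a morphism of HBAVs must satisfy $\lambda_{\mathrm{source}}=f^{\vee}\circ\lambda_{\mathrm{target}}\circ (f\otimes\mathrm{id}_{\mathfrak c})$, the required identity is
\[
\eta^2\lambda \;=\; \iota(\eta)^{\vee}\circ\lambda\circ(\iota(\eta)\otimes\mathrm{id}_{\mathfrak c}).
\]
Here the hypothesis that $\iota$ is stable under the Rosati involution translates into $\OO_F$-linearity of $\lambda$, namely $\iota(x)^{\vee}\circ\lambda=\lambda\circ(\iota(x)\otimes\mathrm{id}_{\mathfrak c})$ for every $x\in\OO_F$. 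Applying this with $x=\eta$ and then commuting a second copy of $\iota(\eta)$ through $\lambda$ reduces the right-hand side to $\lambda\circ(\iota(\eta^2)\otimes\mathrm{id}_{\mathfrak c})$, which is precisely $\eta^2\lambda$ under the convention stated in the lemma.

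For the two level structures one proceeds similarly, but with the direction of compatibility dictated by whether the structure targets $A$ or $A^{\vee}$. The $\mu_N$-structure $\mu\colon \mathfrak d^{-1}\otimes\mu_N\hookrightarrow A[N]$ targets $A$, so the condition is $\iota(\eta)\circ\mu_{\mathrm{source}}=\mu$, which is immediate since $\iota(\eta)\circ\iota(\eta^{-1})\circ\mu=\mu$ and, by $\OO_F$-linearity of $\mu$, the composition $\iota(\eta^{-1})\circ\mu$ agrees with the shorthand $\eta^{-1}\mu$. On the other hand the $\Gamma_?(p^n)$-structure $\alpha$ targets $A^{\vee}$, so the required condition is $\alpha_{\mathrm{source}}=\iota(\eta)^{\vee}\circ\alpha$. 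But the natural $\OO_F$-action on $A^{\vee}$ is precisely $x\mapsto \iota(x)^{\vee}$, so $\iota(\eta)^{\vee}\circ\alpha$ is exactly the shorthand $\eta\alpha$; again $\OO_F$-linearity of $\alpha$ lets one move $\eta$ to either side freely.

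The main obstacle is not computational: each step uses only commutativity of $\OO_F$, invertibility of $\eta$, the Rosati condition, and $\OO_F$-linearity of the structure maps. The only thing requiring real care is bookkeeping of conventions, specifically the asymmetry between $\mu$ (where one pre-composes with $f^{-1}$, hence the $\eta^{-1}$) and $\alpha$ (where one pre-composes with $f^{\vee}$, hence the $\eta$), together with the factor of $\eta^2$ on the polarisation which arises because the condition $\lambda_{\mathrm{source}}=f^{\vee}\circ\lambda_{\mathrm{target}}\circ(f\otimes\mathrm{id})$ applies $\iota(\eta)$ twice.
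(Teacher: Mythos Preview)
Your proof is correct and follows essentially the same approach as the paper: a direct verification of each compatibility condition (polarisation, $\mu_N$-structure, wild level structure), using the Rosati-stability of $\iota$ to obtain $\iota(\eta)^{\vee}\circ\lambda=\lambda\circ\iota(\eta)$ and hence the factor $\eta^2$ on the polarisation. The only minor caveat is that for a $\Gamma_0(p^n)$-level structure the datum is a subgroup $C\hookrightarrow A[p^n]$ rather than a map into $A^{\vee}[p^n]$, but there the compatibility is even easier since $\eta C=C$ by $\OO_F$-stability of $C$.
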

	\begin{proof}
		A morphism of HBAVs $(B,\iota',\lambda', \mu',\alpha')\to(A,\iota,\lambda,\mu,\alpha)$ is an $\O_F$-linear isogeny $\varphi:B\to A$ making the following diagrams commute:
		\begin{center}
			\begin{tikzcd}[column sep = 0.6cm]
			B^{\vee} & A^{\vee} \arrow[l, "\varphi^\vee"'] &  & \mathfrak d^{-1}\otimes\mu_N \arrow[d, "\mu'_N", hook] \arrow[r,equal] & \mathfrak d^{-1}\otimes \mu_N \arrow[d, "\mu_N", hook] &  & (\O/p^n)^2 \arrow[d, "\alpha'"] & (\O/p^n)^2 \arrow[d, "\alpha"] \arrow[l,equal] \\
			B\otimes\gothc \arrow[r, "\varphi"] \arrow[u, "\lambda'"'] & A\otimes \gothc \arrow[u, "\lambda"'] &  & B \arrow[r, "\varphi"] & A &  & {B^\vee[p^n]} & {A^{\vee}[p^n]}. \arrow[l, "\varphi^\vee"']
			\end{tikzcd}
		\end{center}
		Setting $B=A$, and $\varphi=[\eta]$, we see that $\lambda'=[\eta]^\vee\circ \lambda \circ [\eta]=\eta^2\lambda$, where in the second step we have used that $[\eta]^\vee\circ \lambda=\lambda \circ [\eta]$ since $\iota$ is stable under the Rosati involution. The second diagram implies $\mu'_N=[\eta]^{-1}\circ \mu_N=\eta^{-1}\mu_N$. The third diagram implies
		$\alpha' = [\eta]^\vee\circ \alpha=\eta\alpha.$
	\end{proof}
	
	\begin{definition}\label{d:polarisation action}
		The polarisation action of $\OO^{\times,+}_{F}$ on $X$ is given by letting
		$\eta\in \OO^{\times,+}_{F}$ act via
		\[\eta \cdot_{\pol} (A,\iota,\lambda,\mu_N)=(A,\iota,\eta\lambda,\mu_N).\]
	\end{definition} 
	
	As a consequence of Lemma~\ref{l:isom-induced-by-real-mult} we see:
	\begin{proposition}\cite[Lem.~8.1]{AIP3}.
		\label{prop:delta torsor}
		This action of $\OO^{\times,+}_{F}$ on $X$ factors through the finite group $\Delta(N):= \OFU/(1+\n\OO_F)^{\times 2}$, and makes $X\rightarrow X_{G}$ into a finite \'etale $\Delta(N)$-torsor.
	\end{proposition}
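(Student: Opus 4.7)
The plan is to verify the two claims directly from the moduli descriptions of $X$ and $X_G$, using Lem.~\ref{l:isom-induced-by-real-mult} as the main input.

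First, I will show that the polarisation action factors through $\Delta(N)$. Suppose $\eta \in \OFU \cap (1+N\OO_F)^{\times 2}$, so $\eta = \epsilon^2$ for some $\epsilon \in (1+N\OO_F)^\times \cap \OO_F^\times$. Applying Lem.~\ref{l:isom-induced-by-real-mult} (with no level structure at $p$) yields an isomorphism of HBAVs
\[
\epsilon:(A,\iota,\epsilon^2 \lambda, \epsilon^{-1}\mu_N) \isorightarrow (A,\iota,\lambda,\mu_N).
\]
Since $\mu_N$ is an immersion into the $N$-torsion and $\epsilon\equiv 1 \bmod N$, multiplication by $\epsilon^{-1}$ acts trivially on $\mathfrak d^{-1}\otimes\mu_N$, so $\epsilon^{-1}\mu_N=\mu_N$. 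Hence $\eta\cdot_{\pol}(A,\iota,\lambda,\mu_N)\cong(A,\iota,\lambda,\mu_N)$ as HBAVs, so the action of $\eta$ on the fine moduli space $X$ is trivial.

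Next, I will verify that $\Delta(N)$ acts freely on the geometric points of $X$ and that the orbit map surjects onto the fibres of $X\to X_G$. Two polarisations $\lambda, \eta\lambda$ on the same AVRM $(A,\iota)$ give isomorphic HBAVs with tame level $\mu_N$ if and only if there exists an $\OO_F$-linear automorphism $\varphi$ of $A$ with $\varphi^\vee \circ (\eta\lambda)\circ\varphi = \lambda$ and $\varphi\circ \mu_N = \mu_N$. Since $\mathrm{Aut}_{\OO_F}(A)=\OO_F^\times$, such a $\varphi$ is given by $\iota(\epsilon)$ for some $\epsilon\in\OO_F^\times$, and the two conditions become $\eta\epsilon^2 = 1$ (up to replacing $\epsilon$ by $\epsilon^{-1}$) and $\epsilon\in 1+N\OO_F$; so $\eta\in(1+N\OO_F)^{\times 2}$. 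Combined with the previous step this shows freeness of the $\Delta(N)$-action. Conversely, by Lem.~\ref{l:isom-induced-by-real-mult}, any two choices of representative $\lambda\in[\lambda]$ giving the same point of $X_G$ differ by an element of $\OFU$, so the $\Delta(N)$-orbits in $X$ are exactly the fibres of $X\to X_G$. This identifies $X_G$ with the quotient $X/\Delta(N)$ on the level of points (and in fact at the level of moduli problems, since $X_G$ is the coarse space for the quotient moduli problem).

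Finally, the torsor property follows by a standard argument: $\Delta(N)$ is a finite group acting freely on the smooth $L$-scheme $X$, so the quotient $X\to X/\Delta(N)$ exists and is finite \'etale, and the identification $X/\Delta(N)=X_G$ from the previous step completes the proof. The only mildly delicate point is the freeness analysis of the second paragraph, where it matters that we keep track both of the polarisation condition (forcing a square) and of the $\mu_N$-level condition (forcing congruence to $1$ modulo $N$); everything else is a formal consequence of Lem.~\ref{l:isom-induced-by-real-mult} and the moduli interpretations recalled in \S\ref{s:Hilbert modular varieties as moduli spaces}.
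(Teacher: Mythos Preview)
The paper does not supply its own proof here; it simply cites \cite[Lem.~8.1]{AIP3}. Your argument that the action factors through $\Delta(N)$ is correct and is exactly the intended consequence of Lem.~\ref{l:isom-induced-by-real-mult}, so on that point your approach matches the paper's setup.

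The freeness step, however, has a gap. You write ``Since $\mathrm{Aut}_{\OO_F}(A)=\OO_F^\times$'', but this can fail at CM points: if $A$ has complex multiplication by an order in a CM quadratic extension $K/F$, then $\mathrm{Aut}_{\OO_F}(A)$ contains the roots of unity of $K$, which need not lie in $\OO_F$. Your reduction of an isomorphism $(A,\iota,\eta\lambda,\mu_N)\cong(A,\iota,\lambda,\mu_N)$ to the pair of conditions $\eta\epsilon^2=1$ and $\epsilon\equiv 1\bmod N$ with $\epsilon\in\OO_F^\times$ therefore does not cover all cases. The gap is repairable --- for $\varphi\in\OO_K^\times$ the Rosati involution is complex conjugation, so the polarisation condition becomes $\eta=N_{K/F}(\varphi)^{-1}$, and one then has to combine this with the $\mu_N$-condition and the fine-moduli rigidity available for $N\geq 4$ --- but it is more delicate than your sketch indicates.

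A cleaner route, and the one the paper itself takes for the closely analogous statement at wild level in Lem.~\ref{l:torsors-from-G*-to-G-fin-level}, is to pass to $\mathbb{C}$-points and compute the covering group of the map $\cG^\ast\backslash\mathcal{H}^g\to \cG\backslash\mathcal{H}^g$ directly from the congruence subgroups via the determinant, bypassing the pointwise automorphism analysis entirely.
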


	\subsection{The anti-canonical tower for $G$}
	Our next goal is to construct from the anti-canonical tower of $G^*$ the anti-canonical tower for $G$. For this, we move on from schemes to adic spaces.
	
	Recall that, for either $G$ or $G^*$, the datum of a $\Gamma_{\!0}(p^n)$-level structure is a the choice of an $\OO_F$-submodule $ C \subseteq A[p^n]$ etale locally isomorphic to $\OO_F/p^n\OO_F$. If necessary we will denote this level structure  by $\Gamma_{\!0}(p^n)$ and $\Gamma_{\!0}^*(p^n)$ on $G$ and $G^*$ respectively. Similarly, on both $G^*$ and $G$ the notions of anticanonical level structures coincide. We thus obtain commutative diagrams
	
	\begin{center}
		\begin{tikzcd}[row sep = 0.55cm]
		{X_{\Gamma_{\!\scaleto{0}{3pt}}^*(p^n)}} \arrow[d] \arrow[r] & {X_{G,\Gamma_{\!\scaleto{0}{3pt}}(p^n)}} \arrow[d]&&{\XX_{\Gamma_{\!\scaleto{0}{3pt}}^*(p^n)}}(\epsilon)_a \arrow[d] \arrow[r] & {\XX_{G,\Gamma_{\!\scaleto{0}{3pt}}(p^n)}}(\epsilon)_a \arrow[d] \\
		X\arrow[r] & X_G&&\XX(\epsilon) \arrow[r] & \XX_G(\epsilon)
		\end{tikzcd}
	\end{center}
	\begin{lem}\label{lem: et tor}
		The above diagrams are both Cartesian. In particular, the morphism of adic spaces $\XX_{\Gamma_{\!\scaleto{0}{3pt}}^{\ast}(p^n)}(\epsilon)_a\to \XX_{G,\Gamma_{\!\scaleto{0}{3pt}}(p^n)}(\epsilon)_a$ is a finite \'etale $\Delta(N)$-torsor.
	\end{lem}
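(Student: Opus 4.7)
The crucial observation is that, by Definition~\ref{lvls for G*}(2), a $\Gamma_0(p^n)$-level structure for either $G$ or $G^{\ast}$ is nothing but an $\OO_F$-submodule scheme $C \hookrightarrow A[p^n]$ which is \'etale locally isomorphic to $\OO_F/p^n\OO_F$. In particular, this datum is intrinsic to the underlying AVRM $(A,\iota)$ and does not involve $\lambda$ or the polarisation class $[\lambda]$. Consequently, the polarisation action of $\OO_F^{\times,+}$ on $X$ from Definition~\ref{d:polarisation action} extends verbatim to $X_{\Gamma_{\!0}^*(p^n)}$ by the formula $\eta \cdot_{\pol} (A,\iota,\lambda,\mu_N,C) := (A,\iota,\eta\lambda,\mu_N,C)$, acting only on $\lambda$.

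First I would check that the left square is Cartesian at the level of moduli problems. An $S$-valued point of the fibre product $X \times_{X_G} X_{G,\Gamma_{\!0}(p^n)}$ consists of a tuple $(A,\iota,\lambda,\mu_N)$ over $S$ together with a $\Gamma_0(p^n)$-level structure $C$ on the underlying AVRM $(A,\iota)$. Since the notion of $\Gamma_0$-structure coincides for $G$ and $G^{\ast}$, this is precisely an $S$-valued point of $X_{\Gamma_{\!0}^*(p^n)}$, via the obvious map. Since this identification is compatible with the two projections, the left square is Cartesian. By Proposition~\ref{prop:delta torsor} and base change, the morphism $X_{\Gamma_{\!0}^*(p^n)} \to X_{G,\Gamma_{\!0}(p^n)}$ is then a finite \'etale $\Delta(N)$-torsor.

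Next I would pass to adic spaces. Analytification preserves fibre products and preserves the property of being a finite \'etale $\Delta(N)$-torsor. Finally, restricting to the $\epsilon$-anticanonical locus gives the right square: both conditions defining $(\epsilon)_a$ -- the Hasse invariant bound $|\widetilde{\Ha}| \geq |p|^{\epsilon}$ and the transversality of the chosen subgroup $C$ with the canonical subgroup $H_1$ -- depend only on $(A,\iota,C)$ and not on the polarisation. Hence the open subspace $\XX_{G,\Gamma_{\!0}(p^n)}(\epsilon)_a \subseteq \XX_{G,\Gamma_{\!0}(p^n)}$ pulls back along $\XX_{\Gamma_{\!0}^*(p^n)} \to \XX_{G,\Gamma_{\!0}(p^n)}$ to the open subspace $\XX_{\Gamma_{\!0}^*(p^n)}(\epsilon)_a$, and analogously for the bottom row. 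The right square is therefore obtained from the Cartesian square of total spaces by restricting all four corners to the appropriate open subspaces, and so is itself Cartesian. Being a finite \'etale $\Delta(N)$-torsor is preserved under restriction to an open subspace, giving the second claim.

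The only subtle point is the assertion that a $\Gamma_0(p^n)$-level structure really descends canonically from $X_{\Gamma_{\!0}^*(p^n)}$ to $X_{G,\Gamma_{\!0}(p^n)}$; this is immediate once one notes that the polarisation action preserves $C$ by construction, and that $X_{G,\Gamma_{\!0}(p^n)}$ is the coarse moduli space for the quotient problem -- the same reasoning as in Proposition~\ref{prop:delta torsor}, applied over $X_{G,\Gamma_{\!0}(p^n)}$ instead of over $X_G$.
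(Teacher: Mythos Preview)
Your proposal is correct and follows essentially the same approach as the paper. The paper's proof differs only in emphasis: rather than first identifying the fibre product and then deducing the torsor property by base change, it directly verifies that the polarisation action on $X_{\Gamma_{\!0}^*(p^n)}$ factors through $\Delta(N)$ (using the observation $\eta D = D$ for $\eta \in \OO_F^\times$, which is your ``intrinsic to the AVRM'' remark), concludes that the top map is a $\Delta(N)$-torsor, and then infers that the square is Cartesian; the passage to adic spaces and to the anticanonical locus is handled identically.
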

	\begin{proof}
		The points of $X_{\Gamma_{\!\scaleto{0}{3pt}}^{\ast}(p^n)}$ correspond to HBAV $(A,\iota,\lambda,\mu_N,D)$ where $D\subseteq A[p^n]$ is a subgroup \'etale locally isomorphic to $\O/p^n$. For any $\eta \in \OO_F^\times$, as $D$ is an $\OO_F$-module, we have $\eta D = D$, and hence the isomorphism of HBAVs induced by $\eta$ from Lem.~\ref{l:isom-induced-by-real-mult} identifies the tuples $(A,\iota,\eta^2\lambda, \eta^{-1}\cdot\mu_N, D)$ and $(A,\iota,\lambda,\mu_N,D)$ in $X_{\Gamma_{\!\scaleto{0}{3pt}}^{\ast}(p^n)}$. Thus, exactly as in Prop.~\ref{prop:delta torsor}, the polarisation action factors through $\Delta(\n)$.
		It follows on the level of relative moduli descriptions that the top map in the above diagram is a $\Delta(\nn)$-torsor. This shows that the left diagram is Cartesian.
		
		The case of the right hand side follows after adic analytification and restriction.
	\end{proof}
	
	By the following lemma, this allows us to pass to tilde-limits in the diagram on the right:
	
	\begin{lemma}\label{l:perf-tilde-limits-and-quotients}
		Let $\Delta$ be a finite group. Let $(\cX_n\to \cY_n)_{n\in\N}$ be an inverse system of \'etale $\Delta$-torsors of adic spaces over $L$. Suppose there is a perfectoid tilde-limit $\mathcal X_\infty\sim \varprojlim \cX_n$. We moreover impose the technical condition that there is a cover of $\cY_0$ by open affinoid spaces $V_0$, with affinoid pullbacks $U_n:=\Spa(R_n,R_n^+)$ to $\cX_n$, such that $R_\infty$ is affinoid perfectoid and $\varinjlim_n R_n\to R_\infty$ has dense image.
		Then $\cX_\infty/\Delta =: \cY_\infty$ is  perfectoid, $\cX_\infty\to\cY_\infty$ is an \'etale $\Delta$-torsor, and $\cY_\infty \sim \varprojlim \cY_n$.
	\end{lemma}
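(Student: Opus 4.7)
The plan is to reduce to an affinoid situation and then verify perfectoidness of $\Delta$-invariants. By the technical hypothesis, one may work locally and assume that each $U_n = \Spa(R_n, R_n^+)$ is affinoid with perfectoid colimit $U_\infty = \Spa(R_\infty, R_\infty^+)$. At each finite level, since $\cX_n \to \cY_n$ is a finite \'etale $\Delta$-torsor, the corresponding affinoid piece $V_n$ of $\cY_n$ satisfies $V_n = \Spa(R_n^\Delta, R_n^{+,\Delta})$, so we would like to take $\cY_\infty$ to be glued from $\Spa(R_\infty^\Delta, R_\infty^{+,\Delta})$.

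The core step is then to prove that $(R_\infty^\Delta, R_\infty^{+,\Delta})$ is an affinoid perfectoid pair. Completeness is inherited from $R_\infty$ and the fact that taking $\Delta$-invariants is a closed condition. The existence of a topologically nilpotent unit comes from $R_0^\Delta \subseteq R_\infty^\Delta$, and $R_\infty^{+,\Delta}$ is bounded because $R_\infty^+$ is. The remaining condition is surjectivity of Frobenius modulo a pseudo-uniformizer $\varpi$. Since $L$ has characteristic zero, $|\Delta|$ is invertible in $L$, and the averaging operator $e := |\Delta|^{-1}\sum_{\delta}\delta$ defines a continuous $R_\infty^\Delta$-linear retraction of the inclusion $R_\infty^\Delta \hookrightarrow R_\infty$. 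Combined with Frobenius surjectivity for $R_\infty$ and absorption of the factor $|\Delta|^{-1}$ into a higher power of $\varpi$, this yields the desired surjectivity on $R_\infty^{+,\Delta}$ modulo a (possibly enlarged) pseudo-uniformizer.

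Once $\cY_\infty$ has been constructed as an affinoid perfectoid space locally and glued globally, the tilde-limit assertion $\cY_\infty \sim \varprojlim_n \cY_n$ reduces to the density of $\varinjlim_n R_n^\Delta$ in $R_\infty^\Delta$; this follows by applying $e$ to the corresponding density statement for $\varinjlim_n R_n \to R_\infty$ given in the hypothesis. The \'etale $\Delta$-torsor property of $\cX_\infty \to \cY_\infty$ is then verified by checking the diagonal identity $\cX_\infty \times_{\cY_\infty} \cX_\infty = \Delta \times \cX_\infty$, which passes to the tilde-limit from the analogous identities $\cX_n \times_{\cY_n} \cX_n = \Delta \times \cX_n$ at finite level, using that fibre products of perfectoid spaces and disjoint unions commute with tilde-limits.

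The main obstacle in this plan is the Frobenius surjectivity step: while the averaging argument is conceptually clean in characteristic zero, the operator $e$ does not preserve the integral subring $R_\infty^+$ on the nose, and one must keep careful track of the interplay between $R_\infty^{+,\Delta}$, the intersection $R_\infty^\Delta \cap R_\infty^+$, and the chosen pseudo-uniformizer. An efficient alternative would be to directly invoke the standard result that the quotient of an affinoid perfectoid space in characteristic zero by a finite group of continuous automorphisms is again affinoid perfectoid, and use this to identify the quotient with the locally constructed $\cY_\infty$.
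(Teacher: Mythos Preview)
Your proposal is correct and follows essentially the same route as the paper. The paper reduces to the affinoid case exactly as you do, identifies $\cY_n=\Spa(R_n^{\Delta},R_n^{+,\Delta})$, and then --- rather than attempting a direct Frobenius surjectivity argument --- simply invokes Hansen's theorem on finite quotients of affinoid perfectoid spaces (your ``efficient alternative''). The density of $\varinjlim_n R_n^{\Delta}$ in $R_\infty^{\Delta}$ is proved exactly by your averaging idea: given $r\in R_\infty^{\Delta}$, approximate $r/|\Delta|$ by elements $r'_n\in R_n$ and then average over $\Delta$. One small point you omit: the tilde-limit condition also requires $|\cY_\infty|\cong \varprojlim_n|\cY_n|$ on underlying topological spaces, which the paper dispatches in one line via $|\cY_\infty|=|\cX_\infty|/\Delta$.
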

	This is a special case of the statement of \cite[Cor.~2.3.5]{Xu}. We focus on this special case since it suffices for our applications, and has the following simple proof:
	\begin{proof}
		Since the conclusions are local, by restricting to $V_0$ we are immediately reduced to the case that all $\cX_n=\Spa(R_n,R_n^+)$ are affinoid, $\cX_\infty=\Spa(R_\infty,R_\infty^+)$ is affinoid perfectoid and $\varinjlim R_n\to R_\infty$ has dense image. Then $\cY_n=\Spa(R_n^{\Delta},R_n^{+,\Delta})$.
		It follows from \cite[Thm.~1.4]{Hansenquots} that $\cY_\infty:=\cX_\infty/\Delta=\Spa(R_\infty^{\Delta},R_\infty^{+,\Delta})$ is perfectoid.  Note that the assumptions are satisfied because we work over the perfectoid field $L$ over $\Q_p$.  
		
		We claim that $\varinjlim_nR_n^\Delta\to R^{\Delta}_\infty$ has dense image. To see this, let $r\in R^{\Delta}_\infty$ and let $r':=r/|\Delta|$. Then we can find $r'_n\in R_n$ such that $r'_n\to r'$ inside $R_\infty$. Let now $r_n:=\sum_{g\in \Delta} gr'_n$, then clearly $r_n\in R_n^\Delta$. Since the $\Delta$-action is continuous, and $r\in R_\infty^\Delta$, we then have $r_n\to \sum_{g\in \Delta} gr'=\frac{1}{|\Delta|}\sum_{g\in \Delta} gr=r$, as desired. This shows that $\cY_\infty \sim \varprojlim \cY_n$, since the condition on topological spaces follows from $|\cY_\infty|=|\cX_\infty|/\Delta$.
		
		That $\cX_\infty \to \cY_\infty$ is a $\Delta$-torsor now follows from the Cartesian diagrams expressing that $\cX_n \to \cY_n$ is a $\Delta$-torsor, by commuting product and perfectoid tilde-limit.
	\end{proof}
	\begin{prop}
		There exists a perfectoid tilde-limit
		\[
		\XX_{G,\Gamma_{\!\scaleto{0}{3pt}}(p^\infty)}(\e)_a \sim \varprojlim \XX_{G,\Gamma_{\!\scaleto{0}{3pt}}(p^n)}(\e)_a.
		\]
		Moreover, the natural map $\XX_{\Gamma^{\ast}_{\!\scaleto{0}{3pt}}(p^\infty)}(\e)_a \to \XX_{G,\Gamma_{\!\scaleto{0}{3pt}}(p^\infty)}(\e)_a$ is a finite \'etale $\Delta(N)$-torsor.
	\end{prop}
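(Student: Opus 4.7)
The plan is to apply Lemma~\ref{l:perf-tilde-limits-and-quotients} directly to the inverse system of finite \'etale $\Delta(N)$-torsors
\[
\XX_{\Gamma^{\ast}_{\!\scaleto{0}{3pt}}(p^n)}(\e)_a \longrightarrow \XX_{G,\Gamma_{\!\scaleto{0}{3pt}}(p^n)}(\e)_a,
\]
produced by Lemma~\ref{lem: et tor}, taking $\cX_\infty := \XX_{\Gamma^{\ast}_{\!\scaleto{0}{3pt}}(p^\infty)}(\e)_a$, which is perfectoid and satisfies $\cX_\infty \sim \varprojlim_n \cX_n$ by Theorem~\ref{p:X_Gamma*_0(p^infty) for G*}. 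Granting the technical covering condition of the lemma, the conclusion produces a perfectoid space $\cY_\infty = \cX_\infty/\Delta(N)$ with the required tilde-limit description, such that the map $\cX_\infty \to \cY_\infty$ is an \'etale $\Delta(N)$-torsor; this $\cY_\infty$ is then the candidate for $\XX_{G,\Gamma_{\!\scaleto{0}{3pt}}(p^\infty)}(\e)_a$.

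The main step is thus the verification of the technical covering hypothesis. First, since $\XX(\e) \to \XX_G(\e)$ is finite \'etale (by Proposition~\ref{prop:delta torsor}, restricted to the $\e$-nearly ordinary locus), we can cover $\XX_G(\e)$ by open affinoids $V_0$ whose preimage $U_0 \subseteq \XX(\e)$ is affinoid. Next, the forgetful maps $\XX_{\Gamma^{\ast}_{\!\scaleto{0}{3pt}}(p^n)}(\e)_a \to \XX(\e)$ in the anticanonical tower are affinoid morphisms: this is visible from the proof of Theorem~\ref{p:X_Gamma*_0(p^infty) for G*}, where this tower is identified with the Frobenius tower $\mathcal{X}^\ast(p^{-n}\e) \to \mathcal{X}^\ast(\e)$ via the Atkin--Lehner isomorphisms. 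Consequently the pullbacks $U_n \subseteq \XX_{\Gamma^{\ast}_{\!\scaleto{0}{3pt}}(p^n)}(\e)_a$ are affinoid, and the perfectoid tilde-limit property from Theorem~\ref{p:X_Gamma*_0(p^infty) for G*} applied to the restricted tower $U_n$ ensures that $U_\infty$ is affinoid perfectoid and $\varinjlim_n \O(U_n) \to \O(U_\infty)$ has dense image, exactly as required.

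Applying Lemma~\ref{l:perf-tilde-limits-and-quotients} now gives a perfectoid $\cY_\infty$ with $\cX_\infty \to \cY_\infty$ a finite \'etale $\Delta(N)$-torsor and $\cY_\infty \sim \varprojlim_n \cY_n$; writing $\XX_{G,\Gamma_{\!\scaleto{0}{3pt}}(p^\infty)}(\e)_a := \cY_\infty$ establishes both claims of the proposition simultaneously. The only subtlety to keep an eye on is confirming that the above affinoid cover of $\XX_G(\e)$ can indeed be chosen so the preimage in $\XX(\e)$ is affinoid; this is standard for finite \'etale maps and causes no obstacle, so the proof is essentially a direct application of the technical lemma already in place.
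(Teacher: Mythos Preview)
Your approach is the same as the paper's: apply Lemma~\ref{l:perf-tilde-limits-and-quotients} to the system of $\Delta(N)$-torsors from Lemma~\ref{lem: et tor}, with $\cX_\infty = \XX_{\Gamma^{\ast}_{\!\scaleto{0}{3pt}}(p^\infty)}(\e)_a$ supplied by Theorem~\ref{p:X_Gamma*_0(p^infty) for G*}. Both conclusions then follow at once.

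The one place your sketch is looser than the paper is in verifying the \emph{density} part of the technical hypothesis. You invoke ``the perfectoid tilde-limit property \dots\ applied to the restricted tower $U_n$'', but the tilde-limit statement only guarantees a cover with the density property, not that $\varinjlim \O(U_n) \to \O(U_\infty)$ is dense for an \emph{arbitrary} affinoid $U_0 \subseteq \XX(\e)$. The subtlety you actually flag (affinoid preimages under a finite \'etale map) is not the issue. The paper handles this by choosing $U_0$ to come from an affine open of the formal model $\mathfrak X^{\ast}(\e)$, where density holds by construction (via \cite[Prop.~2.4.2]{SW}); it then observes that density is preserved under passage to rational subsets, so any affinoid cover of $\XX_G(\e)$ which is a rational refinement of an affine cover of $\mathfrak X^{\ast}_G(\e) := \mathfrak X^{\ast}(\e)/\Delta(N)$ works. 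Once you make this choice of cover explicit, your argument is complete and matches the paper's.
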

	\begin{proof}
		We apply Lem.~\ref{l:perf-tilde-limits-and-quotients} to the system of $\Delta(\n)$-torsors $\XX_{\Gamma_{\!\scaleto{0}{3pt}}(p^n)}(\e)_a \rightarrow \XX_{G,\Gamma_{\!\scaleto{0}{3pt}}(p^n)}(\e)_a$. To see that the technical condition is satisfied, we note that by construction in Prop.~\ref{p:X_Gamma*_0(p^infty) for G*}, and by \cite[Prop.~2.4.2]{SW}, any affine open formal subscheme of the formal model $\mathfrak X^{\ast}(\epsilon)$ of $\XX^*(\epsilon)$ pulls back to opens $\Spa(R_n,R_n^+)\subseteq \XX_{\Gamma_{\!\scaleto{0}{3pt}}^*(p^n)}(\epsilon)_a$ for all $n\in \Z_{\geq 0}\cup \{\infty\}$ such that $R_\infty$ is affinoid perfectoid and $\varinjlim R_n\to R_\infty$ has dense image. Thus also for any rational subset $\Spa(R_0,R_0^+)(\frac{T}{s})$, the map $\varinjlim R_n\langle \frac{T}{s}\rangle \to R_\infty\langle \frac{T}{s}\rangle$ has dense image. 
		We conclude that any affinoid cover of $\XX_{G}$ that is a rational refinement of an affine formal cover of $\mathfrak X^{\ast}_{G}(\e):=\mathfrak X^{\ast}(\e)/\Delta(N)$ is of the desired form.
	\end{proof}

	\subsection{Wild full level structures}
	
	As before, we ultimately want to work with the full level structures $\Gamma(p^n)$ for $G$ defined in \S\ref{Section: lvl HMFs}, since this will provide the appropriate universal covering spaces we need to define overconvergent Hilbert modular forms. In this case, though, there are differences between $\Gamma(p^n)$ and $\Gamma^*(p^n)$-level structures which introduce new subtleties.
	
	There is a space $\XX_{G,\Gamma(p^n)} \to \XX_{G,\Gamma_{\!\scaleto{0}{3pt}}(p)}$ which relatively represents a choice of an isomorphism $\a_n:(\O/p^n)^2\to A^{\vee}[p^n]$ such that $\a_n(1,0)$ generates the corresponding subgroup for the $\Gamma_{\!0}(p^n)$-level, and there is a natural map $\XX_{\Gamma^*(p^n)} \rightarrow \XX_{G,\Gamma(p^n)}$. However, this map is not a torsor, as it is not surjective. In addition, $\cO_F^{\times,+}$ no longer admits a polarisation action on $\XX_{\Gamma^*(p^n)}$, due to the restrictive additional conditions on $\Gamma^*(p^n)$-level structures: indeed, changing $\lambda$ changes the isomorphism $b$ in diagram \eqref{eq:similitude}, and in general true this will not result in a similitude of pairings.
	
	\subsubsection{`Hybrid' full level structures}
	
	Bearing all of the above in mind, it is convenient to also introduce an intermediate space $\XX_{\Gamma(p^n)} \rightarrow \XX$, relatively representing a choice of $\Gamma(p^n)$-level structure $\a_n$ over the Shimura variety $\XX$ for $G^{\ast}$. This fits into a diagram
	\begin{equation}\label{eq:diag comparing G*,Gamma*(p^n) to G,Gamma(p^n) over tame}
	\begin{tikzcd}[row sep = 0.55cm]
	{\mathcal X_{\Gamma^{\ast}(p^n)}} \arrow[d] \arrow[rr, "\beta_1"] &  & {\mathcal X_{\Gamma(p^n)}} \arrow[d] \arrow[rr, "\beta_2"] &  & {\mathcal X_{G,\Gamma(p^n)}} \arrow[d] \\
	\mathcal X \arrow[rr, equal] &  & \mathcal X \arrow[rr] &  & \mathcal X_G.
	\end{tikzcd}
	\end{equation}
	
	The polarisation action \eqref{d:polarisation action} now gives a well-defined action on this `hybrid' space $\XX_{\Gamma(p^n)}$, since the $\Gamma(p^n)$-level structures require no Weil pairing compatibility. 
	
	We also have a second natural left-action on this space:
	\begin{definition}
		The level structure (LS) action of  $G(\ZZ/p^n\ZZ)=\GL_2(\O_F/p^n\O_F)$ on $\XX_{\Gamma(p^n)}$ is given by letting $\gamma\in G(\ZZ/p^n\ZZ)$ act as
		\[\gamma \cdot_{\mathrm{LS}}(A,\iota,\lambda,\mu_N,\alpha_n) \mapsto (A, \iota, \lambda, \mu_N, \alpha_n\circ \gamma^{\vee}),\hspace{12pt} \g^\vee=\det(\g)\g^{-1}.\]
	\end{definition}
	\subsubsection{Components and the $\OO_F$-linear Weil pairing}
	
	To understand the map $\XX_{\Gamma^*(p^n)} \rightarrow \XX_{G,\Gamma(p^n)}$, we begin by analysing $\beta_1$. For this, we require a description of the components of $\XX_{\Gamma(p^n)}$ through the $\OO_F$-linear Weil pairing $\widetilde{\mathbf{e}}_n$ (Defn.~\ref{def:linearisation}):
	Suppose $(A,\iota,\lambda)$ is a $\mathfrak c$-HBAV over $S$ with $\Gamma(p^n)$-level structure $\a_n:(\mathcal O_F/p^n\mathcal O_F)^2\isorightarrow  A^\vee[p^n]$. Then $\widetilde{\mathbf{e}}_n$ induces an $\OO_F$-linear isomorphism
	\begin{align}\label{TX pairing}
	\O_F/p^n\OF \otimes \gothc^{-1} =& (\mathcal O_F/p^n\mathcal O_F)^2  \wedge(\mathcal O_F/p^n\mathcal O_F \otimes \gothc^{-1})^2\\
	\xrightarrow{\a_n\wedge (\a_n \otimes \gothc^{-1})} &A^\vee[p^n]\wedge A^\vee[p^n]\otimes \gothc^{-1}\xrightarrow{(1,\lambda^{-1}\otimes\mathrm{id})}A^\vee[p^n]\wedge A[p^n]\xrightarrow{\widetilde{\mathbf{e}}_n}  \mathfrak  d^{-1}\otimes_{\Z}\mu_{p^n}.\nonumber
	\end{align}
	Equivalently, by tensoring with $\c$, this can be described as a generator of the $\O_F$-module scheme $\c \d^{-1}\otimes_{\Z}\mu_{p^n}$. We denote the subscheme of generators by $(\c \d^{-1}\otimes_{\Z}\mu_{p^n})^\times$.
	
	In the universal situation over $\XX_{\Gamma(p^n)}$, we conclude that the Weil pairing gives rise to a map
	\begin{equation}\label{m:Weil-pairing-morphisms}
	\mathbf e_{n,\beta}: \XX_{\Gamma(p^n)}\rightarrow (\c \d^{-1}\otimes_{\Z}\mu_{p^n})^\times\xrightarrow{\beta^{-1}}(\O_F/p^n\OF)^\times.
	\end{equation}
	where we recall that $\beta\in \c \d^{-1}\otimes_{\Z}\mu_{p^n}$ is the isomorphism chosen in Defn.~\ref{def:linearisation}.
	
	Similarly for $G$ we get a map $\mathbf e_{n}: \XX_{G,\Gamma(p^n)}\rightarrow (\c \d^{-1}\otimes_{\Z}\mu_{p^n})^\times$.
	Next, we record two equivariance properties of the linearised Weil pairing.
	\begin{lem}\label{l:equivariance of Weil pairing morphism from X_G^*,Gamma(p^n)}
		
		\begin{enumerate}
			\item For $\gamma\in G(\ZZ/p^n\ZZ)$, the action on $\XX_{\Gamma(p^n)}$ fits into a commutative diagram
			\begin{center}
				\begin{tikzcd}[row sep = 0.55cm]
				\XX_{\Gamma(p^n)} \arrow[d, "\gamma"] \arrow[r, "\mathbf e_{n,\beta}"] & {(\O_F/p^n\OF)^\times} \arrow[d, "\det(\gamma)"] \\
				\XX_{\Gamma(p^n)} \arrow[r, "\mathbf e_{n,\beta}"] & {(\O_F/p^n\OF)^\times}.
				\end{tikzcd}
			\end{center}
			\item For $\eta\in \OO_F^{\times,+}$, the polarisation action on $\XX_{\Gamma(p^n)}$ fits into a commutative diagram
			\begin{center}
				\begin{tikzcd}[row sep = 0.55cm]
				\XX_{\Gamma(p^n)} \arrow[d, "\eta"] \arrow[r, "\mathbf e_{n,\beta}"] & (\O_F/p^n\OF)^\times \arrow[d, "\eta^{-1}"] \\
				\XX_{\Gamma(p^n)} \arrow[r, "\mathbf e_{n,\beta}"] &  (\O_F/p^n\OF)^\times.
				\end{tikzcd}
			\end{center}
		\end{enumerate}
	\end{lem}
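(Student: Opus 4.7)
The plan is to unpack \eqref{TX pairing} and \eqref{m:Weil-pairing-morphisms} and exploit $\OF$-bilinearity of $\widetilde{\mathbf{e}}_n$. Since $\beta$ is fixed and $\beta^{-1}$ is $\OF$-linear, it is enough to track how the generator of $\gothc\gothd^{-1}\otimes_{\Z}\mu_{p^n}$ produced by the composition \eqref{TX pairing} transforms under each action; any multiplicative scaling by $u\in(\OF/p^n\OF)^\times$ on the target of \eqref{TX pairing} translates to multiplication by the same $u$ on $(\OF/p^n\OF)^\times$ via $\beta^{-1}$.

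For part $(1)$, the level structure action replaces $\alpha_n$ by $\alpha_n\circ\gamma^\vee$ in both places it occurs in \eqref{TX pairing}. The wedge $\alpha_n\wedge(\alpha_n\otimes\gothc^{-1})$ therefore picks up a factor of $\det(\gamma^\vee)$ on the source. A direct computation with $2\times 2$ matrices gives $\gamma^\vee=\det(\gamma)\gamma^{-1}$, so $\det(\gamma^\vee)=\det(\gamma)^2\det(\gamma)^{-1}=\det(\gamma)\in(\OF/p^n\OF)^\times$. The remaining arrows in \eqref{TX pairing} are unchanged, so the generator of $\gothc\gothd^{-1}\otimes\mu_{p^n}$ is multiplied by $\det(\gamma)$, yielding the claimed diagram after applying $\beta^{-1}$.

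For part $(2)$, the polarisation action fixes $\alpha_n$ and replaces $\lambda$ by $\eta\lambda$. Since $\iota$ is Rosati-stable and $\lambda$ is $\OF$-linear, one has $\eta\lambda=\lambda\circ\iota_A(\eta)=\iota_{A^\vee}(\eta)\circ\lambda$, so $(\eta\lambda)^{-1}=\iota_A(\eta^{-1})\circ\lambda^{-1}$. Substituting this into the third arrow of \eqref{TX pairing}, the map $\lambda^{-1}\circ(\alpha_n\otimes\gothc^{-1})$ into $A[p^n]$ gets precomposed with multiplication by $\iota_A(\eta^{-1})$. By $\OF$-bilinearity of $\widetilde{\mathbf{e}}_n$, this multiplies the output of the pairing by $\eta^{-1}\in(\OF/p^n\OF)^\times$, so the whole composition \eqref{TX pairing} is scaled by $\eta^{-1}$, which gives the claimed commutative diagram after normalising by $\beta$.

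The main obstacle is not computational but bookkeeping: one must be careful that the $\gothc^{-1}$-twist, the distinction between the $\OF$-actions on $A$ and $A^\vee$, and the $\OF$-linearity of the identification $\beta:\gothc^{-1}\OO_p\isorightarrow\gothd^{-1}(1)$ all line up so that scalar multiplications by elements of $(\OF/p^n\OF)^\times$ commute with every arrow in \eqref{TX pairing} and \eqref{m:Weil-pairing-morphisms}. Once the conventions are nailed down, both parts reduce to the one-line observation that the Weil pairing transforms like a determinant in the first factor and like an $\OF$-bilinear pairing in the second.
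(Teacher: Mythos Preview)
Your proof is correct and follows the same approach as the paper's: both reduce part~(1) to the identity $\det(\gamma^\vee)=\det(\gamma)$ after observing that replacing $\alpha_n$ by $\alpha_n\circ\gamma^\vee$ scales the wedge in \eqref{TX pairing} by $\det(\gamma^\vee)$, and both handle part~(2) by noting that the single occurrence of $\lambda^{-1}$ in \eqref{TX pairing} contributes a factor of $\eta^{-1}$ under $\lambda\mapsto\eta\lambda$. Your version is simply more explicit about the $\O_F$-linearity bookkeeping that the paper leaves implicit.
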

	
	\begin{proof}
		Recall that $\gamma$ acts by sending $\alpha_n$ to $\alpha_n\circ \gamma^\vee$. From \eqref{TX pairing} it is then clear that $\mathbf e_{n,\beta}\circ \gamma=\det(\gamma^\vee)\mathbf e_{n,\beta}$. The first part then follows from $\det\gamma^\vee=\det \gamma$. 
		The second part holds since replacing $\lambda \to \eta\circ\lambda$ multiplies \eqref{TX pairing} by $\eta^{-1}$ due to the $\lam^{-1}$ appearing.
	\end{proof}
	
	For any $c\in (\OF/p^n\OF)^\times$, the fibre of $\mathbf e_{n,\beta}$ over $c$ gives a component of $\XX_{\Gamma(p^n)}$. This will be a connected component, as we shall discuss in the next section. Recall that by Defn.~\ref{lvls for G*}, the $\Gamma^{\ast}(p^n)$-level depends on $\beta\in c \d^{-1}(1)$. We can now describe the space $\XX_{\Gamma^*(p^n)}$ as follows:
	
	\begin{lem}\label{l:Weil pairing map fits into commut diag comp G* to G}
		The morphism $\XX_{\Gamma^*(p^n)}\to \XX_{\Gamma(p^n)}$ fits into a Cartesian diagram
		\begin{center}
			\begin{tikzcd}[row sep = 0.55cm]
			{\XX_{\Gamma^*(p^n)}} \arrow[d, hook] \arrow[r, "\mathbf e_{n,\beta}"] & (\Z/p^n\Z)^\times \arrow[d, hook]\\
			{\XX_{\Gamma(p^n)}} \arrow[r] \arrow[r, "\mathbf e_{n,\beta}"] & (\OF/p^n\OF)^\times
			\end{tikzcd}
		\end{center}
	\end{lem}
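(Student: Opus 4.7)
The plan is to reduce the statement to an unwinding of the two definitions involved: the definition of a $\Gamma^*(p^n)$-level structure in Defn.~\ref{d:Gamma^* lvl struct and action by precompose with dual}, and the definition of the Weil-pairing morphism $\mathbf e_{n,\beta}$ in \eqref{m:Weil-pairing-morphisms}. Since $\XX_{\Gamma^*(p^n)}$ and $\XX_{\Gamma(p^n)}$ both relatively represent moduli problems over $\XX$ which only differ in the type of level structure at $p$, it suffices to prove that for any scheme $S\to \XX$ corresponding to a tuple $(A,\iota,\lambda,\mu_N)$, a $\Gamma(p^n)$-level structure $\alpha_n$ over $S$ is a $\Gamma^*(p^n)$-level structure if and only if $\mathbf e_{n,\beta}(\alpha_n)\in (\OF/p^n\OF)^\times$ lies in the image of $(\ZZ/p^n\ZZ)^\times$.

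The key step is to identify the similitude constant $b\in (\OF/p^n\OF)^\times$ appearing in the pairing diagram \eqref{eq:similitude} with the image $\mathbf e_{n,\beta}(\alpha_n)$. For this I would rewrite the map $\langle-,-\rangle_{\beta,n}$ as the composition of a determinant/wedge with the isomorphism induced by $\beta$, so that the defining condition of $b$ reads
\[
\widetilde{\mathbf e}_n \circ (\alpha_n\wedge \alpha_n) = b \cdot \beta \circ \det \qquad \text{inside } \Hom(\wedge^2(\OF/p^n\OF)^2\otimes \gothc^{-1},\mathfrak d^{-1}\otimes\mu_{p^n}).
\]
On the other hand, the construction \eqref{TX pairing} packages precisely the left-hand side into an $\OF$-linear isomorphism $\OF/p^n\OF\otimes\gothc^{-1}\to \mathfrak d^{-1}\otimes\mu_{p^n}$, and by definition $\mathbf e_{n,\beta}(\alpha_n)$ is the unit of $\OF/p^n\OF$ by which this isomorphism differs from $\beta$. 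Comparing the two expressions yields $\mathbf e_{n,\beta}(\alpha_n)=b$ (up to a fixed sign independent of $\alpha_n$ that may be absorbed into $\beta$).

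Once this identification is in place, the condition ``$\alpha_n$ is a $\Gamma^*(p^n)$-level structure'' becomes, tautologically, the condition $\mathbf e_{n,\beta}(\alpha_n)\in (\ZZ/p^n\ZZ)^\times\subseteq (\OF/p^n\OF)^\times$. In moduli-theoretic terms this means that the subfunctor of $\XX_{\Gamma(p^n)}$ parametrising $\Gamma^*(p^n)$-level structures is the preimage of $(\ZZ/p^n\ZZ)^\times$ under $\mathbf e_{n,\beta}$, which is exactly the assertion that the diagram is Cartesian.

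The main obstacle I anticipate is bookkeeping: carefully tracking the r\^oles of $\lambda^{-1}$, the twist by $\gothc^{-1}$, the choice of $\beta$ and the trace pairing used to define $\widetilde{\mathbf e}_n$, so that the comparison ``$\mathbf e_{n,\beta}(\alpha_n)=b$'' comes out correctly rather than, say, as $b^{-1}$. No serious geometric input is needed beyond the fact that the spaces in question are defined by representable moduli problems, so that Cartesianness can be checked on functors of points.
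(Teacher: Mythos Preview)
Your proposal is correct and follows essentially the same approach as the paper: both reduce to moduli functors and unwind the definitions to show that the similitude factor $b$ from \eqref{eq:similitude} coincides with $\mathbf e_{n,\beta}(\alpha_n)$, so that the $\Gamma^*(p^n)$-condition becomes precisely $\mathbf e_{n,\beta}(\alpha_n)\in(\ZZ/p^n\ZZ)^\times$. The paper phrases the comparison slightly differently, passing through the trace and the second characterisation in Defn.~\ref{d:Gamma^* lvl struct and action by precompose with dual} and packaging the two sides as isomorphisms $\varphi_1,\varphi_2$ whose ratio is examined, but this is only a cosmetic difference from your direct identification of $b$.
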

	
	\begin{proof}
		We can check this on the level of schemes, where we can check on the level of moduli functors. Let $(A,\iota,\lambda,\mu_N,\alpha)$ be a HBAV over $S$ corresponding to a point $x\in\XX_{\Gamma^*(p^n)}(S)$.
		Then by \eqref{df:tilde e_n}, we recover the Weil pairing $e_{p^n}$ from $\widetilde{\mathbf{e}}_{n,\beta}$ by composing with $\Tr$. The level structure $\alpha$ therefore composes with $\lambda^{-1}$ and $\widetilde{\mathbf{e}}_{n,\beta}$ to a pairing
		\[(\O_F/p^n\OF)^2 \times (\O_F/p^n\OF)^2 \otimes \gothc^{-1}\to \mathfrak d^{-1}\otimes \mu_{p^n}\xrightarrow{\Tr}\mu_{p^n}.\]
		By definition, $\alpha$ is a $\Gamma_\gothc^{\ast}(p^n)$-level structure if and only if this pairing is similar to the pairing
		\[(\O_F/p^n\OF)^2 \times (\O_F/p^n\OF)^2  \otimes \gothc^{-1}\xrightarrow{ \operatorname{id} \otimes \beta} (\OF/p^n\OF)^2  \times   (\mathfrak d^{-1}\otimes_{\Z}\mu_{p^n})^2 \xrightarrow{\Tr}\mu_{p^n}.\]
		After evaluation at $1$ in the second factor, and tensoring with $\c$, these each induce isomorphisms   $\varphi_1,\varphi_2:\O_F/p^n\OF\to \mathfrak \c \d^{-1}\otimes \mu_{p^n}$. After composing with $\beta^{-1}$, the map $\varphi_2$ derived from the second pairing has image in $(\Z/p^n\Z)^\times$. The above pairings are now similar if and only if their ratio $\varphi_1/\varphi^{-1}_2$ is in $\Aut(\mu_{p^n})\subseteq \Aut(\mathfrak \c \d^{-1}\otimes \mu_{p^n})$, i.e.\ given by multiplication with $(\Z/p^n\Z)^\times\subseteq (\OF/p^n\OF)^\times$. 
		Thus $\alpha$ is a $\Gamma^{\ast}(p^n)$-level structure if and only if $\mathbf{e}_{n,\beta}(x)$ is in $(\Z/p^n\Z)^\times$.
	\end{proof}

	\subsubsection{The map $\beta_1$}
	Lems.~\ref{l:equivariance of Weil pairing morphism from X_G^*,Gamma(p^n)} and \ref{l:Weil pairing map fits into commut diag comp G* to G}  immediately imply that $\XX_{\Gamma(p^n)}$ is a disjoint union of copies of $\XX_{\Gamma^{\ast}(p^n)}$. More precisely, they imply the following corollary.
	\begin{corollary}\label{c:X_G*,Gamma* to X_G*,Gamma is open immersion}
		Let $(\OO_F/p^n\OO_F)^{\times}$ act on $\mathcal X_{\Gamma(p^n)}$ by letting $\eta$ act via the level structure action of the matrix $\smallmatrd{\eta}{0}{0}{1}$. The restriction of the action map to
		\[
		(\OO_F/p^n\OO_F)^\times\times\XX_{\Gamma^{\ast}(p^n)} \to \XX_{\Gamma(p^n)} 
		\]
		is then a $(\ZZ/p^n\ZZ)^{\times}$-torsor for the antidiagonal action 
		and induces an isomorphism
		\[
		\big[ (\OO_F/p^n\OO_F)^{\times}\times \mathcal X_{\Gamma^*(p^n)}\big]/(\ZZ/p^n\ZZ)^{\times} \to \mathcal X_{\Gamma(p^n)}. 
		\]
	\end{corollary}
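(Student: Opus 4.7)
My approach is to combine the two preceding lemmas to explicitly decompose $\mathcal{X}_{\Gamma(p^n)}$ and then verify the claimed quotient description component by component. Concretely, Lem.~\ref{l:Weil pairing map fits into commut diag comp G* to G} says the Weil pairing map $\mathbf{e}_{n,\beta}$ expresses $\mathcal{X}_{\Gamma(p^n)}$ as a disjoint union of open-and-closed preimages $\mathcal{X}_{\Gamma(p^n)}^d$ indexed by $d\in(\OF/p^n\OF)^\times$, with $\mathcal{X}_{\Gamma^*(p^n)}$ being precisely the union of those components with $d\in(\Z/p^n\Z)^\times$. Lem.~\ref{l:equivariance of Weil pairing morphism from X_G^*,Gamma(p^n)}~(1), applied to $\smallmatrd{\eta}{0}{0}{1}$ (which has determinant $\eta$), says that the action of $\eta\in(\OF/p^n\OF)^\times$ carries $\mathcal{X}_{\Gamma(p^n)}^{c}$ isomorphically onto $\mathcal{X}_{\Gamma(p^n)}^{\eta c}$.

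Using these two facts, I first deduce surjectivity of the action map: given any $d\in(\OF/p^n\OF)^\times$, choose any $c\in(\Z/p^n\Z)^\times$ and set $\eta:=dc^{-1}$; then $(\eta,-)$ maps $\mathcal{X}_{\Gamma(p^n)}^c\subseteq \mathcal{X}_{\Gamma^*(p^n)}$ isomorphically onto $\mathcal{X}_{\Gamma(p^n)}^d$. Next, I analyse the fibres: for a point $y\in \mathcal{X}_{\Gamma(p^n)}^d$, a preimage $(\eta,x)$ must satisfy $\eta\cdot\mathbf{e}_{n,\beta}(x)=d$, so once $c:=\mathbf{e}_{n,\beta}(x)\in(\Z/p^n\Z)^\times$ is chosen, both $\eta=dc^{-1}$ and $x=\eta^{-1}y$ are determined. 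Hence the fibre is in natural bijection with $(\Z/p^n\Z)^\times$.

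I then verify that the antidiagonal $(\Z/p^n\Z)^\times$-action $u\cdot(\eta,x)=(\eta u^{-1},ux)$, where $u$ acts on $\mathcal{X}_{\Gamma^*(p^n)}$ via the level structure action of $\smallmatrd{u}{0}{0}{1}\in G^*(\Z/p^n\Z)$, is well-defined (the image lies in $\mathcal{X}_{\Gamma^*(p^n)}$ by Lem.~\ref{l:equivariance of Weil pairing morphism from X_G^*,Gamma(p^n)}~(1), since multiplication by $u\in(\Z/p^n\Z)^\times$ preserves $(\Z/p^n\Z)^\times\subseteq(\OF/p^n\OF)^\times$), clearly free (from the first coordinate), and preserves fibres of the action map because $(\eta u^{-1})\cdot (ux)=\eta\cdot x$. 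Combined with the fibre computation above, the action is in fact simply transitive on fibres, so the map descends to a bijection on $(C,C^+)$-points from the quotient to $\mathcal{X}_{\Gamma(p^n)}$.

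Finally, I upgrade the bijection to an isomorphism of adic spaces by noting that both sides are finite étale over $\mathcal X$ (the source as a finite étale quotient of a finite disjoint union of copies of the finite étale cover $\mathcal X_{\Gamma^*(p^n)}\to\mathcal X$, the target by its moduli description) and the map is compatible with the projections, so a bijective finite étale map is an isomorphism. The only subtle point, which I expect to be the main thing to get right, is the careful bookkeeping of the determinant versus dual-inverse conventions in the level structure action (Defn.~\ref{d:Gamma^* lvl struct and action by precompose with dual}), so that the transformation rule for $\mathbf{e}_{n,\beta}$ in Lem.~\ref{l:equivariance of Weil pairing morphism from X_G^*,Gamma(p^n)}~(1) is applied with the correct sign; once that is set up, the rest is elementary.
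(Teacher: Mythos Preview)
Your proposal is correct and follows essentially the same approach as the paper. The paper gives no detailed proof at all, simply asserting that Lemmas~\ref{l:equivariance of Weil pairing morphism from X_G^*,Gamma(p^n)} and~\ref{l:Weil pairing map fits into commut diag comp G* to G} ``immediately imply'' the corollary; your argument is precisely the unpacking of this implication, decomposing $\mathcal X_{\Gamma(p^n)}$ into fibres of $\mathbf e_{n,\beta}$ and using the determinant-equivariance to shuffle them via the $(\OO_F/p^n\OO_F)^\times$-action. One minor simplification: since the action of each $\eta$ is an automorphism of $\mathcal X_{\Gamma(p^n)}$ carrying the component $\mathcal X_{\Gamma(p^n)}^c$ isomorphically onto $\mathcal X_{\Gamma(p^n)}^{\eta c}$, the restricted action map is visibly a finite disjoint union of such isomorphisms of components, so the torsor and quotient statements are immediate without needing the separate ``bijective finite \'etale implies isomorphism'' step.
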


	\subsubsection{The map $\beta_2$}
	Next, we study the second map from \eqref{eq:diag comparing G*,Gamma*(p^n) to G,Gamma(p^n) over tame}, namely $\beta_2:\XX_{\Gamma(p^n)}\to\XX_{G,\Gamma(p^n)}$.
	
	\begin{lem}\label{l:comparing Gamma-action to polarisation action}
		For any $\eta\in (1+N\OO_{F})^\times$, set $\g:=\smallmatrd{\eta}{0}{0}{\eta}$. Then the polarisation action of $\eta^{2}\in\OO_{F}^{\times,+}$ on $\XX_{\Gamma(p^n)}$ coincides with the LS action on $\XX_{\Gamma(p^n)}$  by $\g^{-1}$. In particular, if $\eta \in (1+p^n N \OO_{F})^\times$, then the polarisation action of $\eta^{2}$ on $\XX_{\Gamma(p^n)}$ is trivial. 
	\end{lem}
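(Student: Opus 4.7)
The plan is to apply Lem.~\ref{l:isom-induced-by-real-mult} with the given element $\eta$ and keep track of which pieces of level data are preserved. Recall that Lem.~\ref{l:isom-induced-by-real-mult} asserts that multiplication by $\eta$ on $A$ induces an isomorphism of HBAVs with level
\[
[\eta]:(A,\iota,\eta^{2}\lambda,\eta^{-1}\mu_N,\eta\alpha_n)\isorightarrow (A,\iota,\lambda,\mu_N,\alpha_n).
\]
Since $\eta\in (1+N\OO_F)^{\times}$, the action of $\iota(\eta^{-1})$ on $A[N]$ is the identity, so $\eta^{-1}\mu_N=\mu_N$. The above isomorphism therefore descends to an identification of points
\[
\bigl[(A,\iota,\eta^{2}\lambda,\mu_N,\eta\alpha_n)\bigr]=\bigl[(A,\iota,\lambda,\mu_N,\alpha_n)\bigr]\quad\text{in }\XX_{\Gamma(p^n)}.
\]

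Next, I would rewrite this identity by replacing $\alpha_n$ with $\eta^{-1}\alpha_n$ throughout (this is valid since it holds over the universal situation). This gives
\[
\bigl[(A,\iota,\eta^{2}\lambda,\mu_N,\alpha_n)\bigr]=\bigl[(A,\iota,\lambda,\mu_N,\eta^{-1}\alpha_n)\bigr].
\]
The left-hand side is by definition the polarisation action of $\eta^{2}\in\OO_F^{\times,+}$ applied to the point $[(A,\iota,\lambda,\mu_N,\alpha_n)]$, so the polarisation action of $\eta^{2}$ sends $\alpha_n$ to $\eta^{-1}\alpha_n$ while fixing everything else.

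It remains to identify this with the LS action of $\gamma^{-1}$. By Defn.~\ref{d:Gamma^* lvl struct and action by precompose with dual}, the LS action of $\gamma^{-1}$ sends $\alpha_n\mapsto\alpha_n\circ(\gamma^{-1})^{\vee}$, and with $\gamma=\smallmatrd{\eta}{0}{0}{\eta}$ we compute $(\gamma^{-1})^{\vee}=\det(\gamma^{-1})\,\gamma=\eta^{-2}\gamma=\smallmatrd{\eta^{-1}}{0}{0}{\eta^{-1}}$, which acts on the level structure as multiplication by $\eta^{-1}$. Thus the LS action of $\gamma^{-1}$ also sends $\alpha_n$ to $\eta^{-1}\alpha_n$, matching the polarisation action of $\eta^{2}$ as required.

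Finally, for the particular case $\eta\in (1+p^{n}N\OO_F)^{\times}$, the element $\iota(\eta^{-1})$ acts as the identity on $A^{\vee}[p^n]$ (and on $\mu_N$-torsion), so $\eta^{-1}\alpha_n=\alpha_n$ and the LS action of $\gamma^{-1}$ is trivial, hence so is the polarisation action of $\eta^{2}$. There is no real obstacle; the only thing to be careful about is the bookkeeping in Lem.~\ref{l:isom-induced-by-real-mult} — that the isomorphism $[\eta]$ simultaneously twists the polarisation by $\eta^{2}$, the tame level by $\eta^{-1}$, and the wild level by $\eta$ — together with the observation that the $\mu_N$-structure is preserved precisely because of the congruence condition $\eta\equiv 1\pmod{N\OO_F}$.
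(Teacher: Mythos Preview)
Your proof is correct and follows essentially the same approach as the paper: both invoke Lem.~\ref{l:isom-induced-by-real-mult}, use the congruence $\eta\equiv 1\pmod{N}$ to eliminate the twist on $\mu_N$, and then compare the resulting twist on $\alpha_n$ with the LS action. The paper phrases the comparison slightly differently---it shows that the polarisation action of $\eta^2$ composed with the LS action of $\gamma$ (using $\gamma=\gamma^{\vee}$ for scalar matrices) is trivial, whereas you substitute $\alpha_n\mapsto\eta^{-1}\alpha_n$ and match directly with the LS action of $\gamma^{-1}$---but the content is the same.
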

	\begin{proof}
		If $\eta \in (1 + N\OO_F)^\times$, then $\eta^{-1} \cdot \mu_N = \mu_N$. We note that acting on the polarisation via $\eta^2$ and then composing with the LS action by $\g=\g^\vee$ sends an HBAV $(A,\iota,\lambda,\mu_N,\alpha)$ to $(A,\iota,\eta^2\lambda,\mu_N,\eta\alpha)$ which is isomorphic to $(A,\iota,\lambda,\mu_N,\alpha)$ by Lem.~\ref{l:isom-induced-by-real-mult}, giving the first statement. The second is immediate, since if $\eta \in (1+ p^n\n\OO_F)^\times$, then $\eta$ acts trivially on the level structure.
	\end{proof}
	\begin{defn}
		For the tame levels we consider, the space $\cX$ is connected, but this is no longer true of the spaces  $\mathcal X_{\Gamma(p^n)}$ and $\mathcal X_{G,\Gamma(p^n)}$ for $n\geq 1$. We denote by $\mathcal X^0_{\Gamma(p^n)}$ and $\mathcal X^0_{G,\Gamma(p^n)}$ the respective identity components.  
	\end{defn}
	\begin{defn}
		Let $U_n$ be the cokernel in the exact sequence
		\[1\to (1+p^n\O_F)^{\times,+}\to\O_F^{\times,+}\to (\O/p^n\O)^\times\to U_n\to 1.\]
	\end{defn}
	\begin{lem}\label{l:connectec component groups}
		Via the Weil pairing, the sets of connected components are:
		
		\begin{enumerate} \item $\pi_0(X_{\mathfrak c,\Gamma(p^n)})=(\OF/p^n\OF)^\times$,
			\item $\pi_0(X_{G,\mathfrak c,\Gamma(p^n)})=U_n$.
			
		\end{enumerate}
	\end{lem}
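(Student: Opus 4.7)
The plan is to use the Weil pairing morphism $\mathbf e_{n,\beta}$ of \eqref{m:Weil-pairing-morphisms} as the means of indexing connected components in both cases. For part (1), I would combine the quotient description of Cor.~\ref{c:X_G*,Gamma* to X_G*,Gamma is open immersion},
\[
\XX_{\Gamma(p^n)} \cong \bigl[(\OF/p^n\OF)^\times \times \XX_{\Gamma^*(p^n)}\bigr]\big/(\ZZ/p^n\ZZ)^\times,
\]
with the standard computation of $\pi_0(\XX_{\Gamma^*(p^n)})$. Here $(\ZZ/p^n\ZZ)^\times$ acts antidiagonally: via its natural embedding into $(\OF/p^n\OF)^\times$ on the left factor and, by Lem.~\ref{l:equivariance of Weil pairing morphism from X_G^*,Gamma(p^n)}(1), by multiplication by $\eta$ on $\mathbf e_{n,\beta}$ on the right factor. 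Since $\XX$ is connected and $\XX_{\Gamma^*(p^n)}\to\XX$ is a finite \'etale cover arising from the PEL Shimura datum of $G^*$, the standard reciprocity law identifies $\pi_0(\XX_{\Gamma^*(p^n)}) = (\ZZ/p^n\ZZ)^\times$ via $\mathbf e_{n,\beta}$. Plugging this into the displayed quotient and using that the antidiagonal action is free, the map $(c,c')\mapsto c\cdot c'$ descends to a bijection $\pi_0(\XX_{\Gamma(p^n)}) \cong (\OF/p^n\OF)^\times$ which matches $\mathbf e_{n,\beta}$, proving (1).

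For part (2), the moduli interpretation identifies $\XX_{G,\Gamma(p^n)}$ with the quotient of $\XX_{\Gamma(p^n)}$ by the polarisation action of $\OF^{\times,+}$: forgetting the specific polarisation in favour of its class corresponds exactly to this quotient. By Lem.~\ref{l:comparing Gamma-action to polarisation action}, the action factors through the finite group $\OF^{\times,+}/(1+p^nN\OF)^{\times,+,2}$, and an argument along the lines of Prop.~\ref{prop:delta torsor} shows the quotient map is a finite \'etale cover, so $\pi_0$ commutes with taking the quotient. By Lem.~\ref{l:equivariance of Weil pairing morphism from X_G^*,Gamma(p^n)}(2), the induced action of $\eta\in \OF^{\times,+}$ on $\pi_0(\XX_{\Gamma(p^n)}) = (\OF/p^n\OF)^\times$ is multiplication by $\eta^{-1}$, so
\[
\pi_0(\XX_{G,\Gamma(p^n)}) \cong (\OF/p^n\OF)^\times/\operatorname{im}(\OF^{\times,+}) = U_n
\]
by the defining exact sequence (noting that $\eta\mapsto \eta^{-1}$ and $\eta\mapsto \eta$ yield the same image subgroup).

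The main obstacle is the input that $\pi_0(\XX_{\Gamma^*(p^n)}) = (\ZZ/p^n\ZZ)^\times$ via $\mathbf e_{n,\beta}$. This is the standard reciprocity law for the Shimura variety of $G^*$: working over $\CC$, one uses the uniformisation $\Sh_{K^*}(G^*,\mathcal S)(\CC) = G^*(\QQ)^+\backslash \mathcal H^\Sigma \times G^*(\AA_f)/K^*$ together with strong approximation for $\mathrm{SL}_2/F$ to identify the components with $\QQ_{>0}^\times\backslash \AA_f^\times/\nu(K^*)$, where $\nu$ is the similitude character; for our choice of $K^*$ this specialises to $(\ZZ/p^n\ZZ)^\times$ once the tame-level factor is absorbed into the assumed connectedness of $\XX$. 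A secondary technical point is verifying that the polarisation quotient is finite \'etale, which reduces to the finiteness encoded in Lem.~\ref{l:comparing Gamma-action to polarisation action}.
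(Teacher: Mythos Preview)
For part (1), your approach is the paper's: both reduce to $\pi_0(\XX_{\Gamma^*(p^n)})=(\ZZ/p^n\ZZ)^\times$ via complex uniformisation and strong approximation for $G^*$, then apply Cor.~\ref{c:X_G*,Gamma* to X_G*,Gamma is open immersion}.

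For part (2), the routes diverge. The paper does not pass through the polarisation quotient; instead it repeats the adelic computation directly for $G$: the determinant identifies the component set of $X_{G,\gothc,\Gamma(p^n)}(\CC)$ with $\OF^{\times,+}\backslash(\hat\ZZ\otimes\OF)^\times/(1+p^n\hat\ZZ\otimes\OF)^\times$, the strict ray class group of conductor $p^n$, and taking the fibre over $[\gothc]\in\Cl^+(\OF)$ yields $U_n$.

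Your quotient approach is valid, but beware a circularity trap in the paper's logical order: the statement that $\beta_2:\XX_{\Gamma(p^n)}\to\XX_{G,\Gamma(p^n)}$ is a $\Delta(p^nN)$-torsor is Lem.~\ref{l:torsors-from-G*-to-G-fin-level}, whose proof \emph{invokes} the very lemma you are proving. So you must establish the torsor property independently. This is feasible along the lines you indicate: both source and target are finite \'etale over $\XX_G$, hence $\beta_2$ is finite \'etale; and a stabiliser computation via Lem.~\ref{l:isom-induced-by-real-mult} shows that $\eta\in\OF^{\times,+}$ fixes a point only if $\eta\in(1+p^nN\OF)^{\times 2}$, giving freeness of the $\Delta(p^nN)$-action. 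Note that the finiteness of the acting group coming from Lem.~\ref{l:comparing Gamma-action to polarisation action} is not by itself enough for \'etaleness of the quotient map --- freeness is what you need.

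The trade-off: the paper's approach is self-contained and symmetric with part (1), at the cost of a second strong-approximation calculation. Yours reuses part (1) and makes the geometric relation between the two towers do the work, but requires the independent torsor check above to avoid circularity.
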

	\begin{proof}
		It suffices to prove this for $L=\Q_p^\cyc$, and we may choose a $\Q$-linear embedding $L\hookrightarrow \C$.
		The $\CC$-points of $X_{\gothc,\Gamma^{\ast}(p^n)}$ then admit a description as $G^*(\Q)^+\backslash [G^*(\A_f) \times \mathcal{S}]/K^*$, where $K^*:=K^{\ast}(p^n)\cap K^\ast_1(N) \subset G^*(\A_f)$ is an open compact level subgroup. For $\ell\nmid p$, our choice of tame level ensures that $\mathrm{det}(K^*_{\ell}) = \ZZ_\ell^\times$, whilst $K^*_p = \Gamma^*(p^n)$ has determinant $1+p^n\Z_p$. By strong approximation, the determinant thus induces an isomorphism from the component group
		\[ G^{\ast}(\Q)^+\backslash G^{\ast}(\A_f)/K^* \isorightarrow \hat{\Z}^\times/(1+p^n\hat{\ZZ})^\times \cong(\Z/p^n\Z)^\times.\]
		Thus $\pi_0(X_{\mathfrak c,\Gamma^{\ast}(p^n)}(\C))=(\ZZ/p^n\ZZ)^\times$, which implies $\pi_0(X_{\mathfrak c,\Gamma(p^n)}(\C))=(\OF/p^n\OF)^\times$ by  Lem.~\ref{c:X_G*,Gamma* to X_G*,Gamma is open immersion}.
		Similarly, for $G$ we have $K:=K(p^n)\cap K_1(N) \subset G(\A_f)$ and $G(\Q)^+\backslash G(\A_f)/K$ equals
		\[\GL_2(\O_F)^+\backslash \GL_2(\hat{\ZZ}\otimes \O_F)/K(p^n)\cap K_1(N) \isorightarrow \O_F^{\times,+}\backslash(\hat{\ZZ}\otimes \O_F)^\times/(1+p^n\hat{\ZZ}\otimes \O_F)^\times.\]
		This is the strict ray class group of conductor $p^n$, which is an extension of $\Cl^{+}(\O_F)$ by $U_n$. After taking the fibre of $[\mathfrak c]\in\Cl^{+}(\O_F)$, this equals $U_n$ as desired.
	\end{proof}
	
	\begin{lem}\label{l:torsors-from-G*-to-G-fin-level}
		\leavevmode
		\begin{enumerate}
			\item The map $\beta_2:{\mathcal X_{\Gamma(p^n)}} \to {\mathcal X_{G,\Gamma(p^n)}}$ is a torsor for  $\Delta(p^nN):=\O_F^{\times,+}/(1+p^nN\O_F)^{\times2}$.
			\item The map $\beta_2:{\mathcal X^0_{\Gamma(p^n)}} \to {\mathcal X^0_{G,\Gamma(p^n)}}$ is a torsor for $\Delta_n(N):=(1+p^n\O_F)^{\times,+}/(1+p^nN\O_F)^{\times2}$.
		\end{enumerate}
	\end{lem}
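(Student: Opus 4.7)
The plan is to analyse the polarisation action of $\OF^{\times,+}$ on $\mathcal X_{\Gamma(p^n)}$ using the moduli description, following a strategy analogous to that of Prop.~\ref{prop:delta torsor}.

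For part~(1), I would first identify the elements acting trivially. Given a point represented by $(A,\iota,\lambda,\mu_N,\alpha)$, an element $\eta \in \OF^{\times,+}$ acts trivially iff $(A,\iota,\eta\lambda,\mu_N,\alpha)\cong (A,\iota,\lambda,\mu_N,\alpha)$. By Lem.~\ref{l:isom-induced-by-real-mult}, any such isomorphism comes from some $\nu\in \OF^\times$ with $\eta=\nu^2$, subject to the compatibility conditions $\nu^{-1}\mu_N=\mu_N$ and $\nu\alpha=\alpha$; since $(p,N)=1$, these force $\nu \in (1+p^nN\OF)^\times$ by the Chinese Remainder Theorem. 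Hence the polarisation action descends to a \emph{free} action of $\Delta(p^nN)$ on $\mathcal X_{\Gamma(p^n)}$.

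Next, I would argue that $\beta_2$ exhibits $\mathcal X_{G,\Gamma(p^n)}$ as the quotient by this action. At the level of moduli, two points $(A,\iota,\lambda_i,\mu_N,\alpha)$ for $i=1,2$ become identified in $\mathcal X_{G,\Gamma(p^n)}$ iff $(A,\iota,[\lambda_i],\mu_N,\alpha)$ agree as tuples with polarisation class; again by Lem.~\ref{l:isom-induced-by-real-mult} this happens exactly when $\lambda_1$ and $\lambda_2$ differ by the polarisation action of some element of $\OF^{\times,+}$. A descent argument along the finite \'etale cover $\mathcal X_{\Gamma(p^n)} \to \mathcal X_{G,\Gamma(p^n)}$, parallel to that of Prop.~\ref{prop:delta torsor}, then promotes this pointwise bijection on fibres to the statement that $\beta_2$ is a $\Delta(p^nN)$-torsor.

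For part~(2), the key input is Lem.~\ref{l:connectec component groups}. By Lem.~\ref{l:equivariance of Weil pairing morphism from X_G^*,Gamma(p^n)}(2), under the identification $\pi_0(\mathcal X_{\Gamma(p^n)})= (\OF/p^n\OF)^\times$ via the Weil pairing, the polarisation action of $\eta \in \OF^{\times,+}$ permutes components by multiplication by $\eta^{-1}$. Hence the stabiliser of the identity component inside $\Delta(p^nN)$ is precisely the image of $(1+p^n\OF)^{\times,+}$, namely $\Delta_n(N)$, and restricting the torsor from part~(1) yields a $\Delta_n(N)$-torsor $\mathcal X^0_{\Gamma(p^n)} \to \mathcal X^0_{\Gamma(p^n)}/\Delta_n(N)$. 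Finally, the description $\pi_0(\mathcal X_{G,\Gamma(p^n)})= U_n = (\OF/p^n\OF)^\times/\mathrm{im}(\OF^{\times,+})$ combined with orbit--stabiliser shows that this quotient is exactly $\mathcal X^0_{G,\Gamma(p^n)}$.

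The main subtlety I expect is the bookkeeping around matching the group-theoretic definitions of $\Delta(p^nN)$ and $\Delta_n(N)$ with the naturally arising stabilisers --- in particular, ensuring that the squares appearing (via $\nu \mapsto \nu^2$ in Lem.~\ref{l:isom-induced-by-real-mult}) line up correctly with the $(1+p^nN\OF)^{\times,2}$ appearing in the definition of $\Delta(p^nN)$; once this is handled carefully, the rest is essentially moduli-theoretic bookkeeping combined with the already established identifications for $\beta_2$ and the Weil pairing.
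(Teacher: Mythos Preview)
Your approach is genuinely different from the paper's, and while the overall strategy is reasonable, there is a gap in the freeness argument for part~(1).

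The paper does \emph{not} argue moduli-theoretically. Instead it passes to $\C$-points, uses the double-coset description $G^{(\ast)}(\Q)^+\backslash[G^{(\ast)}(\A_f)\times\mathcal S]/K^{(\ast)}$, and proves part~(2) \emph{first}: on identity components the map becomes the cover $\cG^{\ast}\backslash\mathcal H^g\to\cG\backslash\mathcal H^g$ of arithmetic quotients, whose Galois group is read off from a snake-lemma diagram and seen to be $\Delta_n(N)$. Part~(1) is then deduced by combining this with the $\pi_0$-computation of Lem.~\ref{l:connectec component groups}. So the logical order is reversed from yours, and the key input is group theory over~$\C$ rather than a moduli argument.

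The gap in your argument is the sentence ``By Lem.~\ref{l:isom-induced-by-real-mult}, any such isomorphism comes from some $\nu\in\OF^\times$.'' That lemma only says that elements of $\OF^\times$ \emph{give} isomorphisms of HBAVs; it does not say every $\OF$-linear automorphism of $A$ lies in $\OF^\times$. At CM points one has $\End_{\OF}(A)=\OO_K$ for a CM extension $K/F$, so $\Aut_{\OF}(A)=\OO_K^\times$ can be strictly larger, and you would need to rule out that some $\varphi\in\OO_K^\times\setminus\OF^\times$ with $\varphi\bar\varphi=\eta$ fixes both level structures. This can in fact be handled (the congruences $\varphi\equiv 1\bmod p^nN$ together with $N\geq 4$ force $\varphi\in\OF^\times$), but it requires an argument you have not supplied. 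A second point needing care is the identification of $\mathcal X_{G,\Gamma(p^n)}$ with the quotient: since the Shimura variety for $G$ is only a coarse moduli space, the pointwise fibre analysis does not immediately give the scheme-theoretic statement, and ``parallel to Prop.~\ref{prop:delta torsor}'' is really an appeal to \cite[Lem.~8.1]{AIP3} which you would need to adapt. The paper's route via $\C$ sidesteps both issues entirely.
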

	\begin{proof}
		Setup and notation like in the last proof,  it suffices to see this for  $X_{\Gamma(p^n)}(\C)\to X_{G,\Gamma(p^n)}(\C)$.  We first see from Lem.~\ref{l:connectec component groups} that $\beta_2$ on connected components is the quotient $(\OF/p^n\OF)^\times \to (\OF/p^n\OF)^\times/(\O_F^{\times,+}/(1+p^n\O_F)^{\times,+})$.
		It therefore suffices to prove that on identity components,
		\[ X^0_{\Gamma^*(p^n)}(\C)= X^0_{\Gamma(p^n)}(\C)\to X^0_{G,\Gamma(p^n)}(\C)\]
		is a torsor for the group $\Delta_n(N):=(1+p^n\O_F)^{\times,+}/(1+p^nN\O_F)^{\times,2}$. This map is the cover
		\begin{equation}\label{eq:Galois-cover-G*|H^g->G|H^g}
		\cG^{\ast}\backslash \mathcal H^g\to \cG\backslash \mathcal H^g
		\end{equation}
		where $\cG^{\ast} = K^*\cap G^*(\Q)^+$ where $K^{\ast}=K^{\ast}(p^n)\cap K^\ast_1(N)$, and analogously for $G$. Recall that the kernel for the action of $G(\Q)^+$ on $\mathcal H^g$ are the scalar matrices. Denoting by $\mathrm{P}\cG$ the quotient of $\cG$ by scalar matrices in $\cG$. We note that the only scalar matrix in $\cG^{\ast}$ is the identity. We therefore have a commutative diagram with exact rows and columns
		\begin{center}
			\begin{tikzcd}[row sep = 0.35cm]
			& 1 \arrow[d] \arrow[r]                   & (1+p^nN\OO_F)^\times \arrow[r, "x\mapsto x^2"] \arrow[d] & (1+p^nN\OO_F)^{\times,2} \arrow[d] \arrow[r]           & 1 \\
			1 \arrow[r] & \cG^{\ast} \arrow[r] \arrow[d,equal]  & \cG \arrow[d] \arrow[r, "\det"]                 & {(1+p^n\mathcal O_F)^{\times,+}} \arrow[d] \arrow[r] & 1 \\
			1 \arrow[r] & \cG^{\ast} \arrow[r]  & \mathrm{P}\cG \arrow[r]                         & \Delta_n(N) \arrow[r]                   & 1.
			\end{tikzcd}
		\end{center}
		The bottom row tells us that the Galois group of the cover \eqref{eq:Galois-cover-G*|H^g->G|H^g} is $\Delta_n(N)$, as desired.
	\end{proof}

	%%===========================================================
	%% 	TORSORS OVER TAME LEVEL
	\subsection{Torsors over tame level}
	To define overconvergent modular forms, we also need to understand the torsor structures obtained as we vary the wild level.
	
	\begin{definition}Let $m \leq n$.
		\begin{enumerate}
			\item Let $\overline{\Gamma}_{\!0}(p^m,p^n) \subset G(\ZZ/p^n\ZZ)$ and $\overline{\Gamma}_{\!0}^*(p^m,p^n) \subset G^*(\ZZ/p^n\ZZ)$ 
			denote the subgroups of matrices of the form $\smallmatrd{\ast}{\ast}{c}{\ast}$ with $p^m|c$.
			\item Let $Z_n := (1+\n\OO_F)^\times/(1+p^n\n\OO_F)^\times$, embedded diagonally into $\overline{\Gamma}_{\!0}(p^m,p^n)$.
			\item Let $\PG_{\!0}(p^m,p^n) := \overline{\Gamma}_{\!0}(p^m,p^n)/Z_n$ be the quotient group.
		\end{enumerate}
	\end{definition} 
	By Lems.~\ref{l:equivariance of Weil pairing morphism from X_G^*,Gamma(p^n)} and \ref{l:Weil pairing map fits into commut diag comp G* to G}, the level structure action of  $\Gamma(p^n)$ on $\XX_{\Gamma(p^n)}$ restricts to an action of $\Gamma^{\ast}(p^n)$ on $\XX_{\Gamma^*(p^n)}$. We then have:
	\begin{proposition}Via actions on the level structure:
		\begin{enumerate}\setlength{\itemsep}{5pt}
			\item $\XX_{\Gamma^*(p^n)} \to \XX_{\Gamma_{\!\scaleto{0}{3pt}}^*(p^m)}$ is a finite \'etale torsor for the group $\overline{\Gamma}_{\!0}^*(p^m,p^n)$,
			\item $\XX_{\Gamma(p^n)} \to \XX_{\Gamma^{\ast}_{\!\scaleto{0}{3pt}}(p^m)}$  is a finite \'etale torsor for the group $\overline{\Gamma}_{\!0}(p^m,p^n)$, 
			\item $\XX_{G,\Gamma(p^n)} \to \XX_{G,\Gamma_{\!\scaleto{0}{3pt}}(p^m)}$  is a finite \'etale torsor for the group $\PG_{\!0}(p^m,p^n).$
		\end{enumerate}
	\end{proposition}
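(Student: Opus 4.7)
The plan is to handle parts (1) and (2) uniformly by a direct check on moduli functors, and then to derive (3) from (2) by a quotient construction combined with a degree count.

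For (1) and (2), I would verify that the LS action sends $(A, \iota, \lambda, \mu_N, \alpha_n)$ to $(A, \iota, \lambda, \mu_N, \alpha_n \circ \gamma^\vee)$. Writing $\gamma = \smallmatrd{a}{b}{c}{d}$ with $c \in p^m\OO_F/p^n\OO_F$ and $d$ a unit, the first column of $\gamma^\vee = \smallmatrd{d}{-b}{-c}{a}$ is $\smallvector{d}{-c}$, which is $\smallvector{d}{0} \bmod p^m$, so $\alpha_n \circ \gamma^\vee \smallvector{1}{0} \equiv d \cdot \alpha_n\smallvector{1}{0} \bmod p^m$. Hence $\overline{\Gamma}_{\!0}^{(\ast)}(p^m, p^n)$ preserves the cyclic subgroup $D_m$ generated by $\alpha_n\smallvector{1}{0} \bmod p^m$, which underlies $\alpha_n$ as a $\Gamma_{\!0}^*(p^m)$-structure. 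Conversely, given two $\Gamma^{(\ast)}(p^n)$-structures refining the same $\Gamma_{\!0}^*(p^m)$-structure, the unique matrix connecting them must have this upper-triangular form mod $p^m$ by the same calculation, which gives freeness and transitivity on geometric fibres. Finiteness and étaleness are inherited from the corresponding properties of $\XX_{\Gamma^{(\ast)}(p^n)} \to \XX_{\Gamma_{\!\scaleto{0}{3pt}}^*(p^m)}$ via the relative moduli interpretation.

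For (3), the LS action of $\overline{\Gamma}_{\!0}(p^m, p^n)$ on $\XX_{\Gamma(p^n)}$ commutes with the polarisation action, since the two modify independent data ($\alpha_n$ and $\lambda$). It therefore descends to an action on $\XX_{G,\Gamma(p^n)} = \XX_{\Gamma(p^n)}/\Delta(p^n N)$. By Lem.~\ref{l:comparing Gamma-action to polarisation action}, for $\eta \in (1+N\OO_F)^\times$ the LS action of $\smallmatrd{\eta}{0}{0}{\eta}^{-1}$ on $\XX_{\Gamma(p^n)}$ coincides with the polarisation action of $\eta^2 \in \OO_F^{\times,+}$, which is trivial on $\XX_{G,\Gamma(p^n)}$. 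Reducing mod $p^n$ shows $Z_n$ is contained in the kernel, so the action factors through $\PG_{\!0}(p^m, p^n)$.

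To see that this quotient action makes $\XX_{G,\Gamma(p^n)} \to \XX_{G,\Gamma_{\!\scaleto{0}{3pt}}(p^m)}$ into a torsor, I would compute degrees in the commutative square
\[
\begin{tikzcd}[row sep = 0.55cm]
\XX_{\Gamma(p^n)} \arrow[r] \arrow[d] & \XX_{\Gamma_{\!\scaleto{0}{3pt}}^*(p^m)} \arrow[d] \\
\XX_{G,\Gamma(p^n)} \arrow[r] & \XX_{G, \Gamma_{\!\scaleto{0}{3pt}}(p^m)}.
\end{tikzcd}
\]
By part (2), Lem.~\ref{lem: et tor}, and Lem.~\ref{l:torsors-from-G*-to-G-fin-level}(1), the top and two vertical maps are finite étale torsors of degrees $|\overline{\Gamma}_{\!0}(p^m, p^n)|$, $|\Delta(p^nN)|$ and $|\Delta(N)|$ respectively, so the bottom map is finite étale of degree $|\overline{\Gamma}_{\!0}(p^m, p^n)| \cdot |\Delta(N)|/|\Delta(p^nN)|$. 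Since $N \geq 4$ ensures squaring is injective on $(1+N\OO_F)^\times$, the ratio $|\Delta(p^nN)|/|\Delta(N)|$ equals $|Z_n|$, so the degree simplifies to $|\PG_{\!0}(p^m, p^n)|$. Transitivity on geometric fibres follows by lifting along the $\Delta(p^nN)$-cover and applying (2); freeness then follows from the matching degree. The main obstacle is pinning down the kernel of the action exactly as $Z_n$, and this is precisely what the degree count achieves.
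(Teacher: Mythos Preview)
Your proof is correct. For parts (1) and (2) you spell out exactly what the paper's terse ``follow from the moduli description'' means, and your treatment is fine.

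For part (3) you take a somewhat different route. The paper argues via a forward reference: it introduces the auxiliary group $\overline{E}(p^m,p^n)$, proves (in Lem.~\ref{l:diagonal-torsor-finite-level}) that the diagonal $\XX_{\Gamma(p^n)} \to \XX_{G,\Gamma_{\!\scaleto{0}{3pt}}(p^m)}$ is an $\overline{E}(p^m,p^n)$-torsor, and then uses the exact sequence $0 \to \Delta(p^nN) \to \overline{E}(p^m,p^n) \to \PG_{\!0}(p^m,p^n) \to 0$ together with Lem.~\ref{l:torsors-from-G*-to-G-fin-level}(1) to quotient. You instead work directly with the commutative square, descending the $\overline{\Gamma}_{\!0}(p^m,p^n)$-action and matching degrees via the identity $|\Delta(p^nN)|/|\Delta(N)| = |Z_n|$. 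The group-theoretic content is the same --- your degree identity is precisely what the paper's snake-lemma diagram for $\overline{E}(p^m,p^n)$ encodes --- but your argument is more self-contained for this proposition and avoids the forward reference. The paper's approach, on the other hand, sets up $\overline{E}(p^m,p^n)$ once and for all, which pays off later when passing to infinite level in diagram~\eqref{eq:diag comparing G*,Gamma*(p^infty) to G,Gamma(p^infty) over tame}.
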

	\begin{proof}
		Parts (1) and (2) follow from the moduli description. 
		Part (3) follows from Lem.~\ref{l:torsors-from-G*-to-G-fin-level}.1 and Lem.~\ref{l:diagonal-torsor-finite-level} below: the proof only uses the left hand side of~\eqref{dg:def-of-diagonal-E-torsor}, so this is  not circular.
	\end{proof}

	\subsubsection{The diagonal torsor}
	We now have a commutative diagram of towers of finite \'etale torsors
	\begin{equation}\label{dg:def-of-diagonal-E-torsor}
	\begin{tikzcd}[row sep = 0.55cm]
	{\XX_{\Gamma(p^n)}} \arrow[d, "{\Gammabar_{\!\scaleto{0}{3pt}}(p^m, p^n)}"'] \arrow[r, "\Delta(p^nN)"] \arrow[rd, dotted] & {\XX_{G,\Gamma(p^n)}} \arrow[d, "{\PG_{\!\scaleto{0}{3pt}}(p^m, p^n)}"] \\
	\XX_{\Gamma_{\!\scaleto{0}{3pt}}(p^m)} \arrow[r, "\Delta(\n)"'] & \XX_{G,\Gamma_{\!\scaleto{0}{3pt}}(p^m)}
	\end{tikzcd}
	\end{equation}
	Next, we describe the diagonal map in the above diagram, which should be a torsor for some group $\overline{E}(p^m,p^n)$ which can be described as an extension in two ways:
	\[0\to \Gammabar_{\!0}(p^m,p^n)\to \overline{E}(p^m,p^n)\to \Delta(N)\to 0,\]
	\[0\to \Delta(p^nN)\to \overline{E}(p^m,p^n)\to \mathrm \PG_{\!0}(p^m,p^n)\to 0.\]
	It transpires that both extensions are non-split, reflecting our earlier observation that there is no polarisation action by $\Delta(\nn)$ on $\mathcal X_{\Gamma(p^n)}$.
	
	In order to describe $ \overline{E}(p^m,p^n)$ and its action, recall from Lem.~\ref{l:comparing Gamma-action to polarisation action} that for any $\eta\in (1+\n \OO_F)^\times$, the polarisation action of $\eta^{2}$ coincides with the action on the level structure via $\smallmatrd{\eta^{-1}}{0}{0}{\eta^{-1}}$.
	We conclude that the combined action of $\Gammabar_{\!0}(p^m,p^n)\times \OO^{\times,+}_{F}$ is such that the subgroup 
	\[(1+N\OO_F)^{\times}\hookrightarrow \Gammabar_{\!0}(p^m,p^n)\times \OO^{\times,+}_{F},\quad \eta\mapsto \left(\smallmatrd{\eta}{0}{0}{\eta},\eta^{2}\right)\]
	acts trivially on $\mathcal X_{\Gamma(p^n)}$. We therefore obtain an action of the quotient
	\[
	\overline{E}(p,p^n):= \left (\Gammabar_{\!0}(p^m,p^n)\times \OO^{\times,+}_{F} \right)/(1+N\OO_F)^{\times}.
	\]
	We now obtain a short exact sequences as above: first, we clearly have a sequence
	\[0\to \Gammabar_{\!0 }(p^m,p^n)\xrightarrow{\gamma\mapsto (\gamma,1)} \overline{E}(p^m,p^n)\xrightarrow{(\gamma,x)\mapsto x} \Delta(\n)\to 0.\]
	Second, projection to the first factor induces a natural map $\overline{E}(p^m,p^n)\to \PG_{\!0}(p^m,p^n)$. From the snake lemma diagram
	\begin{center}
		\begin{tikzpicture}[scale=0.5]%diagram design shamelessly copied from tex.stackexchange.com/questions/3892
		
		\matrix[matrix of math nodes,column sep={10pt},row
		sep={30pt,between origins},nodes={asymmetrical rectangle}] (s)
		{
			& & |[name=kb]|0 &|[name=kc]| (1+p^n\n\OO_F)^\times \\
			&|[name=A]|  0  &|[name=B]|  (1+\n\OO_F)^\times  &|[name=C]|  (1+\n\OO_F)^\times  &|[name=01]| 0 \\
			|[name=02]| 0 &|[name=A']| \OO_F^{\times,+} &|[name=B']| \OO_F^{\times,+}\times \Gammabar_{\!0}(p^m,p^n) &|[name=C']| \Gammabar_{\!0}(p^m,p^n) &|[name=03]| 0 \\
			&|[name=ca]|\OO_F^{\times,+} &|[name=cb]| \overline{E}(p^m,p^n)&|[name=cc]| \PG_{\!0}(p^m,p^n)&|[name=04]| 0 \\
		};
		\draw[->]
		
		(kb) edge (kc)
		(kb) edge (B)
		(kc) edge (C)
		(A) edge (B)
		(A) edge (A')
		(C) edge (01)
		(B) edge  (B')
		(C) edge  (C')
		(02) edge (A')
		(A') edge (B')
		(B') edge (C')
		(B') edge (cb)
		(C') edge (cc)
		(ca) edge (cb)
		(cb) edge (cc)
		(C') edge (03)
		(cc) edge (04)
		
		(B') edge[bend right] node [left] {} (A');

		\draw[==]
		(A') edge (ca)
		(B) edge (C)
		
		;
		\draw[->,rounded corners]  (kc) -| node[auto,text=black,pos=0.8]
		{$(x\mapsto x^2)$} ($(01.east)+(.5,0)$) |- ($(B)!.4!(B')$) -|
		($(02.west)+(-.5,0)$) |- (ca);
		\end{tikzpicture}
	\end{center}
	we see the kernel of this map is $\Delta(p^n\n)$ embedded into $\overline{E}(p^m,p^n)$ via $x \mapsto (1,x)$, from which the second exact sequence follows. This shows:
	\begin{lem}\label{l:diagonal-torsor-finite-level}
		The map $\XX_{\Gamma(p^n)}\to \XX_{G,\Gamma_{\!\scaleto{0}{3pt}}(p^m)}$ is an \'etale torsor for the group $\overline{E}(p^m,p^n)$.
	\end{lem}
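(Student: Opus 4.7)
The plan is to produce the required torsor structure by packaging the constructions already made in the paragraphs preceding the lemma. First, I would verify that $\overline{E}(p^m,p^n)$ acts on $\XX_{\Gamma(p^n)}$. The level structure action of $\Gammabar_{\!0}(p^m,p^n)$ and the polarisation action of $\OO_F^{\times,+}$ from Defn.~\ref{d:polarisation action} operate on independent data of a quadruple $(A,\iota,\lambda,\mu_N,\alpha)$ (namely on $\alpha$ and on $\lambda$ respectively), so they commute, giving an action of $\Gammabar_{\!0}(p^m,p^n)\times \OO_F^{\times,+}$. By Lem.~\ref{l:comparing Gamma-action to polarisation action}, the subgroup $(1+N\OO_F)^{\times}$ embedded via $\eta\mapsto(\smallmatrd{\eta}{0}{0}{\eta},\eta^{2})$ acts trivially, so we obtain the desired action of the quotient $\overline{E}(p^m,p^n)$.

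Next I would check that this action lies over $\XX_{G,\Gamma_{\!0}(p^m)}$. Using the short exact sequence $0\to \Gammabar_{\!0}(p^m,p^n)\to \overline{E}(p^m,p^n)\to \Delta(N)\to 0$ constructed just above, it suffices to note that $\Gammabar_{\!0}(p^m,p^n)$ acts trivially on $\XX_{\Gamma_{\!0}(p^m)}$ (it only affects the wild level structure at $p$) and that $\Delta(N)$ acts trivially on $\XX_{G,\Gamma_{\!0}(p^m)}$ by definition of the polarisation action and Prop.~\ref{prop:delta torsor}. So the composition $\XX_{\Gamma(p^n)}\to \XX_{G,\Gamma_{\!0}(p^m)}$ is $\overline{E}(p^m,p^n)$-invariant.

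For the torsor property, I would compare with the factorisation $\XX_{\Gamma(p^n)}\xrightarrow{\beta_2}\XX_{G,\Gamma(p^n)}\to \XX_{G,\Gamma_{\!0}(p^m)}$. By Lem.~\ref{l:torsors-from-G*-to-G-fin-level}(1) the first map is a finite \'etale $\Delta(p^n\n)$-torsor, and by part~(3) of the proposition preceding the lemma, the second is a finite \'etale $\PG_{\!0}(p^m,p^n)$-torsor. Consequently the composition is finite \'etale of degree $|\Delta(p^n\n)|\cdot|\PG_{\!0}(p^m,p^n)|$, which by the second exact sequence derived from the snake lemma diagram equals $|\overline{E}(p^m,p^n)|$. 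Since the orders match the generic degree of the cover, to conclude that $\XX_{\Gamma(p^n)}\to \XX_{G,\Gamma_{\!0}(p^m)}$ is an \'etale $\overline{E}(p^m,p^n)$-torsor it suffices to check that the map $\overline{E}(p^m,p^n)\times \XX_{\Gamma(p^n)}\to\XX_{\Gamma(p^n)}\times_{\XX_{G,\Gamma_{\!0}(p^m)}}\XX_{\Gamma(p^n)}$ is an isomorphism, which can be verified on geometric points: the $\Delta(p^n\n)$-action is already known to be simply transitive on the fibres of $\beta_2$, and lifts of elements of $\PG_{\!0}(p^m,p^n)$ to $\overline{E}(p^m,p^n)$ act simply transitively on the $\Delta(p^n\n)$-orbits, by the exact sequence $0\to \Delta(p^n\n)\to\overline{E}(p^m,p^n)\to\PG_{\!0}(p^m,p^n)\to 0$ and the torsor structure on $\XX_{G,\Gamma(p^n)}\to\XX_{G,\Gamma_{\!0}(p^m)}$.

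The main technical obstacle I anticipate is bookkeeping: making sure that the \emph{lifted} $\PG_{\!0}(p^m,p^n)$-action on $\XX_{\Gamma(p^n)}$ (which a priori is only defined on $\XX_{G,\Gamma(p^n)}$) really matches, up to the $\Delta(p^n\n)$-ambiguity, the action coming from our explicit combined action on $\XX_{\Gamma(p^n)}$. This is precisely the content of the two exact sequences involving $\overline{E}(p^m,p^n)$ derived from the snake lemma, so once this compatibility is spelled out on the level of moduli, the torsor property follows formally.
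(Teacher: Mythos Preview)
Your argument contains a circularity. You invoke part~(3) of the preceding Proposition (that $\XX_{G,\Gamma(p^n)} \to \XX_{G,\Gamma_{\!\scaleto{0}{3pt}}(p^m)}$ is a $\PG_{\!0}(p^m,p^n)$-torsor), but in the paper that statement is \emph{deduced from} the present lemma: the proof of the Proposition says explicitly ``Part (3) follows from Lem.~\ref{l:torsors-from-G*-to-G-fin-level}.1 and Lem.~\ref{l:diagonal-torsor-finite-level} below: the proof only uses the left hand side of~\eqref{dg:def-of-diagonal-E-torsor}, so this is not circular.'' The reason for this ordering is that $\XX_{G,\Gamma(p^n)}$ is only a coarse moduli space, so part~(3) has no direct moduli-theoretic proof; one instead obtains it by quotienting the diagonal $\overline{E}(p^m,p^n)$-torsor by the normal subgroup $\Delta(p^nN)$.

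The fix, which is exactly what the paper does, is to route the torsor verification through the \emph{other} factorisation in diagram~\eqref{dg:def-of-diagonal-E-torsor}: down the left and across the bottom. The map $\XX_{\Gamma(p^n)} \to \XX_{\Gamma_{\!\scaleto{0}{3pt}}(p^m)}$ is a $\Gammabar_{\!0}(p^m,p^n)$-torsor by part~(2) of the Proposition (which follows directly from the moduli description), and $\XX_{\Gamma_{\!\scaleto{0}{3pt}}(p^m)} \to \XX_{G,\Gamma_{\!\scaleto{0}{3pt}}(p^m)}$ is a $\Delta(N)$-torsor by Lem.~\ref{lem: et tor}. This matches the first short exact sequence $0\to \Gammabar_{\!0}(p^m,p^n)\to \overline{E}(p^m,p^n)\to \Delta(N)\to 0$, and one then checks the torsor diagram is Cartesian by decomposing it along this extension into the two Cartesian squares coming from these two known torsors. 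Your first two paragraphs (constructing the $\overline{E}(p^m,p^n)$-action and checking it lies over $\XX_{G,\Gamma_{\!\scaleto{0}{3pt}}(p^m)}$) are correct and essentially what the paper uses; only the torsor verification needs to be rerouted.
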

	\begin{proof}
		By definition in diagram~\eqref{dg:def-of-diagonal-E-torsor}, this map is the composition of an \'etale $\overline{\Gamma}_0(p^m,p^n)$-torsor with an \'etale $\Delta(N)$-torsor.
		It is therefore finite \'etale. From the fact that $\overline{E}(p^m,p^n)$ acts on $\XX_{\Gamma(p^n)}\to \XX_{G,\Gamma_{\!\scaleto{0}{3pt}}(p^m)}$, it is clear that the diagram defining the torsor property commutes. One then verifies that the diagram is Cartesian by decomposing it into smaller Cartesian diagrams induced from the torsor properties of $\XX_{\Gamma(p^n)}\to \XX_{\Gamma_{\!\scaleto{0}{3pt}}(p^m)}$ and $\XX_{\Gamma_{\!\scaleto{0}{3pt}}(p^m)}\to \XX_{G,\Gamma_{\!\scaleto{0}{3pt}}(p^m)}$.
	\end{proof}

	\subsection{Passing to infinite level}
	The following proposition is proven over $\C_p$ by Xu Shen \cite[Thm.~3.3.9]{Xu} in the much greater generality of Shimura varieties of abelian type. In our special case, the version over $\Q_p^\cyc$ is easy to deduce from our preparations. We first note:
	\begin{lem}
		The group $\Delta_\infty(N)=\varprojlim_n \Delta_n(N)$ is finite and $\Delta_\infty(N)=\Delta_n(N)$ for $n\gg 0$.
	\end{lem}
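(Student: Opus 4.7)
The plan is to reduce the lemma to a lattice computation inside the finitely generated abelian group $M := \mathcal{O}_F^{\times,+}$. By Dirichlet's unit theorem, $M$ is free of rank $g-1$. I would first verify that both subgroups
\[
U_n := (1+p^n\mathcal{O}_F)^{\times,+} \quad \text{and} \quad V_n := (1+p^nN\mathcal{O}_F)^{\times 2}
\]
are of finite index in $M$: the former because it is the kernel of the natural map $M \to (\mathcal{O}_F/p^n\mathcal{O}_F)^\times$, whose target is finite; the latter because squaring is injective on the torsion-free group $M$, so $V_n \cong W_n := M \cap (1+p^nN\mathcal{O}_F)^\times$, which is finite-index by the same argument. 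In particular, each $\Delta_n(N) = U_n/V_n$ is a finite abelian group, and the inverse limit $\Delta_\infty(N)$ is profinite.

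Second, to prove that the system is eventually constant, I would embed $M$ diagonally into $(\mathcal{O}_F \otimes \mathbb{Z}_p)^\times$ and apply the $p$-adic logarithm, which converges on $1+p^n\mathcal{O}_F \otimes \mathbb{Z}_p$ once $n$ is large enough (e.g.\ $n \geq 1$ for $p>2$, $n \geq 3$ for $p=2$). Under $\log$, the subgroups $U_n$ and $W_n$ pass to sublattices $L^U_n, L^W_n \subseteq p^n\mathcal{O}_F \otimes \mathbb{Z}_p$, and the squaring map on $M$ becomes multiplication by $2$. Since $2 \in \mathbb{Z}_p^\times$ for $p$ odd (with a shift in $n$ for $p=2$), a direct computation with these $\mathbb{Z}_p$-lattices shows that the transition maps $\Delta_{n+1}(N) \to \Delta_n(N)$ are both injective (i.e.\ $V_n \cap U_{n+1} = V_{n+1}$) and surjective (i.e.\ $U_n = U_{n+1} V_n$) for $n \gg 0$.

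The main obstacle is the surjectivity step: the $p$-adic logarithm produces approximate square roots in the completion, but surjectivity requires these to be realised by genuine global units in $M$. I would handle this by replacing $M$ with its $p$-adic closure $\overline{M}$ inside $(\mathcal{O}_F \otimes \mathbb{Z}_p)^\times$ (a finitely generated $\mathbb{Z}_p$-module) and observing that the finite quotients $\Delta_n(N)$ can be computed equivalently from the closed, finite-index subgroups $\overline{U_n}, \overline{V_n} \subseteq \overline{M}$; for this closed-subgroup version, the lattice statements above directly apply.

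Combining the three steps, $(\Delta_n(N))_n$ is an inverse system of finite groups which is eventually constant, so $\Delta_\infty(N)$ is finite and equals $\Delta_n(N)$ for $n \gg 0$.
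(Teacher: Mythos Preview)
Your approach is correct but takes a substantially more hands-on route than the paper. The paper's argument is essentially a single observation: the inclusions $(1+p^n\O_F)^{\times,+}\subseteq \O_F^{\times,+}$ and $(1+p^nN\O_F)^{\times,2}\subseteq (1+N\O_F)^{\times,2}$ induce compatible injections
\[
\Delta_n(N)\hookrightarrow \Delta(N)=\O_F^{\times,+}/(1+N\O_F)^{\times,2}
\]
into a \emph{fixed} finite group, so the $\Delta_n(N)$ may be regarded as a descending chain of subgroups of $\Delta(N)$ and must therefore stabilise. Your plan instead analyses the transition maps $\Delta_{n+1}(N)\to\Delta_n(N)$ directly via the $p$-adic logarithm, reducing to a lattice argument inside the $p$-adic closure $\overline M\subseteq\O_p^\times$. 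This works, but it requires several auxiliary verifications (convergence of $\log$, the $p=2$ adjustments, and especially the identification $U_n/V_n\cong\overline{U_n}/\overline{V_n}$, whose injectivity --- i.e.\ $U_n\cap\overline{V_n}=V_n$ --- is not purely formal and is precisely the global-versus-$p$-adic issue you flag). The paper's argument never leaves the global unit group $\O_F^{\times,+}$ and needs no $p$-adic analysis at all. What your approach buys is an explicit mechanism for why the transition maps eventually become isomorphisms; what the paper's approach buys is brevity.
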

	\begin{proof}
		There is a natural injective map, compatible for varying $n$,
		\[\Delta_n(N)= (1+p^n\O_F)^{\times,+}/(1+p^nN\O_F)^{\times,2}\hookrightarrow \O_F^{\times,+}/(1+N\O_F)^{\times,2}=\Delta(N). \]
		Since $\Delta(N)$ is finite, it follows that $\Delta_n(N)$ stabilises for $n\gg 0$.
	\end{proof}
	\begin{proposition}
		\begin{enumerate}
			\item There exist perfectoid spaces $\XX_{\Gamma(p^\infty)} $ and $\XX_{G,\Gamma(p^\infty)}$ such that
			\[
			\XX_{\Gamma(p^\infty)} \sim \varprojlim \XX_{\Gamma(p^n)} \hspace{12pt} \text{and} \hspace{12pt} \XX_{G,\Gamma(p^\infty)} \sim \varprojlim \XX_{G,\Gamma(p^n)}.
			\]
			\item There also exist perfectoid spaces $\XX^0_{\Gamma(p^\infty)} \sim \varprojlim \XX^0_{\Gamma(p^n)}$ and $\XX^0_{G,\Gamma(p^\infty)} \sim \varprojlim \XX^0_{G,\Gamma(p^n)}$.
			\item There is a natural morphism $\XX_{\Gamma(p^\infty)}\to  \XX_{G,\Gamma(p^\infty)}$ which is a pro-\'etale torsor for the profinite group  $\Delta(p^\infty N)=\varprojlim_n \Delta(p^nN)$.
			\item When restricted to connected components of the identity, it is a finite \'etale torsor $\XX^0_{\Gamma(p^\infty)}\to  \XX^0_{G,\Gamma(p^\infty)}$ for the finite group $\Delta_\infty(N)=\varprojlim_n \Delta_n(N)$.
		\end{enumerate}
	\end{proposition}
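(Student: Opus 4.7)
\emph{Plan.}  The strategy is to bootstrap from Thm.~\ref{p:X_Gamma*_0(p^infty) for G*}(4), handling identity components first (where the relevant quotients are by finite groups) and then passing to all components via their connected-component decomposition.

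For parts (2) and (4): By Lem.~\ref{l:Weil pairing map fits into commut diag comp G* to G}, the identity component $\XX^0_{\Gamma(p^n)}$ lies inside $\XX_{\Gamma^*(p^n)}$ as the fibre of the Weil pairing over $1\in (\ZZ/p^n\ZZ)^\times$, and coincides with $\XX^0_{\Gamma^*(p^n)}$. Since this identification respects the transition maps, the clopen subspace of the perfectoid $\XX_{\Gamma^*(p^\infty)}$ tracking the identity components at each finite level is itself perfectoid and satisfies the tilde-limit property; we take this as $\XX^0_{\Gamma(p^\infty)}$. For the $G$-side, Lem.~\ref{l:torsors-from-G*-to-G-fin-level}(2) gives finite \'etale $\Delta_n(N)$-torsors $\XX^0_{\Gamma(p^n)} \to \XX^0_{G,\Gamma(p^n)}$, and since $\Delta_n(N)$ stabilises to the finite group $\Delta_\infty(N)$ (as recorded just before the proposition), Lem.~\ref{l:perf-tilde-limits-and-quotients} applies to produce $\XX^0_{G,\Gamma(p^\infty)} := \XX^0_{\Gamma(p^\infty)}/\Delta_\infty(N)$; this simultaneously establishes (4).

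For parts (1) and (3): By Lem.~\ref{l:connectec component groups}, the components of $\XX_{\Gamma(p^n)}$ are indexed by $(\OF/p^n\OF)^\times$ via the Weil pairing, and by Lem.~\ref{l:equivariance of Weil pairing morphism from X_G^*,Gamma(p^n)}(1) the level-structure action of $\smallmatrd{\eta}{0}{0}{1}$ permutes them freely and transitively. Choosing lifts $\eta\in\OO_p^\times$ coherently across $n$ yields compatible isomorphisms $\XX_{\Gamma(p^n)} \cong (\OF/p^n\OF)^\times \times \XX^0_{\Gamma(p^n)}$, and in the limit we set $\XX_{\Gamma(p^\infty)} := \OO_p^\times \times \XX^0_{\Gamma(p^\infty)}$. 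This is perfectoid as a product of a profinite perfectoid space with a perfectoid space, and the tilde-limit property follows componentwise. The same recipe with $U_n$ in place of $(\OF/p^n\OF)^\times$ (using Lem.~\ref{l:connectec component groups}(2)) gives $\XX_{G,\Gamma(p^\infty)}$. Part (3) then follows by passing Lem.~\ref{l:torsors-from-G*-to-G-fin-level}(1) to the limit: a compatible system of finite \'etale torsors for the finite quotients of $\Delta(p^\infty N)$ is, by definition, a pro-\'etale $\Delta(p^\infty N)$-torsor.

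The main subtlety is avoiding a direct quotient by the infinite profinite group $\Delta(p^\infty N)$ (or equivalently by $\ZZ_p^\times$ in the description of Cor.~\ref{c:X_G*,Gamma* to X_G*,Gamma is open immersion}), which falls outside the reach of Lem.~\ref{l:perf-tilde-limits-and-quotients}. Our component decomposition sidesteps this by reducing all genuinely infinite-profinite quotients to the case of the finite group $\Delta_\infty(N)$ acting on identity components, with the remaining ``profinite direction'' absorbed into a product with the profinite perfectoid space $\OO_p^\times$ (resp.\ $U_\infty$).
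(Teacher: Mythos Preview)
Your overall strategy is reasonable, but there is a genuine gap in your argument for (2), and it propagates to (1).

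\textbf{The gap in (2).} You claim the identity component is a ``clopen subspace of the perfectoid $\XX_{\Gamma^*(p^\infty)}$''. This is false: the identity component is the fibre of the Weil pairing map $\mathbf e_\beta:\XX_{\Gamma^*(p^\infty)}\to \Z_p^\times$ over the single point $1$, and $\{1\}$ is closed but \emph{not open} in the profinite space $\Z_p^\times$. At each finite level the identity component is clopen, but the intersection of their preimages in $\XX_{\Gamma^*(p^\infty)}$ is only closed. So you cannot simply say ``it is itself perfectoid and satisfies the tilde-limit property'' without further work. One can realise the fibre as a fibre product of perfectoid spaces (hence perfectoid), but the tilde-limit density condition still requires justification. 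The paper handles this by citing Shen \cite[Cor.~3.3.4]{Xu}, which establishes perfectoidness of connected Shimura varieties at infinite level directly.

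\textbf{Comparison with the paper's architecture.} Your approach makes \emph{both} halves of (1) depend on (2), via the decomposition $\XX_{\Gamma(p^\infty)}\cong \OO_p^\times\times \XX^0_{\Gamma(p^\infty)}$. The paper instead obtains the first half of (1) directly from Cor.~\ref{c:X_G*,Gamma* to X_G*,Gamma is open immersion}, writing $\XX_{\Gamma(p^\infty)}=\XX_{\Gamma^*(p^\infty)}\times \OO_p^\times/\Z_p^\times$ as a product of the already-known perfectoid space with a profinite perfectoid set; this needs no knowledge of identity components. For the second half of (1), the paper (like you) does go through (2), invoking \cite[Prop.~3.3.5]{Xu} to pass from the identity component to the full space. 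Your component-decomposition idea for $\XX_{G,\Gamma(p^\infty)}$ via $U_\infty\times \XX^0_{G,\Gamma(p^\infty)}$ is a legitimate alternative route once (2) is in hand, and your arguments for (3) and (4) match the paper's. The essential missing ingredient is a proof (or citation) for the perfectoidness and tilde-limit property of $\XX^0_{\Gamma^*(p^\infty)}$.
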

	\begin{proof}
		By Theorem~\ref{p:X_Gamma*_0(p^infty) for G*}.4, there is a perfectoid space $\XX_{\Gamma^{\ast}(p^\infty)} \sim \varprojlim \XX_{\Gamma^{\ast}(p^n)}$. By Cor.~\ref{c:X_G*,Gamma* to X_G*,Gamma is open immersion} we have $\XX_{\Gamma(p^n)}=\XX_{\Gamma^{\ast}(p^n)}\times [(\OF/p^n\OF)^\times/(\Z/p^n\Z)^\times]$ on the level of adic spaces, and thus $\XX_{\Gamma(p^\infty)}:=\XX_{\Gamma^{\ast}(p^\infty)}\times \OO_p^\times/\Z_p^\times\sim \varprojlim_n \XX_{\Gamma^{\ast}(p^n)}\times [(\OF/p^n\OF)^\times/(\Z/p^n\Z)^\times] = \varprojlim_n \cX_{\Gamma(p^n)}$. 
		
		For (2) we note that $\XX^0_{\Gamma(p^n)}=\XX^0_{\Gamma^*(p^n)}$, and the existence of the perfectoid space $\XX^0_{\Gamma^*(p^\infty)} \sim \varprojlim \XX^0_{\Gamma^*(p^n)}$ follows from \cite[Cor.~3.3.4]{Xu}.
		From this we obtain the perfectoid space $\cX_{G,\Gamma(p^\infty)}^0$, using Lem.~\ref{l:perf-tilde-limits-and-quotients},  Lem.~\ref{l:torsors-from-G*-to-G-fin-level}.2 and the fact that $\Delta_\infty(N) = \Delta_n(N)$ for $n\gg 0$. This lemma also gives (4). We deduce the second part of (1) from the second part of (2) by \cite[Prop.~3.3.5]{Xu}.
		
		%For $(4)$ note that by Lem.~\ref{l:torsors-from-G*-to-G-fin-level}.2, we have 
		%$\XX^0_{\Gamma(p^n)}=\XX^0_{G,\Gamma(p^n)}\times \Delta_n(N)$, it follows again that $\XX^0_{\Gamma(p^\infty)}:=\XX^0_{G,\Gamma(p^\infty)}\times \Delta_\infty(N) \sim \varprojlim_n \XX^0_{G,\Gamma(p^n)}\times \Delta_n(N)$.
		
		Finally, (3) follows from Lem.~\ref{l:torsors-from-G*-to-G-fin-level}.1 by as usual applying the fact that perfectoid tilde-limits commute with fibre products to the diagram defining the torsor property.
	\end{proof}
	\subsubsection{The action of $G(\Q_p)$}\label{s:the action og G(Q_p)}
	Since each $\XX_{\Gamma(p^n)} \rightarrow \XX$ is an \'etale $G(\Z/p^n\Z)=\GL_2(\O_F/p^n\OF)$-torsor, it follows that $\XX_{\Gamma(p^\infty)} \rightarrow \XX$ is a pro-\'etale $G(\Z_p)=\GL_2(\O_p)$-torsor. Here we recall that $\gamma \in \GL_2(\O_p)$ acts by precomposition with $\gamma^{\vee}=\det(\gamma)\gamma^{-1}$ on the level structure $\O_p^2\isorightarrow T_pA^\vee$.

	We shall now for a moment include the dependence on the polarisation ideal $\gothc$ into the notation because,
	as in the Siegel case, the $G(\Z_p)$-action extends naturally to a $G(\Q_p)$-action which, in our case, permutes the spaces $\XX_{\gothc,\Gamma(p^\infty)}$ over the polarisation ideals $\gothc$, as we shall now describe.
	\begin{lem}\label{l:KL-(1.9.1)}
		Let $(A,\iota,\lambda,\mu_N)$ be a HBAV. Let $\mathfrak a\subseteq \O_F$ be an ideal coprime to $N$ and let $D\subseteq A[\mathfrak a]$ be any $\O_F$-submodule scheme. Then there is a unique way to make the isogeny $\varphi:A\to B:=A/D$ into a morphism of HBAVs $(A,\iota,\lambda,\mu_N)\to (B,\iota',\lambda',\mu'_N)$. If $D\cong \oplus_{i=1}^k \O_F/\mathfrak b_i$ and $\mathfrak b:=\mathfrak b_1\cdots \mathfrak b_k$, then $\lambda'$ is the unique $\gothc\mathfrak b$-polarisation making the following diagram commute:
		\begin{equation}
		\begin{tikzcd}[row sep = 0.55cm]
		A\otimes \gothc\arrow[r, "\lambda"]                                                  & A^\vee                        \\
		B\otimes \gothc \mathfrak b \arrow[u, "\varphi^D\otimes \gothc"] \arrow[r, "\lambda'"] & B^\vee\arrow[u, "\varphi^{\vee}"].
		\end{tikzcd}
		\end{equation}
		Here $\varphi^D$ is such that $B\otimes \mathfrak b\xrightarrow{\varphi^D}A\xrightarrow{\varphi}B$ is the natural map $B\otimes \mathfrak{b} \to B\otimes \O_F = B$.
	\end{lem}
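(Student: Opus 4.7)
The claim has three components: the $\O_F$-action $\iota'$, the tame level $\mu'_N$, and the $\gothc\mathfrak{b}$-polarisation $\lambda'$ on $B=A/D$, all making $\varphi$ a morphism of HBAVs. The first two are straightforward. Since $D$ is an $\O_F$-stable finite flat subgroup scheme of $A$, the quotient $B$ is an abelian scheme with a unique $\O_F$-action $\iota'$ making $\varphi$ equivariant, and $\iota'$ inherits stability under the Rosati involution. Since $(\mathfrak{a},N)=1$, we have $D\cap A[N]=0$, so $\varphi$ restricts to an $\O_F$-linear isomorphism $A[N]\isorightarrow B[N]$; composing $\mu_N$ with this isomorphism gives the unique $\mu'_N$ compatible with $\varphi$.

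For $\lambda'$, I would first establish uniqueness via a rigidity argument. If $\lambda_1,\lambda_2:B\otimes\gothc\mathfrak{b}\to B^\vee$ are any two morphisms making the diagram commute, then $\varphi^\vee\circ(\lambda_1-\lambda_2)=0$, so $\lambda_1-\lambda_2$ factors through $\ker\varphi^\vee$, the Cartier dual of $D$, which is a finite flat group scheme. Since the source is an abelian scheme, rigidity forces $\lambda_1-\lambda_2=0$, giving $\lambda_1=\lambda_2$.

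For existence, I would construct $\lambda':=(\varphi^\vee)^{-1}\circ\lambda\circ(\varphi^D\otimes\gothc)$ a priori in the $\Q$-isogeny category, where $\varphi^\vee$ is rationally invertible, and then show this extends to an honest morphism of abelian schemes $B\otimes\gothc\mathfrak{b}\to B^\vee$. Integrality can be checked prime by prime on $\mathfrak{q}$-adic Tate modules for each prime $\mathfrak{q}$ of $\O_F$: at $\mathfrak{q}\nmid\mathfrak{b}$, $\varphi^\vee$ already induces an isomorphism on $T_\mathfrak{q}$, so the claim is automatic; at $\mathfrak{q}\mid\mathfrak{b}$, the decomposition $D\cong\bigoplus_i\O_F/\mathfrak{b}_i$ combined with the defining identity $\varphi\circ\varphi^D=\mathrm{nat}_{\mathfrak{b}}$ yields enough explicit structure on $\varphi^D$ to check that the rational formula preserves integral Tate lattices.

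Once $\lambda'$ is known to be a genuine morphism, the remaining polarisation axioms follow formally: symmetry under biduality descends from that of $\lambda$ together with the $\O_F$-linearity of $\varphi$ and $\varphi^D$; the isomorphism claim $B\otimes\gothc\mathfrak{b}\isorightarrow B^\vee$ follows by comparing degrees on both sides of $\varphi^\vee\circ\lambda'=\lambda\circ(\varphi^D\otimes\gothc)$; and positivity is inherited from $\lambda$ via pullback along $\varphi^D$. The principal technical obstacle in this plan is the integrality of $\lambda'$ at primes $\mathfrak{q}\mid\mathfrak{b}$, which requires careful $\O_F$-equivariant local analysis of the isogeny $\varphi^D$.
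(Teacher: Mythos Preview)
Your handling of $\iota'$ and $\mu'_N$ matches the paper exactly. For $\lambda'$, however, you take a genuinely different route. The paper does not construct $\lambda'$ directly: it cites Kisin--Lai, \S1.9, for the case where $D\cong\O_F/\mathfrak b_1$ is a single cyclic summand, and then treats general $D$ by factoring $\varphi$ as a chain of isogenies with cyclic kernels $\O_F/\mathfrak b_i$ and applying the cyclic case iteratively. Your approach---defining $\lambda':=(\varphi^\vee)^{-1}\circ\lambda\circ(\varphi^D\otimes\gothc)$ in the isogeny category and then verifying integrality on $\mathfrak q$-adic Tate modules---is more self-contained and makes uniqueness explicit via rigidity, which the paper leaves implicit in the citation. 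The trade-off is that the integrality check at $\mathfrak q\mid\mathfrak b$, which you rightly identify as the crux, is a real computation (one must show that $\lambda\circ(\varphi^D\otimes\gothc)$ sends $T_{\mathfrak q}(B\otimes\gothc\mathfrak b)$ into $\varphi^\vee(T_{\mathfrak q}B^\vee)$), whereas the paper's reduction-and-citation bypasses this entirely. One small caveat: the Tate-module criterion for a rational morphism of abelian schemes to be integral is cleanest over a field; since the lemma is stated over an arbitrary base $S$, you would need either to reduce to geometric fibres and invoke rigidity, or to argue directly with the finite flat group scheme $\ker\varphi^\vee$.
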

	\begin{proof}
		Let $\iota'$ be the quotient action and let $\mu'_N$ be the composition of $\mu_N$ with $A\to A/D$. It remains to construct $\lambda'$ and show that it is a Deligne--Pappas polarisation as described. We refer to \cite[\S1.9]{Kisin-Lai} for the construction if $D$ is of the form $D=\O_F/\mathfrak b_i$. The general case follows by decomposing into a chain of isogenies of this form.
	\end{proof}
	
	Let now $\gamma\in G(\QQ_p)=\GL_2(F_p)$. If $\gamma$ is an element of the form $\smallmatrd{x}{0}{0}{x}$ for some $x\in \O_p$, we will see that the action we now define sends $A\mapsto A/A[x]=A\otimes (x)^{-1}$. For general $\gamma=\smallmatrd{a}{b}{c}{d}$, we may therefore after rescaling assume that $\gamma \in M_2(\O_p) \cap \GL_2(F_p)$.

	We may regard $\gamma$ as acting on $\O_p^2\otimes\gothc^{-1}$. In particular, via $\lambda^{-1}\circ\alpha:\O_p^2\otimes\gothc^{-1}\isorightarrow T_pA$, the matrix $\gamma$ acts $\O_F$-linearly on $T_pA$ and thus on $A[p^n]$ for all $n$. For $n\to \infty$, the kernel $D$ of $\gamma:A[p^n]\to A[p^n]$ stabilises. 
	The automorphism $\gamma$ now sends $(A,\iota,\lambda,\mu_N,\alpha)$ to the HBAV $(B:=A/D,\iota',\lambda',\mu'_N,\alpha')$ from Lem.~\ref{l:KL-(1.9.1)}, where $\alpha':\O_p^2\isorightarrow T_pB^\vee$ is determined as follows:

	\begin{lem}\label{l:action-of-G(Q_p)}
		There is a unique $\alpha':\O_p^2\isorightarrow T_pB^{\vee}$ such that the following diagram commutes:
		\begin{center}
			\begin{tikzcd}[row sep = 0.55cm]
			\O_p^2 \arrow[r, "\alpha"]                             & T_pA^{\vee}                                 \\
			\O_p^2 \arrow[r, "\alpha'"] \arrow[u, "\gamma^{\vee}"] & T_pB^{\vee}. \arrow[u, "\varphi^{\vee}"]
			\end{tikzcd}
		\end{center}
	\end{lem}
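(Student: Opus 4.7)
The map $\varphi^{\vee}\colon T_pB^{\vee}\hookrightarrow T_pA^{\vee}$ is injective, since Cartier dualising the exact sequence of $p$-divisible groups $0\to D\to A[p^\infty]\to B[p^\infty]\to 0$ gives an exact sequence
\[
0\to B^{\vee}[p^\infty]\to A^{\vee}[p^\infty]\to D^{\vee}\to 0
\]
with $D^{\vee}$ finite, so passing to Tate modules kills $D^{\vee}$. Hence if $\alpha'$ exists it is unique, equal to the composition $(\varphi^{\vee})^{-1}\circ\alpha\circ\gamma^{\vee}$ computed in $V_pB^{\vee}$. The whole task is thus to check that this composition actually lands in $T_pB^{\vee}\subseteq T_pA^{\vee}$, i.e.\ that $\alpha(\gamma^{\vee}(\OO_p^2))$ is contained in $\varphi^{\vee}(T_pB^{\vee})$.

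The plan is to identify $\varphi^{\vee}(T_pB^{\vee})$ as an orthogonal complement under the Weil pairing and then perform an elementary matrix computation. For $N$ large enough that $D\subseteq A[p^N]$, the exact sequence $0\to D\to A[p^N]\to B[p^N]\to 0$ dualises to the sequence exhibiting $B^{\vee}[p^N]\hookrightarrow A^{\vee}[p^N]$ as the orthogonal complement $D^{\perp}$ of $D$ under the Weil pairing $e_{p^N}\colon A[p^N]\times A^{\vee}[p^N]\to\mu_{p^N}$. Passing to the inverse limit, $\varphi^{\vee}(T_pB^{\vee})$ is precisely the set of $y\in T_pA^{\vee}$ whose reduction mod $p^N$ annihilates $D$.

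Next, transport everything through the trivialisations $\alpha\colon\OO_p^2\isorightarrow T_pA^{\vee}$ and $\lambda^{-1}\circ\alpha\colon \OO_p^2\otimes\gothc^{-1}\isorightarrow T_pA$, and use the $\OO_F$-linear refinement $\widetilde{\mathbf e}_N$ (Defn.~\ref{def:linearisation}) of the Weil pairing together with the pairing trivialisation $\beta$ from \S\ref{Section: lvl HMFs}. Under these identifications, $\widetilde{\mathbf e}_N$ becomes (up to $\beta$) the standard determinant pairing
\[
\det\colon (\OO_p/p^N)^2\otimes\gothc^{-1}\times(\OO_p/p^N)^2\longrightarrow \OO_p/p^N\otimes\gothc^{-1}\otimes\gothd^{-1}(1),
\]
and $D\subseteq (\OO_p/p^N)^2\otimes\gothc^{-1}$ corresponds to $\ker(\gamma)$. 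The key algebraic identity is
\[
\det(\gamma v,w)=\det(v,\gamma^{\vee}w)\quad\text{for all }v,w,
\]
which is immediate from $\gamma^{\vee}=\det(\gamma)\gamma^{-1}$ and bilinearity of $\det$ on $2$-dimensional space. Taking $v\in D$, so $\gamma v=0$, this gives $\det(v,\gamma^{\vee}w)=0$ for every $w\in\OO_p^2$, i.e.\ $\gamma^{\vee}(\OO_p^2)\subseteq D^{\perp}$ at level $N$. This holds for every sufficiently large $N$, so $\alpha\circ\gamma^{\vee}$ factors through $\varphi^{\vee}(T_pB^{\vee})$, providing the sought $\alpha'$.

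It remains to check that $\alpha'$ is an isomorphism. Both source and target are free $\OO_p$-modules of rank two, and $\alpha'$ is injective because $\alpha\circ\gamma^{\vee}$ and $\varphi^{\vee}$ are. For surjectivity, compare $\OO_p$-lengths of cokernels: the cokernel of $\gamma^{\vee}\colon\OO_p^2\to\OO_p^2$ has length $v_p(\det\gamma^{\vee})=v_p(\det\gamma)$, which equals $\log_p|D|$ by the elementary divisor theorem, and the cokernel of $\varphi^{\vee}$ is $D^{\vee}$, of the same length. Thus $\alpha'$ has trivial cokernel and is an isomorphism. The only genuinely delicate step is the identification of $\varphi^{\vee}(T_pB^{\vee})$ with $D^{\perp}$ and the correct bookkeeping of the $\OO_F$-structure and the polarisation twists; once the determinant pairing description is in place, the rest is formal.
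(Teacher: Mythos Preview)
Your proof is correct and complete, but it takes a different route from the paper. The paper does not use the Weil pairing at all: instead it invokes Lem.~\ref{l:KL-(1.9.1)} to factor $\varphi^{\vee}$ as $\lambda\circ(\varphi^D\otimes\gothc)\circ\lambda'^{-1}$ on Tate modules, reducing the problem to showing that $\alpha$ identifies $\coker(\gamma^{\vee}\colon\O_p^2\to\O_p^2)$ with $\coker(\varphi^D\colon T_pB\otimes\gothc\mathfrak{b}\to T_pA\otimes\gothc)=A[\mathfrak{b}]/D$. This is done via the identity $\gamma\gamma^{\vee}=\det(\gamma)\cdot\mathrm{id}$, which gives $\coker\gamma^{\vee}=\coker(\det\gamma)/\coker\gamma$, and then a $\Tor$-argument shows $\alpha$ sends $\coker\gamma\to D$ and $\coker(\det\gamma)\to A[\mathfrak{b}]$.

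Your approach via orthogonal complements under the Weil pairing and the adjugate identity $\det(\gamma v,w)=\det(v,\gamma^{\vee}w)$ is arguably more conceptual and avoids the auxiliary isogeny $\varphi^D$; the paper's approach is more hands-on with cokernels and ties in directly with the polarisation bookkeeping already set up in Lem.~\ref{l:KL-(1.9.1)}. One small remark: your invocation of $\beta$ is not needed, since for a $\Gamma(p^\infty)$ (as opposed to $\Gamma^{\ast}(p^\infty)$) level structure the similitude factor in diagram~\eqref{eq:similitude} is only an element of $(\O_p/p^N)^{\times}$, but this is harmless because orthogonality is insensitive to scaling by units.
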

	\begin{proof}
		By Lem.~\ref{l:KL-(1.9.1)}, it suffices to show that there is a unique dotted arrow making the diagram
		\begin{center}
			\begin{tikzcd}[row sep = 0.55cm]
			\O_p^2  \arrow[r, "\alpha"]                             & T_pA\otimes \gothc  \arrow[r,"\lambda"]                                  & T_pA^{\vee}                            \\
			\O_p^2 \arrow[r, dotted] \arrow[u, "\gamma^{\vee}"] & T_pB\otimes \gothc\mathfrak b \arrow[u, "\varphi^D"] \arrow[r, "\lambda'"] & T_pB^{\vee} \arrow[u, "\varphi^\vee"]
			\end{tikzcd}
		\end{center}
		commutative.
		Since all arrows become isomorphisms upon inverting $p$, it suffices to show that the cokernels of $\gamma^\vee$ and $\varphi^D$ are identified by $\alpha$. 
		As usual, one sees that the cokernel of $\varphi^D$ is given by $A[\mathfrak b]/D$. Since $\gamma\circ \gamma^\vee =\smallmatrd{\det \gamma}{0}{0}{\det \gamma}$, we have $\coker \gamma^{\vee}=\coker(\det \gamma)/\coker \gamma$. Let now $n$ be large enough that $p^n$ kills $\coker \gamma$, then the $\Tor$-sequence for quotienting by $p^n$ shows that $\alpha$ sends $\coker \gamma$ to $\coker (\gamma:T_pA\to T_pA)=(\ker \gamma:A[p^n]\to A[p^n])=D$. Second, we have $\det(\gamma)\O_p = \mathfrak b\O_p$, and the same $\Tor$-argument shows that $\alpha$ sends $\coker(\det \gamma)$ to $\ker(\det \gamma:A[p^n]\to A[p^n])=A[\mathfrak b]$. This shows that $\alpha$ sends $\coker \gamma^{\vee}$ to $A[\mathfrak b]/D=\coker \varphi^D$.
	\end{proof}
	It is clear from this characterisation of $\gamma$ and the contravariance of $-^{\vee}$ that this is compatible with the multiplication in $G(\Q_p)$, and thus defines an action as desired. We moreover note that for $\gamma\in G(\Z_p)$, we have $\varphi=\id$ and therefore the action thus defined coincides with the action by precomposition with $\gamma^\vee$. Thus the $G(\Q_p)$-action extends the $G(\Z_p)$-action defined earlier.
	
	\begin{lem}
		Let $n \in \ZZ_{\geq 1} \cup \{\infty\}$.
		\begin{enumerate}
			\item $\XX_{\Gamma(p^\infty)} \rightarrow \XX_{\Gamma_{\!\scaleto{0}{3pt}}(p^n)}$ is a pro-\'etale torsor for the group $\Gamma_{\!0}(p^n):=\varprojlim_{m} \Gammabar_{\!0}(p^n,p^m)$.
			\item $\XX_{G,\Gamma(p^\infty)}\rightarrow \XX_{G,\Gamma_{\!\scaleto{0}{3pt}}(p^n)}$ is a pro-\'etale torsor for the group  $\mathrm{P}\Gamma_{\!0}(p^n) := \varprojlim_{m}\PG_{\!0}(p^n,p^m)$.
			\item  $\mathcal X_{\Gamma(p^\infty)}\to \mathcal X_{G,\Gamma_{\!\scaleto{0}{3pt}}(p^n)}$ is a pro-\'etale torsor for the group   $E(p^n):= \varprojlim_{m}\overline{E}(p^n,p^m)$.
		\end{enumerate}
		(Notice we have swapped $n$ and $m$; this is for notational convenience later on).
	\end{lem}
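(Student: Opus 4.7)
The plan is to obtain all three pro-\'etale torsor statements by passing to the inverse limit $m \to \infty$ from the corresponding finite-level torsor statements already at our disposal, namely the preceding proposition (for parts (1) and (2)) and Lem.~\ref{l:diagonal-torsor-finite-level} (for part (3)). Concretely, for every $m \geq n$, these results supply finite \'etale torsor structures
\[
\XX_{\Gamma(p^m)} \to \XX_{\Gamma_{\!0}(p^n)}, \quad \XX_{G,\Gamma(p^m)} \to \XX_{G,\Gamma_{\!0}(p^n)}, \quad \XX_{\Gamma(p^m)} \to \XX_{G,\Gamma_{\!0}(p^n)},
\]
for the groups $\Gammabar_{\!0}(p^n,p^m)$, $\PG_{\!0}(p^n,p^m)$, and $\overline{E}(p^n,p^m)$, respectively. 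These form compatible inverse systems as $m$ varies, via the natural surjections $\Gammabar_{\!0}(p^n,p^{m+1}) \twoheadrightarrow \Gammabar_{\!0}(p^n,p^{m})$ (and analogously in the other two cases), whose inverse limits are by definition the profinite groups $\Gamma_{\!0}(p^n)$, $\mathrm{P}\Gamma_{\!0}(p^n)$, and $E(p^n)$ appearing in the statement.

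For each case, the torsor property at level $m$ is encoded in a Cartesian diagram of adic spaces of the shape
\[
\begin{tikzcd}[row sep = 0.5cm]
\Gammabar_{\!0}(p^n,p^m) \times \XX_{\Gamma(p^m)} \arrow[r, "\mathrm{act}"] \arrow[d, "\mathrm{pr}"'] & \XX_{\Gamma(p^m)} \arrow[d] \\
\XX_{\Gamma(p^m)} \arrow[r] & \XX_{\Gamma_{\!0}(p^n)},
\end{tikzcd}
\]
and mutatis mutandis for parts (2) and (3). The preceding proposition produces perfectoid tilde-limits $\XX_{\Gamma(p^\infty)} \sim \varprojlim_m \XX_{\Gamma(p^m)}$ and $\XX_{G,\Gamma(p^\infty)} \sim \varprojlim_m \XX_{G,\Gamma(p^m)}$. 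Since perfectoid tilde-limits commute with finite fibre products of perfectoid spaces (as used repeatedly in the proof of Thm.~\ref{p:X_Gamma*_0(p^infty) for G*}), and since the products by the profinite perfectoid spaces associated to the groups respect these tilde-limits, passing the above Cartesian diagram to the limit over $m$ yields the desired pro-\'etale $\Gamma_{\!0}(p^n)$-torsor structure on $\XX_{\Gamma(p^\infty)} \to \XX_{\Gamma_{\!0}(p^n)}$, and analogously for parts (2) and (3).

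The main technical point to verify is that the product with each profinite group is compatible with the tilde-limit; this is where I will invoke that profinite perfectoid spaces are affinoid perfectoid, so that products of the form $\Gamma \times \XX_{\Gamma(p^m)}$ are again affinoid perfectoid and the density condition required of a perfectoid tilde-limit is inherited from $\XX_{\Gamma(p^m)} \to \XX_{\Gamma(p^\infty)}$. Once this is in place, the verification that the limit diagram is Cartesian and that the action map induces the torsor structure is routine, and the cases $n \in \ZZ_{\geq 1}$ and $n = \infty$ are handled simultaneously by the same argument, interpreting $\Gammabar_{\!0}(p^\infty,p^m)$ as the subgroup of upper-triangular matrices in $G(\ZZ/p^m\ZZ)$.
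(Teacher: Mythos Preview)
Your proposal is correct and follows essentially the same approach as the paper: the paper's proof is the one-line observation that the Cartesian diagrams expressing the torsor property at finite level remain Cartesian in the limit because perfectoid tilde-limits commute with fibre products. Your write-up simply unpacks this in more detail, but the underlying argument is identical.
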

	\begin{proof}
		The diagrams expressing the torsor property are Cartesian since the corresponding Cartesian diagrams at finite level are, and perfectoid tilde-limits commute with fibre products.
	\end{proof}
	In summary, taking the limit over diagram~\eqref{dg:def-of-diagonal-E-torsor}, we thus get a diagram of pro-\'etale torsors
	\begin{equation}\label{eq:diag comparing G*,Gamma*(p^infty) to G,Gamma(p^infty) over tame}
	\begin{tikzcd}[column sep = {3cm,between origins},row sep = 0.7cm]
	\mathcal X_{\Gamma^*(p^\infty)} \arrow[rd, swap, "\GIst"] \ar[r]	&\mathcal X_{\Gamma(p^\infty)} \arrow[d,"\GI"'] \arrow[rd, "{E(p^n)}"] \arrow[r, "\Delta(p^\infty \nn)"] & \XX_{G,\Gamma(p^\infty)} \arrow[d, "\PGI"]   \\
	&\mathcal{X}_{\Gamma_{\!\scaleto{0}{3pt}}(p^n)} \arrow[r,"\Delta(N)"'] & \mathcal{X}_{G,\Gamma_{\!\scaleto{0}{3pt}}(p^n)}
	\end{tikzcd} 
	\end{equation}
	where $\Delta(p^\infty N):= \varprojlim_{n} \Delta(p^n N)$.
	
	\subsubsection{Comparison to $\XX_{\Gamma^*(p^\infty)}$}
	Taking limits, the Weil pairing morphism \eqref{m:Weil-pairing-morphisms} induces maps 
	\[
	\varprojlim \mathbf{e}_{n,\beta} =: \mathbf{e}_\beta : \XX_{\Gamma(p^\infty)} \rightarrow \O_p^\times\qquad \text{ and }\qquad 
	\varprojlim \mathbf{e}_{n} =:\mathbf{e}: \XX_{G,\Gamma(p^\infty)} \rightarrow \gothc\d^{-1}(1)^\times,
	\]
	where the targets refer to the associated profinite perfectoid groups.  Moreover, in the limit we obtain a level action of $G(\Zp)$. Through this we define a level structure action of $\eta \in \OO_p^\times$ acting by $\smallmatrd{\eta}{0}{0}{1}$. In the limit, Lem.~\ref{l:equivariance of Weil pairing morphism from X_G^*,Gamma(p^n)} and Cor.~\ref{c:X_G*,Gamma* to X_G*,Gamma is open immersion} give the following two lemmas.
	
	\begin{lemma}\label{l:Gamma*(p^infty) x O_p^x->Gamma(p^infty) is torsor}
		The anti-diagonal action of $\OO_p^\times$ restricts to a $\Zp^\times$-torsor
		\[
		\XX_{\Gamma^*(p^\infty)} \times \OO_p^\times \longrightarrow \XX_{\Gamma(p^\infty)}.
		\]
	\end{lemma}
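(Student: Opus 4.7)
The plan is to deduce this from the finite-level version already recorded in Corollary~\ref{c:X_G*,Gamma* to X_G*,Gamma is open immersion}, by passing to the tilde-limit in $n$. Recall that at finite level, the anti-diagonal action of $(\OO_F/p^n\OO_F)^\times$ on $\mathcal X_{\Gamma^*(p^n)}\times (\OO_F/p^n\OO_F)^\times$ makes the multiplication map
\[
(\OO_F/p^n\OO_F)^\times\times \mathcal X_{\Gamma^*(p^n)}\longrightarrow \mathcal X_{\Gamma(p^n)}
\]
into a $(\ZZ/p^n\ZZ)^\times$-torsor; equivalently, the diagram expressing the torsor property (i.e.\ that the map from the source together with the anti-diagonal action defines an isomorphism onto the fibre product with itself over $\cX_{\Gamma(p^n)}$) is Cartesian.

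First, I would assemble the objects at infinite level. The target $\cX_{\Gamma(p^\infty)}$ was constructed above as the perfectoid tilde-limit of the $\cX_{\Gamma(p^n)}$, and similarly $\cX_{\Gamma^*(p^\infty)}\sim\varprojlim_n\cX_{\Gamma^*(p^n)}$ by Thm.~\ref{p:X_Gamma*_0(p^infty) for G*}. Under the convention fixed in the Notation subsection, $\OO_p^\times=\varprojlim_n (\OO_F/p^n\OO_F)^\times$ is a profinite perfectoid space, and $\ZZ_p^\times=\varprojlim_n(\ZZ/p^n\ZZ)^\times$. Since all transition maps in sight are affinoid on the perfectoid side, and since products of perfectoid spaces with profinite perfectoid spaces are again tilde-limits of the corresponding finite-level products, we obtain
\[
\cX_{\Gamma^*(p^\infty)}\times\OO_p^\times\ \sim\ \varprojlim_n\bigl(\cX_{\Gamma^*(p^n)}\times (\OO_F/p^n\OO_F)^\times\bigr).
\]

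Next, I would check that the anti-diagonal $\OO_p^\times$-action and the multiplication map to $\cX_{\Gamma(p^\infty)}$ are compatible in the inverse system: this is transparent from the definitions, as the anti-diagonal action and multiplication are already defined compatibly at each finite level $n$ in Cor.~\ref{c:X_G*,Gamma* to X_G*,Gamma is open immersion}. Since perfectoid tilde-limits commute with fibre products of adic spaces, the Cartesian diagram encoding the $(\ZZ/p^n\ZZ)^\times$-torsor property at level $n$ passes to a Cartesian diagram at infinite level with structure group $\ZZ_p^\times=\varprojlim_n(\ZZ/p^n\ZZ)^\times$. This yields the claimed $\ZZ_p^\times$-torsor structure.

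The only mild subtlety, and the step I would be most careful about, is the interplay between the different topologies: one must ensure that the anti-diagonal $\ZZ_p^\times$-action on $\cX_{\Gamma^*(p^\infty)}\times\OO_p^\times$ is pro-\'etale (equivalently, that the multiplication map is a pro-\'etale $\ZZ_p^\times$-torsor). This follows from the fact that each finite level is a finite \'etale $(\ZZ/p^n\ZZ)^\times$-torsor, combined with the tilde-limit description above, exactly in the same way as for the other pro-\'etale torsors constructed earlier in this section. No new input is needed beyond Cor.~\ref{c:X_G*,Gamma* to X_G*,Gamma is open immersion} and the standard compatibility of tilde-limits with fibre products.
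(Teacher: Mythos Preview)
Your proposal is correct and matches the paper's approach exactly: the paper simply states that ``in the limit'' Cor.~\ref{c:X_G*,Gamma* to X_G*,Gamma is open immersion} gives this lemma, and you have spelled out precisely what that means—taking tilde-limits of the finite-level $(\ZZ/p^n\ZZ)^\times$-torsors and using that perfectoid tilde-limits commute with fibre products to obtain the pro-\'etale $\ZZ_p^\times$-torsor at infinite level. If anything, your writeup is more detailed than the paper's one-line justification.
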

	
	\begin{lemma}\label{l:equivariance of Weil pairing morphism from X_G,Gamma(p^infty)}
		For any $(\gamma,x) \in E(p)$, the following diagram commutes:
		\begin{center}
			\begin{tikzcd}[row sep = 0.55cm]
			\XX_{\Gamma(p^\infty)} \arrow[d, "{(\gamma,x)}"] \arrow[r, "\mathbf e_\beta"] & \O_p^\times \arrow[d, "\det(\gamma)x^{-1}"] \\
			\XX_{\Gamma(p^\infty)} \arrow[r, "\mathbf e_{\beta}"] & \O_p^\times.
			\end{tikzcd}
		\end{center}
	\end{lemma}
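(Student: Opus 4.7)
The plan is to reduce the statement to the finite-level equivariance already established in Lemma~\ref{l:equivariance of Weil pairing morphism from X_G^*,Gamma(p^n)}, by unwinding the definition of $E(p)$ and taking an inverse limit. Since $\mathbf e_\beta = \varprojlim_m \mathbf e_{m,\beta}$ is by construction the limit of the finite-level Weil pairing morphisms, and the action of $E(p) = \varprojlim_m \overline{E}(p,p^m)$ on $\XX_{\Gamma(p^\infty)}$ is the limit of the actions of $\overline{E}(p,p^m)$ on $\XX_{\Gamma(p^m)}$, it suffices to verify the analogous diagram on $\XX_{\Gamma(p^m)}$ for each $m$, with $(\gamma,x)$ replaced by its image in $\overline E(p,p^m)$.

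First I would decompose any element of $\overline E(p,p^m) = (\Gammabar_{\!0}(p,p^m) \times \OO_F^{\times,+})/(1+N\OO_F)^\times$ as the product of the image of a pure level-structure element $(\gamma,1)$ with $\gamma \in \Gammabar_{\!0}(p,p^m)$ and a pure polarisation element $(1,x)$ with $x \in \OO_F^{\times,+}$. These two factors act on different pieces of the moduli data (the level structure $\alpha$ and the polarisation $\lambda$ respectively), so they commute and one can apply Lemma~\ref{l:equivariance of Weil pairing morphism from X_G^*,Gamma(p^n)} to each piece separately: part (1) gives that the LS action by $\gamma$ transforms $\mathbf e_{m,\beta}$ by multiplication by $\det(\gamma)$, while part (2) gives that the polarisation action by $x$ transforms it by $x^{-1}$. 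Composing these two transformations yields precisely the factor $\det(\gamma)x^{-1}$ claimed in the statement.

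Next I would verify that the formula $(\gamma,x) \mapsto \det(\gamma)x^{-1}$ factors through the quotient defining $\overline E(p,p^m)$, i.e.\ that the diagonal embedding $(1+N\OO_F)^\times \hookrightarrow \Gammabar_{\!0}(p,p^m) \times \OO_F^{\times,+}$ sending $\eta \mapsto (\smallmatrd{\eta}{0}{0}{\eta}, \eta^2)$ lands in the kernel of this character. Indeed, under our formula this element is sent to $\det(\smallmatrd{\eta}{0}{0}{\eta})\cdot(\eta^2)^{-1} = \eta^2 \cdot \eta^{-2} = 1$, so the claimed transformation rule is well-defined on $\overline E(p,p^m)$. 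The commutative diagram at level $m$ then passes to the inverse limit over $m$ to give the statement on $\XX_{\Gamma(p^\infty)}$.

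There is essentially no analytic obstacle here: the argument is a formal consequence of the two finite-level equivariances combined with the construction of $E(p)$, and the only point that requires care is tracking the sign/inverse conventions for the polarisation action (which appears with $x^{-1}$ rather than $x$, mirroring the appearance of $\lambda^{-1}$ in \eqref{TX pairing}) and ensuring the quotient is well-defined. Both points are already encoded in the proof of Lemma~\ref{l:equivariance of Weil pairing morphism from X_G^*,Gamma(p^n)} and in the construction of $\overline E(p,p^m)$ via the snake-lemma diagram, so no new input is needed.
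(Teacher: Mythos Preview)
Your proposal is correct and follows essentially the same approach as the paper: the paper's proof is simply the sentence ``In the limit, Lem.~\ref{l:equivariance of Weil pairing morphism from X_G^*,Gamma(p^n)} and Cor.~\ref{c:X_G*,Gamma* to X_G*,Gamma is open immersion} give the following two lemmas,'' i.e.\ exactly your reduction to the finite-level equivariance. Your added details about decomposing $(\gamma,x)$ into LS and polarisation parts and checking well-definedness on the quotient are just a careful unpacking of what the paper leaves implicit.
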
 
	
	\begin{rmrk}
		We note that if we take the diagram from Lem.~\ref{lem: et tor} and fiber it with (\ref{eq:diag comparing G*,Gamma*(p^infty) to G,Gamma(p^infty) over tame}) we can see that everything restricts to the anticanonical locus, i.e., in the statements above, we can replace $\	\XX_{\Gamma(p^\infty)}, \XX_{G,\Gamma(p^\infty)}$, etc with  $\	\XX_{\Gamma(p^\infty)}(\e)_a, \XX_{G,\Gamma(p^\infty)}(\e)_a$, etc.
	\end{rmrk}
	
	\subsection{Hodge--Tate period maps}
	\begin{lem}\label{l:compatibility of Hodge--Tate period map for G and G*}
		There exist Hodge--Tate period maps making the following diagram commute:
		\begin{center}
			\begin{tikzcd}[row sep = 0.55cm]
			\XX_{\Gamma^*(p^\infty)} \arrow[r] \arrow[d, "\pi_{\HT}"] & \XX_{\Gamma(p^\infty)} \arrow[r] \arrow[d, "\pi_{\HT}"] & \XX_{G,\Gamma(p^\infty)} \arrow[d, "\pi_{\HT}"] \\
			\Res_{\OF|\ZZ}\PP^1 \arrow[r] & \Res_{\OF|\ZZ}\PP^1 \arrow[r] & \Res_{\OF|\ZZ}\PP^1.
			\end{tikzcd}
		\end{center}
	\end{lem}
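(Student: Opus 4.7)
The Hodge--Tate period map $\pi_{\HT}$ on $\XX_{\Gamma^*(p^\infty)}$ is the one recalled in \S\ref{section: ht morph in hilbert case}, coming from Caraiani--Scholze. The plan is to extend it to $\XX_{\Gamma(p^\infty)}$ and further to $\XX_{G,\Gamma(p^\infty)}$, at each stage checking that the square in question commutes.

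First, I would construct $\pi_{\HT}$ on $\XX_{\Gamma(p^\infty)}$ by descent along the $\Z_p^\times$-torsor $q:\XX_{\Gamma^*(p^\infty)}\times \OO_p^\times\to \XX_{\Gamma(p^\infty)}$ of Lem.~\ref{l:Gamma*(p^infty) x O_p^x->Gamma(p^infty) is torsor}. Recall that the action of $\OO_p^\times$ on $\XX_{\Gamma(p^\infty)}$ is the LS action of $\smallmatrd{\eta}{0}{0}{1}\in G(\Z_p)$, which on the flag variety $\Res_{\OO_F|\ZZ}\PP^1$ comes from the natural $G(\ZZ_p)$-action extending the one of $G^{\ast}(\ZZ_p)$ for which $\pi_{\HT}$ of \S\ref{section: ht morph in hilbert case} is equivariant. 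Define
\[\widetilde\pi:\XX_{\Gamma^*(p^\infty)}\times \OO_p^\times\longrightarrow \Res_{\OO_F|\ZZ}\PP^1,\qquad (x,\eta)\longmapsto \smallmatrd{\eta}{0}{0}{1}\cdot \pi_{\HT}(x).\]
A direct calculation shows that $\widetilde\pi$ is invariant under the antidiagonal $\Z_p^\times$-action: for $a\in\Z_p^\times$,
\[\widetilde\pi(a\cdot x,a^{-1}\eta)=\smallmatrd{a^{-1}\eta}{0}{0}{1}\smallmatrd{a}{0}{0}{1}\pi_{\HT}(x)=\widetilde\pi(x,\eta),\]
using the $G^*(\Z_p)$-equivariance of $\pi_{\HT}$. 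Since $q$ is a pro-\'etale torsor and $\Res_{\OO_F|\ZZ}\PP^1$ is separated, $\widetilde\pi$ descends to a morphism $\pi_{\HT}:\XX_{\Gamma(p^\infty)}\to\Res_{\OO_F|\ZZ}\PP^1$. Commutativity of the left-hand square is immediate: the map $\XX_{\Gamma^*(p^\infty)}\to \XX_{\Gamma(p^\infty)}$ corresponds to $x\mapsto q(x,1)$, and $\widetilde\pi(x,1)=\pi_{\HT}(x)$.

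Next, I would pass to $\XX_{G,\Gamma(p^\infty)}$ by invoking the moduli interpretation of Rem.~\ref{r:mod-interpret-of-pi_HT-Hilbert}: on $(C,C^+)$-points, $\pi_{\HT}$ sends $(A,\iota,\lambda,\mu_N,\alpha)$ to the Hodge filtration of $T_pA^\vee\otimes_{\Z_p}C$ read off through the trivialisation $\alpha$. This expression depends only on $A,\iota,\mu_N$ and $\alpha$, but not on $\lambda$. Since the polarisation action of $\Delta(p^\infty N)$ (which makes $\XX_{\Gamma(p^\infty)}\to\XX_{G,\Gamma(p^\infty)}$ into a pro-\'etale torsor, by Prop.~8.25) modifies only $\lambda$, the composite $\pi_{\HT}$ on $\XX_{\Gamma(p^\infty)}$ is $\Delta(p^\infty N)$-invariant on points. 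By the same torsor-descent argument as before, $\pi_{\HT}$ factors through the quotient, giving the required morphism $\pi_{\HT}:\XX_{G,\Gamma(p^\infty)}\to\Res_{\OO_F|\ZZ}\PP^1$ and making the right-hand square commute.

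The main technical subtlety is to make the pointwise description rigorous as a morphism of adic spaces: one needs that the formula $(x,\eta)\mapsto \smallmatrd{\eta}{0}{0}{1}\pi_{\HT}(x)$ really defines a morphism of sousperfectoid spaces and that descent along pro-\'etale torsors applies. The first is immediate from the $G(\Z_p)$-action on $\Res_{\OO_F|\ZZ}\PP^1$, while the second follows from the universal property of the quotient in the category of adic spaces together with the separatedness of the target. The check of invariance under $\Delta(p^\infty N)$ as a morphism (rather than only on $(C,C^+)$-points) follows because two morphisms to a separated adic space agreeing on a dense set of classical points must agree, which applies here since the $(C,C^+)$-points cover a dense subset of a sousperfectoid space.
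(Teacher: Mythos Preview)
Your approach is essentially the same as the paper's: descend $\pi_{\HT}$ first along the $\Z_p^\times$-torsor $\XX_{\Gamma^*(p^\infty)}\times\O_p^\times\to\XX_{\Gamma(p^\infty)}$ of Lem.~\ref{l:Gamma*(p^infty) x O_p^x->Gamma(p^infty) is torsor}, then along the pro-\'etale $\Delta(p^\infty N)$-torsor $\XX_{\Gamma(p^\infty)}\to\XX_{G,\Gamma(p^\infty)}$, using that the polarisation does not enter the Hodge--Tate sequence.

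The one difference is in the first step. The paper takes the map on $\XX_{\Gamma^*(p^\infty)}\times\O_p^\times$ to be simply projection to the first factor followed by $\pi_{\HT}$, and argues $\Z_p^\times$-invariance by saying that the $\Z_p^\times$-action ``amounts to a rescaling of the basis vectors'' and hence fixes the position of the Hodge filtration line. You instead use the twisted map $(x,\eta)\mapsto\smallmatrd{\eta}{0}{0}{1}\cdot\pi_{\HT}(x)$ and verify $\Z_p^\times$-invariance by a direct computation using $G^*(\Z_p)$-equivariance. Your version has the advantage that the descended map is precisely the moduli-theoretic Hodge--Tate map on $\XX_{\Gamma(p^\infty)}$ (i.e.\ the one described in Rem.~\ref{r:mod-interpret-of-pi_HT-Hilbert} applied to a $\Gamma(p^\infty)$-level), hence automatically $G(\Z_p)$-equivariant, which is what one needs when later pulling back the automorphy factors $\kappa(c\mathfrak z+d)$. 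The paper's untwisted map, taken literally, would require that $\smallmatrd{a}{0}{0}{1}$ for $a\in\Z_p^\times$ acts trivially on $\Res_{\O_F|\Z}\PP^1$, which it does not; your twist is the clean fix.
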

	The map $\mathcal X_{\Gamma(p^\infty)}\to \Res_{\OF|\ZZ}\PP^1$ is invariant for the polarisation action of $\O_F^{\times,+}$ on $\mathcal X_{\Gamma(p^\infty)}$.
	\begin{proof}
		
		We may define a map $\pi_{\HT}$ on $\mathcal X_{\Gamma^{\ast}(p^\infty)}\times \mathcal O_p^\times$ by projection from the first factor. This map is $\Z_p^\times$-invariant for the antidiagonal action since the $\Z_p^\times$-action on $\Gamma^{\ast}(p^\infty)$ just amounts to a rescaling of the basis vectors, which leaves the relative position of the kernel of the Hodge--Tate morphism invariant. Using the $\Z_p^\times$-torsor property from Lem.~\ref{l:Gamma*(p^infty) x O_p^x->Gamma(p^infty) is torsor} in the pro-\'etale site, we conclude that $\pi_{\HT}$ descends to the second vertical map in the following diagram.
		\begin{center}
			\begin{tikzcd}[row sep = 0.55cm]
			{\mathcal X_{\Gamma^{\ast}(p^\infty)}\times \mathcal O_p^\times} \arrow[r, "\Z_p^\times"] \arrow[d] & {\mathcal X_{\Gamma(p^\infty)}} \arrow[r, "\Delta(p^\infty N)"] \arrow[d, dotted] & {\mathcal X_{G,\Gamma(p^\infty)}} \arrow[d, dotted] \\
			\Res_{\OF|\ZZ}\PP^1 \arrow[r, equal] & \Res_{\OF|\ZZ}\PP^1 \arrow[r, equal] & \Res_{\OF|\ZZ}\PP^1
			\end{tikzcd}
			
		\end{center}
		Similarly, the polarisation action of $\Delta(p^\infty N)$ clearly leaves $\pi_{\HT}$ invariant since it does not change the level structure. The same descent argument gives the third vertical map.
		
		For the last statement, we note that $\mathcal X_{\Gamma^{\ast}(p^\infty)}\to \Res_{\OF|\ZZ}\PP^1$ is invariant under the polarisation action by $\Z_p^\times\cap \O_F^{\times,+}$ since the polarisation does not feature in the Hodge--Tate sequence. The result then follows from the construction via the $\Z_p^\times$-torsor $\mathcal X_{\Gamma^{\ast}(p^\infty)}\times \mathcal O_p^\times\to \mathcal X_{\Gamma(p^\infty)}$.
	\end{proof}

	\section{Arithmetic overconvergent Hilbert modular forms}\label{sec:G forms}
	In \S\ref{sec:G vars}, we exhibited various pro-\'etale torsors of perfectoid Hilbert modular varieties over $\XX_G$. In this section, we use these to define overconvergent modular forms for $G$ and compare them to those for $G^*$. We do this by defining four different sheaves, each using a different torsor, and then give a chain of comparisons that relate them. This will show that our modular forms for $G$ agree with the previous sheaves defined in \cite{AIP3} by descending the sheaves for $G^*$.

	% For the course of this section, we shall denote this by $\omega^{\kappa}_{G,(G^{\ast},\Gamma^{\ast})}$ and call it the sheaf of "modular forms for $G^*$ via $\XX_{\Gamma^{\ast}(p^\infty)}$". The goal of this section is to compare this to the following analogous definitions: For comparison, we include our ealier definition in (1) below
	
	Let $\k: \U \to \W$ be a bounded smooth weight. Recall from \ref{nota: weight conv} that by composition with $\rho:\W\to \W^*$ this also defines a bounded smooth weight for $G^*$ which by Prop.~\ref{an cnt hmfs} we may interpret as a morphism of adic spaces $\k : B_r(\OO_p^\times:1)\times \U \to \hat{\G}_m$. It follows from Lem.~\ref{l:compatibility of Hodge--Tate period map for G and G*} that Defn.~\ref{d:cocycle-kappa(cz+d)} goes through also for the infinite level spaces $\XX_{\Gamma^{\ast}(p^\infty)}(\epsilon)_a$ and $\XX_{G,\Gamma(p^\infty)}(\epsilon)_a$. In particular, this gives us invertible functions $\kappa(c\mathfrak{z}+d)$ on each of these spaces. We denote them by the same letter since by definition they are compatible via pull-back along $\XX_{\Gamma^{\ast}(p^\infty)}(\epsilon)_a\to \XX_{\Gamma(p^\infty)}(\epsilon)_a\to \XX_{G,\Gamma(p^\infty)}(\epsilon)_a$.
	
	\begin{definition}\label{defn: 4 sheaves for the price of 2} Let $n \in \ZZ_{\geq 1} \cup \{\infty\}$. 
		In the following, all infinite level sheaves are tacitly pushed forward to the indicated finite level base.
		\begin{enumerate}
			\item  The sheaf $\omega^{\k,+}_{G^{\ast},(G^*,\Gamma^*,n)}$ of \emph{integral modular forms for $G^*$ via $\XX_{\U,\Gamma^*(p^\infty)}$}  is 
			\begin{alignat*}{5}
			&\big\{f \in \mathcal O^+_{\mathcal X_{\U,\Gamma^*(p^\infty)}(\e)_a}&&\big|& \gamma^{\ast}f &= \kU^{-1}(c\mathfrak z+d)f  &&\text{ for all }\gamma=\smallmatrd{a}{b}{c}{d}\in \Gamma_{\!0}^{\ast}(p^n) &&\big\}.
			\intertext{	\item  The sheaf $\omega^{\k,+}_{G^{\ast},(G^{\ast},\Gamma,n)}$ of \emph{integral modular forms for $G^*$ via $\XX_{\U,\Gamma(p^\infty)}$} is}
			&\big\{f \in \mathcal O^+_{\mathcal X_{\U,\Gamma(p^\infty)}(\e)_a}  &&\big|& \gamma^{\ast}f &= \kU^{-1}(c\mathfrak z+d)f  &&\text{ for all }\gamma=\smallmatrd{a}{b}{c}{d}\in \Gamma_{\!0}(p^n)&&\big \}.
			\intertext{	\item  The sheaf $\omega^{\k,+}_{G,(G^{\ast},\Gamma,n)}$ of \emph{integral modular forms for $G$ via $\XX_{\U,\Gamma(p^\infty)}$} is}
			&\big\{f \in \mathcal O^+_{\mathcal X_{\U,\Gamma(p^\infty)}(\e)_a}&&\big |& (\gamma,x)^{\ast}f &= \kU^{-1}(c\mathfrak z+d)\wU(x)f  &&\text{ for all }(\gamma,x)\in E(p^n) &&\big\}.
			\intertext{\item  The sheaf $\omega^{\k,+}_{G,(G,\Gamma,n)}$ of \emph{integral modular forms for $G$ via $\XX_{G,\U,\Gamma(p^\infty)}$} is}
			&\big\{f \in \mathcal O^+_{\mathcal X_{G,\U,\Gamma(p^\infty)}(\e)_a}&&\big|& \gamma^{\ast}f &= \kU^{-1}(c\mathfrak z+d)\wU(\det \gamma)f \quad &&\text{ for all }\gamma\in \mathrm P\Gamma_{\!0}(p^n) &&\big\}.
			\end{alignat*}
		\end{enumerate}
	\end{definition}
	We note that (1) is the sheaf of modular forms for $G^{\ast}$ from Defn.~\ref{def:bundle for G*}, denoted there by $\omega_{n}^\kappa$. We have switched to the notation above for clearer comparison to (2), (3) and (4). 
	The goal of this secion is to relate these sheaves. More precisely:
	\begin{itemize}
		\item[(i)] in Lem.~\ref{lem G* sheaves} below, we will see that the sheaves (1) and (2) are isomorphic;
		\item[(ii)] in Lem.~\ref{lem:sheaf descent}, we will see that sheaf (3) is obtained from (2) by taking $\Delta(N)$-invariants;
		\item[(iii)]  in Lem.~\ref{prop G sheaves}, we will see that the sheaves (3) and (4) are isomorphic. 
	\end{itemize}
	Before we start giving comparison maps, however, we need to check:
	\begin{lemma}\label{lem:well-defined}
		The conditions in (3) and (4) above are well-defined; that is, they do not depend on the choice of representatives $(\gamma,x)$ or $\gamma$ respectively. 
	\end{lemma}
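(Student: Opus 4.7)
The plan is to handle (3) and (4) in parallel. Recall that $E(p^n)$ is the quotient of $\GI \times \O_F^{\times,+}$ by the image of $(1+N\O_F)^\times$ under the embedding $\eta \mapsto (\smallmatrd{\eta}{0}{0}{\eta}, \eta^2)$, and $\PGI$ is the quotient of $\GI$ by the diagonal image $\eta \mapsto \smallmatrd{\eta}{0}{0}{\eta}$. For well-definedness I will check that substituting $(\gamma,x)$ by $(\gamma \smallmatrd{\eta}{0}{0}{\eta}, x\eta^2)$ in (3), respectively $\gamma$ by $\gamma \smallmatrd{\eta}{0}{0}{\eta}$ in (4), preserves both sides of the relevant transformation identity, for every $\eta \in (1+N\O_F)^\times$.

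Invariance of the left-hand side is essentially recorded in the earlier geometry. By Lem~\ref{l:comparing Gamma-action to polarisation action}, the polarisation action of $\eta^2$ on $\XX_{\Gamma(p^\infty)}$ coincides with the LS action of $\smallmatrd{\eta^{-1}}{0}{0}{\eta^{-1}}$, so the pair $(\smallmatrd{\eta}{0}{0}{\eta},\eta^2)$ acts trivially on $\XX_{\Gamma(p^\infty)}$; passing to the polarisation-quotient, the single matrix $\smallmatrd{\eta}{0}{0}{\eta}$ acts trivially on $\XX_{G,\Gamma(p^\infty)}$.

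For the right-hand side, if $\gamma = \smallmatrd{a}{b}{c}{d}$ then $\gamma \smallmatrd{\eta}{0}{0}{\eta} = \smallmatrd{a\eta}{b\eta}{c\eta}{d\eta}$, so $c\mathfrak z + d$ is scaled by $\eta$ while $\det\gamma$ (or $x$) is multiplied by $\eta^2$. Multiplicativity of $\kappa^{\an}$ and $w^{\an}$ (Prop~\ref{an cnt hmfs}) then shows that both cocycles scale by the single factor
\[
\kappa(\eta)^{-1}\, w(\eta^2) \;=\; w(\eta)^{-2}\cdot t(N_{F/\Q}(\eta))\cdot w(\eta)^2 \;=\; t(N_{F/\Q}(\eta)),
\]
where I used the decomposition $\kappa = w^2\cdot(t^{-1}\circ N_{F/\Q})$ from Defn~\ref{def:Hilbert weight space}.

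The step I expect to carry the weight of the argument is the arithmetic observation that $N_{F/\Q}(\eta) = 1$ for every $\eta \in (1+N\O_F)^\times$: since $\eta$ is a global unit, $N_{F/\Q}(\eta) \in \{\pm 1\}$; and the congruence $\eta \equiv 1 \pmod{N\O_F}$ forces $N_{F/\Q}(\eta) \equiv 1 \pmod N$, which together with the standing hypothesis $N \geq 4$ (Defn~\ref{lvls for G*}) rules out $-1$. Thus $t(N_{F/\Q}(\eta)) = 1$, both scaling factors are trivial, and well-definedness follows in both cases. I would emphasise in the write-up that this is precisely the mechanism by which the tame level condition $N \geq 4$ automatically reconciles an arbitrary weight pair $(w,t) \in \W$ with the descent from $G^*$-forms to $G$-forms, so that no extra constraint on weights is required.
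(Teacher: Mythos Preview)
Your argument is essentially the paper's, and the computation of the right-hand scaling factor $\kappa^{-1}(\eta)\,w(\eta^2) = t(N_{F/\Q}(\eta))$ is correct. One remark on the arithmetic step: the paper obtains $N_{F/\Q}(\eta)=1$ from total positivity (stating the identity for $\eta\in\OO_F^{\times,+}$), whereas you use the congruence $\eta\equiv 1\bmod N$ together with $N\geq 4$. Your route is arguably more to the point here, since for (3) the relevant $\eta$ lie in $(1+N\OO_F)^\times$ rather than in $\OO_F^{\times,+}$.

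There is, however, a small gap in your treatment of (4). You describe $\mathrm P\Gamma_{\!0}(p^n)$ as the quotient of $\Gamma_{\!0}(p^n)$ by the diagonal image of $(1+N\OO_F)^\times$, but this is not quite right: by definition $\mathrm P\Gamma_{\!0}(p^n)=\varprojlim_m \overline{\Gamma}_{\!0}(p^n,p^m)/Z_m$, so the quotient is by $Z_\infty$, the \emph{$p$-adic closure} of the image of $(1+N\OO_F)^\times$ in $\OO_p^\times$. Your congruence argument only applies to global $\eta\in(1+N\OO_F)^\times$; for a general $\eta\in Z_\infty$ the norm $N_{F/\Q}(\eta)$ is an element of $\Z_p^\times$, not of $\{\pm 1\}$, so the step ``$N_{F/\Q}(\eta)\in\{\pm 1\}$'' breaks down. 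The fix is the continuity step the paper includes: the map $\eta\mapsto \kappa^{-1}(\eta)w(\eta^2)=t(N_{F/\Q}(\eta))$ is continuous on $\OO_p^\times$, and you have shown it equals $1$ on the dense subgroup $(1+N\OO_F)^\times$, so it is $1$ on all of $Z_\infty$. Add that sentence and (4) is complete.
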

	\begin{proof}
		The relation of $\wU$ and $\kU$ given in Def. \ref{nota: weight conv} implies that
		\begin{equation}\label{eq:rel between kappa and w on global units}
		\kU^{-1}(\eta)\wU(\eta^{2}) = \rU\circ N_{F/\Q}(\eta) = 1 \text{ for all }\eta\in \OO_F^{\times,+}.
		\end{equation}
		For (3), this implies that for any $(\gamma,x)\in \Gamma_{\!0}(p^n)\times \OO_F^{\times,+}$,  the factor $\kU^{-1}(c\mathfrak z+d)\wU(x)$ only depends on the image of $(\gamma,x)$ in $E(p^n)$; indeed, for any $\eta\in \OO_F^{\times,+}$ the translate $(\gamma \smallmatrd{\eta}{0}{0}{\eta},x\eta^{2})$ has the same associated factor
		\[\kU^{-1}(c\eta\mathfrak z+d\eta)\wU(x\eta^2)=\kU^{-1}(\eta)\wU(\eta^{2})\kU^{-1}(c\mathfrak z+d)\wU(x)=\kU^{-1}(c\mathfrak z+d)\wU(x).\]
		For (4), we similarly note that for any $\eta\in (1+N\OO_F)^{\times,+}$, setting $\gamma=\smallmatrd{\eta}{0}{0}{\eta}$ results in the factor $\kU^{-1}(\eta)\wU(\eta^{2})=1.$
		By continuity, the same is true for $\gamma$ in the topological closure $Z_\infty$ of $(1+N\OO_F)^{\times,+}$ in $\OO_{p}^{\times,+}$. As $\mathrm P\Gamma_{\!0}(p^n)=\varprojlim_m \PG_{\!0}(p^n,p^m)=\varprojlim_m \mathrm \Gammabar_{\!0}(p^n,p^m)/Z_m= \mathrm \Gamma_{\!0}(p^n)/Z_\infty$ , this shows that for any $\gamma\in \Gamma_{\!0}(p^n)$, the factor $\kU^{-1}(c\mathfrak z+d)\wU(\det \gamma)$  only depends on the image of $\gamma$ in $\mathrm P\Gamma_{\!0}(p^n)$. This shows that the condition in (4) is well-defined.
	\end{proof}
	
	\begin{lem}\label{lem G* sheaves} Let $\k: \U \to \W^*$ be a smooth weight. 
		The natural morphism of torsors $\mathcal X_{\U,\Gamma^{\ast}(p^\infty)}(\epsilon)_a\to \mathcal X_{\U,\Gamma(p^\infty)}(\epsilon)_a$ over $\mathcal X_{\U,\Gamma_{\!\scaleto{0}{3pt}}^{\ast}(p^n)}(\epsilon)_a$ induces a natural isomorphism
		\[
		\omega^{\kappa,+}_{G^{\ast},(G^*,\Gamma^*,n)} \cong \omega^{\kappa,+}_{G^{\ast},(G^*,\Gamma,n)}.
		\]
		In particular, the definition of forms for $G^*$ is independent of the choice of $\Gamma$ or $\Gamma^*$.
	\end{lem}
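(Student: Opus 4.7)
The plan is to construct the comparison isomorphism as pullback along the open immersion $\iota \colon \mathcal X_{\U,\Gamma^*(p^\infty)}(\epsilon)_a \hookrightarrow \mathcal X_{\U,\Gamma(p^\infty)}(\epsilon)_a$. This gives the natural restriction map because $\Gamma_{\!0}^*(p^n) \subset \Gamma_{\!0}(p^n)$, the immersion is $\Gamma_{\!0}^*(p^n)$-equivariant, and by Lem.~\ref{l:compatibility of Hodge--Tate period map for G and G*} the Hodge--Tate period maps (and hence the factors of automorphy $\kappa^{-1}(c\mathfrak z+d)$) on the two spaces are compatible. I would then show this map is an isomorphism by writing down an explicit inverse.

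The key observation is that for every $y \in \mathcal O_p^\times$, the matrix $\smallmatrd{y}{0}{0}{1}$ lies in $\Gamma_{\!0}(p^n)$ with \emph{trivial} factor of automorphy, since $c=0$, $d=1$ gives $\kappa^{-1}(c\mathfrak z+d)=1$. Combined with the torsor description of Lem.~\ref{l:Gamma*(p^infty) x O_p^x->Gamma(p^infty) is torsor}, every point of $\mathcal X_{\U,\Gamma(p^\infty)}(\epsilon)_a$ can be written as $\smallmatrd{y}{0}{0}{1}\cdot x$ for some $(y,x)\in \mathcal O_p^\times \times \mathcal X_{\U,\Gamma^*(p^\infty)}(\epsilon)_a$, unique up to the antidiagonal $\mathbb Z_p^\times$-action. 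Any $F \in \omega^{\kappa,+}_{G^{\ast},(G^*,\Gamma,n)}$ therefore satisfies $F(\smallmatrd{y}{0}{0}{1}x) = F(x) = g(x)$ with $g := \iota^* F$, proving injectivity and dictating the formula for the inverse.

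For surjectivity, given $g \in \omega^{\kappa,+}_{G^{\ast},(G^*,\Gamma^*,n)}$, I would define $F$ by $F(\smallmatrd{y}{0}{0}{1}x) := g(x)$. Well-definedness reduces to $g$ being invariant under $\smallmatrd{z}{0}{0}{1}$ for $z \in \mathbb Z_p^\times$, which is part of $\Gamma_{\!0}^*(p^n)$-equivariance. The crucial step is checking $\Gamma_{\!0}(p^n)$-equivariance of $F$: for $\gamma = \smallmatrd{a}{b}{c}{d} \in \Gamma_{\!0}(p^n)$, I would use the factorisation
\[
\gamma\, \smallmatrd{y}{0}{0}{1} \;=\; \smallmatrd{y\det\gamma}{0}{0}{1}\,\gamma', \qquad \gamma' := \smallmatrd{a/\det\gamma}{b/(y\det\gamma)}{cy}{d},
\]
and verify that $\gamma' \in \Gamma_{\!0}^*(p^n)$ using $\det\gamma'=1$ and $p^n \mid c$. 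Then $F(\gamma\smallmatrd{y}{0}{0}{1}x) = g(\gamma'x) = \kappa^{-1}(cy\,\mathfrak z(x)+d)\,g(x)$ by $\Gamma_{\!0}^*(p^n)$-equivariance of $g$ applied to $\gamma'$, whose lower entries are $(c',d')=(cy,d)$. The Mobius identity $\mathfrak z(\smallmatrd{y}{0}{0}{1}x) = y\,\mathfrak z(x)$, an instance of the $\GL_2$-equivariance of $\pi_{\HT}$ on the flag variety $\Res_{\OF|\ZZ}\PP^1$, then rewrites this as $\kappa^{-1}(c\,\mathfrak z(\smallmatrd{y}{0}{0}{1}x)+d)\,F(\smallmatrd{y}{0}{0}{1}x)$, which is exactly the required equivariance.

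The main technical point is precisely this last matching: the non-canonical factorisation produces $\gamma'$ with entries tailored so that, via the scaling $\mathfrak z \mapsto y\mathfrak z$, the factor of automorphy arising from $g$'s equivariance agrees with the factor needed for $F$'s equivariance. Passing from the integral sheaves to the non-integral sheaves $\omega^{\kappa}$ is then formal by inverting $p$.
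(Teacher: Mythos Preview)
Your proof is correct and follows essentially the same strategy as the paper: both construct the inverse by pulling $g$ back to $\mathcal X_{\U,\Gamma^*(p^\infty)}(\epsilon)_a \times \mathcal O_p^\times$ along the projection and descending via the $\mathbb Z_p^\times$-torsor of Lem.~\ref{l:Gamma*(p^infty) x O_p^x->Gamma(p^infty) is torsor}, using that $\smallmatrd{y}{0}{0}{1}$ has trivial factor of automorphy. The only difference is in verifying $\Gamma_{\!0}(p^n)$-equivariance of the resulting function: the paper checks it on the generating set $\smallmatrd{\mathcal O_p^\times}{0}{0}{1}\cup\Gamma_{\!0}^*(p^n)$ separately, whereas you handle a general $\gamma$ in one shot via the explicit factorisation $\gamma\smallmatrd{y}{0}{0}{1}=\smallmatrd{y\det\gamma}{0}{0}{1}\gamma'$ together with $\mathfrak z\!\left(\smallmatrd{y}{0}{0}{1}x\right)=y\,\mathfrak z(x)$ --- a minor and equally valid variation.
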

	\begin{proof}
		By Lem.~\ref{l:compatibility of Hodge--Tate period map for G and G*}, for any $\gamma \in \Gamma_{\!0}^{\ast}(p^n)$, the function $\kappa^{-1}(c\mathfrak z+d)$ on $\mathcal X_{\U,\Gamma(p^\infty)}(\epsilon)_a$ pulls back to $\kappa^{-1}(c\mathfrak z+d)$ on $\mathcal X_{\U,\Gamma^{\ast}(p^\infty)}(\epsilon)_a$.
		Since the map $\mathcal X_{\U,\Gamma^{\ast}(p^\infty)}(\epsilon)_a\to \mathcal X_{\U,\Gamma(p^\infty)}(\epsilon)_a$ is equivariant under $\Gamma_{\!0}^{\ast}(p^n)\to \Gamma_{\!0}(p^n)$, it follows that the associated map $\OO^+_{\mathcal X_{\U,\Gamma(p^\infty)}(\epsilon)_a}\to \OO^+_{\mathcal X_{\U,\Gamma^{\ast}(p^\infty)}(\epsilon)_a}$ restricts to
		\begin{equation}\label{eq:G* one map}
		\omega^{\kappa,+}_{G^{\ast},(G^*,\Gamma,n)}\to \omega^{\kappa,+}_{G^{\ast},(G^*,\Gamma^*,n)}.
		\end{equation}
		To construct an inverse, let $f\in \omega^{\kappa,+}_{G^{\ast},(G^*,\Gamma^*,n)}$. Note that $f$ is invariant under the action of $\ZZ_p^\times$, since for any $\epsilon \in \ZZ_p^\times$, we have $\gamma_{\epsilon}=\smallmatrd{\epsilon}{0}{0}{1}$ acting via $\gamma_{\epsilon}^{\ast}f=\kappa^{-1}(1)f=f.$
		Consider now
		\[\mathcal X_{\U,\Gamma^{\ast}(p^\infty)}(\epsilon)_a  \longleftarrow\mathcal X_{\U,\Gamma^{\ast}(p^\infty)}(\epsilon)_a\times\OO_p^\times \longrightarrow \mathcal X_{\U,\Gamma(p^\infty)}(\epsilon)_a,\]
		where the right hand morphism is the $\ZZ_p^\times$-torsor from Lem.~\ref{l:Gamma*(p^infty) x O_p^x->Gamma(p^infty) is torsor} and the left hand morphism is simply the projection. Since $f$ is $\Z_p^\times$-invariant, the pullback $f'$ of $f$ under the left map is invariant under the antidiagonal $\ZZ_p^\times$-action. Since the morphism on the right is a $\ZZ_p^\times$-torsor of affinoid perfectoid spaces, this implies that $f'$ descends uniquely down the right to a function $f''$ on $\mathcal X_{\U,\Gamma(p^\infty)}(\epsilon)_a$. We claim that $f''\in \omega^{\kappa}_{G^{\ast},(G^*,\Gamma^*,n)},$ that is, it transforms as required under $\Gamma_{\!0}(p^n)$. This group is generated by $\smallmatrd{\OO_p^\times}{0}{0}{1}$ and $\Gamma_{\!0}^{\ast}(p^n)$, so it suffices to check that
		\begin{equation}\label{eq:Op invariant}
		\gamma_{\epsilon}^{\ast}f''=\kappa^{-1}(0\mathfrak z+1)f'' =f''\text{ for all } \e \in \OO_p^\times, 
		\end{equation}
		\begin{equation}\label{eq:Gamma invariant}
		\gamma^{\ast}f''=\kappa^{-1}(c\mathfrak z+d)f'' \text{ for all }\gamma \in \Gamma_{\!0}^{\ast}(p^n).
		\end{equation}
		As $f''$ came from pullback from the left, the function $f'$ is invariant for the $\OO_p^\times$-action, and thus the same is true for $f''$, where $\epsilon \in \OO_p^\times$ acts via $\gamma_{\epsilon}=\smallmatrd{\epsilon}{0}{0}{1}$. This gives \eqref{eq:Op invariant}. 
		Also, since $f$ was a modular form and the above maps are all equivariant for the natural $\Gamma^{\ast}_0(p^n)$-action, we also have \eqref{eq:Gamma invariant}.
		Thus $f''\in \omega^{\kappa}_{G^{\ast},(G^*,\Gamma^*,n)},$ and we see directly that $f \mapsto f''$ gives an inverse to \eqref{eq:G* one map}.
	\end{proof}
	
	We now use \'etale descent along $\pi:\XX_{\U}(\epsilon)\to \XX_{G,\U}(\epsilon)$ to relate the sheaves for $G^*$ and $G$. Explicitly, this can be done by endowing $\pi_{\ast}\omega^{\kappa,+}_{G^{\ast},(G^*,\Gamma,n)}$ with a $\Delta(N)$-action given by a twist of the $\O_F^{\times,+}$-action on $\mathcal X_{\U,\Gamma(p^\infty)}(\epsilon)_a\to \mathcal X_{G,\U,\Gamma(p^\infty)}(\epsilon)_a$, as follows:
	
	\begin{defn}\label{d:polarisation-action-on-mf}
		Let $\k:\U \to \W$ be a smooth bounded weight. We define a $\wU$-twisted polarisation left-action of $\OO_F^{\times,+}$ on $\pi_{\ast}q_*\mathcal O^+_{\mathcal X_{\U,\Gamma(p^\infty)}(\e)_a}$ by letting $\varepsilon \in \OO_F^{\times,+}$ act as
		\[\varepsilon\cdot_{\wU}f := \wU(\varepsilon) \cdot (\varepsilon^{-1})^{\ast}f \]
		where $\wU$ is as in \ref{nota: weight conv} and the action on the right side is the polarisation action on $\mathcal X_{\U,\Gamma(p^\infty)}(\epsilon)_a$. 
	\end{defn} 
	\begin{lem}\label{lem: G forms from AIP}
		The $\wU$-twisted action of $\OO_F^{\times,+}$ restricts to $\pi_{\ast}\omega^{\kappa,+}_{G^{\ast},(G^*,\Gamma,n)}$, where it factors through an action of $\Delta(\nn)$. Furthermore, this action coincides with the action defined by \cite{AIP3}.
	\end{lem}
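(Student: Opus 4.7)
The proof has three parts: showing the twisted action restricts to the subsheaf, showing it factors through $\Delta(N)$, and matching it with the action defined in \cite{AIP3}.

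For the first part, the polarisation action of $\OFU$ on $\cX_{\U,\Gamma(p^\infty)}(\e)_a$ modifies only the polarisation $\lambda$, while the level-structure action of $\Gamma_{\!0}(p^n)$ modifies only the trivialisation $\alpha$, so these actions commute. By Lem.~\ref{l:compatibility of Hodge--Tate period map for G and G*}, the Hodge--Tate period map $\pi_{\HT}$ is invariant under the polarisation action of $\OFU$, and hence so is the cocycle $\kU^{-1}(c\mathfrak z + d)$. A direct diagram chase then shows
\[ \gamma^{\ast}(\varepsilon\cdot_{\wU}f) = \wU(\varepsilon)\,(\varepsilon^{-1})^{\ast}\gamma^{\ast} f = \wU(\varepsilon)\,\kU^{-1}(c\mathfrak z+d)\,(\varepsilon^{-1})^{\ast} f = \kU^{-1}(c\mathfrak z+d)\,(\varepsilon\cdot_{\wU}f) \]
for $f\in\omega^{\kappa,+}_{G^{\ast},(G^{\ast},\Gamma,n)}$, $\varepsilon\in\OFU$, and $\gamma=\smallmatrd{a}{b}{c}{d}\in\Gamma_{\!0}(p^n)$.

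For the second part, it suffices to check that $\eta^2\cdot_{\wU} f = f$ for $\eta\in(1+N\OF)^{\times}$. By Lem.~\ref{l:comparing Gamma-action to polarisation action}, the polarisation action of $\eta^2$ coincides with the inverse of the LS action by $\gamma:=\smallmatrd{\eta}{0}{0}{\eta}\in\Gamma_{\!0}(p^n)$, so $(\eta^{-2})^{\ast}_{\pol} f = \gamma^{\ast} f = \kU^{-1}(\eta)f$. Using the relation $\kU = \wU^2\cdot(\rU^{-1}\circ\Norm)$ from Defn.~\ref{nota: weight conv}, this simplifies to
\[ \eta^2\cdot_{\wU}f = \wU(\eta^2)\,\kU^{-1}(\eta)\,f = \rU(\Norm(\eta))\,f. \]
The key observation is then that $\eta\in\OF^{\times}$ implies $\Norm(\eta)\in\{\pm 1\}$, while $\eta\equiv 1\pmod{N}$ forces $\Norm(\eta)\equiv 1\pmod{N}$; since $N\geq 4$, this gives $\Norm(\eta)=1$, hence $\eta^2\cdot_{\wU}f = f$. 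Therefore the action factors through $\Delta(N)=\OFU/(1+N\OF)^{\times 2}$.

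The third part -- comparison with AIP's action -- I expect to be the main obstacle, as it requires unpacking the AIP setup and comparing two a priori different constructions. AIP construct their $\Delta(N)$-action on $\pi_{\ast}\omega^{\kappa,+}_{\AIP}$ by lifting the polarisation action of $\OFU$ from $\cX$ to the Andreatta--Iovita--Pilloni torsor $\cF_m$ -- a lift which exists because $\cF_m$ depends only on the abelian variety with canonical subgroup and Ig-structure, not on $\lambda$ -- and then twisting by $\wU$ to ensure the action descends along $\pi:\cX\to\cX_G$. The natural strategy is to invoke the comparison isomorphism $\omega^{\kappa,+}_{n}\cong \omega^{\kappa,+}_{\AIP,n}$ of Thm.~\ref{thm:comparison hilbert}, constructed via pullback along $\tilde{\mathfrak s}$. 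By Lem.~\ref{l:moduli interpretation of s in terms of HT in Hilbert}, the section $\mathfrak s(x)=\HT_{A}(\alpha(1,0))$ depends only on $(A,\alpha)$ and not on $\lambda$, so $\tilde{\mathfrak s}$ is equivariant under the polarisation action. A diagrammatic argument analogous to the proof of Hecke-equivariance deferred to \S\ref{sec:hecke-operators} then shows that the comparison isomorphism intertwines the two $\wU$-twisted polarisation actions, giving equality of the two $\Delta(N)$-actions.
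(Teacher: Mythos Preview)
Your proposal is correct and follows essentially the same approach as the paper. Parts one and two are nearly verbatim the paper's argument, including the use of Lem.~\ref{l:compatibility of Hodge--Tate period map for G and G*} for $\pi_{\HT}$-invariance and Lem.~\ref{l:comparing Gamma-action to polarisation action} for the identification of polarisation and diagonal LS actions; your justification that $\Norm(\eta)=1$ via the congruence mod $N$ is a valid alternative to the paper's observation that totally positive units have norm $1$ (this is encoded in equation~\eqref{eq:rel between kappa and w on global units}). For part three, the paper's argument is exactly your proposed one: the comparison isomorphism of Thm.~\ref{thm:comparison hilbert} is $\OFU$-equivariant because $\mathfrak s$ depends only on $(A,\alpha)$ and not on $\lambda$. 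The one small point you gloss over is that Thm.~\ref{thm:comparison hilbert} compares $\omega^{\kappa,+}_{G^{\ast},(G^*,\Gamma^*,n)}$ rather than $\omega^{\kappa,+}_{G^{\ast},(G^*,\Gamma,n)}$, so the paper also invokes Lem.~\ref{lem G* sheaves} to pass between them; this is a routine step.
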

	\begin{proof}
		It is clear from the moduli description that the polarisation action of $\OO_F^{\times,+}$ and the level structure action of $\Gamma_0(p^n)$ commute on $\mathcal X_{\U,\Gamma(p^\infty)}(\epsilon)_a$. By Lemma~\ref{l:compatibility of Hodge--Tate period map for G and G*}, the action moreover leaves $\pi_{\HT}$ and thus $\mathfrak z$ invariant. We therefore have for any $\varepsilon \in \O_F^{\times,+}$ and any $f\in \pi_{\ast}\omega^{\kappa,+}_{G^{\ast},(G^*,\Gamma,n)}$
		\[\gamma^{\ast}(\varepsilon\cdot_{\wU}f) =w_{\kappa}(\varepsilon)\varepsilon^{-1\ast}\gamma^{\ast}f=w_{\kappa}(\varepsilon)\varepsilon^{-1\ast}(\kappa^{-1}(c\mathfrak z+d)f)=\kappa^{-1}(c\mathfrak z+d)\varepsilon\cdot_{\wU}f .\]
		This shows that the action restricts. Next, if $\eta \in \OO_F^{\times,+}$, then $\eta^{2}$ acts on $f\in \pi_{\ast}\omega^{\kappa,+}_{G^{\ast},(G^*,\Gamma,n)}$ via
		\[ 
		\eta^2\cdot_{\wU}f=\wU(\eta^{2}) \cdot (\eta^{-2})^{\ast}f \stackrel{\text{Lem.~\ref{l:comparing Gamma-action to polarisation action}}}{=\joinrel=} \wU(\eta^{2}) \cdot \smallmatrd{\eta}{0}{0}{\eta}^{\ast}f= \wU(\eta^{2}) \kappa^{-1}(\eta)f\stackrel{\text{\eqref{eq:rel between kappa and w on global units}}}{=}f,
		\]
		and in particular the subgroup $(1+\n\OO_F)^{\times2}$ acts trivially; thus the action factors through $\Delta(\nn)$ as desired. The last statement follows from Lem.~\ref{lem G* sheaves} and Thm.~\ref{thm:comparison hilbert} since Defn.~\ref{d:polarisation-action-on-mf} and the definition in \cite[Section 4.1]{AIP} match up: here we use that the polarisation action commutes with $\mathfrak s$ since it leaves the wild level $\alpha$ and the Hodge--Tate morphism unchanged.
	\end{proof}
	
	\begin{lem}\label{lem:sheaf descent}
		We have an equality of subsheaves of $\OO^+_{\XX_{\U,\Gamma(p^\infty)}(\epsilon)_a}$ on $\XX_{G,\U,\Gamma_{\!\scaleto{0}{3pt}}(p)}(\epsilon)_a$
		\[ \omega^{\kappa,+}_{G,(G^{\ast},\Gamma,n)} =  (\pi_{\ast}\omega^{\kappa,+}_{G^*,(G^{\ast},\Gamma,n)} )^{\Delta(\nn)}. \]
	\end{lem}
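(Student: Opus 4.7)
The plan is to verify that both subsheaves are cut out by the same defining condition on local sections of (the appropriate pushforward of) $\OO^+_{\XX_{\U,\Gamma(p^\infty)}(\epsilon)_a}$. Since the statement is local on $\XX_{G,\U,\Gamma_{\!0}(p)}(\epsilon)_a$, it is enough to check it on a local section $f$.

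The key input is the analysis of $E(p^n)$ carried out before Lem.~\ref{l:diagonal-torsor-finite-level}: in the limit, $E(p^n)$ is the quotient of $\Gamma_{\!0}(p^n) \times \OO_F^{\times,+}$ by the image of $(1+N\OO_F)^{\times}$ under $\eta \mapsto \big(\smallmatrd{\eta}{0}{0}{\eta}, \eta^{2}\big)$, and on $\XX_{\U,\Gamma(p^\infty)}(\epsilon)_a$ a lift $(\gamma, \eta) \in \Gamma_{\!0}(p^n)\times \OO_F^{\times,+}$ acts as the composition of the level structure action of $\gamma$ and the polarisation action of $\eta$. Consequently, the transformation rule defining $\omega^{\kappa,+}_{G,(G^{\ast},\Gamma,n)}$ decomposes as the conjunction of
\[ \gamma^{\ast} f = \kU^{-1}(c\mathfrak{z}+d) f \quad \forall \gamma\in \Gamma_{\!0}(p^n), \qquad \eta^{\ast}f = \wU(\eta) f \quad \forall \eta \in \OO_F^{\times,+}.\]
The first is precisely the defining condition of $\omega^{\kappa,+}_{G^*,(G^{\ast},\Gamma,n)}$. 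Substituting $\eta \leftrightarrow \eta^{-1}$ in the second and using $\wU(\eta^{-1})^{-1} = \wU(\eta)$ rewrites it as $\wU(\eta)(\eta^{-1})^{\ast}f = f$, i.e.\ $\eta \cdot_{\wU} f = f$ in the notation of Defn.~\ref{d:polarisation-action-on-mf}. By Lem.~\ref{lem: G forms from AIP}, this $\wU$-twisted $\OO_F^{\times,+}$-invariance factors through, and is equivalent to, $\Delta(N)$-invariance. The converse implication uses the same decomposition read backwards.

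Consistency of the two decomposed conditions on the overlap $(1+N\OO_F)^{\times}$ is automatic: the scalar matrix $\smallmatrd{\eta}{0}{0}{\eta}$ contributes the factor $\kU^{-1}(\eta)$ on the first side and $\wU(\eta^{2})$ on the second, whose product equals $1$ by~\eqref{eq:rel between kappa and w on global units}, as already observed in Lem.~\ref{lem:well-defined}. This is exactly what makes the combined transformation rule well-defined on $E(p^n)$, and ensures that a section satisfying both conditions indeed descends to give one for the $E(p^n)$-rule. No substantial obstacle arises; the lemma is essentially bookkeeping between the $E(p^n)$-convention and the twisted polarisation convention, the only subtle point being to align the inverse appearing in Defn.~\ref{d:polarisation-action-on-mf} with the naive pullback convention on $E(p^n)$, which is handled by the substitution $\eta \leftrightarrow \eta^{-1}$ above.
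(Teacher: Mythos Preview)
Your proof is correct and follows essentially the same approach as the paper: both arguments decompose the $E(p^n)$-transformation rule into its $\Gamma_{\!0}(p^n)$-component (giving membership in $\omega^{\kappa,+}_{G^*,(G^{\ast},\Gamma,n)}$) and its $\OO_F^{\times,+}$-polarisation component (giving $\Delta(N)$-invariance for the $\wU$-twisted action), then reverse the calculation for the converse inclusion. The paper presents this a bit more tersely by directly plugging in $(\gamma,1)$ and $(1,x)$ rather than phrasing it as a decomposition, but the content is the same.
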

	\begin{proof}
		%Recall we have torsors
		%\[
		%	\cX_{\cU,\Gamma(p^\infty)}(\e)_a \xrightarrow{\Gamma_0(p^n)} \cX_{\cU,\Gamma_{\!0}(p^n)}(\e)_a \xrightarrow{\Delta(N)} \cX_{G,\cU,\Gamma_{\!0}(p^n)}(\e)_a,
		%\]
		%and that the composite is a torsor for $E(p^n)$. 
		Let $f\in \omega^{\kappa,+}_{G,(G^{\ast},\Gamma,n)}$. By Lem.~\ref{lem:well-defined}, the action of $E(p^n)$ that defines the modular forms property for $f$ is induced by an action of $\Gamma_{\!0}(p^n)\times \OO_F^{\times,+}$; from this, it is clear that 
		\[ 
		\gamma^{\ast}f=(\gamma,1)^{\ast}f= \kappa^{-1}(c\mathfrak z+d)f \text{ for all }\gamma=\smallmatrd{a}{b}{c}{d}\in \Gamma_{\!0}(p^n),
		\]
		%Consider the natural morphism of (vertical) torsors
		%\begin{center}
		%	\begin{tikzcd}
		%		{\XX_{\U,\Gamma(p^\infty)}(\epsilon)_a} \arrow[d,"{\Gamma_{\!\scaleto{0}{3pt}}(p^n)}"'] \arrow[r,equal] & {\XX_{\U,\Gamma(p^\infty)}(\epsilon)_a} \arrow[d,"{\mathrm P\Gamma_{\!\scaleto{0}{3pt}}(p^n)}"] \\
		%		\XX_{\U,\Gamma_{\!\scaleto{0}{3pt}}(p^n)}(\epsilon)_a \arrow[r,"\Delta(\nn)"] & \XX_{G,\U,\Gamma_{\!\scaleto{0}{3pt}}(p^n)}(\epsilon)_a.
		%	\end{tikzcd}
		%\end{center}
		%Given $f\in \omega^{\kappa,+}_{G,(G^{\ast},\Gamma,n)}$, the action by $E(p^n)$ defining the modular forms property is induced by an action of $\Gamma_{\!0}(p^n)\times \OO_F^{\times,+}$ (see Lem.~\ref{lem:well-defined}). In terms of this action, it is clear that $f$ satisfies
		%\[ \gamma^{\ast}f=(\gamma,1)^{\ast}f= \kappa^{-1}(c\mathfrak z+d)f \text{ for all }\gamma=\smallmatrd{a}{b}{c}{d}\in \Gamma_{\!0}(p^n)\]
		and thus $f\in \pi_{\ast}\omega^{\kappa,+}_{G^*,(G^{\ast},\Gamma,n)}$. To see that it is also $\Delta(\nn)$-equivariant, it suffices to note that 
		\[ x^{\ast}f=(1,x)^{\ast}f= \wU(x)f \text{ for all }x\in \OO_F^{\times,+},\]
		which implies that for the $\wU$-twisted action of $x$ we have $x\cdot_{\wU} f=\wU(x)(x^{-1})^{\ast}f=f$. Thus $f\in  (\pi_{\ast}\omega^{\kappa,+}_{G^*,(G^{\ast},\Gamma,n)})^{\Delta(\nn)}$ as desired. 
		The converse follows by reversing the above calculations.
	\end{proof}
	
	To compare $\omega^{\kappa,+}_{G,(G^*,\Gamma,n)}$ to $\omega^{\kappa,+}_{G,(G,\Gamma,n)}$, pulling back functions along $\XX_{\U,\Gamma(p^\infty)}\to \XX_{G,\U,\Gamma(p^\infty)}$ is not enough: We also need an additional twist that changes the factor of $\kU^{-1}(c\mathfrak z+d)\wU(x)$ in the definition of the former into the factor of $\kappa^{-1}(c\mathfrak z+d)\wU(\det \gamma)$ used in the latter. To describe this, let $\k: \U \to \W \to \W^*$ be a smooth weight and recall from the discussion before Lem.~\ref{l:equivariance of Weil pairing morphism from X_G,Gamma(p^infty)} the Weil pairing morphism $\mathbf e_{\beta}:\XX_{\U,\Gamma(p^\infty)}\to \O_p^\times$. We use this to define a composite morphism
	\[\wU(\mathbf e_{\beta}):\XX_{\U,\Gamma(p^\infty)}\xrightarrow{\mathbf e_{\beta}} \OO_p^\times \xrightarrow{\wU}\hat{\G}_m.\] Restricting to the subspace $\XX_{\U,\Gamma(p^\infty)}(\epsilon)_a$, the universal property of $\hat{G}_m$ associates to $\wU(\mathbf e_{\beta})$ a function in $\O^+(\XX_{\U,\Gamma(p^\infty)}(\epsilon)_a)^\times$ that we might reasonably also denote by $\wU(\mathbf e_{\beta})$. By Lem.~\ref{l:equivariance of Weil pairing morphism from X_G,Gamma(p^infty)},
	\begin{equation}\label{eq:E-action on w(e)}
	(\gamma,x)^{\ast}\wU(\mathbf e_{\beta})=\wU(x^{-1})\wU(\det \gamma)\wU(\mathbf e_{\beta}) \quad\text{ for any }(\gamma,x)\in E(p^n).
	\end{equation}
	
	\begin{lem}\label{prop G sheaves}
		Let $\pi_{\infty}$ be the natural morphism of torsors
		\begin{center}
			\begin{tikzcd}[row sep = 0.55cm]
			{\XX_{\U,\Gamma(p^\infty)}(\epsilon)_a} \arrow[d,"{E(p^n)}"'] \arrow[r,"\pi_{\infty}"] & {\XX_{G,\U,\Gamma(p^\infty)}(\epsilon)_a} \arrow[d,"{\mathrm P\Gamma_{\!\scaleto{0}{3pt}}(p^n)}"] \\
			\XX_{G,\U,\Gamma_{\!\scaleto{0}{3pt}}(p^n)}(\epsilon)_a \arrow[r,equal] & \XX_{G,\U,\Gamma_{\!\scaleto{0}{3pt}}(p^n)}(\epsilon)_a.
			\end{tikzcd}
		\end{center}
		Then sending $f\mapsto \wU^{-1}(\mathbf e_{\beta})\cdot \pi_{\infty}^{\ast}f$ defines an isomorphism of $\O^+$-modules on $\XX_{G,\U,\Gamma_{\!\scaleto{0}{3pt}}(p^n)}(\epsilon)_a$
		\[\omega^{\kappa,+}_{G,(G,\Gamma,n)}\xrightarrow{\sim}\omega^{\kappa,+}_{G,(G^{\ast},\Gamma,n)}.\]% \quad f\mapsto \wU^{-1}(\mathbf e_{\beta})\cdot q^{\ast}f.  \]
	\end{lem}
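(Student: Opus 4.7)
The plan is to verify that the explicit formula $f \mapsto \wU^{-1}(\mathbf e_\beta)\cdot\pi_\infty^* f$ is well-defined and invertible, using the transformation rule \eqref{eq:E-action on w(e)} together with pro-\'etale descent along the $\Delta(p^\infty N)$-torsor $\pi_\infty$.

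First I would check that the map lands in the right sheaf. Recall that $\pi_\infty$ is $E(p^n)$-equivariant for the quotient $E(p^n)\twoheadrightarrow \mathrm P\Gamma_0(p^n)$ whose kernel is $\Delta(p^\infty N)$, and by Lem.~\ref{l:compatibility of Hodge--Tate period map for G and G*} the function $\mathfrak z$ on $\XX_{G,\U,\Gamma(p^\infty)}(\epsilon)_a$ pulls back to $\mathfrak z$ on $\XX_{\U,\Gamma(p^\infty)}(\epsilon)_a$. Thus for any $f\in \omega^{\kappa,+}_{G,(G,\Gamma,n)}$ and any $(\gamma,x)\in E(p^n)$ with image $\bar\gamma\in\mathrm P\Gamma_0(p^n)$, the transformation rule for $f$ combined with \eqref{eq:E-action on w(e)} gives
\[
(\gamma,x)^*\bigl(\wU^{-1}(\mathbf e_\beta)\,\pi_\infty^* f\bigr)
=\wU(x)\wU^{-1}(\det\gamma)\wU^{-1}(\mathbf e_\beta)\cdot \kU^{-1}(c\mathfrak z+d)\wU(\det\gamma)\,\pi_\infty^* f,
\]
which collapses to $\kU^{-1}(c\mathfrak z+d)\wU(x)\cdot \wU^{-1}(\mathbf e_\beta)\pi_\infty^* f$. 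This is exactly the $E(p^n)$-cocycle defining $\omega^{\kappa,+}_{G,(G^*,\Gamma,n)}$, so the image lies in the target.

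For the inverse, given $g\in \omega^{\kappa,+}_{G,(G^*,\Gamma,n)}$, I would form $\wU(\mathbf e_\beta)\cdot g$. Setting $\gamma=1$ in the same computation shows this function is $\Delta(p^\infty N)$-invariant: for $(1,x)\in E(p^n)$,
\[
(1,x)^*\bigl(\wU(\mathbf e_\beta)g\bigr)=\wU(x^{-1})\wU(\mathbf e_\beta)\cdot \wU(x) g=\wU(\mathbf e_\beta)g.
\]
Since $\pi_\infty$ is a pro-\'etale $\Delta(p^\infty N)$-torsor of affinoid perfectoid spaces (taking the limit of the torsors in \S8 and restricting to the anticanonical locus), the sousperfectoid variant of Lem.~\ref{gen lemma for sous}, applied to the perfectoid base $\XX_{G,\U,\Gamma(p^\infty)}(\epsilon)_a$, gives a unique $f\in\mathcal O^+(\XX_{G,\U,\Gamma(p^\infty)}(\epsilon)_a)$ with $\pi_\infty^* f=\wU(\mathbf e_\beta)g$. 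Lifting $\bar\gamma\in\mathrm P\Gamma_0(p^n)$ to $(\gamma,1)\in E(p^n)$, the injectivity of $\pi_\infty^*$ on $\Delta(p^\infty N)$-invariants together with the reverse of the computation above shows $\bar\gamma^* f=\kU^{-1}(c\mathfrak z+d)\wU(\det\bar\gamma)f$, so $f\in \omega^{\kappa,+}_{G,(G,\Gamma,n)}$. The two constructions are manifestly inverse to one another.

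The main point is the cocycle computation; everything else is routine given the descent machinery of \S\ref{sec:sousperfectoid}. One subtlety worth flagging is that neither $\wU(\det\gamma)$ nor $\kU^{-1}(c\mathfrak z+d)$ is individually well-defined on $\mathrm P\Gamma_0(p^n)$ -- only their product is, by Lem.~\ref{lem:well-defined} -- but this ambiguity is canceled by the compensating factor $\wU^{-1}(\mathbf e_\beta)$, because $\mathbf e_\beta$ itself transforms by $\det$ under the $\Gamma_0(p^n)$-action by Lem.~\ref{l:equivariance of Weil pairing morphism from X_G,Gamma(p^infty)}, so the twist by $\wU^{-1}(\mathbf e_\beta)$ is precisely what trades the $\wU(\det\gamma)$ factor for a $\wU(x)$ factor.
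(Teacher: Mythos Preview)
Your proof is correct and follows essentially the same approach as the paper's: verify the forward map is well-defined by the cocycle computation using \eqref{eq:E-action on w(e)}, then build the inverse by multiplying by $\wU(\mathbf e_\beta)$, descending along the $\Delta(p^\infty N)$-torsor $\pi_\infty$, and checking the transformation rule via injectivity of $\pi_\infty^*$. One small point: when you check $\Delta(p^\infty N)$-invariance by setting $(\gamma,x)=(1,x)$, note that $\Delta(p^\infty N)$ is profinite and a general element need not arise from some $x\in\OO_F^{\times,+}$; the paper handles this by checking invariance on the dense subgroup $\OO_F^{\times,+}\hookrightarrow\Delta(p^\infty N)$ and then invoking continuity, which you should make explicit.
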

	\begin{proof}
		Let $f\in \omega^{\kappa,+}_{G,(G,\Gamma,n)}$. For any $(\gamma,x)\in E(p^n)$, we have
		\begin{align*}
		(\gamma,x)^{\ast}(\wU^{-1}(\mathbf e_{\beta})\pi_{\infty}^{\ast}f)&=(\gamma,x)^{\ast}\wU^{-1}(\mathbf e_{\beta})\cdot (\gamma,x)^{\ast}\pi_{\infty}^{\ast}f\cr
		&=\wU(x)\wU^{-1}(\det \gamma)\wU^{-1}(\mathbf e_{\beta})\pi_{\infty}^{\ast}\gamma^{\ast}f\cr
		&=\wU(x)\wU^{-1}(\det \gamma)\wU^{-1}(\mathbf e_{\beta})\pi_{\infty}^{\ast}(\kappa^{-1}(c\mathfrak z+d)\wU(\det \gamma)f)\\
		&=\kappa^{-1}(c\mathfrak z+d)\wU(x)\wU^{-1}(\mathbf e_{\beta})\pi_{\infty}^{\ast}f.
		\end{align*}
		In the second step we have used equation~\eqref{eq:E-action on w(e)} and the fact that $\pi_{\infty}$ is equivariant with respect to the projection $E(p^n)\to \mathrm{P}\Gamma_0(p^n)$, $(\gamma,x)\mapsto \gamma$, in the third step we use $f\in \omega^{\kappa,+}_{G,(G,\Gamma,n)}$, and in the last step we use that by Lem.~\ref{l:compatibility of Hodge--Tate period map for G and G*}, $\pi_{\infty}^{\ast}(\kappa^{-1}(c\mathfrak z+d))=\kappa^{-1}(c\mathfrak z+d)$. This identity shows that indeed $\wU^{-1}(\mathbf e_{\beta})\pi_{\infty}^{\ast}f\in \omega^{\kappa}_{G,(G^{\ast},\Gamma,n)}$, so the map in the statement of the proposition is well-defined.
		
		To see that the map is an isomorphism, take now $f\in \omega^{\kappa,+}_{G,(G^{\ast},\Gamma,n)}$ and consider the function $g=\wU(\mathbf e_{\beta})f$ on $\XX_{\U,\Gamma(p^\infty)}(\epsilon)_a$. We claim that $g$ descends uniquely to a function on $\XX_{G,\U,\Gamma(p^\infty)}(\epsilon)_a$. To see this, since $\pi_{\infty}:\XX_{\U,\Gamma(p^\infty)}(\epsilon)_a\to \XX_{G,\U,\Gamma(p^\infty)}(\epsilon)_a$ is a perfectoid $\Delta(p^\infty \nn)$-torsor, it suffices to show that $g$ is $\Delta(p^\infty \nn)$-invariant. It suffices by continuity to show that it is invariant for the dense subgroup $\OO_F^{\times,+} \hookrightarrow \Delta(p^\infty \nn)$.  For this we calculate that for any $x\in \OO_F^{\times,+}$ we have
		\[ x^{\ast}g = (1,x)^{\ast}g = (1,x)^{\ast}(\wU(\mathbf e_{\beta}))(1,x)^{\ast}f=\wU(x)^{-1}\wU(\mathbf e_{\beta})\kU^{-1}(1)\wU(x)f=\wU(\mathbf e_{\beta})f=g, \]
		where we have again used equation~\eqref{eq:E-action on w(e)}. This shows that $g$ indeed arises as pullback of a unique function $h=h(f)$ on $\XX_{G,\U,\Gamma(p^\infty)}(\epsilon)_a$ with $\pi_{\infty}^{\ast}h=g$. We claim that $h\in \omega^{\kappa,+}_{G,(G,\Gamma,n)}$. For this we have to check by definition that $\gamma^{\ast}h$ equals $\kU^{-1}(c\mathfrak z+d)\wU(\det \gamma)h$.  The morphism of sheaves $\pi_{\infty}^{\ast}:\OO^+_{\XX_{\U,\Gamma(p^\infty)}(\epsilon)_a}\to \OO^+_{\XX_{\U,\Gamma_{\!\scaleto{0}{3pt}}(p^\infty)}(\epsilon)_a}$ is the pullback of a pro-\'etale map, hence injective; thus, as it is also $\Gamma_{\!0}(p^n)$-equivariant, it suffices to check this property on $\pi_{\infty}^{\ast}h=g$ with respect to the action of $\Gamma_{\!0}(p^n)\to E(p^n)$. But by definition of $g$ as $\wU(\mathbf e_{\beta})f$, we have for any $\gamma\in\Gamma_{\!0}(p^n)$ that
		\[\gamma^{\ast}g=(\gamma,1)^{\ast}(\wU(\mathbf e_{\beta}))(\gamma,1)^{\ast}f=\wU(\det(\gamma))\wU(\mathbf e_{\beta})\kU^{-1}(c\mathfrak z+d)f=\kU^{-1}(c\mathfrak z+d)\wU(\det(\gamma))g.\]
		This shows that $g$ transforms under the action of $\Gamma_{\!0}(p^n)$ as desired, and thus so does $h$. This shows $h\in \omega^{\kappa,+}_{G,(G,\Gamma,n)}$ as desired. Moreover, by construction we have $f=\wU^{-1}(\mathbf e_{\beta})\pi_{\infty}^{\ast}h$.
		
		This shows that the map in the proposition is injective and surjective, thus an isomorphism.
	\end{proof}
	\begin{defn}\label{d:mf-G-via-X_G}
		Let $\k:\U \to \W$ be a bounded smooth weight and $n \in \ZZ_{\geq 0} \cup \{\infty\}$. For $n\neq 0$, let $\omega_{G,\gothc,n}^{\kappa,+}$ be the sheaf $\omega^{\kappa,+}_{G,(G,\Gamma,n)}$ from  Defn.~\ref{defn: 4 sheaves for the price of 2} (4), noting that we have reintroduced our fixed $\gothc$ to the notation. As before, we define the rational version $\omega^{\kappa}_{G,\gothc,n}$ by replacing $\OO^+_{\cX}$ with $\OO_{\cX}$, or equivalently by inverting $p$. We use the Atkin--Lehner isomorphism to define `$n=0$' versions 
		\[
		\omega^\kappa_{G,\gothc} = \omega^\kappa_{G,\gothc,0} := \AL_*\omega^\kappa_{G,\gothc,1}, \hspace{12pt} \omega^{\kappa,+}_{G,\gothc} = \omega^{\kappa,+}_{G,\gothc,0} := \AL_* \omega^{\kappa,+}_{G,\gothc,1}
		\]
		which are sheaves on $\cX_{G,\gothc,\cU}(\e)$. 
		For any $n \in \ZZ_{\geq 0} \cup \{\infty\}$, the space of \emph{overconvergent arithmetic Hilbert modular forms of tame level $\mu_N$, $p$-level $\Gamma_{\!0}(p^n)$, polarisation ideal $\gothc$, radius of overconvergence $\e$ and weight $\kU$}, and its integral subspace, are then
		\begin{align*}
		M_{\k}^G(\Gamma_{\!0}(p^n),\mu_N,\e,\gothc):=\hH^0(\mathcal X_{G,\gothc,\cU}(\e),\omega^{\kappa}_{G,\gothc,n}),\\
		M_{\k}^{G,+}(\Gamma_{\!0}(p^n),\mu_N,\e,\gothc):=\hH^0(\mathcal X_{G,\gothc,\cU}(\e),\omega^{\kappa,+}_{G,\gothc,n}).
		\end{align*}

	\end{defn}
	\begin{remark}
		One can define cusp forms for $G$ in much the same way as for $G^*$ (Rem.~\ref{rem:cusp forms}).
	\end{remark}
	As in \cite[Lem.~4.5]{AIP}, the polarisation action by $p$-adic units $x \in F^{\times,+}$ defines isomorphisms  
	\[
	P_x:M_{\k}^G(\Gamma_{\!0}(p^n),\mu_N,\e,\gothc) \isorightarrow  M_{\k}^G(\Gamma_{\!0}(p^n),\mu_N,\e,x\gothc)
	\]
	We will see below that the action of the Hecke operators permutes the spaces $M_{\k}^G(\mu_N,\e,\gothc)$ by scaling the polarisation ideal. To obtain a Hecke stable space, one therefore  also defines:
	
	\begin{defn}\label{d:full-space-of-OCHMF-for-G}
		Let $\mathfrak{R}_F^{(p)}$ be the group of fractional ideals of $F$ coprime to $p$ and $\mathfrak{T}_F^{+,(p)}$ be the positive elements which are $p$-adic units, then $\mathfrak{R}_F^{(p)}/\mathfrak{T}_F^{+,(p)}$ is the narrow class group. We let 
		\[
		M_{\k}^G(\Gamma_{\!0}(p^n),\mu_N,\e):=\bigg ( \bigoplus_{\gothc \in \mathfrak{R}_F^{(p)} }  M_{\k}^G(\Gamma_{\!0}(p^n),\mu_N,\e,\gothc) \bigg)\bigg/  \big \langle P_x(f)-f \text{ for all }x \in \mathfrak{T}_F^{+,(p)}\big \rangle. 
		\] This is the space of arithmetic overconvergent Hilbert modular forms of weight $\k$, tame level $\mu_{\nn}$ and radius of overconvergence $\e$. One can define integral versions of these spaces by using $\omega^{\kappa,+}_{G,\gothc,n}$.
	\end{defn}

	Putting everything together, we obtain the desired comparison to the sheaf of Hilbert modular forms for $G$ as defined by Andreatta--Iovita--Pilloni. For this we first recall the definition:
	\begin{definition}\label{d:AIP-sheaf-G}
		Let $\w_{G,\gothc,\mathrm{AIP,n}}^{\k,+}:=(\pi_{\ast}\w_{G^*,\gothc,\mathrm{AIP,n}}^{\k,+})^{\Delta(N)}$. This is the integral analytic incarnation of the sheaf of Hilbert modular forms for $G$ on $\XX_{G,\gothc,\U}(\epsilon)$ denoted by $\omega^{\k_G^{\mathrm{un}}}$ in \cite[\S 8.2]{AIP3}.
	\end{definition}
	\begin{thm}\label{thm:comparison-hilbert-for-G}
		Let $\k: \U \to \W$ be a smooth bounded weight, $n \in \ZZ_{\geq 0} \cup \{\infty\}$ and $0\leq \epsilon\leq \epsilon_{\kappa}$. There is a natural isomorphism \[\w_{G,\gothc,n}^{\k,+} \cong \omega^{\k,+}_{G,\gothc,\AIP,n}\]
		of $\O^+$-modules on $\XX_{G,\gothc,\U}(\epsilon)$.
		In particular, $\w_{G,\gothc,n}^{\k,+}$ is an invertible $\O^+$-module. By inverting $p$, it induces a Hecke-equivariant isomorphism of line bundles $\w_{G,\gothc,n}^{\k}= \omega^{\k}_{G,\gothc,\AIP,n}$.
		
	\end{thm}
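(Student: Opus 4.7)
The proof will proceed by chaining together the four comparison results established above, combined with Theorem \ref{thm:comparison hilbert} for $G^*$. For clarity, I first handle $n\in\ZZ_{\geq 1}\cup\{\infty\}$; the case $n=0$ then follows formally by applying $\AL_*$ to the $n=1$ comparison (on both sides, this is how the $n=0$ sheaf is defined).

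The plan is to construct the following chain of natural isomorphisms of $\O^+$-modules on $\XX_{G,\gothc,\U,\Gamma_0(p^n)}(\e)_a$:
\begin{equation*}
\w_{G,\gothc,n}^{\k,+}
\xrightarrow{(\mathrm i)}\omega^{\kappa,+}_{G,(G^{\ast},\Gamma,n)}
\xrightarrow{(\mathrm{ii})}\bigl(\pi_{\ast}\omega^{\kappa,+}_{G^*,(G^{\ast},\Gamma,n)}\bigr)^{\Delta(N)}
\xrightarrow{(\mathrm{iii})}\bigl(\pi_{\ast}\omega^{\kappa,+}_{G^*,(G^{\ast},\Gamma^*,n)}\bigr)^{\Delta(N)}
\xrightarrow{(\mathrm{iv})}\bigl(\pi_{\ast}\w^{\k,+}_{\AIP,n}\bigr)^{\Delta(N)},
\end{equation*}
whose composite is then $\w^{\k,+}_{G,\gothc,\AIP,n}$ by Definition \ref{d:AIP-sheaf-G}. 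Here (i) is exactly the isomorphism of Lemma \ref{prop G sheaves} (recalling that $\w_{G,\gothc,n}^{\k,+}=\omega^{\kappa,+}_{G,(G,\Gamma,n)}$ by Definition \ref{d:mf-G-via-X_G}); (ii) is Lemma \ref{lem:sheaf descent}; (iii) is induced by Lemma \ref{lem G* sheaves}; and (iv) is the map $\tilde{\mathfrak s}^*$ from Theorem \ref{thm:comparison hilbert}.

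The key point is to check that the inner isomorphisms (iii) and (iv) are compatible with the twisted polarisation $\Delta(N)$-action from Definition \ref{d:polarisation-action-on-mf} on the left, and with the AIP-polarisation $\Delta(N)$-action on the right, so that one can pass to $\Delta(N)$-invariants throughout. For (iii), the isomorphism in Lemma \ref{lem G* sheaves} is constructed from the $\ZZ_p^\times$-torsor $\XX_{\U,\Gamma^*(p^\infty)}\times\OO_p^\times \to \XX_{\U,\Gamma(p^\infty)}$ from Lemma \ref{l:Gamma*(p^infty) x O_p^x->Gamma(p^infty) is torsor}, which by the moduli description is visibly equivariant under the polarisation action of $\OO_F^{\times,+}$; since $\mathfrak z$ is polarisation-invariant (Lemma \ref{l:compatibility of Hodge--Tate period map for G and G*}), it follows that the twisted action on both sides is transported correctly. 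For (iv), the equivariance is exactly the content of the last statement of Lemma \ref{lem: G forms from AIP}: the twisted polarisation action on $\pi_*\omega^{\k,+}_{G^*,(G^*,\Gamma,n)}$ agrees, under the comparison $\tilde{\mathfrak s}^*$, with the AIP polarisation action, because $\tilde{\mathfrak s}$ itself is polarisation-invariant (the universal level $\alpha$ and the Hodge–Tate map do not involve $\lambda$).

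Once this chain is established, the first assertion follows by taking $\Delta(N)$-invariants on the outer equalities (noting that (i) is an identity of subsheaves of the same ambient $\O^+$-module, so taking invariants is harmless). Invertibility of $\w_{G,\gothc,n}^{\k,+}$ is then inherited from invertibility of $\w^{\k,+}_{G,\gothc,\AIP,n}$, which in turn follows from \cite[Prop.~8.2]{AIP3} (or directly from finite étale descent along $\pi:\XX\to\XX_G$ of the invertible sheaf $\w^{\k,+}_{\AIP,n}$ established in Theorem \ref{thm:comparison hilbert}). Inverting $p$ gives the rational statement, and Hecke equivariance is inherited from the Hecke-equivariance of Theorem \ref{thm:comparison hilbert} together with the fact that every morphism in the chain is manifestly Hecke-equivariant (a systematic verification will be given in \S\ref{sec:hecke-operators}, specifically in Proposition \ref{prop: hecke equiv}, where the Hecke correspondences for $G$ are shown to be compatible with those for $G^*$ under the descent along $\pi$).

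The main obstacle in this argument is purely bookkeeping: one must track that the twisted polarisation action of Definition \ref{d:polarisation-action-on-mf} interacts correctly with each of the intermediate descriptions, and in particular that the Weil-pairing twist $\wU^{-1}(\mathbf e_\beta)$ used in Lemma \ref{prop G sheaves} does not introduce any extra cocycle factor when combined with the polarisation action. This follows from Lemma \ref{l:equivariance of Weil pairing morphism from X_G,Gamma(p^infty)}, which shows that the polarisation action on $\mathbf e_\beta$ is precisely by the same character needed to compensate for the difference between the $\Gamma_0(p^n)$-cocycle $\kU^{-1}(c\mathfrak z+d)\wU(\det\gamma)$ and the $E(p^n)$-cocycle $\kU^{-1}(c\mathfrak z+d)\wU(x)$, guaranteeing the full chain descends $\Delta(N)$-equivariantly.
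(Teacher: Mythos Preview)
Your proof is correct and follows essentially the same approach as the paper: the chain of isomorphisms (i)--(iv) you write down is exactly the chain the paper uses (Definition~\ref{d:mf-G-via-X_G}, Lemma~\ref{prop G sheaves}, Lemma~\ref{lem:sheaf descent}, Lemma~\ref{lem G* sheaves}, then Theorem~\ref{thm:comparison hilbert} together with Lemma~\ref{lem: G forms from AIP}, and finally Definition~\ref{d:AIP-sheaf-G}). Your additional discussion of why steps (iii) and (iv) are $\Delta(N)$-equivariant is more explicit than the paper, which simply cites Lemma~\ref{lem: G forms from AIP} for this compatibility, and your deferral of Hecke equivariance to Proposition~\ref{prop: hecke equiv} matches the paper's treatment.
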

	
	\begin{proof}
		To ease notation, we suppress the dependence on $\gothc$. By Thm.~\ref{thm:comparison hilbert}, we have $\omega_{G^*}^{\k,+}=\omega_{G^*,\AIP}^{\k,+}$. By combining the lemmas of \S9, we conclude that
		\begin{align*}
		\w_{G,n}^{\k,+}&\stackrel{\eqref{d:mf-G-via-X_G}}{=}
		\w_{G,(G,\Gamma,n)}^{\k,+}\stackrel{\eqref{prop G sheaves}}{=} 
		\w_{G,(G^*,\Gamma,n)}^{\k,+}\stackrel{\eqref{lem:sheaf descent}}{=}
		(\pi_{\ast}\w_{G^*,(G^*,\Gamma,n)}^{\k,+})^{\Delta(N)}\\&\stackrel{\eqref{lem G* sheaves}}{=}
		(\pi_{\ast}\w_{G^*,(G^*,\Gamma^*,n)}^{\k,+})^{\Delta(N)}\stackrel{\eqref{thm:comparison hilbert} \&\eqref{lem: G forms from AIP}}{=}
		(\pi_{\ast}\w_{G^{\ast},\AIP,n}^{\k,+})^{\Delta(N)}\stackrel{\eqref{d:AIP-sheaf-G}}{=}\omega_{G,\AIP,n}^{\k,+}
		\end{align*}
		as desired.
		We postpone the proof of Hecke-equivariance to \S\ref{sec:hecke-operators}.
	\end{proof}
	\section{Hecke operators}\label{sec:hecke-operators}
	Throughout this section, we fix a smooth bounded weight $\kappa:\U\to \W \to \W^*$ and $0\leq \epsilon\leq \epsilon_{\kappa}$. To ease notation, in this section we suppress the subscript $\U$ from our Hilbert modular varieties.
	
	\subsection{The tame Hecke operators}
	Let $\mathfrak a\subseteq \O_F$ be any ideal coprime to $\mathfrak n$ and $p$. Let us denote by ${\mathcal X_{\gothc,\Gamma_{\!\scaleto{0}{3pt}}(\mathfrak a)}(\epsilon)} $ the Hilbert modular variety of tame level $\mu_N \cap \Gamma_{\!0}(\mathfrak a)$, representing tuples $(A,\iota,\lambda,\mu_N,D)$ where  $D\subseteq A[\mathfrak a]$  is a closed $\O_F$-submodule scheme that is \'etale locally isomorphic to $\O_F/\mathfrak a$, and $\lam : A\otimes \gothc \isorightarrow A^\vee$. Let us denote by $\pi_1: {\mathcal X_{\gothc,\Gamma_{\!\scaleto{0}{3pt}}(\mathfrak a)}(\epsilon)} \to {\mathcal X_{\gothc}(\epsilon)}$ the forgetful morphism. By Lem.~\ref{l:KL-(1.9.1)}, there is a second map  defined by sending $A\mapsto A/D$:
	\[\pi_2: {\mathcal X_{\gothc,\Gamma_{\!\scaleto{0}{3pt}}(\mathfrak a)}(\epsilon)_{a}} \to {\mathcal X_{\gothc\mathfrak a}(\epsilon)},\quad (A,\iota,\lambda,\mu_N,D)\to (A':=A/D,\iota',\lambda',\mu'_N).\]
	Now let $n\in \Z_{\geq 1}\cup \{\infty\}$. Then it is clear that the morphisms $\pi_1$ and $\pi_2$ extend uniquely when we add level structures at $p$: more precisely, we obtain a natural commutative diagram
	\begin{equation}\label{eq:tame-Hecke-operator}
	\begin{tikzcd}[row sep = 0.55cm]
	{\mathcal X_{\gothc \mathfrak a,\Gamma(p^\infty)}(\epsilon)_{a}} \arrow[d] & {\mathcal X_{\gothc,\Gamma(p^\infty)\cap \Gamma_{\!\scaleto{0}{3pt}}(\mathfrak a)}(\epsilon)_{a}} \arrow[l,"\pi_{2,\infty}"'] \arrow[r,"\pi_{1,\infty}"] \arrow[d] & {\mathcal X_{\gothc,\Gamma(p^\infty)}(\epsilon)_{a}} \arrow[d] \\
	{\mathcal X_{\gothc \mathfrak a,\Gamma_{\!\scaleto{0}{3pt}}(p^n)}(\epsilon)_{a}}              & {\mathcal X_{\gothc,\Gamma_{\!\scaleto{0}{3pt}}(p^{n})\cap \Gamma_{\!\scaleto{0}{3pt}}(\mathfrak a)}(\epsilon)_{a}} \arrow[l,"\pi_2"'] \arrow[r,"\pi_1"]                            & {\mathcal X_{\gothc,\Gamma_{\!\scaleto{0}{3pt}}(p^{n})}(\epsilon)_{a}},       
	\end{tikzcd}
	\end{equation}
	where $\pi_{2,\infty}$ sends $\alpha:\O_p^2\to T_pA^\vee$ to $\alpha':\O_p^2\xrightarrow{\a} T_pA^{\vee}\xrightarrow{(\varphi^\vee)^{-1}} T_pA'^\vee$, where the isomorphism $\varphi^\vee:T_pA'^\vee\to T_pA^\vee$ is the one induced from the prime-to-$p$ isogeny $\varphi^\vee:A'^\vee=(A/D)^\vee\to A^\vee$.
	\begin{lem}\label{l:hecke-corresp-T}
		There is a canonical isomorphism $\pi_1^{\ast}\omega_n^{\kappa,+}=\pi_2^{\ast}\omega_n^{\kappa,+}$ of sheaves on ${\mathcal X_{\gothc,\Gamma_{\!\scaleto{0}{3pt}}(p^{n})\cap \Gamma_{\!0}(\mathfrak a)}(\epsilon)_{a}}$. 
	\end{lem}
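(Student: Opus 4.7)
The plan is to exploit the fact that $\mathfrak{a}$ is coprime to $p$: the universal isogeny $\varphi: A \to A' = A/D$ on the correspondence space $Y_0 := \mathcal X_{\gothc,\Gamma_{\!\scaleto{0}{3pt}}(p^n)\cap \Gamma_{\!\scaleto{0}{3pt}}(\mathfrak a)}(\epsilon)_a$ has degree $N_{F/\QQ}(\mathfrak a)$, which is a $p$-adic unit. Consequently $\varphi$ is \'etale on $p$-divisible groups, so the dual map $\varphi^\vee: A'^\vee \to A^\vee$ induces an isomorphism $\varphi^\vee : T_pA'^\vee \isorightarrow T_pA^\vee$, and the pullback-of-differentials map $\varphi^*:\omega_{A'} \to \omega_A$ is an integral isomorphism of $\O^+_{Y_0} \otimes \O_p$-modules. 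This is the ingredient that will produce the canonical isomorphism.

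Next I would show that the two Hodge--Tate period maps coincide on the correspondence space at infinite level, i.e.\ $\pi_{\HT} \circ \pi_{1,\infty} = \pi_{\HT} \circ \pi_{2,\infty}$ as morphisms $Y := \mathcal{X}_{\gothc,\Gamma(p^{\infty})\cap\Gamma_{\!\scaleto{0}{3pt}}(\mathfrak a)}(\e)_a \to \Res_{\OF|\ZZ}\PP^1$. By the moduli description of $\pi_{\HT}$ (Rem.~\ref{r:mod-interpret-of-pi_HT-Hilbert}), this amounts to checking that the kernels of the two composites $\O_p^2 \otimes \O^+ \xrightarrow{\alpha} T_pA^\vee \otimes \O^+ \xrightarrow{\HT_A} \omega_A$ and $\O_p^2 \otimes \O^+ \xrightarrow{\alpha'} T_pA'^\vee \otimes \O^+ \xrightarrow{\HT_{A'}} \omega_{A'}$ agree, where by construction of $\pi_{2,\infty}$ we have $\alpha' = (\varphi^\vee)^{-1}\circ \alpha$. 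Functoriality of the Hodge--Tate map gives the commutative square $\HT_A \circ \varphi^\vee = \varphi^{*}\circ \HT_{A'}$, so that $\HT_A \circ \alpha = \varphi^* \circ \HT_{A'}\circ \alpha'$; since $\varphi^*$ is an isomorphism, the two kernels coincide as subbundles of $\O_p^2\otimes \O^+$. In particular, pulling back the parameter $\mathfrak z$ gives $\pi_{1,\infty}^\ast \mathfrak z = \pi_{2,\infty}^\ast \mathfrak z$ as functions on $Y$, and hence the factor of automorphy $\kappa^{-1}(c\mathfrak z+d)$ pulls back the same way along either $\pi_{i,\infty}$.

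To finish, I would identify both pullbacks with subsheaves of $q'_*\O^+_Y$, where $q': Y \to Y_0$ is the projection. Since the diagram \eqref{eq:tame-Hecke-operator} is Cartesian on the nose (as the correspondence space is the fibre product of moduli problems) and $Y \sim \varprojlim_m Y_m$ is a perfectoid tilde-limit compatible with these fibre products, base change gives canonical identifications $\pi_i^{\ast}q_\ast\O^+ = q'_\ast\O^+_Y$ for $i = 1,2$. Under these identifications, the subsheaf $\pi_i^{\ast}\omega_n^{\kappa,+}$ is cut out by the condition $\gamma^\ast f = \kappa^{-1}(c \pi_{i,\infty}^\ast\mathfrak z+d)f$ for all $\gamma \in \Gamma_{\!0}(p^n)$, where the $\Gamma_{\!0}(p^n)$-action on $Y$ is the natural action on the full level structure $\alpha$ of $A$ (this is the same action in both cases, since $\pi_{2,\infty}$ is equivariant with the transport of the action to $\alpha'$ through $(\varphi^\vee)^{-1}$, which is itself $\Gamma_{\!0}(p^n)$-equivariant as $\varphi$ is $\Gamma_{\!0}(p^n)$-equivariant). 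The cocycles and the actions coincide by the previous step, so the two subsheaves are literally equal. The main point to double-check is the integral statement: one must verify that the base-change identification $\pi_i^\ast q_\ast \O^+ = q'_\ast\O^+_Y$ is integral, which I expect to be the main (but still routine) obstacle; for this one can argue with an affinoid perfectoid cover and Lem.~\ref{gen lemma for sous}. The isomorphism obtained this way is visibly natural and, on classical weights $\kappa = k\in \ZZ_{\geq 1}$, reduces under Prop.~\ref{prop: comparison to classical modular forms} to the $k$-th tensor power of $\varphi^\ast: \pi_2^\ast \omega_{\mathcal{A}'} \isorightarrow \pi_1^\ast \omega_{\mathcal{A}}$, as expected.
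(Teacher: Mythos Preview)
Your proposal is correct and follows essentially the same approach as the paper: both argue that $\pi_i^\ast\omega_n^{\kappa,+}$ can be described as the subsheaf of $q'_\ast\O^+_Y$ cut out by the transformation law with automorphy factor $\pi_{i,\infty}^\ast\kappa^{-1}(c\mathfrak z+d)$, and then show $\pi_{\HT}\circ\pi_{1,\infty}=\pi_{\HT}\circ\pi_{2,\infty}$ via the commutative square $\HT_A\circ\varphi^\vee=\varphi^\ast\circ\HT_{A'}$ coming from functoriality of the Hodge--Tate sequence under the prime-to-$p$ isogeny $\varphi$. The paper's proof is terser (it takes the Cartesian base-change identification and the compatibility of $\Gamma_{\!0}(p^n)$-actions for granted from diagram~\eqref{eq:tame-Hecke-operator}), but the logic is identical.
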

	\begin{proof}
		By diagram~\eqref{eq:tame-Hecke-operator}, we have
		\[\pi_1^{\ast}\omega_n^{\kappa,+}=\{ f\in \mathcal O^+_{\XX_{\gothc,\Gamma(p^\infty)\cap\Gamma_{\!\scaleto{0}{3pt}}(\mathfrak a)}(\epsilon)_a}\mid \gamma^{\ast}f = \pi_{1,\infty}^\ast\kappa^{-1}(c \mathfrak z+d)f \text{ for all }\gamma \in \Gamma_{\!0}(p^n) \}.\]
		
		The same applies to $\pi_2^\ast\omega_n^{\kappa,+}$, so we are left to see that $\pi_{1,\infty}^\ast\kappa^{-1}(c \mathfrak z+d)=\pi_{2,\infty}^\ast\kappa^{-1}(c \mathfrak z+d)$. To see this, observe that $\varphi:A\to A/D$ induces an isomorphism of Hodge--Tate sequences
		\begin{equation}\label{dg:functoriality-of-HT}
		\begin{tikzcd}[row sep = 0.55cm]
		\O_p^2\arrow[d,equal]\arrow[r, "\alpha"]&T_p(A/D)^\vee \arrow[r,  "\HT"] \arrow[d,  "\varphi^{\vee}"] & \omega_{A/D} \arrow[d,  "\varphi^{\ast}"]\\
		\O_p^2\arrow[r, "\alpha'"]&T_pA^\vee \arrow[r,  "\HT"] & \omega_A
		\end{tikzcd}
		\end{equation}
		by definition of $\alpha$. This shows that $\pi_{\HT}\circ \pi_{1,\infty}=\pi_{\HT}\circ \pi_{2,\infty}$, giving the desired equality.
	\end{proof}
	\begin{definition}\label{d:T_a}
		The $T_{\mathfrak a}$-operator is defined as the composition
		\begin{align*}
		&M^{G^*,+}_{\k}(\Gamma_{\!0}(p^n),\mu_N,\epsilon,\gothc\mathfrak a)=\Gamma(\mathcal X_{\gothc \mathfrak a,\Gamma_{\!\scaleto{0}{3pt}}(p^{n})}(\epsilon)_{a},\omega^{\kappa,+}_n)\to \Gamma(\mathcal X_{\gothc,\Gamma_{\!\scaleto{0}{3pt}}(p^{n})\cap \Gamma_{\!\scaleto{0}{3pt}}(\mathfrak a)}(\epsilon)_{a},\pi_2^{\ast}\omega^{\kappa,+}_n)\\
		&\isorightarrow \Gamma(\mathcal X_{\gothc,\Gamma_{\!\scaleto{0}{3pt}}(p^{n})\cap \Gamma_{\!\scaleto{0}{3pt}}(\mathfrak a)}(\epsilon)_{a},\pi_1^{\ast}\omega^{\kappa,+}_n)\xrightarrow{\frac{1}{q_{\mathfrak a}}\Tr_{\pi_1}}\Gamma(\mathcal X_{\gothc,\Gamma_{\!\scaleto{0}{3pt}}(p^{n})}(\epsilon)_{a},\omega^{\kappa,+}_n)=M^{G^*,+}_{\k}(\Gamma_{0}(p^n),\mu_N,\epsilon,\gothc).
		\end{align*}
		where $\Tr_{\pi_1}$ is the trace of the  finite locally free map $\pi_1$, and where $q_{\mathfrak a}:=|\O_F/\mathfrak a|$.
	\end{definition}
	\subsection{The $U_{\mathfrak p}$-operators}
	Let $n \in \ZZ_{\geq 1}$ and $\mathfrak p$ be a prime ideal of $\OO_F$ above $p$ of ramification index $e$. Set $l:=ne+1$.
	For the definition of the $U_{\fp}$-operator, we then use the moduli space $\mathcal X_{\gothc,\Gamma_{\!\scaleto{0}{3pt}}(p^n)\cap \Gamma_{\!\scaleto{0}{3pt}}(\mathfrak p^{l})}(\epsilon)_{a}\to \mathcal X_{\gothc}(\epsilon)$ which relatively represents the data of an anticanonical $\O_F$-submodule scheme $C\subseteq A[p^n]$ \'etale locally isomorphic to $\O_F/p^n\O_F$ together with an $\O_F$-submodule scheme $D\subseteq A[\mathfrak p^{l}]$ \'etale locally isomorphic to $\O_F/\fp^{l}$ such that $C[\fp^{en}]=D[\fp^{en}]$. In particular, $D$ is then anticanonical. There is a forgetful map $\pi_1:\mathcal X_{\gothc,\Gamma_{\!\scaleto{0}{3pt}}(p^n)\cap \Gamma_{\!\scaleto{0}{3pt}}(\mathfrak p^{l})}(\epsilon)_{a}\to \mathcal X_{\gothc,\Gamma_{\!\scaleto{0}{3pt}}(p^n)}(\epsilon)_a$ which is finite flat of degree $q_{\gothp}:=|\OF/\mathfrak p|$. There is also a second map
	\begin{alignat*}{3}
	\pi_2:\mathcal X_{\gothc,\Gamma_{\!\scaleto{0}{3pt}}(p^n)\cap \Gamma_{\!\scaleto{0}{3pt}}(\mathfrak p^{l})}(\epsilon)_{a}&\to&& \mathcal X_{\gothc \fp,\Gamma_{\!\scaleto{0}{3pt}}(p^n)}(\epsilon)_a\\ (A,\iota,\lambda,\mu_N,C,D)&\mapsto&& (A':=A/D[\mathfrak p],\iota',\lambda',\mu'_N,C':=(C+D)/D[\mathfrak p])
	\end{alignat*}
	where $(A',\iota',\lambda',\mu'_N)$ is like in Lem.~\ref{l:KL-(1.9.1)} and where $C+D\subseteq A[p^{n+1}]$ is the submodule scheme generated by $C$ and $D$. 
	Then $C'$ is \'etale locally isomorphic to $\O_F/p^n\O_F$. 
	We note that this map is not surjective, and the image is already contained in an open subspace that can be described using the partial Hasse invariant at $\fp$ (for example, if $F=\Q$, then it lands in $\cX_{p\gothc,\Gamma_{\!\scaleto{0}{3pt}}(p^n)}(p^{-1}\e)_a$).
	
	Let us now fix any uniformiser $\varpi \in \O_p$ such that $\varpi \O_p=\mathfrak p\O_p$ and let $u_{\fp}:=\smallmatrd{\varpi}{0}{0}{1}\in G(\Q_p)$. 
	Then letting  $u_{\fp}$ act in terms of the $G(\Q_p)$-action, we obtain a commutative diagram
	\begin{equation}\label{eq:wild-Hecke-operator}
	\begin{tikzcd}[row sep = 0.54cm]
	{\mathcal X_{\gothc\fp, \Gamma(p^\infty)}(\epsilon)_{a}} \arrow[d,"q"] & {\mathcal X_{\gothc,\Gamma(p^\infty)}(\epsilon)_{a}} \arrow[l,"u_{\mathfrak p}"'] \arrow[r,equal] \arrow[d] & {\mathcal X_{\gothc,\Gamma(p^\infty)}(\epsilon)_{a}} \arrow[d,"q"] \\
	{\mathcal X_{\gothc \fp,\Gamma_{\!\scaleto{0}{3pt}}(p^n)}(\epsilon)_{a}}              & {\mathcal X_{\gothc,\Gamma_{\!\scaleto{0}{3pt}}(p^n)\cap \Gamma_{\!\scaleto{0}{3pt}}(\mathfrak p^{l})}(\epsilon)_{a}} \arrow[l,"\pi_2"'] \arrow[r,"\pi_1"]                            & {\mathcal X_{\gothc,\Gamma_{\!\scaleto{0}{3pt}}(p^n)}(\epsilon)_{a}}.         
	\end{tikzcd}
	\end{equation}
	
	\begin{lem}\label{l:hecke-corresp-U}
		The action of $u_{\fp}$ defines a map $\pi_2^{\ast}\omega_n^{\kappa,+}\rightarrow\pi_1^{\ast}\omega_n^{\kappa,+}$ of invertible sheaves on ${\mathcal X_{\gothc,\Gamma_{\!\scaleto{0}{3pt}}(p^{n})\cap \Gamma_{\!0}(\mathfrak p^{l})}(\epsilon)_{a}}$. 
		It is independent of the choice of the uniformiser $\varpi$.
	\end{lem}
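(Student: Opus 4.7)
The strategy follows the proof of Lem.~\ref{l:hecke-corresp-T}, but now exploits the $G(\QQ_p)$-action at infinite level from \S\ref{s:the action og G(Q_p)} in place of a finite-level moduli map. Given a section $f \in \pi_2^{\ast}\omega_n^{\kappa,+}$, I would first lift it along the left vertical map of \eqref{eq:wild-Hecke-operator} to a function $\tilde f$ on $\cX_{\gothc\fp,\Gamma(p^\infty)}(\epsilon)_a$ satisfying $\gamma^{\ast}\tilde f = \kappa^{-1}(c\mathfrak z + d)\tilde f$ for all $\gamma = \smallmatrd{a}{b}{c}{d} \in \Gamma_0^{\ast}(p^n)$; by Lem.~\ref{lem G* sheaves}, $\tilde f$ is equivalently invariant under the larger group $\Gamma_0(p^n) \subseteq G(\ZZ_p)$ with the same cocycle. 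I then set $u_\fp^* f := u_\fp^{\ast} \tilde f$, a function on $\cX_{\gothc,\Gamma(p^\infty)}(\epsilon)_a$, and verify that it descends along the right vertical $q$ to a section of $\pi_1^{\ast}\omega_n^{\kappa,+}$.

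Since $q$ is a pro-\'etale torsor for $\Gamma_0(p^n) \cap \Gamma_0(\fp^l)$, this descent amounts to verifying the appropriate cocycle transformation law for $u_\fp^{\ast}\tilde f$ under this subgroup. There are two key ingredients. First, $G(\QQ_p)$-equivariance of $\pi_{\HT}$ (Lem.~\ref{l:compatibility of Hodge--Tate period map for G and G*}), combined with the formula $u_\fp \cdot (z : 1) = (\varpi z : 1)$, yields $u_\fp^{\ast} \mathfrak z = \varpi \mathfrak z$. Second, a direct conjugation gives $u_\fp \gamma u_\fp^{-1} = \smallmatrd{a}{\varpi b}{\varpi^{-1} c}{d}$ for $\gamma = \smallmatrd{a}{b}{c}{d} \in \Gamma_0(p^n) \cap \Gamma_0(\fp^l)$, and this conjugate still lies in $\Gamma_0(p^n)$ because $\fp^{ne+1} \mid c$ forces $p^n \mid \varpi^{-1} c$ at $\fp$, while at other primes above $p$ the entry $c$ is unchanged and already has sufficient divisibility. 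Combining these,
\[
\gamma^{\ast} u_\fp^{\ast} \tilde f \;=\; u_\fp^{\ast} (u_\fp \gamma u_\fp^{-1})^{\ast} \tilde f \;=\; u_\fp^{\ast}\!\bigl(\kappa^{-1}(\varpi^{-1} c\, \mathfrak z + d)\,\tilde f\bigr) \;=\; \kappa^{-1}(c\,\mathfrak z + d)\, u_\fp^{\ast}\tilde f,
\]
which is exactly the transformation required for descent along $q$.

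For the independence of $\varpi$, if $\varpi' = u\varpi$ with $u \in \OO_p^\times$, then $u_\fp' = \smallmatrd{u}{0}{0}{1} u_\fp$ and hence $(u_\fp')^{\ast}\tilde f = u_\fp^{\ast} \smallmatrd{u}{0}{0}{1}^{\ast}\tilde f$. Using the enlarged $\Gamma_0(p^n) \subseteq G(\ZZ_p)$-equivariance of $\tilde f$ furnished by Lem.~\ref{lem G* sheaves}, the matrix $\smallmatrd{u}{0}{0}{1}$ lies in $\Gamma_0(p^n)$ and contributes a trivial cocycle factor $\kappa^{-1}(0\cdot\mathfrak z + 1) = 1$, so $\smallmatrd{u}{0}{0}{1}^{\ast}\tilde f = \tilde f$ and $(u_\fp')^{\ast}\tilde f = u_\fp^{\ast}\tilde f$. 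I expect the main obstacle to be precisely this book-keeping between the $G^{\ast}$- and $G$-level structures: the element $u_\fp$ and its rescaling by $\smallmatrd{u}{0}{0}{1}$ generally escape $G^{\ast}(\QQ_p)$, so the $G(\QQ_p)$-action and the comparison of Lem.~\ref{lem G* sheaves} are both essential for making the cocycle computations land in the correct space. Once these identifications are set up, both parts reduce to the short computations above.
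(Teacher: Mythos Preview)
Your proposal is correct and follows essentially the same approach as the paper's proof: both identify $\pi_1^{\ast}\omega_n^{\kappa,+}$ and $u_{\fp}^{\ast}\omega_n^{\kappa,+}$ with the same subsheaf of $\mathcal O^+_{\mathcal X_{\gothc,\Gamma(p^\infty)}(\epsilon)_a}$ cut out by the cocycle condition for $\Gamma_{\!0}(p^n)\cap\Gamma_{\!0}(\fp^l)$, via the two ingredients $u_{\fp}^{\ast}\mathfrak z=\varpi\mathfrak z$ and the conjugation $\smallmatrd{a}{b}{c}{d}\mapsto\smallmatrd{a}{\varpi b}{\varpi^{-1}c}{d}$. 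Your treatment is in fact more explicit than the paper's on two points: you spell out the appeal to Lem.~\ref{lem G* sheaves} needed to pass from $\Gamma_{\!0}^{\ast}(p^n)$ to $\Gamma_{\!0}(p^n)$ (which the paper leaves implicit by working directly with the $\Gamma(p^\infty)$-tower), and you give a clean direct argument for independence of $\varpi$ via $\smallmatrd{u}{0}{0}{1}^{\ast}\tilde f=\tilde f$, whereas the paper's proof does not address this point separately.
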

	\begin{proof}
		By diagram~\eqref{eq:wild-Hecke-operator}, we have
		\[\pi_1^{\ast}\omega_n^{\kappa,+}=\{ f\in \mathcal O^+_{\XX_{\gothc,\Gamma(p^\infty)}(\epsilon)_a}\mid \gamma^{\ast}f = \kappa^{-1}(c \mathfrak z+d)f \text{ for all }\gamma \in \Gamma_{\!0}(p^n)\cap \Gamma_{\!0}(\fp^{l}) \}.\]
		We claim that $u_{\mathfrak p}^{\ast}\omega_n^{\kappa,+}$ admits the same description. To see this, we first recall that the action of $u_{\fp}$ is equivariant with respect to the morphism $\Gamma_{\!0}(p^n)\cap \Gamma_{\!0}(\fp^{l})\to \Gamma_{\!0}(p^n)$ given by conjugation by $u_{\mathfrak p}$, namely $j:\smallmatrd{a}{b}{c}{d}\mapsto \smallmatrd{a}{\varpi b}{\varpi^{-1}c}{d}$. Second, we see from $G(\Q_p)$-equivariance of $\pi_{\HT}$ that  $u_\fp^{\ast}\mathfrak z = \varpi \mathfrak z$.
		Consequently, $\kappa(\gamma,\mathfrak z):=\kappa(c\mathfrak z+d)$ is sent by $u_{\mathfrak p}$ to $(\gamma,\mathfrak z)\mapsto \kappa(j(\gamma),\varpi\mathfrak z)=\kappa(\varpi^{-1}\varpi c \mathfrak z+d)=\kappa(c\mathfrak z+d)$. This together with the fact that shows that $\XX_{\gothc,\Gamma(p^\infty)}(\epsilon)_a\to X_{\gothc,\Gamma_{\!\scaleto{0}{3pt}}(p^{n})\cap \Gamma_{\!\scaleto{0}{3pt}}(\mathfrak p^{l})}(\epsilon)_{a}$ is a pro-\'etale $\Gamma_{\!0}(p^{n})\cap \Gamma_{\!0}(\mathfrak p^{l})$-torsor shows that $u_{\mathfrak p}^{\ast}\omega_n^{\kappa,+}$ has the desired form.
	\end{proof}
	\begin{definition}\label{d:U_fp}
		The $U_{\fp}$-operator is defined as the composition
		\begin{align*}
		M^{G^*}_{\k}&(\Gamma_{\!0}(p^n),\mu_N,\epsilon,\gothc\fp)=\Gamma(\mathcal X_{\gothc \fp,\Gamma_{\!\scaleto{0}{3pt}}(p^{n})}(\epsilon)_{a},\omega^{\kappa}_n)\to \Gamma(\mathcal X_{\gothc,\Gamma_{\!\scaleto{0}{3pt}}(p^{n})\cap \Gamma_{\!\scaleto{0}{3pt}}(\mathfrak p^{l})}(\epsilon)_{a},\pi_2^{\ast}\omega^{\kappa}_n)\\
		&\longrightarrow \Gamma(\mathcal X_{\gothc,\Gamma_{\!\scaleto{0}{3pt}}(p^{n})\cap \Gamma_{\!\scaleto{0}{3pt}}(\mathfrak p^{l})}(\epsilon)_{a},\pi_1^{\ast}\omega^{\kappa}_n)\xrightarrow{\frac{1}{q_{\mathfrak p}}\Tr_{\pi_1}}\Gamma(\mathcal X_{\gothc,\Gamma_{\!\scaleto{0}{3pt}}(p^{n})}(\epsilon)_{a},\omega^{\kappa}_n)=M^{G^*}_{\k}(\Gamma_{\!0}(p^n),\mu_N,\epsilon,\gothc).
		\end{align*}
	\end{definition}
	\subsection{The Hecke action on arithmetic Hilbert modular forms}
	For $\gothc$ a polarisation ideal, we let $[\gothc]$ denote its class in the narrow class group. One can then define Hecke operators on the spaces of overconvergent Hilbert modular forms for $G$ of the form
	\[T_{\mathfrak a}:M_{\kappa}^G(\Gamma_{\!0}(p^n),\mu_N,\epsilon,[\mathfrak c\mathfrak a])\to M_{\kappa}^G(\Gamma_{\!0}(p^n),\mu_N,\epsilon,[\mathfrak c])\]
	\[U_{\mathfrak p}:M_{\kappa}^G(\Gamma_{\!0}(p^n),\mu_N,\epsilon,[\mathfrak c\mathfrak p])\to M_{\kappa}^G(\Gamma_{\!0}(p^n),\mu_N,\epsilon,[\mathfrak c])\]
	by taking $\Delta$-invariants of the operators defined in the last section. Alternatively, one can define these operators more directly and without any reference to $G^{\ast}$ based on Defns.~\ref{defn: 4 sheaves for the price of 2} and Defn.~\ref{d:mf-G-via-X_G}; copying the definitions for $G^{\ast}$ and replacing $\XX$ by $\XX_{G}$ throughout. The proofs %of Lems.~\ref{l:hecke-corresp-T}~and~\ref{l:hecke-corresp-U} then 
	go through without change. The natural morphisms of Hecke correspondences over the map $\XX\to \XX_G$ shows that the operators thus defined coincide with the ones obtained via $G^{\ast}$.
	
	It is clear from either definition that the Hecke operators commute with the polarisation action from Defn.~\ref{d:full-space-of-OCHMF-for-G}. Consequently, they induce a Hecke action on $M^{G}_{\kappa}(\Gamma_{\!0}(p^n),\mu_N,\epsilon)$.
	
	\begin{rmrk}
		Via the Koecher principle the Hecke operators also extend to the boundary. Moreover, the subspaces of cusp forms will be preserved by the action of Hecke operators.
	\end{rmrk}
	
	\begin{rmrk}
		As defined, the Hecke operators for $G^*$ are canonical. If one fixes a set of representatives $(\gothc_i)$ of the narrow class group and considers the Hecke operators as mapping between the spaces for these fixed $\gothc_i$, then the Hecke operators for $G^*$ become non-canonical, depending on the choice of representatives (cf. \cite[Section 4.3]{AIP}). For $G$, the operators remain canonical as picking different representatives does not affect our polarisation class. In particular, we get commuting Hecke operators on $M_k^G(\Gamma_{\!0}(p^n),\mu_N,\e)$.
		
		Lastly, one can check directly that for $G^*$ or $G$ the Hecke operator $U_p=\prod_{\gothp|p} U_{\gothp}^{e_i}$ is a compact operator. Alternatively, this follows from Prop.~\ref{prop: hecke equiv} together with \cite[Lem.~3.27]{AIP}.
	\end{rmrk}
	\begin{rmrk}
		It is clear from the definition that the tame Hecke operators preserve the integral spaces of overconvergent forms, while for $U_{\mathfrak p}$ this is in general only true after renormalisation.
	\end{rmrk}
	\subsection{Hecke-equivariance of the comparison}
	We can now finish the proof of Thms.~\ref{thm:comparison hilbert} and \ref{thm:comparison-hilbert-for-G} by proving that the comparison isomorphisms are Hecke equivariant.
	\begin{prop}\label{prop: hecke equiv}
		The isomorphisms $\omega^{\kappa,+}_{G^*,n}\isorightarrow \omega^{\kappa,+}_{G^*,\AIP,n}$ and $\omega^{\kappa,+}_{G,n}\isorightarrow \omega^{\kappa,+}_{G,\AIP,n}$ are Hecke equivariant on global sections.
	\end{prop}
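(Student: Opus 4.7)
The plan is to reduce Hecke equivariance to the compatibility of the section $\tilde{\mathfrak{s}}$ with the Hecke correspondences. Both the $U_\fp$ and $T_\mathfrak{a}$ operators on either side of the comparison are built from the same finite-level correspondence diagrams \eqref{eq:tame-Hecke-operator} and \eqref{eq:wild-Hecke-operator}, assembled out of pull-back, the trace of the finite \'etale map $\pi_1$, and, in the wild case, the action of $u_\fp \in G(\Q_p)$. Because pull-back commutes with pull-back and with the trace along finite \'etale maps, the task is to verify that the map $\tilde{\mathfrak{s}}: \XX_{\Gamma^*(p^\infty)}(p^n\epsilon)_a \to \mathcal{F}_m(\epsilon)$ of Lem.~\ref{l:s-from-X-to-P} intertwines the two correspondences on either side. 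Concretely, I would prove that in each correspondence square, the pullbacks $\pi_1^\ast \tilde{\mathfrak{s}}$ and $\pi_2^\ast \tilde{\mathfrak{s}}$ (resp.\ $u_\fp^\ast \tilde{\mathfrak{s}}$) fit into commuting diagrams with the corresponding maps into the AIP torsor.

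For the tame operator $T_\mathfrak{a}$, the essential input is the functoriality diagram~\eqref{dg:functoriality-of-HT} used already in the proof of Lem.~\ref{l:hecke-corresp-T}: an $\O_F$-isogeny $\varphi\colon A \to A/D$ of prime-to-$p$ degree induces a commutative square between the Hodge--Tate sequences of $A$ and $A/D$. By the moduli description of $\mathfrak{s}$ in Lem.~\ref{l:moduli interpretation of s in terms of HT in Hilbert}, this implies $\varphi^\ast \mathfrak{s}(A/D, \alpha') = \mathfrak{s}(A,\alpha)$, and an analogous statement holds for the section $\psi(1)$ used to define the AIP torsor $\mathcal{F}_m$ (Cor.~\ref{l:comparison-of-AIP-torsor-to-Pilloni-torsor}). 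Hence $\tilde{\mathfrak{s}}$ commutes with the correspondence, and since the trace along $\pi_1$ matches on both sides, $T_\mathfrak{a}$ commutes with $\tilde{\mathfrak{s}}^\ast$.

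For the wild operator $U_\fp$, the action of $u_\fp = \smallmatrd{\varpi}{0}{0}{1}$ on $\mathcal X_{\gothc,\Gamma(p^\infty)}(\epsilon)_a$ admits the moduli description $(A,\iota,\lambda,\mu_N,\alpha) \mapsto (A/D[\fp], \iota',\lambda',\mu_N',\alpha')$ of Lem.~\ref{l:action-of-G(Q_p)}, where $D[\fp]$ is obtained from $\alpha$ and $\alpha'$ is characterised by $\varphi^{\vee}\circ \alpha' = \alpha \circ u_\fp^{\vee}$. Combining this with Lem.~\ref{l:moduli interpretation of s in terms of HT in Hilbert} and the same functoriality of the Hodge--Tate map as above, I would show that $u_\fp^\ast \mathfrak{s}$ and $\pi_{2,\infty}^\ast \mathfrak{s}$ agree modulo the precisely matching twist of the AIP torsor $\mathcal{F}_m$ under the isogeny $A \to A/D[\fp]$ that defines $U_\fp^{\AIP}$ in \cite{AIP3}. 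I expect this to be the most delicate step: since $u_\fp$ changes the polarisation ideal from $\gothc$ to $\gothc\fp$ and rescales the first coordinate of the level structure by $\varpi$, one must check that the resulting normalisation factors (including the factor $q_\fp^{-1}$ in Defn.~\ref{d:U_fp}) line up with the AIP convention, and that the different $\varpi$-choices do not produce a discrepancy, as in the independence statement of Lem.~\ref{l:hecke-corresp-U}.

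Finally, for $G$ the isomorphism $\omega^{\k,+}_{G,n} \cong \omega^{\k,+}_{G,\AIP,n}$ was constructed in Thm.~\ref{thm:comparison-hilbert-for-G} as the chain $\omega^{\k,+}_{G,n}=\omega^{\k,+}_{G,(G,\Gamma,n)} \cong \omega^{\k,+}_{G,(G^\ast,\Gamma,n)} = (\pi_\ast \omega^{\k,+}_{G^\ast,(G^\ast,\Gamma,n)})^{\Delta(N)} = (\pi_\ast \omega^{\k,+}_{G^\ast,\AIP,n})^{\Delta(N)} = \omega^{\k,+}_{G,\AIP,n}$. All Hecke correspondences commute with the polarisation $\O_F^{\times,+}$-action (since they act on level structures while the polarisation action does not), hence with the $\Delta(N)$-quotient; the intermediate twist by $\wU^{-1}(\mathbf{e}_\beta)$ from Lem.~\ref{prop G sheaves} is also Hecke-compatible because $\mathbf{e}_\beta$ is built from the Weil pairing, which is functorial under isogenies. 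Thus Hecke equivariance for $G$ follows formally from the $G^\ast$-case.
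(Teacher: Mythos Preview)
Your approach is essentially the same as the paper's: reduce to showing that $\mathfrak s$ induces a morphism of Hecke correspondences, using functoriality of the Hodge--Tate sequence (diagram~\eqref{dg:functoriality-of-HT}) for $T_{\mathfrak a}$ and the moduli description of the $G(\Q_p)$-action (Lem.~\ref{l:action-of-G(Q_p)}) for $U_{\fp}$.

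Two points where the paper is sharper than your outline. First, the $U_{\fp}$ case is not delicate: the key observation is simply that $u_{\fp}^{\vee}=\smallmatrd{1}{0}{0}{\varpi}$ fixes $(1,0)$, so combining $\varphi^{\vee}\circ\alpha'=\alpha\circ u_{\fp}^{\vee}$ with the moduli description of $\mathfrak s$ gives $\mathfrak s\circ u_{\fp}=\varphi^{\ast}\circ\mathfrak s$ on the nose. There is no normalisation discrepancy to chase, since the factor $q_{\fp}^{-1}$ and the trace along $\pi_1$ are literally the same on both sides by definition. Second, for $G$ the paper does not unwind the chain of Lemmas~\ref{lem G* sheaves}--\ref{prop G sheaves}; it simply notes that the Hecke operators for $G$ are, by construction in \S\ref{sec:hecke-operators}, the $\Delta(N)$-invariants of those for $G^{\ast}$, so equivariance for $G$ is immediate from the $G^{\ast}$ case. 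Your check that the twist by $\wU^{-1}(\mathbf e_{\beta})$ is Hecke-compatible is not needed for this argument.
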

	\begin{proof}
		We consider the case of $G^{\ast}$, the case of $G$ follows from this. As in the proof of Thm.~\ref{thm:comparison-hilbert-for-G}, we can assume that $\kappa$ has image in $\W_k^{\ast}$ for some $k>0$.
		It is clear from comparing Definitions~\ref{d:T_a}~and~\ref{d:U_fp} to the definition in \cite[\S8.5]{AIP3} that it suffices to prove that the isomorphism $\pi_{2}^{\ast}\omega_{G^{\ast},n}^{\kappa}=\pi_1^{\ast}\omega_{G^{\ast},n}^{\kappa}$ from Lem.~\ref{l:hecke-corresp-T}~and~\ref{l:hecke-corresp-U} is identified with the isomorphisms $\pi_{2}^{\ast}\omega_{G^{\ast},n,\AIP}^{\kappa}=\pi_1^{\ast}\omega_{G^{\ast},n,\AIP}^{\kappa}$ from \cite[Lem.~8.5]{AIP3} under the comparison isomorphism. For this it suffices to see that the comparison map $\mathfrak s$ from \S\ref{s:comparison-morphism-Hilbert} induces a morphism of Hecke correspondences
		\begin{equation}\label{eq:morph-of-Hecke-correspondence}
		\begin{tikzcd}[row sep = 0.54cm]
		{\mathcal X_{\gothc\fp, \Gamma(p^\infty)}(\epsilon)_{a}} \arrow[d,"\mathfrak s"] & {\mathcal X_{\gothc,\Gamma(p^\infty)}(\epsilon)_{a}} \arrow[l,"\pi_{2,\infty}"'] \arrow[r,"\pi_{1,\infty}"] \arrow[d,"\mathfrak s"] & {\mathcal X_{\gothc,\Gamma(p^\infty)}(\epsilon)_{a}} \arrow[d,"\mathfrak s"] \\
		\mathcal F_m(\epsilon)             &  \mathcal F_{m,\Gamma_{\!\scaleto{0}{3pt}}(\mathfrak a)}(\epsilon)  \arrow[l,"\pi_2"']   \arrow[r,"\pi_1"] & \mathcal F_{m}(\epsilon)
		\end{tikzcd}
		\end{equation}
		where $ \mathcal F_{m,\Gamma_{\!\scaleto{0}{3pt}}(\mathfrak a)}(\epsilon)$ is the pullback along   $\pi_1:{\mathcal X_{\gothc, \Gamma_{\!\scaleto{0}{3pt}}(\mathfrak a)}(\epsilon)}\to {\mathcal X_{\gothc}(\epsilon)}$ of the Andreatta--Iovita--Pilloni torsor, and where $\pi_2$ is induced by the map $\omega_{A/D}\to \omega_{A}$. Commutativity of the right square is clear. In terms of moduli, commutativity of the left square is now precisely that of diagram~\eqref{dg:functoriality-of-HT}.
		
		For the $U_{\fp}$-operator, the top left map in diagram is replaced by the action of $u_{\mathfrak p}=\smallmatrd{\varpi}{0}{0}{1}$. Let $A$ be the universal abelian variety over $\mathcal X_{\gothc,\Gamma_{\!\scaleto{0}{3pt}}(p^n)\cap\Gamma_{\!\scaleto{0}{3pt}}(\fp^{l})}(\epsilon)_a$ with its anticanonical subgroup $D\subseteq A[\fp^{l}]$, then the map $\pi_{2}^{\ast}\omega_{G^{\ast},n,\AIP}^{\kappa}=\pi_1^{\ast}\omega_{G^{\ast},n,\AIP}^{\kappa}$ is induced via the adjoint $\omega_{G^{\ast},n,\AIP}^{\kappa}\to \pi_{2,\ast}\pi_1^{\ast}\omega_{G^{\ast},n,\AIP}^{\kappa}$ obtained by restriction from the map $\varphi^{\ast}:\omega_{A/D[\fp]}\to \omega_{A}$ associated to the isogeny $A\to A/D[\fp]$.
		
		On the other hand, the morphism $\omega_{G^{\ast},n}^{\kappa}\to \pi_{2,\ast}\pi_1^{\ast}\omega_{G^{\ast},n}^{\kappa}$ in Lem.~\ref{l:hecke-corresp-U} is given by restriction of the action of $u_{\mathfrak p}$. To prove that the comparison is equivariant for the $U_{\mathfrak p}$-operator, it thus suffices to prove that these two morphisms commute with the comparison morphism $\mathfrak s$.

		Using Lem.~\ref{l:action-of-G(Q_p)} and the identity $u_{\fp}^\vee(1,0)=(1,0)$, we see that the following diagram commutes
		\begin{center}
			\begin{tikzcd}[row sep = 0.5cm]
			{(1,0)} & \O_p^2 \arrow[d, "u_{\mathfrak p}^{\vee}", equal] \arrow[r, "\alpha"] & T_pA^\vee \arrow[r, "\HT"] & \omega_{A} \\
			{(1,0)} \arrow[u, maps to] & \O_p^2 \arrow[r, "\alpha'"] & T_p(A/D[\mathfrak p])^\vee \arrow[r, "\HT"] \arrow[u, "\varphi^{\vee}"'] & \omega_{A/D[\mathfrak p]} \arrow[u, "\varphi^{\ast}"'].
			\end{tikzcd}
		\end{center}
		This shows that also $\mathfrak s\circ u_{\fp}=\varphi^{\ast}\circ \mathfrak s$, and thus $U_{\mathfrak p}$ commutes with the comparison isomorphism.
	\end{proof}
	
	\bibliographystyle{alpha}

	\bibliography{biblio(3)}

\begin{thebibliography}{AIP16b}

\bibitem[AIP15]{AIP4}
F.~Andreatta, A.~Iovita, and V.~Pilloni.
\newblock {$p$}-adic families of {S}iegel modular cusp forms.
\newblock {\em Ann. of Math. (2)}, 181(2):623--697, 2015.

\bibitem[AIP16a]{AIP3}
F.~Andreatta, A.~Iovita, and V.~Pilloni.
\newblock The adic, cuspidal, {H}ilbert eigenvarieties.
\newblock {\em Res. Math. Sci.}, 3:Paper No. 34, 36, 2016.

\bibitem[AIP16b]{AIP}
F.~Andreatta, A.~Iovita, and V.~Pilloni.
\newblock On overconvergent {H}ilbert modular cusp forms.
\newblock {\em Ast\'erisque}, (382):163--192, 2016.
\newblock Arithm\'etique {$p$}-adique des formes de {H}ilbert.

\bibitem[AIP18]{AIP2}
F.~Andreatta, A.~Iovita, and V.~Pilloni.
\newblock Le halo spectral.
\newblock {\em Ann. Sci. \'{E}c. Norm. Sup\'{e}r.}, 51(06), 2018.

\bibitem[Bar76]{barten}
W.~Bartenwerfer.
\newblock Der erste {R}iemannsche {H}ebbarkeitssatz im nichtarchimedischen
  {F}all.
\newblock {\em J. Reine Angew. Math.}, 286(287):144--163, 1976.

\bibitem[Buz07]{buzeig}
K.~Buzzard.
\newblock Eigenvarieties.
\newblock In {\em {$L$}-functions and {G}alois representations}, volume 320 of
  {\em London Math. Soc. Lecture Note Ser.}, pages 59--120. Cambridge Univ.
  Press, Cambridge, 2007.

\bibitem[Cha90]{Chai-ArithCompact}
C.-L. Chai.
\newblock Arithmetic minimal compactification of the {H}ilbert-{B}lumenthal
  moduli spaces.
\newblock {\em Ann. of Math. (2)}, 131(3):541--554, 1990.

\bibitem[CHJ17]{CHJ}
P.~Chojecki, D.~Hansen, and C.~Johansson.
\newblock Overconvergent modular forms and perfectoid {S}himura curves.
\newblock {\em Doc. Math.}, 22:191--262, 2017.

\bibitem[CS17]{CarScho}
A.~Caraiani and P.~Scholze.
\newblock On the generic part of the cohomology of compact unitary {S}himura
  varieties.
\newblock {\em Ann. of Math. (2)}, 186(3):649--766, 2017.

\bibitem[Han]{Hansenquots}
D.~Hansen.
\newblock {Quotients of adic spaces by finite groups}.
\newblock {\em Math. Res. Lett.}
\newblock To appear.
  \url{http://www.davidrenshawhansen.com/adicgpquotient.pdf}.

\bibitem[Heua]{heuer-cusps}
B.~Heuer.
\newblock Cusps and $q$-expansion principles for modular curves at infinite
  level.
\newblock {\em Preprint}.
\newblock \url{https://arxiv.org/abs/2002.02488}.

\bibitem[Heub]{heuer-v_lb_rigid}
B.~Heuer.
\newblock Line bundles on rigid spaces in the $v$-topology.
\newblock {\em Preprint}.
\newblock \url{https://arxiv.org/abs/2012.07918}.

\bibitem[Heu19]{heuer-thesis}
B.~Heuer.
\newblock {\em Perfectoid geometry of $p$-adic modular forms}.
\newblock PhD thesis, King's College London, 2019.

\bibitem[Hid04]{hidapadic}
H.~Hida.
\newblock {\em {$p$}-adic automorphic forms on {S}himura varieties}.
\newblock Springer Monographs in Mathematics. Springer-Verlag, New York, 2004.

\bibitem[HJ]{HJperf}
D.\ Hansen and C.\ Johansson.
\newblock Perfectoid {S}himura varieties and the {C}alegari--{E}merton
  conjectures.
\newblock {\em Preprint}.
\newblock \url{https://arxiv.org/abs/2011.03951}.

\bibitem[HK]{kedhan}
D.~Hansen and K.~S. Kedlaya.
\newblock Sheafiness criteria for {H}uber rings.
\newblock {\em Preprint}.
\newblock \url{https://kskedlaya.org/papers/criteria.pdf}.

\bibitem[How17]{howe-thesis}
S.~Howe.
\newblock {\em Overconvergent modular forms and the $p$-adic Jacquet-Langlands
  correspondence}.
\newblock PhD thesis, University of Chicago, 2017.
\newblock \url{https://knowledge.uchicago.edu/record/787}.

\bibitem[Hub13]{huber2013etale}
R.~Huber.
\newblock {\em {\'E}tale cohomology of rigid analytic varieties and adic
  spaces}, volume~30.
\newblock Springer, 2013.

\bibitem[Kat73]{kertz}
N.~M. Katz.
\newblock {$p$}-adic properties of modular schemes and modular forms.
\newblock In {\em Modular functions of one variable, {III} ({P}roc. {I}nternat.
  {S}ummer {S}chool, {U}niv. {A}ntwerp, {A}ntwerp, 1972)}, pages 69--190.
  Lecture Notes in Mathematics, Vol. 350. Springer, Berlin, 1973.

\bibitem[KL]{KedLiu}
K.~S. {Kedlaya} and R.~{Liu}.
\newblock {Relative $p$-adic Hodge theory, II: Imperfect period rings}.
\newblock {\em Preprint}.
\newblock \url{https://arxiv.org/abs/1602.06899}.

\bibitem[KL05]{Kisin-Lai}
M.~Kisin and K.~F. Lai.
\newblock Overconvergent {H}ilbert modular forms.
\newblock {\em Amer. J. Math.}, 127(4):735--783, 2005.

\bibitem[L{\"{u}}t74]{lutke}
W.~L{\"{u}}tkebohmert.
\newblock Der {S}atz von {R}emmert-{S}tein in der nichtarchimedischen
  {F}unktionentheorie.
\newblock {\em Math. Z.}, 139:69--84, 1974.

\bibitem[Mum70]{mumford}
D.~Mumford.
\newblock {\em Abelian varieties}.
\newblock Tata Institute of Fundamental Research Studies in Mathematics, No. 5.
  Published for the Tata Institute of Fundamental Research, Bombay; Oxford
  University Press, London, 1970.

\bibitem[Pil13]{pilloni2013formes}
V.~Pilloni.
\newblock Formes modulaires surconvergentes.
\newblock {\em Ann. Inst. Fourier (Grenoble)}, 1(63):219--239, 2013.

\bibitem[Rap78]{RapoportCompactifications}
M.~Rapoport.
\newblock Compactifications de l'espace de modules de {H}ilbert-{B}lumenthal.
\newblock {\em Compositio Math.}, 36(3):255--335, 1978.

\bibitem[{Sch}]{diamonds}
P.~{Scholze}.
\newblock {{\'E}tale cohomology of diamonds}.
\newblock {\em Preprint}.
\newblock \url{https://arxiv.org/abs/1709.07343}.

\bibitem[Sch13]{Scholzepaidchodge}
P.~Scholze.
\newblock {$p$}-adic {H}odge theory for rigid-analytic varieties.
\newblock {\em Forum Math. Pi}, 1:e1, 77, 2013.

\bibitem[Sch15]{torsion}
P.~Scholze.
\newblock On torsion in the cohomology of locally symmetric varieties.
\newblock {\em Ann. of Math. (2)}, 182(3):945--1066, 2015.

\bibitem[She17]{Xu}
X.~Shen.
\newblock Perfectoid {S}himura varieties of abelian type.
\newblock {\em Int. Math. Res. Not.}, (21):6599--6653, 2017.

\bibitem[SW]{berkeley17}
P.~Scholze and J.~Weinstein.
\newblock {\em $p$-adic geometry, {UC} {B}erkeley course notes}.
\newblock Annals of Mathematics Studies. Princeton University Press, Princeton,
  NJ.

\bibitem[SW13]{SW}
P.~Scholze and J.~Weinstein.
\newblock Moduli of $ p $-divisible groups.
\newblock {\em Camb. J. Math.}, 1(2):145--237, 2013.

\bibitem[TX16]{tian}
Y.~Tian and L.~Xiao.
\newblock {$p$}-adic cohomology and classicality of overconvergent {H}ilbert
  modular forms.
\newblock {\em Ast\'{e}risque}, (382):73--162, 2016.

\end{thebibliography}

\end{document}